\documentclass[12pt,reqno,a4paper]{amsart}
\usepackage{
    amsmath, amsfonts, amssymb,  amsthm,   amscd, 
    gensymb,  graphicx, comment,  etoolbox, url,
    booktabs, stackrel, mathtools,enumitem, mathdots,  microtype, lmodern,    mathrsfs, graphicx, tikz,  longtable,tabularx, float, tikz, pst-node, tikz-cd, multirow, tabularx, amscd,  bm, array, makecell, diagbox, booktabs,ragged2e, caption, subcaption }
\usepackage{makecell,slashbox}
\usepackage{thmtools} 

\usepackage{xcolor}
\usepackage[utf8]{inputenc}
\usepackage{microtype, fullpage, wrapfig,textcomp,mathrsfs,csquotes,fbb}
\usepackage[pagebackref=true, colorlinks=true, linkcolor=blue, citecolor=blue, urlcolor=blue, breaklinks=true]{hyperref}

\usepackage[capitalise]{cleveref}

\usepackage{todonotes}
\usetikzlibrary{positioning}
\usetikzlibrary{shapes,arrows.meta,calc}
\usetikzlibrary{arrows}

\newtheorem{theorem}{Theorem}[section]

\newtheorem{conjecture}[theorem]{Conjecture}
\newtheorem{corollary}[theorem] {Corollary}
\newtheorem{lemma}[theorem]{Lemma}

\newtheorem{proposition}[theorem]{Proposition}

\newtheorem*{theorem*}{Theorem}
\newtheorem*{conjecture*}{Conjecture}

\theoremstyle{definition}
\newtheorem{definition}[theorem]{Definition}
\newtheorem{example}[theorem]{Example}
\newtheorem{remark}[theorem]{Remark}

\newtheorem{question}[theorem]{Question}

\newtheorem*{notation*}{Notation}

\newcommand{\R}{\mathbb{R}}
\newcommand{\Z}{\mathbb{Z}}
\newcommand{\C}{\mathbb{C}}

\newcommand{\TC}{\mathrm{TC}}

\newcommand{\ct}{\mathrm{cat}}

\newcommand{\sct}{\mathrm{secat}}
\newcommand{\rct}{\mathrm{relcat}}
\newcommand{\pr}{\mathrm{pr}}
\newcommand{\id}{\mathrm{id}}
\newcommand{\op}{\mathrm{op}}
\newcommand{\hdim}{\mathrm{hdim}}

\makeatletter
\def\@tocline#1#2#3#4#5#6#7{\relax
  \ifnum #1>\c@tocdepth 
  \else
    \par \addpenalty\@secpenalty\addvspace{#2}%
    \begingroup \hyphenpenalty\@M
    \@ifempty{#4}{%
      \@tempdima\csname r@tocindent\number#1\endcsname\relax
    }{%
      \@tempdima#4\relax
    }%
    \parindent\z@ \leftskip#3\relax \advance\leftskip\@tempdima\relax
    \rightskip\@pnumwidth plus4em \parfillskip-\@pnumwidth
    #5\leavevmode\hskip-\@tempdima
      \ifcase #1
       \or\or \hskip 1em \or \hskip 2em \else \hskip 3em \fi%
      #6\nobreak\relax
    \hfill\hbox to\@pnumwidth{\@tocpagenum{#7}}\par
    \nobreak
    \endgroup
  \fi}
\makeatother

\newcolumntype{x}[1]{>{\centering\arraybackslash}p{#1}}

\begin{document}
\title[]{Monoidal and symmetrized \\ parametrized topological complexity}
\author[R. Singh]{Ramandeep Singh Arora}
\address{Department of Mathematics, Indian Institute of Science Education and Research Pune, India}
\email{ramandeepsingh.arora@students.iiserpune.ac.in}
\email{ramandsa@gmail.com}
\author[N. Daundkar]{Navnath Daundkar}
\address{Department of Mathematics, Indian Institute of Technology Madras, Chennai, India.}
\email{navnath@iitm.ac.in}

\thanks{}

\begin{abstract} 
We introduce and study monoidal and symmetrized versions of parametrized topological complexity.
First, we develop parametrized analogues of the monoidal topological complexity theories of Iwase--Sakai, Aguilar-Guzm\'an--Gonz\'alez (based on the Fadell--Husseini approach), and Dranishnikov. 
We investigate the parametrized Iwase--Sakai conjecture and provide sufficient conditions under which it holds.
We then introduce symmetrized parametrized topological complexity and combine it with the monoidal perspective to define monoidal symmetrized parametrized topological complexity, showing that the resulting notions agree.
Finally, we compute these invariants for Fadell--Neuwirth fibrations.
\end{abstract}

\keywords{Parametrized topological complexity, Symmetrized topological complexity, Monoidal topological complexity, Iwase-Sakai conjecture, Fadell--Neuwirth fibrations}
\subjclass[2020]{55M30, 55S40, 55P91, 55R10, 55R91}
\maketitle

\tableofcontents

\section{Introduction}\label{sec:intro}
The \emph{topological complexity} of a space $X$, denoted by $\TC(X)$, is the smallest positive integer $k$ for which the product space $X \times X$ admits an open cover ${U_0,\ldots,U_k}$ such that each $U_i$ admits a continuous section of the free path fibration
\begin{equation}
\label{eq: free path space fibration}
    \pi \colon X^I \to X \times X, \qquad \pi(\gamma)=(\gamma(0),\gamma(1)),
\end{equation}
where $X^I$ is the free path space of $X$ endowed with the compact-open topology.
This invariant was introduced by Farber in \cite{FarberTC} to study the complexity of motion planning algorithms for a mechanical system whose configuration space is $X$. 
Since then, topological complexity has emerged as one of the fundamental numerical homotopy invariants in topological robotics and has been studied extensively from both theoretical and computational perspectives.

In many practical applications, it is natural to require motion planners to satisfy additional compatibility conditions beyond merely assigning continuous paths. 
For example, if the initial and terminal configurations coincide, the assigned motion should remain stationary throughout the entire interval. 
Likewise, reversing the roles of the initial and terminal configurations should reverse the direction of the prescribed path, reflecting the inherent symmetry of the motion-planning problem. 
Imposing such natural constraints leads to refined notions of topological complexities, namely the symmetric topological complexity $\TC^S$, monoidal topological complexity $\TC^M$, and symmetrized topological complexity $\TC^{\Sigma}$, which have been studied extensively in the literature \cite{FG07, I-S, gonzalezhighertc, Grantsymmtc}.

Further, the system itself can be influenced by external parameters that may vary over time or from one situation to another. 
Consequently, it is desirable to study motion planners that adapt continuously to these varying conditions rather than treating each instance independently. 
This viewpoint leads to the notion of parametrized topological complexity $\TC[p \colon E \to B]$ of a fibration $p \colon E \to B$, introduced by Cohen, Farber and Weinberger in \cite{farber-para-tc}, which extends the classical theory by incorporating families of motion-planning problems into a unified framework. 
It provides a natural setting for investigating how changes in external parameters affect the complexity of motion planning.

Parametrized topological complexity has been the subject of considerable recent interest. We refer the reader to \cite{farber-para-tc, PTCcolfree, ptcspherebundles, minowa2024parametrized} for its foundational properties and computational aspects. 
The notion was subsequently extended to the fibrewise setting by Garc\'{\i}a-Calcines \cite{fibrewise}, while Crabb \cite{crabb2023fibrewise} established several computational results in this more general framework.
More recently, the second author extended parametrized topological complexity to the equivariant setting \cite{D-EqPTC}, and the first and second authors introduced the notion of invariant parametrized topological complexity \cite{Inv-Para-TC}.

The primary aim of this paper is to develop the monoidal and symmetrized variants of topological complexity in the parametrized setting. 
We introduce these new parametrized invariants, establish their basic properties, and investigate the interplay between the monoidal and symmetry conditions and the parametrized framework. 
Furthermore, we extend several fundamental results from the classical theory to this more general setting, thereby enriching the theory of parametrized motion planning.

\subsection*{Outline of the paper}

\Cref{sec: preliminaries} establishes the notation and conventions used throughout the paper and reviews the necessary background. 
We also obtain some new results in \Cref{thm: restriction map is a G-fibration} and \Cref{prop: cofib E to E times B E}.
We further show in \Cref{thm: dim--hdim in Dranishnikov's result} that Dranishnikov's result \cite[Theorem 2.5]{dranishnikov2014topological} on monoidal topological complexity can be strengthened by replacing dimension with homotopy dimension.
More precisely,

\begin{theorem*}
Suppose $X$ is a $k$-connected CW complex.
If $\mathrm{hdim}(X) < (k+1)(\TC(X)+1)-1$, then
$$
    \TC^{M}(X) = \TC(X).
$$
\end{theorem*}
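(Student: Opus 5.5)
The plan is to reduce to Dranishnikov's original statement \cite[Theorem 2.5]{dranishnikov2014topological} by replacing $X$ with a homotopy equivalent CW complex of minimal dimension. By definition of homotopy dimension there is a CW complex $Y$ with $Y\simeq X$ and $\dim Y=\hdim(X)$. I would check that every quantity in the hypothesis and conclusion is unchanged when $X$ is replaced by $Y$; then Dranishnikov's theorem applied to $Y$ gives the result for $X$.

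The hypotheses and the right-hand side transfer directly. Connectivity is a homotopy invariant, so $Y$ is $k$-connected. Farber's topological complexity is a homotopy invariant \cite{FarberTC}, so $\TC(Y)=\TC(X)$. Hence
$$\dim Y=\hdim(X)<(k+1)(\TC(X)+1)-1=(k+1)(\TC(Y)+1)-1,$$
and Dranishnikov's theorem yields $\TC^M(Y)=\TC(Y)=\TC(X)$.

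The remaining and main step is the homotopy invariance $\TC^M(X)=\TC^M(Y)$ for CW complexes. This is not formal, because the monoidal condition (paths over the diagonal must be constant) is not obviously preserved by composing with homotopy equivalences and their homotopies. I would first try to invoke the Iwase--Sakai identification \cite{I-S} of $\TC^M(X)$ with the fibrewise pointed Lusternik--Schnirelmann category of the trivial fibration $\pr_1\colon X\times X\to X$ with diagonal section. A homotopy equivalence $f\colon X\to Y$ of CW complexes induces a fibrewise pointed homotopy equivalence between $X\times X\to X$ and the pullback of $Y\times Y\to Y$ along $f$. For well-pointed, cofibrant data, fibrewise pointed category is invariant under such equivalences and under pullback along homotopy equivalences of bases. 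This gives the inequality in both directions.

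If that route proves delicate, the fallback is a direct argument. Given a monoidal open cover of $Y\times Y$ with sections $s_i$, pull it back along $f\times f$. Use the homotopy $gf\simeq\id$ together with the cofibration $\Delta X\hookrightarrow X\times X$ (valid for CW complexes) to adjust the resulting paths so they become constant on the diagonal. The homotopy extension property is what lets one make the homotopy stationary near $\Delta X$, and I expect this adjustment to be the technical heart of the proof.
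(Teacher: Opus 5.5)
Your proposal is correct and follows the same route as the paper: pass to a CW complex $Y\simeq X$ with $\dim Y=\hdim(X)$, transfer $\TC$, connectivity and $\TC^M$ along the homotopy equivalence, and apply the dimension--connectivity result to $Y$. The step you flag as the technical heart needs neither the fibrewise Lusternik--Schnirelmann detour nor a cofibration adjustment. CW complexes are LEC, so $\TC^M(Y)=\rct(\Delta_Y)$ and $\TC^M$ is a homotopy invariant (\Cref{thm: monodial tc is a relative category}, due to Carrasquel-Vera--Garc\'{\i}a-Calcines--Vandembroucq); the paper then uses Doeraene--El Haouari's criterion $\rct(\Delta_Y)=\sct(\Delta_Y)$ where you quote Dranishnikov's theorem, which amounts to the same thing.
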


As a consequence, we show that the monoidal topological complexity of configuration spaces matches that of their usual topological complexity, see \Cref{ex: monoidal TC of conf spaces}.

In \Cref{sec: mono-para-tc}, we introduce and study the monoidal parametrized topological complexity $\TC^{M}[p \colon E \to B]$ of a fibration $p \colon E \to B$, including its formulation, fundamental properties, and connections with existing notions. 
For example, in \Cref{thm: equivalent defn of monoidal para tc}, we show that the monoidal parametrized topological complexity is a relative category (\Cref{defn: relative category}). 
In \Cref{subsec: para-Iwase-Sakai-conj}, we formulate the parametrized version of the Iwase--Sakai conjecture in the following sense: 

\begin{conjecture}(Parametrized I-S conjecture)
\label{conj: para IS conjecture}
If $p \colon E \to B$ is a locally trivial fibration, where $E$ and $B$ are locally finite simplicial complexes, then
$$
    \TC^{M}[p \colon E \to B] = \TC[p \colon E \to B]\,.
$$
\end{conjecture}

We prove that this conjecture holds in several cases. 
In particular, in \Cref{thm: mon-para-tc = para-tc if dim(E) < (TC[p] + 1)(k+1)-1} we show the following:

\begin{theorem*}
Suppose $p \colon E \to B$ is an LEC fibration with the fibre $F$, where $E$ is a CW complex and $E \times_B E$ is paracompact.
If $F$ is $k$-connected such that
$$
    \dim(E) < (\TC[p \colon E \to B]+1) \cdot (k+1) - 1, 
$$
then $\TC^{M}[p \colon E \to B] = \TC[p \colon E \to B]$.    
\end{theorem*}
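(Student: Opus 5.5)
The idea is to transplant Dranishnikov's argument for $\TC^M(X)=\TC(X)$ to the fibrewise setting. We replace $X\times X$ by $E\times_B E$, the free path fibration by the parametrized path fibration $\Pi\colon E^I_B\to E\times_B E$, and the diagonal $X\hookrightarrow X\times X$ by the fibrewise diagonal $\Delta\colon E\hookrightarrow E\times_B E$. Set $n=\TC[p\colon E\to B]$. The inequality $\TC[p]\le \TC^M[p]$ holds by definition, so only $\TC^M[p]\le n$ needs proof.

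For the reverse inequality, I would first use the relative-category description of \Cref{thm: equivalent defn of monoidal para tc}: $\TC^M[p]$ is the least $m$ such that $E\times_B E$ is covered by $m+1$ open sets, each containing $\Delta(E)$, on which $\Pi$ has a section restricting to constant paths on $\Delta(E)$. Equivalently, each set deforms into $\Delta(E)$ rel $\Delta(E)$. I would then pass to the Ganea/Schwarz join characterization. Let $\Pi_n\colon *^{n+1}_{E\times_BE}E^I_B\to E\times_B E$ be the $(n+1)$-fold fibrewise join of $\Pi$. Then $\TC[p]\le n$ gives a section $s$ of $\Pi_n$, using paracompactness of $E\times_B E$. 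Correspondingly, $\TC^M[p]\le n$ should follow from a section that agrees on $\Delta(E)$ with the canonical section $c$ given by constant paths (say, in the first join coordinate), since such a section can be converted back into a cover of the monoidal type. This reduces the problem to a relative lifting problem: deform $s$ to a section $s'$ with $s'|_{\Delta(E)}=c$.

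Next I would study the obstruction to this relative lift. On $\Delta(E)$ we have two sections $s|_{\Delta(E)}$ and $c|_{\Delta(E)}$ of the pullback $\Delta^*\Pi_n$. The fibre of $\Pi$ over a point of $E\times_BE$ is the path space $\Omega F$ of the fibre, up to homotopy, and $\Omega F$ is $(k-1)$-connected. The fibre of $\Pi_n$ is therefore the $(n+1)$-fold join $(\Omega F)^{*(n+1)}$, which is $((n+1)(k+1)-2)$-connected. Because $\dim E<(n+1)(k+1)-1$, i.e.\ $\dim E\le (n+1)(k+1)-2$, the two sections over $\Delta(E)\cong E$ are fibrewise homotopic by standard obstruction theory. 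The obstructions lie in $H^{j}(E;\pi_j(\text{fibre}))$ and vanish for $j\le (n+1)(k+1)-2$.

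Finally, I would use that $\Delta\colon E\to E\times_B E$ is a cofibration (\Cref{prop: cofib E to E times B E}, which is where the LEC hypothesis enters), together with the homotopy lifting property of $\Pi_n$, to extend this homotopy from $\Delta(E)$ to all of $E\times_BE$. This produces $s'$ homotopic to $s$ with $s'|_{\Delta(E)}=c$, and converting $s'$ back to an open cover gives $\TC^M[p]\le n$.

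I expect the main obstacle to be this last conversion, from a section of the join that agrees with $c$ on the diagonal to an open cover whose sets each contain $\Delta(E)$ and carry sections equal to constant paths there. The naive sets $\{t_i>0\}$ need not contain the diagonal, since only the first join coordinate is guaranteed positive on $\Delta(E)$. I would try Dranishnikov's trick: rescale the join coordinates near $\Delta(E)$, using a tubular-type neighbourhood obtained from the cofibration, so that every coordinate is positive with constant-path data on a neighbourhood of the diagonal. If that fails, I would instead invoke the relative-category form of \Cref{thm: equivalent defn of monoidal para tc} directly, with the fibrewise join of the pair $(E^I_B, E)$.
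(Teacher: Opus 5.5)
Your argument is correct, but it handles the key step differently from the paper. The paper notes that $\Pi$ is a fibrational substitute for $\Delta_E$, so $\Delta_E$ has homotopy fibre $\Omega F$ and is a $k$-equivalence. It then quotes Doeraene--El Haouari \cite[Theorem 14]{secat-and-relcat-II} to get $\rct(\Delta_E)=\sct(\Delta_E)$ under the dimension hypothesis, and translates back via \Cref{thm: equivalent defn of monoidal para tc}.

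You instead prove that secat-equals-relcat statement by hand. The pullback of $\Pi_n$ along $\Delta_E$ has fibre $J^n(\Omega F)$, which is $((n+1)(k+1)-2)$-connected. So when $\dim E\le (n+1)(k+1)-2$, all obstructions in $H^j(E;\pi_j)$ to a vertical homotopy between two sections vanish; your count is right. This shows exactly where the numerical condition comes from. It also yields a homotopy through sections, which is what the relative lifting along the closed cofibration $\Delta E\hookrightarrow E\times_B E$ needs. That cofibration is simply the LEC hypothesis, not \Cref{prop: cofib E to E times B E}, which concerns the $\Z_2$-equivariant case. The paper's route is shorter but black-boxes this estimate.

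The obstacle you anticipate at the end disappears if you take as reference section over the diagonal $s_n(e)=[\mathrm{c}_e,\tfrac{1}{n+1},\dots,\mathrm{c}_e,\tfrac{1}{n+1}]$, rather than one concentrated in the first join coordinate. The obstruction argument applies to any pair of sections. With $s_n$, every set $\{t_i>0\}$ contains $\Delta E$, and the conversion to a cover is exactly step (4)$\Rightarrow$(1) of \Cref{thm: equivalent defn of monoidal para tc}; no rescaling near the diagonal is needed. Even with your first-coordinate choice, the sets $U_i=\{t_i>0\}$ satisfy $\Delta E\subset U_0$ and $U_i\cap\Delta E=\emptyset$ for $i\ge 1$, so \Cref{thm: mon-para-tc = para-tc under special cover} would also finish the proof.
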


In \Cref{prop: Iwase-Sakai conjecture for principal G-bundle}, we show that for a principal $G$-bundle $p \colon E \to B$, where $G$ is a connected Lie group, the following equalities hold:
$$
    \TC^{M}[p \colon E \to B] 
        = \TC[p \colon E \to B] 
        = \TC^{M}(G) 
        = \TC(G) 
        = \ct(G).
$$
We further establish \Cref{thm: mon-para-tc = para-tc under special cover}, \Cref{cor: mon-para-tc = para-tc under gen-special cover}, and \Cref{cor: mon-para-tc = para-tc of associated bundle}, which provide conditions on covers of various spaces under which the conjecture holds, allowing us to compute several examples.
This section also develops a $D$-monoidal version (\Cref{defn: D-mon-para-tc}) and a Fadell--Husseini version (\Cref{defn: FH-mon-para-tc}) of monoidal parametrized topological complexity, as well as their generalized versions.
In particular, we obtain the following results in \Cref{thm: mon-para-tc = D-mon-para-tc} and \Cref{thm: mon-para-tc = FH-mon-para-tc}, respectively:

\begin{theorem*}
Suppose $p \colon E \to B$ is a fibration, where $E$ and $E \times_B E$ are ANRs.
Then
$$
    \TC^{M}[p \colon E\to B]
        = \TC_g^{M}[p \colon E\to B]
        = \TC^{DM}[p \colon E\to B].
$$
\end{theorem*}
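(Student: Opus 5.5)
The plan is to prove the two equalities through the chain
$$
    \TC^{DM}[p] \le \TC^{M}[p],\qquad
    \TC^{M}_g[p] \le \TC^{M}[p] \le \TC^{DM}[p],\qquad
    \TC^{M}[p] \le \TC^{M}_g[p].
$$
Here I use the definitions as I expect them to be set up. In $\TC^{M}$, every open set $U_i$ of the cover of $E\times_B E$ contains the diagonal $\Delta(E)$, and its section of $\Pi\colon E^I_B\to E\times_B E$ sends $(e,e)$ to the constant path. In the generalized version $\TC^M_g$, the covering sets are arbitrary rather than open. In the Dranishnikov version $\TC^{DM}$, the sets $U_i$ are open, and each section $s_i$ is only required to be constant on $U_i\cap\Delta(E)$.

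The inequalities $\TC^{DM}\le\TC^{M}$ and $\TC^{M}_g\le\TC^{M}$ are immediate from the definitions. The inequality $\TC^{M}\le\TC^{M}_g$ is the standard "generalized covers versus open covers for ANRs" argument, carried out relative to $\Delta(E)$. Given a generalized monoidal cover $\{A_i\}$, I would use that $E\times_B E$ is an ANR and that $\Delta(E)\hookrightarrow E\times_B E$ is a closed cofibration (\Cref{prop: cofib E to E times B E}). Via \Cref{thm: equivalent defn of monoidal para tc}, a monoidal section over $A_i$ is the same as a fibrewise deformation of $A_i$ into $\Delta(E)$ relative to $\Delta(E)$. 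The ANR property of $E\times_B E$ and of $E^I_B$ (which is an ANR since $E$ is) then extends each such deformation to an open neighbourhood $U_i\supseteq A_i$, keeping it stationary on $\Delta(E)$. This gives an open monoidal cover of the same size.

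The remaining and main step is $\TC^{M}\le\TC^{DM}$. Take a D-monoidal open cover $U_0,\dots,U_k$ with sections $s_i$ that are constant on $U_i\cap\Delta(E)$; each $U_i$ must be enlarged so as to contain all of $\Delta(E)$.

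First, since $\Delta(E)$ is a closed cofibration and $E\times_B E$ is an ANR, I will produce an open neighbourhood $N$ of $\Delta(E)$ together with a section $s_N\colon N\to E^I_B$ that restricts to constant paths on $\Delta(E)$. This comes from extending the constant section, using the ANR property of $E^I_B$ together with the homotopy lifting property of $\Pi$.

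Next, using normality, I will choose a closed neighbourhood $C\subset N$ of $\Delta(E)$ and a Urysohn function $\varphi$ that equals $1$ on $\Delta(E)$ and is supported in the interior of $C$. The new sets are $V_i=(U_i\setminus \Delta(E))\cup \operatorname{int}(C)$, which are open and contain $\Delta(E)$. On $V_i$ I would define a section that interpolates, via $\varphi$, between $s_i$ and $s_N$ on the overlap.

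The interpolation is done through the relative-category reformulation of \Cref{thm: equivalent defn of monoidal para tc}. There $s_i$ and $s_N$ become fibrewise deformations into $\Delta(E)$ that agree (both are stationary) on $U_i\cap\Delta(E)$. Near $\Delta(E)$ one runs the deformation of $N$ for time governed by $\varphi$, and then the deformation coming from $U_i$. Continuity at points of $\partial U_i\cap\Delta(E)$ is exactly where the D-condition (constancy on $U_i\cap\Delta(E)$) is used.

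I expect this gluing to be the main obstacle. Two sections of a fibration that agree on a closed set need not be fibrewise homotopic near it, so the argument must genuinely use the cofibration $(E\times_B E,\Delta(E))$ and the ANR hypotheses. My first attempt would be to shrink $N$ so that, for each $x\in N\cap U_i$, the concatenation $s_i(x)\ast\overline{s_N(x)}$ lies in a neighbourhood of constant loops that deformation retracts onto them; this is possible because $E$ is an ANR. That retraction would then give the homotopy needed to splice $s_i$ and $s_N$ along $\varphi$.
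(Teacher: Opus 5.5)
The decisive inequality $\TC^{M}[p]\le\TC^{DM}[p]$ is not established. You set up a gluing of $s_i$ with a monoidal section $s_N$ near $\Delta E$ and leave it open, and the repair you suggest cannot work. You say continuity at points of $\partial U_i\cap\Delta E$ is where the D-condition is used. But such points lie outside $U_i$, and the D-condition constrains $s_i$ only on $U_i\cap\Delta E$. As $x\in U_i$ tends to a point of $\Delta E\cap(\overline{U_i}\setminus U_i)$, $s_i(x)$ may be an essential path; think of the counterclockwise planner on $S^1\times S^1$ minus the diagonal. Since $N$ must contain all of $\Delta E$, no shrinking of $N$ makes $s_i(x)\ast\overline{s_N(x)}$ nearly constant on all of $N\cap U_i$. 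Moreover, with a weight that is merely $1$ on $\Delta E$, the spliced section need not be continuous at points of $\partial U_i\cap\mathrm{int}(C)$, where only $s_N$ is available. Producing $N$ and $s_N$ exactly constant on $\Delta E$ is not automatic either: the paper gets it from $\TC[p]<\infty$ in \Cref{cor: existence of monoidal section} and leaves the general case as \Cref{question: existence of monoidal section}.

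The missing idea is that no splicing is needed, because $\TC^{M}_g[p]\le\TC^{DM}[p]$ is elementary (\Cref{lemma: gen-mon-para-tc leq d-mon-para-tc}). Shrink $\{U_i\}$ by normality to a closed cover $\{Z_i\}$ with $Z_i\subseteq U_i$. On $Z_i\cup\Delta E$, paste $s_i|_{Z_i}$ with $(e,e)\mapsto\mathrm{c}_e$. Both pieces are closed, and they agree on $Z_i\cap\Delta E$ precisely by the D-condition, so this is a generalized monoidal cover. With your $\TC^{M}\le\TC^{M}_g$, the chain $\TC^{M}\le\TC^{M}_g\le\TC^{DM}\le\TC^{M}$ then closes.

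That leaves $\TC^{M}[p]\le\TC^{M}_g[p]$ as the real content, and your sketch of it does not hold up as written.
\begin{itemize}
\item \Cref{prop: cofib E to E times B E} assumes $p$ locally trivial, $E$ an ENR and $B$ a separable ANR, none of which is given. The closed cofibration $\Delta E\hookrightarrow E\times_B E$ comes instead from \Cref{thm: cobfib} with trivial group.
\item The claim that $E^I_B$ is an ANR because $E$ is does not follow. $E^I_B$ is only a closed subspace of $E^I$, and the Cohen--Farber--Weinberger result giving this needs local triviality and separable ANR hypotheses.
\item The ANR property of $E\times_B E$ lets you extend a deformation of $A_i$ into $\Delta E$ to a neighbourhood. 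It gives no control keeping that deformation fibrewise over $B$, which is what you need to read off a section of $\Pi$.
\end{itemize}

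The paper avoids the fibrewise issue by dropping it locally and recovering it globally:
\begin{itemize}
\item A monoidal section $\sigma$ over $A$ gives $H(e_1,e_2,t)=(\sigma(e_1,e_2)(t),e_2)$, a deformation of $A$ into $\Delta E$ rel $\Delta E$. Hence $\rct_g(\Delta_E)\le\TC^{M}_g[p]$ (\Cref{lemma: recal(Delta) leq gen-mon-para-tc}).
\item Garc\'{\i}a-Calcines' theorem gives $\rct_g(\Delta_E)=\rct(\Delta_E)$ for closed cofibrations between ANRs (\Cref{thm: relcat = gen-relcat}).
\item Finally $\rct(\Delta_E)=\TC^{M}[p]$ by \Cref{thm: equivalent defn of monoidal para tc}. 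There a genuine section of $\Pi_k$ is obtained from a homotopy section by the homotopy lifting property of $\Pi_k$. It is then made equal to $s_k$ on the diagonal using the cofibration $\Delta E\hookrightarrow E\times_B E$.
\end{itemize}
This route never needs an ANR structure on $E^I_B$ or any fibrewise extension.
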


\begin{theorem*}
Suppose $p \colon E \to B$ is a fibration, where $E$ and $E \times_B E$ are ANRs.
Then
$$
    \TC_g^{FH}[p \colon E\to B]
        = \TC^{FH}[p \colon E \to B]
        = \TC^{M}[p \colon E\to B].
$$
\end{theorem*}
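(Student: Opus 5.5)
The plan is to establish the cycle of inequalities
$$
    \TC^{M}[p]
        \ \geq\ \TC^{FH}[p]
        \ \geq\ \TC^{FH}_g[p]
        \ \geq\ \TC^{DM}[p]
        = \TC^{M}[p],
$$
where $p \colon E \to B$ throughout. The last equality is \Cref{thm: mon-para-tc = D-mon-para-tc}, which applies because $E$ and $E\times_B E$ are ANRs. Here I read $\TC^{FH}$, following the Fadell--Husseini relative category, as follows. One asks for an open cover $U_0,\dots,U_k$ of $E\times_B E$ with local sections of the fibrewise path fibration $\Pi \colon E^I_B \to E\times_B E$. Only $U_0$ is required to contain the diagonal $\Delta(E)$, and the section on $U_0$ must be stationary on $\Delta(E)$; the other sets are unrestricted. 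The generalized version $\TC^{FH}_g$ allows arbitrary (not necessarily open) covers.

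The first inequality is immediate: a monoidal cover has every $U_i\supseteq\Delta(E)$ with stationary sections, so it is in particular an FH cover. The second is also immediate, since an open cover is a special case of an arbitrary cover.

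The real content is $\TC^{FH}_g[p]\geq \TC^{DM}[p]$. Recall that $\TC^{DM}$ requires only that each section be stationary on $U_i\cap\Delta(E)$, without requiring $\Delta(E)\subseteq U_i$. I would first pass from a generalized FH cover to an open one. For this I would use the ANR hypothesis on $E\times_B E$ and $E$, as in the proof that $\TC^M_g=\TC^M$: sections over arbitrary subsets extend to open neighbourhoods (since $\Pi$ is a fibration with ANR total and base spaces). One must also keep the stationarity condition near $\Delta(E)$, and here I would use that $\Delta(E)\hookrightarrow E\times_B E$ is a closed cofibration (\Cref{prop: cofib E to E times B E}). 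In this way one may assume $U_0$ is an open set containing $\Delta(E)$ whose section is stationary on $\Delta(E)$.

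Next, using normality of $E\times_B E$ (metrizable ANR), I would choose a closed neighbourhood $V$ of $\Delta(E)$ with $V\subseteq U_0$. I would then set $U_0'=U_0$ and $U_i'=U_i\setminus V$ for $i\geq 1$. These sets remain open and still cover $E\times_B E$, and the restricted sections remain sections. Moreover $U_i'\cap\Delta(E)=\emptyset$ for $i\ge1$, so the $D$-monoidal stationarity condition holds vacuously there, and it holds on $U_0$ by assumption. This gives a $D$-monoidal cover with the same number of sets.

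The main obstacle I expect is the first step of this paragraph: upgrading a generalized FH cover to an open one while retaining stationarity on the diagonal. My first attempt would be to mimic the argument already used for $\TC^M_g=\TC^M$. That argument deforms a neighbourhood of $\Delta(E)$ into $\Delta(E)$ rel $\Delta(E)$ via the cofibration structure, and composes the resulting paths with constant paths.
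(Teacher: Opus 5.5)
Your cycle of inequalities has the right shape, but there is a genuine gap: the inequality $\TC^{FH}_g[p]\ge\TC^{DM}[p]$ is the whole content of the theorem, and you leave it unproved.

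\textbf{Two minor points first.} In the paper's definition the sets $U_i$, $i\ge 1$, must be disjoint from $\Delta E$. So $\TC^{FH}[p]\le\TC^{M}[p]$ requires deleting the diagonal from $U_1,\dots,U_k$, which is harmless because $\Delta E$ is closed ($E$ is metrizable). With that definition an open FH cover is already a $D$-monoidal cover, so your closed neighbourhood $V$ is superfluous. Also, the closed cofibration $\Delta E\hookrightarrow E\times_B E$ comes from \Cref{thm: cobfib}, not from \Cref{prop: cofib E to E times B E}, which concerns locally trivial fibrations with $E$ an ENR.

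\textbf{Why your proposed tool fails.}
\begin{enumerate}
\item A section of $\Pi$ over an arbitrary subset need not extend to an open neighbourhood. The ANR argument of \cite[Theorem 2.7]{garcia2019note}, as used in \Cref{cor: mon-para-tc = para-tc under gen-special cover}, only gives a section $\sigma$ on an open $V\supseteq U$ with $\left.\sigma\right|_{U}\simeq s$. So $\sigma(e,e)=\mathrm{c}_e$ is lost.
\item Restoring it by a relative lifting rel $\Delta E$ would need a homotopy through sections over $\Delta E$, which that argument does not supply.
\item That argument also needs $\Pi$ to be a fibration between ANRs. Your hypotheses (only $E$ and $E\times_B E$ are ANRs) do not make $E^I_B$ an ANR; the paper has this only for locally trivial $p$ with metrizable separable ANRs $E$, $B$.
\item The ``argument already used for $\TC^M_g=\TC^M$'' that you plan to mimic is not a direct deformation argument. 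In the paper that equality is itself deduced from relative category.
\end{enumerate}

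\textbf{The missing idea.} Move the problem from $\Pi$ to the diagonal $\Delta_E\colon E\to E\times_B E$, which is a closed cofibration between ANRs. A section $\sigma$ over $U$ gives the deformation $H(e_1,e_2,t)=(\sigma(e_1,e_2)(t),e_2)$ of $U$ into $\Delta E$ inside $E\times_B E$. It is relative to $\Delta E$ when $\sigma$ is stationary there. This is \Cref{lemma: FH-recal(Delta) leq gen-FH-mon-para-tc}, giving $\rct^{FH}_g(\Delta_E)\le\TC^{FH}_g[p]$. The paper then closes the cycle with
\[
\rct^{FH}_g(\Delta_E)=\rct_g(\Delta_E)=\rct(\Delta_E)=\TC^{M}[p],
\]
using \Cref{thm: gen-relcat = gen-FH-relcat}, \Cref{thm: relcat = gen-relcat} and \Cref{thm: equivalent defn of monoidal para tc}. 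All the ANR work is done in these cited results about $\Delta_E$, whose hypotheses are exactly those of the theorem.

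Preserving stationarity is not really the difficulty you identified. An open cover with $\Delta E\subseteq V_0$ and arbitrary sections already gives $\TC^M[p]\le k$ by the gluing in \Cref{thm: mon-para-tc = para-tc under special cover}. What is hard is producing an open cover from a generalized one at all, and under these hypotheses that step must be carried out for $\Delta_E$, not for $\Pi$.
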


\Cref{sec: symm-para-tc} is devoted to developing the notion of symmetrized parametrized topological complexity $\TC^{\Sigma}[p \colon E \to B]$ of a fibration $p \colon E \to B$. 
We establish its main properties such as fibrewise homotopy invariance (\Cref{prop: fibrewise homotopy invariance of sym-para-tc}), and examine its relationship with other topological complexity invariants in \Cref{prop: sym-para-tc of projection map} and \Cref{prop: para-tc leq sym-para-tc}.
We also provide the following lower and upper bounds for it in \Cref{thm: cohomological lower bound on sym-para-tc} and \Cref{thm: dimension-connectivity upper bound on symmetrized TC}, respectively.

\begin{theorem*}
Suppose $p \colon E \to B$ is a fibration. 
Let $SP^2_B(E) := (E\times_B E)/G$ be the fibrewise symmetric square of $E$, let $\rho \colon E\times_B E \to SP^2_B(E)$ be the orbit map, and let $dE_B := \rho(\Delta E)$ denote the image of the diagonal under $\rho$.
If there exists cohomology classes $u_1,\dots,u_k \in H^*(SP^2_B(E);R)$ (for any commutative ring $R$) such that
    \begin{enumerate}
        \item $u_i$ restricts to zero in $H^*(dE_B;R)$ for $i=1,\dots,k$;
        \item $u_1 \smile \dots \smile u_k \neq 0$ in $H^*(SP^2_B(E);R)$, 
    \end{enumerate}
then $\TC^{\Sigma}[p \colon E \to B]\geq k.$    
\end{theorem*}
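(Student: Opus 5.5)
We need to guess the definition of $\TC^{\Sigma}[p]$. By analogy with the definition of symmetrized TC of Basabe--González--Rudyak--Tamaki, it should be the following. Let $G=\mathbb{Z}/2$ act on $E\times_B E$ by swapping coordinates and on $E^I_B$ by reversing paths; the fibrewise path fibration $\Pi\colon E^I_B\to E\times_B E$ is then $G$-equivariant. Then $\TC^{\Sigma}[p]+1$ is the smallest number of open $G$-invariant sets covering $E\times_B E$ such that one set is $\Delta E$ (or contains it) and each of the others, say $U_i$, lies in $E\times_B E\setminus\Delta E$ and admits a $G$-equivariant section of $\Pi$.

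The plan follows the classical argument for $\TC^\Sigma$. Write $\TC^{\Sigma}[p]=n$ and suppose $n<k$. Take invariant open sets $U_1,\dots,U_n$ covering $(E\times_B E)\setminus \Delta E$ with equivariant sections $s_i$. Add the extra set $U_0$, an invariant open neighbourhood of $\Delta E$ deformable (equivariantly) onto $\Delta E$; if the definition instead includes $\Delta E$ itself as a non-open piece, we would instead pass directly to the closed-set/excisive version. Descend to the orbit space: $V_i:=\rho(U_i)$ is open in $SP^2_B(E)$, since $\rho$ is an open map for a finite group action, and $V_0\cup\dots\cup V_n=SP^2_B(E)$.

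The key claim is that each $u_j$ restricts to zero on each $V_i$. Fix $i\ge1$. On $U_i$ the section $s_i(x,y)$ is a path in the fibre from $x$ to $y$, and $s_i(y,x)$ is its reverse. The map $\phi\colon U_i\to E\times_B E$ given by $\phi(x,y)=(x,s_i(x,y)(1/2))$ lands in $\Delta E$ after the homotopy $\phi_t(x,y)=(s_i(x,y)(t/2),\,s_i(x,y)(1-t/2))$. This homotopy is $G$-equivariant, because swapping $(x,y)$ reverses the path, so it descends to a homotopy on $V_i$ from the inclusion $V_i\hookrightarrow SP^2_B(E)$ to a map into $dE_B$. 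Hence $u_j|_{V_i}$ factors through $H^*(dE_B;R)$ and is zero by hypothesis (1). For $V_0$, the equivariant deformation onto $\Delta E$ likewise descends, so $u_j|_{V_0}=0$.

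Now, since $u_j|_{V_j}=0$ (reindexing so that the classes $u_0,\dots,u_n$ are assigned to $V_0,\dots,V_n$ and using $k\ge n+1$), each $u_j$ lifts to a relative class in $H^*(SP^2_B(E),V_j;R)$. The cup product of these lifts lies in $H^*(SP^2_B(E),\bigcup_j V_j)=0$. Hence $u_1\smile\dots\smile u_k=0$, contradicting (2). This gives $\TC^\Sigma[p]\ge k$. The relative cup product is valid because the $V_j$ are open.

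The main obstacle is the handling of the diagonal piece and the openness needed for the relative cup product. The constructions on $V_i$ for $i\ge1$ are routine; the delicate issue is whether the definition forces $U_0=\Delta E$, which is not open. In that case I would use that $\Delta E\subset E\times_B E$ is a $G$-cofibration; this should be available from \Cref{prop: cofib E to E times B E} together with the $G$-fibration result \Cref{thm: restriction map is a G-fibration}. That gives an invariant open neighbourhood $N$ of $\Delta E$ that equivariantly deformation retracts to it, so $\rho(N)$ deforms into $dE_B$. Its image is then an open set on which every $u_j$ vanishes, and the argument above goes through.
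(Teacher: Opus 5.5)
Your argument is essentially the paper's proof. An equivariant section $s$ over an invariant open set $U$ yields the $G$-homotopy $(s(t/2),\,s(1-t/2))$ from the inclusion into $\Delta E$; this is \Cref{lemma: eq def symmetrized}. The homotopy descends to $\rho(U)$, so each $u_j$ lifts to $H^*(SP^2_B(E),\rho(U);R)$, and the product of the lifts lies in $H^*(SP^2_B(E),SP^2_B(E);R)=0$.

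The one thing to correct is the definition you guessed. The paper sets $\TC^{\Sigma}[p\colon E\to B]:=\sct_G(\Pi)$. So $\TC^{\Sigma}[p\colon E\to B]<k$ means $E\times_B E$ is covered by at most $k$ invariant open sets, \emph{each} admitting a $G$-equivariant section of $\Pi$. There is no distinguished diagonal piece, and no requirement that the sets avoid $\Delta E$. Your $i\ge 1$ argument then applies verbatim to every set of the cover, and the issue with $U_0$ never arises.

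This matters, because your proposed remedy for $U_0$ is not available in the stated generality. The theorem assumes only that $p$ is a fibration, whereas \Cref{prop: cofib E to E times B E} requires $p$ locally trivial with $E$ an ENR and $B$ a separable ANR. Also, \Cref{thm: restriction map is a G-fibration} says nothing about an invariant neighbourhood of $\Delta E$ deforming onto it. So for the invariant as you defined it, that step would be a genuine gap.
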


\begin{theorem*}
Suppose $p \colon E \to B$ is a fibration with fibre $F$ such that $E \times_B E$ is a $G$-CW complex of dimension at least 2.
If $F$ is $s$-connected, then
$$
    \TC^{\Sigma}[p \colon E \to B] 
        < \frac{\hdim_G(E \times_B E)+1}{s+1}.
$$
\end{theorem*}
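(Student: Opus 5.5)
We bound $\TC^{\Sigma}[p \colon E \to B]$ by the $G$-equivariant sectional category of a suitable $G$-fibration, and then apply an equivariant dimension--connectivity estimate. Here $G=\Z/2$ acts on $E \times_B E$ by swapping coordinates and on $E^I_B$ by reversing paths. Following the symmetrized definition, $\TC^{\Sigma}[p]$ is one less than the $G$-equivariant sectional category of the fibrewise path fibration
\[
\Pi \colon E^I_B \setminus \Pi^{-1}(\Delta E) \to E\times_B E \setminus \Delta E ,
\]
and the diagonal contributes the extra open set. Alternatively it is the relative version with $\Delta E$ adjoined. By \Cref{thm: restriction map is a G-fibration}, the restriction of $\Pi$ over the complement of the diagonal is a $G$-fibration. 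Since $G$ acts freely on $E\times_B E\setminus\Delta E$, the equivariant sectional category there reduces to the ordinary sectional category of the quotient fibration over $(E\times_B E\setminus \Delta E)/G$.

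The second ingredient is the standard equivariant dimension--connectivity bound for sectional category. If a $G$-fibration over a $G$-CW complex $Y$ has $(s-1)$-connected fibres (in each fixed-point sense), then
\[
\sct_G < \frac{\hdim_G(Y)+1}{s+1}.
\]
This is proved via the equivariant join construction: the $(n+1)$-fold fibrewise join has $(n+1)(s+1)-2$-connected fibres. By equivariant obstruction theory it then admits a $G$-section once $\hdim_G(Y) \le (n+1)(s+1)-1$. The fibre of $\Pi$ over a point $(x,y)$ is the space of paths in the fibre $F$ from $x$ to $y$, which is homotopy equivalent to $\Omega F$. Since $F$ is $s$-connected, $\Omega F$ is $(s-1)$-connected, which gives exactly the required connectivity. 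The hypothesis that $\dim \ge 2$ is used to absorb the diagonal piece. Including the extra open neighbourhood of $\Delta E$, a $G$-invariant deformation neighbourhood of the diagonal supplied by \Cref{prop: cofib E to E times B E}, shifts the count by one. This matches the strict inequality as stated.

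The main obstacle I expect is comparing homotopy dimensions. We must bound the $G$-homotopy dimension of $E\times_B E\setminus\Delta E$, or of the pair $(E\times_B E,\Delta E)$, by $\hdim_G(E\times_B E)$. An open subset need not inherit a small CW structure. The first thing I would try is to use the cofibration $\Delta E \hookrightarrow E\times_B E$ from \Cref{prop: cofib E to E times B E} to replace the complement by a $G$-equivariant deformation retract that is a subcomplex, or to work relatively. Concretely, I would run the obstruction theory on the pair $(E\times_B E, \Delta E)$ with the section on $\Delta E$ given by constant paths, which is compatible with the relative-category viewpoint of \Cref{thm: equivalent defn of monoidal para tc}. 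The relative obstructions then lie in $H^{j}_G(E\times_B E,\Delta E;\pi_{j-1})$, and these vanish above $\hdim_G$. The remaining care is to make sure that the equivariant obstruction groups are governed by $\hdim_G$. This holds because the fibre connectivity holds on all fixed-point sets: over the diagonal, the fixed paths are the reversal-symmetric paths, which are again loop-like and $(s-1)$-connected.
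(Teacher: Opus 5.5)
Your second paragraph has the right ingredient: the equivariant dimension--connectivity bound for $\sct_G$ of a $G$-fibration whose fibres are $(s-1)$-connected on every fixed-point level, i.e.\ \cite[Theorem 2.16]{Inv-Para-TC}. That is exactly what the paper uses. But you apply it to the wrong fibration because you misread the definition. Here $\TC^{\Sigma}[p \colon E \to B]$ is \emph{by definition} $\sct_G(\Pi)$ for $\Pi \colon E^I_B \to E \times_B E$ over the whole fibred product, diagonal included. This is what separates the symmetrized invariant from the Farber--Grant symmetric one. It is neither one less nor one more than $\sct_G$ of $\Pi$ restricted to $E \times_B E \setminus \Delta E$. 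Suppose you follow your route: bound $\sct_G$ over the complement, then add a $G$-invariant neighbourhood of $\Delta E$. Then you only obtain
$$
    \TC^{\Sigma}[p \colon E \to B] < \frac{\hdim_G(E \times_B E \setminus \Delta E)+1}{s+1} + 1,
$$
which is weaker by one than the statement. It also still requires comparing $\hdim_G$ of the complement with $\hdim_G(E\times_B E)$, which you leave open. So the extra open set does not ``match the strict inequality as stated''. Nor does the hypothesis $\dim \geq 2$ absorb a diagonal piece; it is simply inherited from the hypotheses of the cited equivariant bound. Your relative alternative on $(E \times_B E, \Delta E)$ has the same unresolved issue. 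Relative obstructions vanish above the relative \emph{cellular} dimension, and you give no way to replace the pair by one of relative dimension $\hdim_G(E \times_B E)$.

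The obstacle disappears once you use the correct definition. $\Pi$ is a $G$-fibration over all of $E \times_B E$, by \Cref{thm: restriction map is a G-fibration}, since $\{0,1\} \hookrightarrow I$ is a $\Z_2$-cofibration. The cited theorem is already stated in terms of $\hdim_G$ of the base, so no comparison of homotopy dimensions is needed. You only have to check two fibre conditions. First, every fibre of $\Pi$ is homotopy equivalent to $\Omega F$, which is $(s-1)$-connected. Second, over $(E \times_B E)^G = \Delta E$ the fixed-point fibre is $(\Omega F)^G$. This is not ``loop-like'' as you say: a reversal-invariant loop is determined by its restriction to $[0,\tfrac12]$, which is an arbitrary path starting at the base point. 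Hence $(\Omega F)^G \cong P_0 F$, which is contractible. Your conclusion that it is $(s-1)$-connected is right, but it holds for this reason. With these two checks the cited theorem gives the bound directly, and that is the paper's entire proof.
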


In \Cref{sec: mon-symm-para-tc}, we combine the monoidal and symmetrized perspectives to define monoidal symmetrized parametrized topological complexity $\TC^{M,\,\Sigma}[p \colon E \to B]$ of a fibration $p \colon E \to B$. 
Its connections with the previously introduced notions are captured in the main result of the section, stated in \Cref{thm: equivalent defn of sym monoidal para tc}:

\begin{theorem*}
Suppose $p \colon E \to B$ is a locally trivial fibration, where $E$ is an ENR and $B$ is a separable ANR.
Then the following statements are equivalent:
\begin{enumerate}
\item $\TC^{M,\,\Sigma}[p \colon E\to B] \leq k$.

\item $\TC^{FH,\,\Sigma}[p \colon E\to B] \leq k$.
        
\item $\TC^{DM,\,\Sigma}[p \colon E\to B] \leq k$.

\item $\TC^{\Sigma}[p \colon E\to B] \leq k$.
        
\item The $G$-fibration $\Pi_k \colon J^k_{E \times_B E}(E^I_B) \to E \times_B E$ admits a global $G$-section.
        
\item The $G$-fibration $\Pi_k \colon J^k_{E \times_B E}(E^I_B) \to E \times_B E$ admits a global $G$-section $\sigma$ satisfying $\sigma \circ \Delta_E = s_k$.
\end{enumerate}
\end{theorem*}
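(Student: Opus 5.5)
The plan is to prove a cycle of implications: first the chain $(1)\Rightarrow(2)\Rightarrow(3)\Rightarrow(4)$ among the covering-type invariants, then $(4)\Leftrightarrow(5)$ by the equivariant Schwarz-genus/join characterization, and finally $(5)\Rightarrow(6)\Rightarrow(1)$. Here $G=\mathbb{Z}/2$ acts on $E\times_B E$ by swapping coordinates and on $E^I_B$ by path reversal. The map $\Pi\colon E^I_B\to E\times_B E$ is then a $G$-fibration, and by \Cref{thm: restriction map is a G-fibration} so is its restriction to the complement of the diagonal. The diagonal $\Delta E$ is the fixed-point set, and by \Cref{prop: cofib E to E times B E} the inclusion $\Delta_E\colon E\to E\times_B E$ is a ($G$-)cofibration; this is where the hypotheses ($E$ an ENR, $B$ a separable ANR, $p$ locally trivial) enter.

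The easy implications are obtained by forgetting structure. A monoidal symmetrized cover, that is, an open $G$-invariant cover by sets containing $\Delta E$ with equivariant sections that restrict to constant paths on $\Delta E$, is in particular an admissible cover for the Fadell--Husseini-type version. That in turn gives a $D$-monoidal one, and discarding the diagonal condition gives a cover for $\TC^{\Sigma}$. This yields $(1)\Rightarrow(2)\Rightarrow(3)\Rightarrow(4)$, exactly mirroring the non-symmetrized chain in \Cref{thm: mon-para-tc = D-mon-para-tc} and \Cref{thm: mon-para-tc = FH-mon-para-tc}.

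For $(4)\Leftrightarrow(5)$ I would use the equivariant version of Schwarz's theorem: a $G$-invariant open cover of size $k+1$ with $G$-equivariant local sections exists if and only if the $(k+1)$-fold fibrewise join $\Pi_k$ has a global $G$-section. One direction uses a $G$-invariant partition of unity, obtained by averaging an ordinary one over $G$; this needs paracompactness of $E\times_B E$, which follows since it is an ENR. The other direction takes preimages of the open join coordinates. I expect the main care here is in how $\TC^{\Sigma}$ treats the diagonal. Off the diagonal the $G$-action is free, while on $\Delta E$ one uses constant paths. If $\TC^{\Sigma}$ is defined via a cover of $(E\times_B E)\setminus\Delta E$ plus one extra set, I will pass to a $G$-neighbourhood deformation retracting to $\Delta E$ (available by the cofibration property) to extend to all of $E\times_B E$.

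The main obstacle is $(5)\Rightarrow(6)$: deforming a global $G$-section $\sigma$ so that it agrees with $s_k$ (constant paths in one join coordinate) on $\Delta E$. The approach is as follows. On the fixed set the fibre of $\Pi_k$ over $(e,e)$ is the join of copies of $\Omega F$, with $G$ acting by loop reversal. Both $\sigma|_{\Delta E}$ and $s_k$ are $G$-sections over $\Delta E$, and I would construct a $G$-fibrewise homotopy between them. Since the fibre of $\Pi_k$ is $(k-1)$-connected-ish (join of $k+1$ path-connected pieces) and the fixed-point fibres of the $G$-action also admit the contraction to the constant loop, equivariant obstruction theory, or more directly the homotopy lifting for $G$-fibrations, provides such a homotopy over $\Delta E$. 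The $G$-homotopy extension property of $\Delta_E$ (\Cref{prop: cofib E to E times B E}) then extends it to all of $E\times_B E$, giving $\sigma'$ with $\sigma'\circ\Delta_E=s_k$. A cleaner alternative I would try first is a direct formula: on $\Delta E$, concatenate each component path with a reparametrised contraction toward the constant path. Finally, for $(6)\Rightarrow(1)$, I take the sets $U_i$ as the preimages under $\sigma$ of the open join sets where the $i$-th coordinate is positive. Since $\sigma$ equals $s_k$ on $\Delta E$, with the constant path in every coordinate, each $U_i$ contains $\Delta E$ and its section restricts to constant paths there, so this gives a monoidal symmetrized cover.
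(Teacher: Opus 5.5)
Your overall route matches the paper's: the forgetful chain $(1)\Rightarrow(2)\Rightarrow(3)\Rightarrow(4)$, Grant's equivariant Schwarz criterion for $(4)\Leftrightarrow(5)$, a deformation of $\sigma$ along $\Delta E$ for $(5)\Rightarrow(6)$, and preimages of the $G$-invariant join coordinates for $(6)\Rightarrow(1)$. Your hedge about the diagonal in $(4)\Leftrightarrow(5)$ is unnecessary, since here $\TC^{\Sigma}[p\colon E\to B]$ is by definition $\sct_G(\Pi)$. As written, however, two steps of $(5)\Rightarrow(6)$ do not go through.

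The first is producing the homotopy over $\Delta E$. Because $G$ acts trivially on $\Delta E$, you need a vertical homotopy between two sections of the fixed-point fibration $\Pi_k^G$. Its fibre is $J^k((\Omega F)^G)\cong J^k(P_0F)$, which is contractible, so the connectivity of $J^k(\Omega F)$ is irrelevant. Homotopy lifting alone never produces a homotopy between two prescribed sections. Obstruction theory would need a CW structure on $\Delta E\cong E$, which you are not given. The paper closes this with Pave\v{s}i\'c's theorem: a fibration with contractible fibre over a paracompact, locally contractible base (here $E$ is an ANR) is fibre-homotopically trivial, so any two sections are vertically homotopic.

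Your ``direct formula'' is actually a cleaner substitute and uses no hypothesis on $E$. Write $\sigma(e,e)=[\gamma_0,t_0,\dots,\gamma_k,t_k]$; by equivariance, every $\gamma_i$ with $t_i>0$ satisfies $\gamma_i(t)=\gamma_i(1-t)$. Apply $\phi_s(\gamma)(t):=\gamma\bigl((1-s)\min(t,1-t)\bigr)$ coordinatewise. This is a fibrewise map over $\Delta E$ that fixes symmetric loops at $s=0$ and gives $\mathrm{c}_{\gamma(0)}$ at $s=1$. Follow it with $[\mathrm{c}_e,(1-s)t_0+\frac{s}{k+1},\dots,\mathrm{c}_e,(1-s)t_k+\frac{s}{k+1}]$. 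The result is an explicit vertical $G$-homotopy from $\sigma\circ\Delta_E$ to $s_k$.

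The second is the extension. The $G$-homotopy extension property of $\Delta E\hookrightarrow E\times_B E$ by itself gives a $G$-map $\sigma'$ with $\sigma'\circ\Delta_E=s_k$, but nothing forces $\Pi_k\circ\sigma'=\mathrm{id}$. You need the relative $G$-homotopy lifting property of the $G$-fibration $\Pi_k$ with respect to the closed $G$-cofibration $\Delta E\hookrightarrow E\times_B E$ (Grant's Proposition 2.9). Apply it to $\sigma$ on $(E\times_B E)\times\{0\}$ and the homotopy on $\Delta E\times I$, with the stationary homotopy of $E\times_B E$ in the base. This is exactly why the homotopy over $\Delta E$ must be vertical, and it is how the paper concludes.

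A minor point: for $(1)\Rightarrow(2)$, a monoidal cover is not literally Fadell--Husseini admissible. You must delete $\Delta E$ from $U_i$ for $i\ge 1$, which is harmless because $\Delta E$ is closed, $G$-invariant and contained in $U_0$.
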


We also provide the following lower bound for it in \Cref{thm: cohomological lower bound on mon-sym-para-tc}.

\begin{theorem*}
Let $p\colon E\to B$ be a fibration. 
If there exists relative cohomology classes $v_1,\dots, v_k \in H^*(SP^2_B(E), dE_B; R)$ (for any commutative ring $R$) such that 
$$
    0 \neq v_1\smile \dots \smile v_k \in H^*(SP^2_B(E), dE_B; R)
$$
then $\TC^{M, \,\Sigma}[p:E\to B]\geq k$.    
\end{theorem*}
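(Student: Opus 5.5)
The plan is to imitate the standard relative-category argument for cohomological lower bounds, carried out on the quotient $SP^2_B(E)$ relative to $dE_B$. Suppose $\TC^{M,\,\Sigma}[p\colon E\to B]\le k-1$. Unwinding the definition, we get an open cover $U_0,\dots,U_{k-1}$ of $E\times_B E$ with the following properties:
\begin{itemize}
\item each $U_i$ is $G$-invariant, where $G=\mathbb{Z}/2$ acts by swapping the factors;
\item each $U_i$ contains the diagonal $\Delta E$;
\item each $U_i$ carries a $G$-equivariant fibrewise motion planner $s_i$, meaning $s_i(y,x)=\overline{s_i(x,y)}$, with $s_i(x,x)$ the constant path.
\end{itemize}
Put $V_i=\rho(U_i)$. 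These are open in $SP^2_B(E)$, since $\rho$ is an open map (a quotient by a finite group). They cover $SP^2_B(E)$, and each contains $dE_B$.

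The key step is to show that for each $i$ the inclusion of pairs $(dE_B,dE_B)\hookrightarrow (V_i,dE_B)$ is homotopic rel $dE_B$ to a map into $dE_B$. Equivalently, $V_i$ deformation retracts into $dE_B$ rel $dE_B$, at least up to homotopy of pairs. That makes the restriction $H^*(SP^2_B(E),dE_B)\to H^*(V_i,dE_B)$ zero.

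To build the homotopy, use the symmetric motion planner: send $[x,y]\in V_i$ at time $t$ to $\rho\big(s_i(x,y)(t/2),\,s_i(x,y)(1-t/2)\big)$.
\begin{itemize}
\item At $t=0$ this is $[x,y]$.
\item At $t=1$ both coordinates equal the midpoint $s_i(x,y)(1/2)$, so the point lies in $dE_B$.
\item The formula is well defined on orbits by the equivariance $s_i(y,x)(t)=s_i(x,y)(1-t)$.
\item The points stay in the same fibre, since the paths are fibrewise.
\item On $dE_B$ the path is constant, so points there stay fixed. This is where the monoidal condition is used.
\end{itemize}
The image may leave $V_i$, but it lands in $SP^2_B(E)$. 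This is enough: it shows that $(V_i,dE_B)\hookrightarrow(SP^2_B(E),dE_B)$ is homotopic rel $dE_B$ to a map into $dE_B$. Hence the restriction in relative cohomology factors through $H^*(dE_B,dE_B)=0$.

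Then argue as usual. By exactness, each $v_i$ lifts to a class in $H^*(SP^2_B(E),V_{i-1})$ for $i=1,\dots,k$. The relative cup product
\[
H^*(X,V_0)\otimes\cdots\otimes H^*(X,V_{k-1})\to H^*\Big(X,\bigcup_i V_i\Big)=H^*(X,X)=0
\]
then forces $v_1\smile\cdots\smile v_k=0$, a contradiction. This step needs the $V_i$ to be open, so that the pairs are excisive. Alternatively, one can use the paper's relative category reformulation, \Cref{thm: equivalent defn of sym monoidal para tc}, together with the standard fact that relative category is bounded below by relative cup length.

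The main obstacle is to make the homotopy rigorous: continuity on the quotient, and correct handling of cohomology of pairs where the homotopy does not preserve $V_i$. If needed, I would work upstairs instead. There one uses $G$-invariant relative classes, using $H^*(SP^2_B(E),dE_B)\to H^*_G(E\times_B E,\Delta E)$, or equivalently the identification $\rho^{-1}(dE_B)=\Delta E$.
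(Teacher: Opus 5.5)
Your proposal is correct and is essentially the paper's own proof. The paper likewise turns each monoidal $G$-section into the $G$-homotopy $(e_1,e_2,t)\mapsto(\sigma(e_1,e_2)(t/2),\sigma(e_1,e_2)(1-t/2))$, stationary on $\Delta E$ (\Cref{lemma: eq def mono-symmetrized}). It pushes this to $\rho(U_i)$ to show that the restriction $H^*(SP^2_B(E),dE_B;R)\to H^*(\rho(U_i),dE_B;R)$ in the triple sequence is zero, lifts each $v_i$ to $H^*(SP^2_B(E),\rho(U_i);R)$, and concludes because the product lies in $H^*(SP^2_B(E),SP^2_B(E);R)=0$.

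Two small corrections. First, the pair map that must be deformed into $dE_B$ is $(V_i,dE_B)\hookrightarrow(SP^2_B(E),dE_B)$, as you say later, not $(dE_B,dE_B)\hookrightarrow(V_i,dE_B)$. Second, your suggested alternative through \Cref{thm: equivalent defn of sym monoidal para tc} does not work as stated: that theorem is not a relative-category reformulation, and it imposes ENR/ANR hypotheses absent from the statement. Keep the direct argument.
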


Finally, \Cref{sec: Fadell-Neuwirth fibrations} focuses on applications to Fadell--Neuwirth fibrations, where we compute the parametrized topological complexity invariants developed in this paper. In particular, we obtain the following result.

\begin{theorem*}
Suppose $n \geq 1$, $m \geq 2$ and $d \geq 2$. 
Then
$$
\TC^{\Sigma}[\mkern 1mu p \colon F(\R^d,n+m) \to F(\R^d,m)\mkern 1mu] =
    \begin{cases}
        2n+m-1, & \text{if $d$ is odd},\\
        \text{either }2n+m-2 \text{ or } 2n+m-1, & \text{if $d$ is even}. 
    \end{cases}
$$    
\end{theorem*}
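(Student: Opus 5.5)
We get the value from a lower bound and an upper bound for $\TC^{\Sigma}$, combined with the known value of $\TC[p]$ for Fadell--Neuwirth fibrations. By Cohen--Farber--Weinberger (for $d$ odd) and its extension to even $d$, $\TC[p \colon F(\R^d,n+m)\to F(\R^d,m)] = 2n+m-1$ for $d$ odd and $2n+m-2$ for $d$ even, whenever $m\ge 2$. (We use the convention $\TC=$ reduced count, matching the open cover $U_0,\dots,U_k$ of the introduction.) \Cref{prop: para-tc leq sym-para-tc} gives $\TC[p]\le \TC^{\Sigma}[p]$. Hence $\TC^{\Sigma}[p]\ge 2n+m-1$ for $d$ odd and $\TC^{\Sigma}[p]\ge 2n+m-2$ for $d$ even.

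For the upper bound we apply \Cref{thm: dimension-connectivity upper bound on symmetrized TC}. The fibre is $F=F(\R^d\setminus\{m \text{ points}\},n)$. It is $(d-2)$-connected, since each Fadell--Neuwirth fibre in the tower is a wedge of $(d-1)$-spheres, so $s=d-2$. The space $E\times_B E$ is the pullback, and it fibres over $F(\R^d,m)$ with fibre $F\times F$. This fibre has the homotopy type of a CW complex of dimension $2n(d-1)$, and the base has homotopy dimension $(m-1)(d-1)$. So we expect
\[
\hdim_G(E\times_B E)\le (2n+m-1)(d-1),
\]
using a $G$-CW model. To get this model I would take the standard equivariant deformation retraction onto a compact $G$-CW spine of complementary dimension, compatible with the $\Z/2$ swapping the two fibre coordinates; alternatively, I would argue via equivariant obstruction theory, where only cohomological dimension matters. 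The theorem then yields
\[
\TC^{\Sigma}[p] < \frac{(2n+m-1)(d-1)+1}{d-1} = 2n+m-1+\tfrac{1}{d-1}.
\]
So $\TC^{\Sigma}[p]\le 2n+m-1$ for all $d\ge 2$; when $d=2$ we have $s=0$ and the bound is the same.

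Combining the bounds gives equality $2n+m-1$ for $d$ odd, and the two possible values $2n+m-2$ or $2n+m-1$ for $d$ even.

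The main obstacle is the equivariant homotopy dimension estimate. The involution on $E\times_B E$ is free only off the diagonal, and the $G$-CW structure must realise the nonequivariant homotopy dimension $(2n+m-1)(d-1)$. Without care, the fixed-point diagonal could force a larger dimension in the equivariant model. I would handle this by building the spine fibrewise: take a $\Z/2$-equivariant CW model of $F\times F$ under swap (a product CW structure with the diagonal as a subcomplex, suitably subdivided), then assemble it over a CW model of the base of dimension $(m-1)(d-1)$ by the fibrewise tower. If this fails, the fallback is to check directly that the proof of \Cref{thm: dimension-connectivity upper bound on symmetrized TC} only uses vanishing of equivariant cohomology above $(2n+m-1)(d-1)$. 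That vanishing holds because the orbit space $SP^2_B(E)$ has cohomological dimension at most that of $E\times_B E$.
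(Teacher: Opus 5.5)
Your outline matches the paper's. The lower bound comes from \Cref{prop: para-tc leq sym-para-tc} together with the Cohen--Farber--Weinberger values. The upper bound comes from \Cref{thm: dimension-connectivity upper bound on symmetrized TC} with $s=d-2$, which yields $\TC^{\Sigma}[p]\le 2n+m-1$ once you know $\hdim_G(E\times_B E)\le (d-1)(2n+m-1)$. Your connectivity claim and the arithmetic are correct.

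The gap is that this equivariant dimension estimate, the only substantive input, is never proved. Your fallback does not supply it, for three reasons:
\begin{itemize}
\item The theorem is an obstruction-theoretic statement about a $G$-CW model, so a dimension bound in any weaker sense does not plug into it.
\item Cohomological dimension in the homotopy-invariant sense does not pass to orbit spaces: the free antipodal action on the contractible $S^\infty$ has orbit space $\R P^\infty$.
\item The covering dimension of the open manifold $E\times_B E$ is $d(2n+m)$, which is useless here.
\end{itemize}

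You also cannot borrow the paper's justification. The paper asserts that $E\times_B E$ is $G$-homeomorphic to $F(\R^d,2n+m)$ with the block swap, and then restricts the Blagojevi\'c--Ziegler $\Sigma_{2n+m}$-equivariant spine to $G$. But the fibre product imposes no condition $x_i\neq y_j$, so $F(\R^d,2n+m)$ is only an open dense $G$-subspace of $E\times_B E$. The swap fixes $\Delta E$ in $E\times_B E$ but acts freely on $F(\R^d,2n+m)$; this is exactly the issue you raise. The two spaces differ even nonequivariantly: for $n=1$, $m=2$, $d=2$ their Poincar\'e polynomials are $(1+t)(1+2t)^2$ and $(1+t)(1+2t)(1+3t)$.

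Your fibrewise plan does close the gap if you carry it out.
\begin{enumerate}
\item Let $q\colon E\times_B E\to B$ be the projection. It is a locally trivial $G$-bundle, hence a $G$-fibration, with $G$ acting trivially on $B$.
\item Replace $B=F(\R^d,m)$ by a strong deformation retract $C$ of dimension $(d-1)(m-1)$, for instance Blagojevi\'c--Ziegler for $m$ points. Then $q^{-1}(C)\hookrightarrow E\times_B E$ is a $G$-homotopy equivalence.
\item The pullback of $p$ along each characteristic map $D^j\to C$ is trivial. Hence the pullback of $q$ is $G$-isomorphic to $D^j\times F\times F$, with $G$ swapping the last two factors.
\item Choose a simplicial complex $K$ of dimension $n(d-1)$ and a homotopy equivalence $f\colon K\to F$; one exists by the same argument applied to the Fadell--Neuwirth tower of $F$.
\item The staircase triangulation of $K\times K$ for a fixed vertex order is swap-invariant. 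After barycentric subdivision it is a $G$-CW complex of dimension $2n(d-1)$, and $f\times f$ is a $G$-homotopy equivalence.
\item Build $q^{-1}(C)$ skeleton by skeleton as $G$-homotopy pushouts along $G$-cellular maps $S^{j-1}\times K\times K\to(\text{previous stage})$. This produces a $G$-CW model of dimension at most $(d-1)(m-1)+2n(d-1)=(d-1)(2n+m-1)$.
\end{enumerate}
Alternatively, you could try to adapt the swap-invariant Fox--Neuwirth stratification to what $E\times_B E$ actually is: the complement of the graphic arrangement omitting the subspaces $\{x_i=y_j\}$. With this step written out, your argument proves the statement; without it, neither your proof nor the paper's establishes the upper bound.
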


Moreover, the monoidal parametrized topological complexity and the monoidal symmetrized parametrized topological complexity satisfy the same estimate as above.

\subsection*{Notations and conventions} 
Throughout the text, $G$ denotes a compact Lie group acting on Hausdorff spaces, unless stated otherwise. 
We adopt standard terminology and notation from equivariant topology, such as $G$-spaces, $G$-maps, $G$-homotopies and related notions.

\section{Preliminaries}\label{sec: preliminaries}

In this section, we systematically recall various numerical invariants: equivariant sectional category, relative category, monoidal topological complexity, symmetrized topological complexity and parametrized topological complexity.

\subsection{$G$-Fibrations and $G$-Cofibrations}
\label{subsec: eq-fibrations}
\hfill\\ \vspace{-0.7em}

We begin by recalling the definition of $G$-fibrations and $G$-cofibrations.
For a more detailed discussion, we refer the reader to \cite[Section 2]{Grantsymmtc}.

\begin{definition}
A $G$-map $p \colon E \to B$ is called a \emph{$G$-fibration} if it has the $G$-homotopy lifting property with respect to any $G$-space $X$.
More precisely, if $H \colon X \times I \to B$ is a $G$-homotopy and $f \colon X \to E$ is a $G$-map with $p \circ f = H_0$, then there exists a $G$-homotopy $\widetilde{H} \colon X \times I \to E$ such that the following diagram
\begin{equation}
\label{diag: fibration}
\begin{tikzcd}
X  \arrow[r, "f"] \arrow[d, "i_0"', hook]                     & E \arrow[d, "p"] \\
X \times I \arrow[r, "H"] \arrow[ru, "\widetilde{H}", dotted] & B               
\end{tikzcd}
\end{equation}
commutes.
\end{definition}

We briefly recall some basic facts about fixed points under closed subgroups, both to fix notation and for later use. 
Let $H$ be a closed subgroup of $G$, and let $X$ be a $G$-space. 
The set of $H$-fixed points of $X$, denoted by $X^H$, is defined as
\[
    X^H := \{x \in X \mid h \cdot x = x \text{ for all } h \in H\}.
\]
If $f \colon X \to Y$ is a $G$-map, then $f(X^H) \subseteq Y^H$, so the restriction of $f$ to $X^H$ induces a map 
\[
    f^H \colon X^H \to Y^H.
\]

The proof of the following proposition is immediate and is left to the reader.

\begin{proposition}
\label{prop: fibre of fixed point fibration}
Suppose $p \colon E \to B$ is a $G$-fibration. 
If $b \in B^G$, then the $G$-action on $E$ restricts to a $G$-action on the fibre $F = p^{-1}(b)$.
Moreover, for each closed subgroup $H$ of $G$, the fibre over $b$ of the induced fibration $p^{H} \colon E^H \to B^H$ is $F^H$. 
\end{proposition}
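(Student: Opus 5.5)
The argument is a direct check from the definitions, done in three steps.

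First, I show that $F = p^{-1}(b)$ is $G$-invariant when $b \in B^G$. Take $e \in F$ and $g \in G$. Since $p$ is a $G$-map,
\[
p(g\cdot e) = g\cdot p(e) = g\cdot b = b,
\]
so $g\cdot e \in F$. The action map $G \times F \to F$ is the restriction of the continuous action $G\times E \to E$, and $F$ carries the subspace topology, so $F$ is a $G$-space.

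Second, I identify the fibre of $p^H$. The map $p^H \colon E^H \to B^H$ is defined because $p(E^H)\subseteq B^H$. Also $b \in B^G \subseteq B^H$, so $b$ lies in the base of $p^H$. Its fibre is
\[
(p^H)^{-1}(b) = E^H \cap p^{-1}(b) = \{e \in F \mid h\cdot e = e \text{ for all } h\in H\} = F^H .
\]
The $H$-action here is the restricted action on $F$ from the first step. Both sides carry the subspace topology from $E$, so this is an equality of spaces. This is purely set-theoretic.

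Third, I justify calling $p^H$ a fibration, since the statement refers to "the induced fibration" and this is the only step with any content. Let $f \colon X \to E^H$ and $K \colon X\times I \to B^H$ with $p^H f = K_0$, where $X$ is any space. Give $X$ the trivial $G$-action. The inclusion $E^H \hookrightarrow E$ is not a $G$-map when $X$ is trivial, so I instead use the $G$-space $G/H \times X$. Define
\[
\hat f(gH,x)=g\cdot f(x), \qquad \hat K(gH,x,t)=g\cdot K(x,t).
\]
These are well defined because $f$ and $K$ land in $H$-fixed points, and they are $G$-equivariant. The $G$-homotopy lifting property gives a $G$-lift $\widetilde{K}$. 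Setting $\widetilde{K}'(x,t) = \widetilde{K}(eH,x,t)$ yields a lift of $K$ through $p^H$. It is $H$-fixed by equivariance, since $h\cdot \widetilde{K}(eH,x,t) = \widetilde{K}(hH,x,t) = \widetilde{K}(eH,x,t)$.

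This last step is the only mild obstacle. The main check is continuity of $\hat f$ and $\hat K$ on $G/H\times X$. Since $G/H$ is a quotient of $G$, this follows because $(g,x)\mapsto g\cdot f(x)$ is continuous and $G\times X\to G/H\times X$ is a quotient map, as $G$ is compact.
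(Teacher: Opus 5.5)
Your proposal is correct. The paper gives no proof (it calls the statement immediate and leaves it to the reader), and your first two steps are exactly that direct verification. Your third step goes further than the paper.

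The paper simply takes for granted that $p^H$ is a fibration; this is the standard fact that a $G$-fibration induces fibrations on all fixed-point sets. Your argument via the $G$-space $G/H\times X$, using the correspondence between $G$-maps $G/H\times X\to E$ and maps $X\to E^H$, is the usual proof of that fact. The well-definedness, equivariance and $H$-fixedness checks are all right.

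For the continuity of $\hat f$ and $\hat K$, you could avoid compactness altogether. The projection $G\to G/H$ is always open, so $G\times X\to G/H\times X$ is an open surjection and hence a quotient map. Your compactness argument also works, since $G$ compact and $G/H$ Hausdorff make the projection a closed map with compact fibres.
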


\begin{lemma}
Suppose $p \colon E \to B$ is a fibration, and $G$ is a topological group.
If $E$ and $B$ are considered as $G$-spaces with the trivial $G$-action, then $p$ is a $G$-fibration.
\end{lemma}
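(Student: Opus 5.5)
We must verify the $G$-homotopy lifting property for an arbitrary $G$-space $X$. The idea is to use the ordinary lifting property of $p$ not on $X$ itself but on the orbit space $X/G$, and then pull the lift back along the quotient map. Since $G$ acts trivially on $E$ and $B$, every $G$-map into $E$ or $B$ is $G$-invariant and so factors through the orbit space.

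Let $q \colon X \to X/G$ be the quotient map. Suppose $f \colon X \to E$ is a $G$-map and $H \colon X \times I \to B$ is a $G$-homotopy (with $G$ acting trivially on $I$) such that $p \circ f = H_0$. Because the actions on $E$ and $B$ are trivial, we have $f(gx) = f(x)$ and $H(gx,t) = H(x,t)$. Hence there are unique functions $\bar f \colon X/G \to E$ and $\bar H \colon (X/G) \times I \to B$ with $f = \bar f \circ q$ and $H = \bar H \circ (q \times \id_I)$.

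The map $\bar f$ is continuous by the universal property of the quotient topology. For $\bar H$, the point to check is that $q \times \id_I \colon X \times I \to (X/G) \times I$ is a quotient map; this holds because $I$ is locally compact Hausdorff. Alternatively, $(X \times I)/G \cong (X/G) \times I$ for a trivial action on $I$, which follows from the same fact. This topological point is the only step that needs care, and it is standard.

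Next, $p \circ \bar f = \bar H_0$, since this identity holds after precomposing with the surjection $q$. Because $p$ is an ordinary (Hurewicz) fibration, there is a lift $\widetilde{\bar H} \colon (X/G) \times I \to E$ with $p \circ \widetilde{\bar H} = \bar H$ and $\widetilde{\bar H}_0 = \bar f$. Set
\[
\widetilde H := \widetilde{\bar H} \circ (q \times \id_I).
\]
Then $\widetilde H$ is continuous and $G$-invariant, so it is a $G$-homotopy into the trivial $G$-space $E$. It also satisfies $p \circ \widetilde H = H$ and $\widetilde H \circ i_0 = f$. This makes the diagram \eqref{diag: fibration} commute, and so $p$ is a $G$-fibration.

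Note that no assumption on $G$ (such as compactness) is needed. The group enters only through the invariance of the maps involved, which is why the lemma is stated for an arbitrary topological group.
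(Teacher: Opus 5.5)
Your proposal is correct and follows essentially the same route as the paper: you factor $f$ and $H$ through $q \colon X \to X/G$ and $q \times \id_I$, lift on $X/G \times I$ using the ordinary fibration property of $p$, and precompose the lift with $q \times \id_I$. You also observe that $q \times \id_I$ is a quotient map because $I$ is locally compact Hausdorff, which justifies the continuity of the induced homotopy on $X/G \times I$; the paper leaves this point implicit.
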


\begin{proof}
Suppose $X$ is a $G$-space. 
Then any diagram \eqref{diag: fibration} of $G$-maps factors through the orbit spaces as:
$$
\begin{tikzcd}
X \arrow[d, "i_0"', hook] \arrow[r, "q"]       & X/G \arrow[r, "f'"] \arrow[d, "j_0"', hook] & E \arrow[d, "p"] \\
X \times I \arrow[r, "q \times \mathrm{id}_I"] & X/G \times I \arrow[r, "H'"]                & {B\,,}          
\end{tikzcd}
$$
where $H = H' \circ (q \times \id_I)$, $f = f' \circ q$, and $q$ is the orbit map.
If $\widetilde{H} \colon X/G \times I \to E$ is the lift of $H'$ making the right square commute, then $\widetilde{H} \circ (q \times \id_I)$ is a $G$-map that makes the outer diagram commute.
\end{proof}

\begin{definition}
Let $L$ be a closed $G$-subspace of a $G$-space $K$.
The inclusion map $i \colon L \hookrightarrow K$ is said to be a \emph{$G$-cofibration} if every $G$-map $f \colon (K \times \{0\}) \cup (L \times I) \to Y$ to any $G$-space $Y$ has a continuous $G$-extension $F \colon K \times I \to Y$, where $G$ acts trivially on $I$ and diagonally on $K \times I$.
\end{definition}

Suppose $p \colon E \to B$ is a map. 
For a topological space $K$, let $E^K$ be the space of all continuous maps from $K$ to $E$ equipped with compact-open topology. 
Let $E^K_B$ be the subspace of $E^K$ defined as
$$
    E^K_B := \{f \in E^K \mid (p \circ f)(k) = b ~\text{for some}~ b \in B ~\text{and for all}~ k \in K\}.
$$
If $K$ is equipped with a $G$-action, then $E^K_B$ admits $G$-action given by
$$
    (g \cdot f)(k) := f(g^{-1}k)
$$
for $g\in G, f \in E^K_B$ and $k \in K$.
Under this induced action, the following theorem generalizes \cite[Appendix]{PTCcolfree}.

\begin{theorem}
\label{thm: restriction map is a G-fibration}
Suppose $p \colon E \to B$ is a fibration. 
If $i \colon L \hookrightarrow K$ is a $G$-cofibration, then the restriction map
$$
    \Pi \colon E^K_B \to E^L_B, ~\text{given by}~\Pi(f) = \left.f\right|_{L}
$$
is a $G$-fibration. 
\end{theorem}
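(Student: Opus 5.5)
The idea is to pass to adjoints, so that the lifting problem for $\Pi$ becomes a relative homotopy lifting problem for $p$ over a pair built from $(K,L)$. The equivariance is then removed by passing to orbit spaces, as in the proof of the preceding lemma. Throughout, $E$ and $B$ carry the trivial $G$-action. I assume $K$ (hence $L$) is locally compact Hausdorff, so that the exponential law holds for the compact-open topology. This covers the cases used later, such as $K=I$ and $L=\{0,1\}$.

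\textbf{Step 1: translate the lifting data.} Let $X$ be a $G$-space, $f\colon X\to E^K_B$ a $G$-map, and $H\colon X\times I\to E^L_B$ a $G$-homotopy with $H_0=\Pi\circ f$.
\begin{itemize}
\item By adjunction we get maps $f^\flat\colon X\times K\to E$ and $H^\flat\colon X\times L\times I\to E$.
\item Since the images lie in $E^K_B$ and $E^L_B$, there is a map $\beta\colon X\times I\to B$ with $p\,H^\flat(x,l,t)=\beta(x,t)$ and $p\,f^\flat(x,k)=\beta(x,0)$.
\item Equivariance, using $(g\cdot f)(k)=f(g^{-1}k)$, says exactly that $f^\flat(gx,gk)=f^\flat(x,k)$ and $H^\flat(gx,gl,t)=H^\flat(x,l,t)$. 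So both maps are invariant under the diagonal action.
\item They agree on $X\times L\times\{0\}$, so they glue to a $G$-invariant map $\psi\colon A\to E$, where $A=X\times K\times\{0\}\cup X\times L\times I$.
\end{itemize}
A lift $\widetilde H$ is then the same thing as a $G$-invariant map $\Phi\colon X\times K\times I\to E$ with $\Phi|_A=\psi$ and $p\,\Phi(x,k,t)=\beta(x,t)$. The last condition guarantees that the adjoint lands in $E^K_B$, and invariance gives equivariance of the adjoint.

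\textbf{Step 2: remove the group.} All the maps above are $G$-invariant, so they factor through the orbit space $Q=(X\times K)/G$. Since $G$ acts trivially on $I$ and $G$ is compact, the natural map $(X\times K\times I)/G\to Q\times I$ is a homeomorphism: the orbit map is proper, hence $q\times\id_I$ is a quotient map.

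Let $Q_L=(X\times L)/G\subseteq Q$. Then $\psi$ descends to a map $\bar\psi$ on $Q\times\{0\}\cup Q_L\times I$, and $\beta$ gives a map $\bar\beta\colon Q\times I\to B$, $[x,k],t\mapsto\beta(x,t)$, with $p\bar\psi=\bar\beta$ on that subspace.

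The problem is now a non-equivariant relative homotopy lifting problem for the fibration $p$ with respect to the pair $(Q,Q_L)$. If $Q_L\hookrightarrow Q$ is a closed cofibration, the standard fact (Strøm) that a Hurewicz fibration has the relative homotopy lifting property for closed cofibrations produces $\bar\Phi\colon Q\times I\to E$ extending $\bar\psi$ and lifting $\bar\beta$. We then set $\Phi=\bar\Phi\circ(q\times\id_I)$.

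\textbf{Step 3, the main obstacle: $(Q,Q_L)$ is a closed cofibration pair.}
\begin{itemize}
\item Since $L\hookrightarrow K$ is a $G$-cofibration with $L$ closed, applying the definition with target $Y=K\times\{0\}\cup L\times I$ gives a $G$-equivariant retraction $r\colon K\times I\to K\times\{0\}\cup L\times I$.
\item Then $\id_X\times r$ is a $G$-retraction of $X\times K\times I$ onto $X\times K\times\{0\}\cup X\times L\times I$, for the diagonal action.
\item By equivariance it descends to a retraction of $(X\times K\times I)/G\cong Q\times I$ onto $Q\times\{0\}\cup Q_L\times I$.
\item The identification of $(X\times L\times I)/G$ with $Q_L\times I$ inside $Q\times I$ follows from the compactness argument of Step 2.
\item Closedness of $Q_L$ holds because $X\times L$ is a closed $G$-invariant subset, so its saturation is itself.
\end{itemize}
A retraction of $Q\times I$ onto $Q\times 0\cup Q_L\times I$ is equivalent to $Q_L\hookrightarrow Q$ being a cofibration, which completes the argument.

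The points I expect to need care are the point-set ones: the exponential law for $K$, quotients commuting with $\times I$, and closedness. The homotopy-theoretic content is just Strøm's relative lifting applied on orbit spaces.
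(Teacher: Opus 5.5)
Your proof is correct. Its skeleton is the same as the paper's: take adjoints, glue $f$ and $H$ into a map on $X\times K\times\{0\}\cup X\times L\times I$ lying over $(x,k,t)\mapsto b_{x,t}$, solve a relative lifting problem for $p$ over $X\times K\times I$, and take adjoints back. The difference is where the group is disposed of.

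The paper stays equivariant throughout. It regards $p$ as a $G$-fibration with trivial action (the lemma just before the theorem). It uses that $X\times L\hookrightarrow X\times K$ is a $G$-cofibration; your map $\mathrm{id}_X\times r$ is precisely the justification for this. It then quotes Grant's equivariant relative lifting result \cite[Proposition 2.9]{Grantsymmtc}.

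You instead push the whole relative problem down to the orbit pair $(Q,Q_L)$ and apply Str{\o}m's non-equivariant theorem. The price is a set of point-set checks, and they all go through. The cleanest reason is that orbit maps are open, so $q\times\mathrm{id}_I$ and its restrictions to saturated subsets such as $X\times K\times\{0\}\cup X\times L\times I$ are quotient maps onto their images.

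Your route is more self-contained, since it effectively reproves the trivial-action case of Grant's proposition. It also makes explicit the local compactness of $K$ needed for the exponential law. The paper needs this too, but uses it silently when it treats $(x,k)\mapsto f(x)(k)$ and $(x,l,t)\mapsto H(x,t)(l)$ as continuous.

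The paper's route is shorter and more flexible. It would work verbatim for a $G$-fibration $p$ with nontrivial actions on $E$ and $B$, using $(g\cdot f)(k)=g\,f(g^{-1}k)$ on mapping spaces. Your reduction, by contrast, relies on the glued maps being $G$-invariant, i.e.\ on $G$ acting trivially on $E$.
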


\begin{proof}
Let $X$ be a $G$-space, and suppose we are given a commutative diagram
\[
\begin{tikzcd}
X \arrow[d, "i_0"', hook] \arrow[r, "f"] & E^{K}_B \arrow[d, "\Pi"] \\
X \times I \arrow[r, "H"]                & {E^{L}_B\,,}            
\end{tikzcd}
\]
where $i_0$ is the inclusion map given by $i_0(x)=(x,0)$, and $f$ and $H$ are $G$-maps, with $G$ acting trivially on the unit interval $I$. 
Since $f$ is $G$-equivariant, we have 
$$ 
    f(g x)(k) = (g \cdot f(x))(k) = f(x)(g^{-1}k)
$$
for all $x \in X$ and $k \in K$.
Similarly, we have $H(gx,t)(l) = H(x,t)(g^{-1}l)$ for all $(x,l,t) \in X \times L \times I$.

\vspace{0.3 em}
Define a map $F \colon (X \times K \times \{0\}) \,\cup\,  (X \times L \times I) \to E$ by
$$
F(x,k,t) :=
    \begin{cases}
        f(x)(k), & \text{if } (x,k,t) \in X \times K \times \{0\},\\
        H(x,t)(k), & \text{if } (x,k,t) \in X \times L \times I.
    \end{cases}
$$
This is well defined, since $f(x)(l) = H(x,0)(l)$ for all $(x,l,0) \in X \times L \times \{0\}$. 
Moreover, $F$ is $G$-equivariant, where $X \times K \times \{0\}$ and $X \times L \times I$ are equipped with the diagonal $G$-action.

\vspace{0.3 em}
Define a map $H' \colon X \times K \times I \to B$ as follows: given $(x,k,t) \in X \times K \times I$, the map $p \circ H(x,t) \colon L \to B$ is a constant map, say given by the value $b_{x,t} \in B$. 
Then define $H'(x,k,t) = b_{x,t}$.
Note that $H'$ is $G$-equivariant, since $b_{gx,t} = H(gx,t)(l) = H(x,t)(g^{-1}l) = b_{x,t}$.
Observe that $p \circ f(x) \colon K \to B$ is also a constant map, say given by the value $b_{x} \in B$.
Then $b_{x,0}=b_x$, since $\left.f(x)\right|_{L} = H(x,0)$.

\vspace{0.3 em}
Consider the following diagram
\[
\begin{tikzcd}
{(X \times K \times \{0\}) \,\cup\,  (X \times L \times I)} \arrow[rr, "F"] \arrow[d, "j"', hook] &  & E \arrow[d, "p"] \\
X \times K \times I \arrow[rr, "H'"]                                                              &  & {B\,.}          
\end{tikzcd}
\]
Then the above diagram commutes, since $p (F(x,k,0)) = p(f(x)(k)) = (p \circ f(x))(k) = b_x = b_{x,0} = H'(x,k,0)$ for $(x,k,0) \in X \times K \times \{0\}$, and $p (F(x,l,t)) = p(H(x,t)(l)) = (p \circ H(x,t))(l) = b_{x,t} = H'(x,l,t)$ for $(x,l,t) \in X \times L \times I$.
Since the inclusion $X \times L \hookrightarrow X \times K$ is a $G$-cofibration, by \cite[Proposition 2.9]{Grantsymmtc}, it follows that there exists a $G$-map
$$
    \widehat{H} \colon X\times K\times I \to E
$$
such that $p \circ \widehat{H} = H'$ and $\widehat{H} \circ j = F$.

\vspace{0.3 em}
Let $\widetilde{H} \colon X \times I \to E^K$ be the map defined as
$$
    \widetilde H(x,t)(k) := \widehat{H}(x,k,t).
$$
Then $(p \circ \widetilde{H}(x,t)) \colon K \to B$ is the constant map given by the value $b_{x,t} \in B$, since 
$$
    (p \circ \widetilde{H}(x,t))(k)
        = p(\widetilde{H}(x,t)(k))
        = p(\widehat{H}(x,k,t))
        = H'(x,k,t)
        = b_{x,t}\,.
$$
Moreover, 
$
    (\Pi \circ \widetilde{H}(x,t))(l) 
        = \widehat{H}(x,l,t) 
        = F(x,l,t)
        = H(x,t)(l)
$
implies $\Pi \circ \widetilde{H} = H$, and 
$
    \widetilde{H}(x,0)(k)
        = \widehat{H}(x,k,0)
        = F(x,k,0)
        = f(x)(k)
$
implies $\widetilde{H} \circ i_{0} = f$. 
Since 
$$
    \widetilde{H}(gx,t)(k) 
        = \widehat{H}(gx,k,t) 
        = \widehat{H}(x,g^{-1}k,t)
        = \widetilde{H}(x,t) (g^{-1}k)
        = (g \cdot \widetilde{H}(x,t))(k),
$$
it follows that $\widetilde{H}$ is $G$-equivariant.
Hence, $\widetilde{H}$ is the desired lift and $\Pi$ is a $G$-fibration.
\end{proof}

\subsection{$G$-ENRs and $G$-ANRs}
\label{subsec: eq-enr}
\hfill\\ \vspace{-0.7em}

In this section, we recall the definition of $G$-ENRs and $G$-ANRs.
For a more detailed discussion, we refer the reader to \cite{murayama-G-ANR}.

\begin{definition}
A $G$-space $X$ is said to be a \emph{$G$-ENR} if there exists a finite-dimensional $G$-representation $V$ and a $G$-embedding $h \colon X \to V$ such that $h(X)$ is a $G$-retract of some $G$-invariant open subset of $V$.
\end{definition}

\begin{definition}[{\cite[Section 4]{murayama-G-ANR}, Compare \cite[Chapter IV, Section 1]{borsuk-retracts}}]
A $G$-space $X$ is said to be an $G$-ANR if $X$ is metrizable, and for each $G$-homeomorphism $h$ mapping $X$ onto a closed $G$-subset $h(X)$ of a metrizable $G$-space $Y$, $h(X)$ is a $G$-neighborhood retract of $Y$.
\end{definition}

\begin{proposition}[{\cite[Proposition 6.10]{murayama-G-ANR}}]
\label{prop: ENR implies ANR}
    Every $G$-ENR is a $G$-ANR.   
\end{proposition}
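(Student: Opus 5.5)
The plan is to check the $G$-ANR definition directly: take an arbitrary closed $G$-embedding of $X$ into a metrizable $G$-space and build an equivariant neighbourhood retraction. Let $X$ be a $G$-ENR, with a $G$-embedding $h \colon X \to V$ into a finite-dimensional $G$-representation, a $G$-invariant open set $U \subseteq V$ containing $h(X)$, and a $G$-retraction $r \colon U \to h(X)$.

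\emph{Metrizability.} $V$ is a finite-dimensional real vector space, hence metrizable. $h$ is a homeomorphism onto its image, so $X$ is metrizable. (Averaging a metric over the compact group $G$ even gives an invariant metric, though this is not needed here.)

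\emph{Neighbourhood retract.} Let $k \colon X \to Y$ be a $G$-homeomorphism onto a closed $G$-subset $k(X)$ of a metrizable $G$-space $Y$. Consider the $G$-map
$$
    \phi := h \circ k^{-1} \colon k(X) \to V .
$$
The key step is to extend $\phi$ to a $G$-map $\Phi \colon Y \to V$. By the Dugundji (Tietze) extension theorem, $\phi$ extends to some continuous map $\Phi_0 \colon Y \to V$, since $V$ is a locally convex space and $k(X)$ is closed in the metrizable space $Y$. To make it equivariant, average over $G$ using normalized Haar measure:
$$
    \Phi(y) := \int_G g \cdot \Phi_0(g^{-1} y)\, dg .
$$
This is continuous because $G$ is compact and the action is linear on $V$. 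It is $G$-equivariant by invariance of Haar measure. It still extends $\phi$, since on $k(X)$ each integrand equals $\phi(y)$ by equivariance of $\phi$.

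Now set $W := \Phi^{-1}(U)$, a $G$-invariant open neighbourhood of $k(X)$ in $Y$, and define
$$
    R := k \circ h^{-1} \circ r \circ \Phi|_W \colon W \to k(X).
$$
Here $h^{-1} \colon h(X) \to X$ is a $G$-homeomorphism. So $R$ is a composite of $G$-maps, hence a $G$-map. On $k(X)$ we have $\Phi = h \circ k^{-1}$ and $r$ is the identity on $h(X)$, so $R$ restricts to the identity on $k(X)$. Thus $k(X)$ is a $G$-neighbourhood retract of $Y$, and $X$ is a $G$-ANR.

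\emph{Main obstacle.} The only nontrivial step is the equivariant extension. Continuity of the Haar average needs a brief justification. One route is uniform continuity on compact pieces: the map $(g,y) \mapsto g\Phi_0(g^{-1}y)$ is continuous on $G \times Y$, and $G$ is compact. Alternatively, one can cite the equivariant Tietze/Dugundji theorem for compact Lie groups directly, e.g.\ as in \cite{murayama-G-ANR}.
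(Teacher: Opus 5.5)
Your proof is correct. The paper gives no argument of its own for this statement; it simply cites Murayama, where the result belongs to the general $G$-ANR theory. The standard route in that setting runs as follows:
\begin{itemize}
\item a finite-dimensional representation $V$ is a $G$-absolute extensor for metrizable $G$-spaces, by an equivariant Dugundji theorem;
\item open $G$-subsets and $G$-retracts inherit the neighbourhood-extension property;
\item applying that property to the inverse of a closed $G$-embedding of $X$ gives the required $G$-neighbourhood retraction.
\end{itemize}

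You check the paper's definition directly and make each of these steps explicit. The Haar average $\Phi$ of a Dugundji extension is the equivariant extension property of $V$. The set $W=\Phi^{-1}(U)$ together with $R=k\circ h^{-1}\circ r\circ\Phi|_W$ is the passage to an open subset and then to a retract. What your version buys is self-containment: you use only Haar measure on the compact group $G$ and linearity of the action on $V$. The cited route additionally supplies permanence properties (products, unions) that the paper relies on elsewhere.

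The one step you should write out rather than gesture at is continuity of $\Phi$. Set $F(g,y)=g\,\Phi_0(g^{-1}y)$, which is continuous on $G\times Y$. For fixed $y_0$ and $\varepsilon>0$, a tube-lemma argument over the compact group $G$ gives a neighbourhood of $y_0$ on which $\sup_{g\in G}\|F(g,y)-F(g,y_0)\|<\varepsilon$. Hence $\|\Phi(y)-\Phi(y_0)\|\le\varepsilon$ there. This argument uses no local compactness of $Y$, so it is preferable to the phrase ``uniform continuity on compact pieces''.
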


The following lemma is the equivariant analogue of \cite[Chapter IV, Lemma (8.2)]{borsuk-retracts} and can be proved using $G$-Urysohn's lemma (\cite[Lemma 6.12]{murayama-G-ANR}).

\begin{lemma}[{Compare \cite[Chapter IV, Lemma (8.2)]{borsuk-retracts}}]
\label{lemma: (for) inclusion of G-ANRs is a G-cofibration}
    If $A$ is a closed $G$-subset of a metrizable $G$-space $X$, then for every open $G$-neighborhood $U$ of the set 
    $
        (X \times \{0\}) \times (A \times I)
    $
    in the space $X \times I$, there exists a $G$-map $\phi \colon X \times I \to U$ which is identity on $(X \times \{0\}) \times (A \times I)$.
\end{lemma}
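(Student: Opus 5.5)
Note that the statement literally writes $(X\times\{0\})\times(A\times I)$; this must mean the union $(X\times\{0\})\cup(A\times I)$, as in Borsuk's Lemma (8.2), and I read it that way. The plan follows Borsuk's non-equivariant argument, making every auxiliary choice $G$-invariant. Only one function has to be constructed. Write $C=(X\times\{0\})\cup(A\times I)$; it is a closed $G$-subset of $X\times I$, where $G$ acts trivially on $I$.

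\textbf{Step 1.} For each $x\in X$, define $\varepsilon(x)\in[0,1]$ to be the supremum of the $s\le 1$ such that $\{x\}\times[0,s]\subseteq U$. Since $U$ is a $G$-set and the action on $I$ is trivial, $\{x\}\times[0,s]\subseteq U$ if and only if $\{gx\}\times[0,s]\subseteq U$, so $\varepsilon$ is $G$-invariant. Because $U$ is open and $\{x\}\times\{0\}\in U$, compactness of the segment gives $\varepsilon(x)>0$. Moreover $\varepsilon(x)=1$, with the whole segment $\{x\}\times I$ contained in $U$, whenever $x\in A$. The function $\varepsilon$ need not be continuous, but a tube-lemma argument shows it is lower semicontinuous in the following sense: if $\{x\}\times[0,s]\subseteq U$, then there is a neighbourhood $W$ of $x$ with $W\times[0,s]\subseteq U$.

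\textbf{Step 2.} I next produce a continuous $G$-invariant function $\lambda\colon X\to[0,1]$ with $\lambda=1$ on $A$, $\lambda>0$ everywhere, and $\{x\}\times[0,\lambda(x)]\subseteq U$ for all $x$. This is the main obstacle. I would first do it non-equivariantly, exactly as in Borsuk's proof, using metrizability of $X$: a positive continuous minorant of a positive lower semicontinuous function exists on a paracompact space, built with a partition of unity subordinate to the cover by the sets $W$, and the condition $\lambda=1$ on $A$ can be arranged by taking a maximum with an Urysohn-type function that equals $1$ on $A$ and is supported in the open set $\{x : \{x\}\times I\subseteq U\}$.

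The Urysohn-type function should be made $G$-invariant directly with the $G$-Urysohn lemma \cite[Lemma 6.12]{murayama-G-ANR}. For the minorant itself I would average over Haar measure: replace $\lambda$ by $\bar\lambda(x)=\min_{g\in G}\lambda(gx)$, which is continuous because $G$ is compact and is $G$-invariant. It still satisfies $\bar\lambda(x)\le\lambda(x)$, so the segment condition is preserved, and since $\varepsilon$ is $G$-invariant the bound $\bar\lambda(x)\le\varepsilon(x)$ continues to hold. It also stays positive and equals $1$ on the $G$-set $A$. Taking the minimum over $G$ rather than an integral makes the segment condition automatic, which is why I prefer it.

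\textbf{Step 3.} Define $\phi(x,t)=(x,\lambda(x)\,t)$. This map is continuous and lands in $U$ by Step 2. It is $G$-equivariant because $\lambda$ is invariant and $G$ acts trivially on $I$. It is the identity on $X\times\{0\}$, and it is the identity on $A\times I$ because $\lambda=1$ there.
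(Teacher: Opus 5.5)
Your proof is correct and follows essentially the route the paper intends. The paper gives no written proof, only the remark that the lemma is the equivariant analogue of Borsuk's Lemma (8.2) and can be proved with the $G$-Urysohn lemma; your argument is Borsuk's construction $\phi(x,t)=(x,\lambda(x)t)$ made equivariant, and reading $(X\times\{0\})\times(A\times I)$ as the union $(X\times\{0\})\cup(A\times I)$ is the intended meaning. One small remark: replacing $\lambda$ by $\min_{g\in G}\lambda(g\,\cdot)$, which is valid because $G$ is compact, already makes everything $G$-invariant, so the separate appeal to the $G$-Urysohn lemma and the mention of Haar averaging are not needed.
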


\begin{theorem}[{Compare \cite[Section 14, Corollary 7B]{daverman-manifolds}}]
\label{thm: cobfib}
Suppose $X$ is a $G$-ANR and $A$ is a closed $G$-subset of $X$ that is also a $G$-ANR, then the inclusion map $i \colon A \hookrightarrow X$ is a $G$-cofibration.
\end{theorem}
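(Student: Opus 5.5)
The plan is to follow the classical Borsuk argument (Daverman's Corollary 7B) and replace each step by its equivariant analogue. By definition, $i \colon A \hookrightarrow X$ is a $G$-cofibration if and only if the subspace $T := (X \times \{0\}) \cup (A \times I)$ is a $G$-retract of $X \times I$. Here $G$ acts diagonally on $X \times I$ and trivially on $I$. One direction is immediate: a $G$-retraction $r \colon X\times I \to T$ lets us extend any $G$-map $f \colon T \to Y$ as $F = f\circ r$. For the converse direction (not needed here) we would apply the extension property with $Y = T$ and $f = \id_T$. So the whole proof reduces to constructing a $G$-retraction $X \times I \to T$, which we do in two stages.

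\textbf{Step 1: $T$ is a $G$-neighborhood retract of $X \times I$.} First, $X\times I$ is metrizable, and $T$ is a closed $G$-subset because $A$ is closed and $G$-invariant. Next, $T$ is a $G$-ANR. Indeed, $X\times\{0\}\cong X$, $A\times I$ and their intersection $A\times\{0\}\cong A$ are all $G$-ANRs, the product of a $G$-ANR with $I$ (trivial action) being a $G$-ANR. The union of two closed $G$-ANRs whose intersection is a $G$-ANR is again a $G$-ANR; this is the equivariant Borsuk union theorem, which we would cite from \cite{murayama-G-ANR}. Since $T$ is then a $G$-ANR embedded as a closed $G$-subset of the metrizable $G$-space $X\times I$, the definition of $G$-ANR gives an open $G$-neighborhood $U$ of $T$ together with a $G$-retraction $\rho \colon U \to T$.

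\textbf{Step 2: push $X\times I$ into $U$.} We apply \Cref{lemma: (for) inclusion of G-ANRs is a G-cofibration}, reading its displayed set as the union $(X\times\{0\})\cup(A\times I)$. This yields a $G$-map $\phi \colon X\times I \to U$ that is the identity on $T$. Then $r := \rho\circ\phi \colon X\times I \to T$ is a $G$-map, and for $t \in T$ we get $r(t) = \rho(t) = t$. So $r$ is the required $G$-retraction, and the first paragraph finishes the proof.

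The main obstacle is Step 1, namely verifying that $T$ is a $G$-ANR. This requires the equivariant product and union theorems for $G$-ANRs over a compact Lie group. If these are not available directly in \cite{murayama-G-ANR}, the first fallback is to build the neighborhood retraction of $T$ by hand. We would glue a $G$-neighborhood retraction of $X\times\{0\}$ (coming from $X$ being a $G$-ANR) with one of $A\times I$, using the $G$-Urysohn lemma \cite[Lemma 6.12]{murayama-G-ANR} to interpolate near the intersection. The remaining steps are formal.
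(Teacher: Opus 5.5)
Your proposal is correct and takes essentially the same route as the paper. Both show that $(X\times\{0\})\cup(A\times I)$ is a $G$-ANR via Murayama's product and union results, take a $G$-neighbourhood retraction $U\to(X\times\{0\})\cup(A\times I)$, and precompose with the $G$-map $\phi\colon X\times I\to U$ from the $G$-Urysohn-type lemma; the only cosmetic difference is that the paper writes the extension directly as $f\circ r\circ\phi$ rather than first packaging $\rho\circ\phi$ as a global $G$-retraction.
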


\begin{proof}
    By \cite[Proposition 6.8 and Corolllary 8.2]{murayama-G-ANR}, it follows that $X \times I$ and $A \times I$ are $G$-ANRs, where $I$ is equipped with the trivial $G$-action. 
    In particular, $X \times I$ is a metrizable $G$-space.
    Hence, by \cite[Corollary 7.4 (2)]{murayama-G-ANR}, it follows that $(X \times \{0\}) \cup (A \times I)$ is a $G$-ANR.
    Thus, there exists an open $G$-neighborhood $U$ of $(X \times \{0\}) \times (A \times I)$ in the space $X \times I$ and a $G$-retraction $r \colon U \to (X \times \{0\}) \cup (A \times I)$.
    
    Now suppose $Y$ is a $G$-space and $f \colon (X \times \{0\}) \cup (A \times I) \to Y$ is a $G$-map.
    Then, by \Cref{lemma: (for) inclusion of G-ANRs is a G-cofibration}, there exists a $G$-map $\phi \colon X \times I \to U$ which is identity on $(X \times \{0\}) \times (A \times I)$.
    Then $F \colon X \times I \to Y$ given by 
    $$
        F(x,t) = f(r(\phi(x,t))),
    $$
    is a $G$-equivariant map extending $f$.
\end{proof}

\begin{definition} 
Let $p \colon E \to B$ be a map. 
\begin{enumerate}
    \item The map $p$ is said to be \emph{locally equiconnected}, or \emph{LEC}, if the diagonal inclusion $\Delta_E \colon E \to E \times_B E$ is a closed cofibration.

    \item The map $p$ is said to be \emph{symmetrically locally equiconnected}, or \emph{symmetrically LEC}, if the diagonal inclusion $\Delta_E \colon E \to E \times_B E$ is a closed $\Z_2$-equivariant cofibration, where $\mathbb{Z}_2$ acts trivially on $E$ and by transposition on $E \times_{B} E$.

    \item The map $p$ is said to be an \emph{(symmetrically) LEC fibration} if it is (symmetrically) LEC and a fibration.
\end{enumerate}
\end{definition}

We note that if $p \colon E \to B$ is a map such that both $E$ and $E \times_B E$ are ANRs, then it follows from \Cref{thm: cobfib} that the diagonal inclusion 
$
    \Delta_E \colon E \to E \times_B E
$
is a closed cofibration; that is, the map $p$ is LEC. 
Now we state the result of Cohen, Farber and Weinberger, which gives a sufficient condition for the space $E \times_B E$ to be an ANR.

\begin{proposition}[{\cite[Proposition 4.7]{farber-para-tc}}]
\label{prop: fibre product and path space are ANR}
If $p \colon E \to B$ is a locally trivial fibration with fibre $F$, where $E$ and $B$ are metrizable separable ANRs, then $E \times_B E$ is a metrizable separable ANR.
In particular, the map $p$ is LEC.
\end{proposition}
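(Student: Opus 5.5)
This is Proposition 4.7 of Cohen--Farber--Weinberger, so the plan below reconstructs their argument. The strategy is to exploit the local product structure: $E \times_B E$ is locally homeomorphic to open subsets of $U \times F \times F$ with $U \subseteq B$ open. We then apply Hanner's theorem, which says a metrizable space that is a local ANR is an ANR.

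\textbf{Step 1: metrizability and separability.} Since $E$ and $B$ are metrizable and separable, so is $E \times E$. Because $B$ is Hausdorff, $E \times_B E = \{(e,e') \mid p(e)=p(e')\}$ is the preimage of the diagonal of $B$ under $p\times p$, hence a closed subspace of $E\times E$. It therefore inherits metrizability and separability.

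\textbf{Step 2: the fibre is an ANR.} Cover $B$ by open sets $U$ over which $p$ is trivial, with trivializations $p^{-1}(U)\cong U\times F$. Since $p^{-1}(U)$ is open in the ANR $E$, it is an ANR. So $U\times F$ is an ANR, and $F$ is a retract of $U\times F$ (via $F\cong\{b\}\times F$ and the projection). Hence $F$ is an ANR; it is also metrizable and separable, being a subspace of $E$.

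\textbf{Step 3: local ANR structure of $E \times_B E$.} Over each such $U$, the trivialization gives a homeomorphism
$$
    (p\times p)^{-1}(U)\cap (E\times_B E)\;\cong\; U\times F\times F,
$$
defined by $(e,e')\mapsto (b,\phi(e),\phi(e'))$. This is an open subset of $E\times_B E$. The right side is a finite product of metrizable ANRs, with $U$ an ANR as an open subset of $B$, and is therefore an ANR. So $E\times_B E$ is covered by open sets that are ANRs. Hanner's theorem (a metrizable space which is locally an ANR is an ANR) then shows that $E\times_B E$ is an ANR.

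\textbf{Step 4: the LEC conclusion.} The diagonal $\Delta_E(E)$ is closed in $E\times_B E$ because $E$ is Hausdorff, and it is homeomorphic to the ANR $E$. Applying \Cref{thm: cobfib} with the trivial group shows that $\Delta_E$ is a closed cofibration, so $p$ is LEC.

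\textbf{Main obstacle.} The hard step is the local-to-global passage in Step 3, i.e.\ invoking Hanner's theorem. One must make sure its hypotheses hold: metrizability from Step 1, and openness of the trivializing pieces together with their identification with products of ANRs. If citing Hanner's theorem is to be avoided, the alternative is a paracompactness and partition-of-unity gluing of the local neighbourhood retractions, which is considerably more technical.
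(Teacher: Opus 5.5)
Your proposal is correct. Steps 1--3 give a valid self-contained proof of the ANR assertion: $E\times_B E$ is closed in $E\times E$, $F$ is an ANR as a retract of $U\times F\cong p^{-1}(U)$, the open pieces $\{(e,e') : p(e)\in U\}$ are homeomorphic to $U\times F\times F$, and Hanner's local-to-global theorem finishes the argument. The paper does not reprove this part; it imports it from Cohen--Farber--Weinberger. Your Step 4 is exactly the paper's own deduction of the LEC property, namely \Cref{thm: cobfib} with trivial group applied to the closed ANR $\Delta E\cong E$ inside the ANR $E\times_B E$.
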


Now we show that, if $E$ is an ENR rather than a separable ANR in \Cref{prop: fibre product and path space are ANR}, then the map $p$ is also symmetrically LEC.

\begin{proposition}
\label{prop: cofib E to E times B E}
    Suppose $p \colon E \to B$ is a locally trivial fibration, where $E$ is an ENR and $B$ is a separable ANR.
    Then $E \times_B E$ is a $\Z_2$-ENR, and the map $p$ is symmetrically LEC.
\end{proposition}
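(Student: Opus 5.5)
The plan is to prove the two claims in order. First we show that $E\times_B E$ is a $\Z_2$-ENR. Then we deduce the cofibration statement from \Cref{thm: cobfib} with $G=\Z_2$.

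For the first step, \Cref{prop: ENR implies ANR} tells us that the ENR $E$ is a separable metrizable ANR. So \Cref{prop: fibre product and path space are ANR} applies, and $E\times_B E$ is a metrizable separable ANR. Because $E$ is an ENR, it embeds as a closed subset of some $\R^n$; in particular it is locally compact and finite-dimensional. The fibre product $E\times_B E$ is a closed subset of $E\times E$, since $B$ is Hausdorff. Hence it is locally compact, finite-dimensional and an ANR, and therefore an ENR.

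For the equivariant statement, let $\Z_2$ act on $\R^n\times\R^n=\R^{2n}$ by swapping the factors. This is a finite-dimensional orthogonal $\Z_2$-representation $V$. The embedding $E\times_B E\subset E\times E\subset V$ is a $\Z_2$-embedding onto a closed $\Z_2$-invariant subset. Since $E\times_B E$ is an ENR, it is a retract $r\colon U\to E\times_B E$ of some open neighbourhood $U\subseteq V$. We need a $\Z_2$-invariant neighbourhood and an equivariant retraction, and this averaging step is the main obstacle. Retractions cannot be averaged linearly in general, because the convex combination $\tfrac12\big(r(v)+\tau r(\tau v)\big)$ may leave $E\times_B E$. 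The cleanest route is to invoke the equivariant theory in \cite{murayama-G-ANR}: a finite-dimensional, locally compact, separable metrizable $G$-space, for $G$ finite, that is an ANR is a $G$-ANR. Together with the existence of a closed $G$-embedding into a representation, this gives a $G$-ENR. If that result is not available in exactly this form, we would argue by hand. Take $U' = U\cap\tau U$, which is invariant. Define $\tilde r(v)$ as follows: first compute the midpoint $m(v)=\tfrac12(r(v)+\tau r(\tau v))$. Then shrink $U'$ to an invariant neighbourhood $W$ on which $m(v)$ lies in a neighbourhood $N$ of $E\times_B E$ carrying an equivariant retraction. Such an $N$ can be obtained by applying the same construction to the closed invariant set, using invariant Urysohn functions, or by passing to the orbit space as in the standard proof that retracts of representations with finite group are equivariant neighbourhood retracts.

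For the second claim, $E\times_B E$ is a $\Z_2$-ENR, hence a $\Z_2$-ANR by \Cref{prop: ENR implies ANR}. The diagonal $\Delta_E(E)$ carries the trivial action and is $\Z_2$-homeomorphic to $E$, which is an ENR with trivial action and hence a $\Z_2$-ENR. It is closed because it is the fixed-point set of the transposition in a Hausdorff space. By \Cref{thm: cobfib}, $\Delta_E$ is then a closed $\Z_2$-cofibration, so $p$ is symmetrically LEC.
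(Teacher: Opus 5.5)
Your overall route is the paper's. You show that $E\times_B E$ is a locally compact, finite-dimensional, separable ANR (via \Cref{prop: ENR implies ANR} and \Cref{prop: fibre product and path space are ANR}), upgrade this to the $\Z_2$-ENR property, and then apply \Cref{thm: cobfib} to $\Delta E\subset E\times_B E$. The gap is in the upgrade, which you correctly flag as the main obstacle.

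The principle you attribute to \cite{murayama-G-ANR} cannot be cited in that form. It says that a finite-dimensional, locally compact, separable metrizable ANR with an action of a finite group is automatically a $G$-ANR. But a $G$-ANR necessarily has every fixed-point set $X^H$ an ANR; restricting an equivariant neighbourhood retraction to $H$-fixed points shows this. So any criterion of this kind must assume, or separately verify, that the fixed-point sets are ANRs, and your statement omits exactly this hypothesis. Your fallback construction does not repair this. It needs an invariant neighbourhood $N$ of $E\times_B E$ carrying an equivariant retraction, which is the very statement being proved, and the orbit-space alternative is only gestured at.

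The missing step is short. The isotropy groups of the swap action on $E\times_B E$ are the trivial group and $\Z_2$. The corresponding fixed-point sets are $E\times_B E$ itself and $\Delta E\cong E$. Both are ANRs, the second because $E$ is an ENR, a fact you already use in your last paragraph. Jaworowski's theorem \cite[Theorem 2.1]{Jaworowski1976} applies to finite-dimensional, locally compact, separable metric $G$-spaces with finitely many orbit types. It deduces the $G$-ENR property from the fixed-point sets being ANRs, so $E\times_B E$ is a $\Z_2$-ENR; this is precisely how the paper argues.

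With that in place, your final paragraph is correct and matches the paper. Your direct observation that $E$ with the trivial action is a $\Z_2$-ENR simply replaces the paper's citation of \cite[Corollary 8.2]{murayama-G-ANR}.
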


\begin{proof}
    Since $E$ is a neighborhood retract of a Euclidean space, it follows by \cite[Chapter IV, Proposition 8.1 and Lemma 8.3]{dold-alg-top} that $E$ is locally compact.
    Hence, by \cite[Page 186, Exercise 2]{munkres2000topology}, $E \times E$ is locally compact.
    As $B$ is Hausdorff, $E \times_B E$ is a closed subset of $E \times E$, and hence, by \cite[Corollary 29.3]{munkres2000topology}, it is also locally compact.
    Furthermore, since $E \times E$ is also an ENR, it follows that $E \times_B E$ can be embedded into a finite-dimensional Euclidean space.
    
    Since $E$ is a subspace of a Euclidean space, it follows that $E$ is separable.
    Hence, by \Cref{prop: ENR implies ANR} and \Cref{prop: fibre product and path space are ANR}, it follows that $E \times_B E$ is a separable ANR.
    Note that isotropy groups of $E \times_B E$ under the transposition action of $\Z_2$ are the trivial group $T$ and $\Z_2$ itself. 
    As the fixed point sets
    $$
        (E \times_B E)^T = E \times_B E
            \text{ and }
        (E \times_B E)^{\Z/2} = \Delta E \cong E
    $$ 
    are ANRs, it follows from \cite[Theorem 2.1]{Jaworowski1976} that $E \times_B E$ is a $\Z_2$-ENR. 
    Note that $E$ equipped with trivial $\Z_2$-action is a $\Z_2$-ANR, see \cite[Corollary 8.2]{murayama-G-ANR}.
    Hence, the desired result follows from \Cref{thm: cobfib}.
\end{proof}

\subsection{Equivariant Sectional Category}
\label{subsec: equi-sectional-category}
\hfill\\ \vspace{-0.7em}

Schwarz \cite{Sva} introduced and studied the notion of the sectional category of a fibration, which was later extended by Bernstein and Ganea \cite{secat} to arbitrary maps. 
The corresponding equivariant analogue was introduced by Colman and Grant \cite{EqTC}.
We recall its definition together with an equivalent criterion expressed via a global section of fibred joins of maps.

\begin{definition}
The \emph{equivariant sectional category} of a map $p \colon E \to B$, denoted by $\sct_G(p)$, is the least nonnegative integer $k$ for which $B$ admits a $G$-invariant open cover $\{U_0,\dots,U_k\}$ such that on each $U_i$ there exists a $G$-map $s_i \colon U_i \to E$ for which $p \circ s_i$ is $G$-homotopic to the inclusion $\iota_{U_i} \colon U_i \hookrightarrow B$.
If such an integer does not exist, we set $\sct_G(p) = \infty$.
\end{definition}

Let $F$ be a topological space, and let $k$ be a non-negative integer.
The \emph{$(k+1)$-fold fibred join} of $F$ is defined as the space
$$
    J^{k}(F) 
        := \left\{(f_0,t_0,\dots,f_k,t_k) \,\big\vert\,
                f_i \in F, \, t_i \in I, \, \sum t_i = 1 \right\}/\sim\,,
$$
where the equivalence relation $\sim$ is generated by
$$
    (f_0,t_0,\dots,f_i,0, \dots,f_k,t_k) \sim (f_0,t_0,\dots,f_i',0,\dots,f_k,t_k)\,.
$$
This construction generalizes to the fibrewise setting as follows.
Let $p \colon E \to B$ be a $G$-map. 
The \emph{$(k+1)$-fold fibred join} of $E$ is defined as the space
$$
    J^{k}_B(E) 
        := \left\{(e_0,t_0,\dots,e_k,t_k) \,\big\vert\,
                e_i \in E, \, t_i \in I, \, p(e_i) = p(e_j), \, \sum t_i = 1 \right\}/\sim\,,
$$
where the equivalence relation $\sim$ is defined in the same way as in the definition of $J^k(F)$.
The space $J^{k}_B(E)$ is equipped with a $G$-action given by
$$
    g \cdot [e_0,t_0,\dots,e_k,t_k] = [g \cdot e_0,t_0,\dots,g \cdot e_k,t_k]
$$
The \emph{$(k+1)$-fold fibred join} of $p$ is then defined as the $G$-map $p_k \colon J^{k}_{B}(E) \to B$ given by
$$
    p_k([e_0,t_0,\dots,e_k,t_k]) = p(e_0) = \dots = p(e_k)\,.
$$
We note that the notation $J^k$ denotes the $(k+1)$-fold join of the object under consideration.

\begin{lemma}[{\cite[Lemma 3.3]{Grantsymmtc}}]
If $p \colon E \to B$ is a $G$-fibration with fibre $F$, then $p_k \colon J^{k}_B(E) \to B$ is a $G$-fibration with fibre $J^{k}(F)$.   
\end{lemma}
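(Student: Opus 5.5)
The plan is to reduce the statement to two separate facts: the equivariant homotopy lifting property of $p_k$, and the identification of its fibre. I would handle the fibre first, since it is immediate. Take $b \in B$. Because every $e_i$ in a point of $J^k_B(E)$ lies over the same point of $B$, we get
$$
    p_k^{-1}(b) = \{[e_0,t_0,\dots,e_k,t_k] \mid e_i \in p^{-1}(b)\}.
$$
This is exactly $J^k(F)$ with $F = p^{-1}(b)$, because the equivalence relations defining the two joins agree. (This uses that $J^k_B(E)$ is topologized as a subspace of the unfibred join, and that the identification is compatible with the subspace topology.)

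For the fibration property, I would write the fibred join as a pullback and use the fact that $G$-fibrations are stable under the relevant constructions. Concretely, take a $G$-space $X$, a $G$-homotopy $H \colon X \times I \to B$, and a $G$-map $f \colon X \to J^k_B(E)$ with $p_k \circ f = H_0$. Write
$$
    f(x) = [f_0(x), t_0(x), \dots, f_k(x), t_k(x)].
$$
The coordinates $t_i \colon X \to I$ are continuous and $G$-invariant. The maps $f_i$, however, are only defined and continuous on the open $G$-invariant sets $X_i = t_i^{-1}(0,1]$.

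On each $X_i$ I would lift $H|_{X_i \times I}$ starting at $f_i$, using that $p$ is a $G$-fibration, to obtain $G$-homotopies $\widetilde{H}_i$. The candidate lift is then
$$
    \widetilde{H}(x,s) = [\widetilde{H}_0(x,s), t_0(x), \dots, \widetilde{H}_k(x,s), t_k(x)].
$$
This is equivariant because each $\widetilde{H}_i$ and each $t_i$ is, and it lies in the fibred join because every $\widetilde{H}_i(x,s)$ lies over $H(x,s)$.

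The main obstacle is continuity of $\widetilde{H}$ at points where some $t_i \to 0$. Each $\widetilde{H}_i$ lives only on $X_i \times I$, so the naive formula need not be continuous in the join topology, which, following Schwarz's convention, is the one making the $t_i$ and the partial maps $e_i$ continuous. To handle this I would follow the proof of Schwarz's (and Colman–Grant's) non-equivariant argument. Replace the $t_i$ by reparametrized coordinates, for instance
$$
    t_i' = \max\bigl(0,\, t_i - \tfrac{1}{2(k+1)}\bigr)
$$
suitably normalized, or more generally by functions built from a $G$-invariant partition. The effect is that the new weights vanish on a neighbourhood of $X \setminus X_i$, so the lifted coordinates are only ever needed where they are defined. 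Concretely, I would first deform $f$ by a fibrewise $G$-homotopy over the constant homotopy to a map whose $i$-th weight vanishes on a closed neighbourhood of $\{t_i = 0\}$. I would then lift, and finally concatenate. All the auxiliary functions are built from the invariant $t_i$, so equivariance is preserved.

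Alternatively, one can simply quote that $J^k_B$ preserves Hurewicz fibrations and upgrade to the $G$-case using fixed points. Since the statement is cited from Grant's paper, I would mainly verify that the non-equivariant proof goes through with $G$-invariant data at every step.
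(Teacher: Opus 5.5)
Your candidate lift $\widetilde{H}(x,s)=[\widetilde{H}_0(x,s),t_0(x),\dots,\widetilde{H}_k(x,s),t_k(x)]$, with $\widetilde{H}_i$ an equivariant lift of $H|_{X_i\times I}$ starting at $f_i$, is already a complete proof. It is the standard coordinatewise argument behind the cited result: lift each join coordinate and keep the weights fixed. The proposal goes wrong when you declare its continuity to be the main obstacle.

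Take the topology you yourself invoke, Schwarz's: the coarsest one making the $t_i$ and the partial maps $e_i\colon t_i^{-1}(0,1]\to E$ continuous. In it, a map $\phi\colon Y\to J^k_B(E)$ is continuous exactly when each $t_i\circ\phi$ is continuous and each $e_i\circ\phi$ is continuous on the open set $\phi^{-1}(t_i^{-1}(0,1])$. For your $\widetilde{H}$ these are $(x,s)\mapsto t_i(x)$ and $\widetilde{H}_i$ on $X_i\times I$, so nothing needs repairing. The remaining checks are routine:
\begin{itemize}
\item each $X_i$ is $G$-invariant and each $f_i$ is equivariant, because the action does not move the weights;
\item $\widetilde{H}_0=f$;
\item $p_k\circ\widetilde{H}=H$, because every coordinate lies over $H(x,s)$.
\end{itemize}
The same initial-topology description shows that $p_k^{-1}(b)$ carries the join topology of $J^k(p^{-1}(b))$.

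The repair you propose instead has a real defect. Let $V$ be a vertical $G$-homotopy from $f$ to the reweighted map $f'$, so $p_k\circ V_s=H_0$ for all $s$, and let $L$ lift $H$ starting at $f'$. The concatenation $V\ast L$ starts at $f$, but it covers $(x,s)\mapsto H(x,\max(0,2s-1))$ rather than $H$. Since $s\mapsto\max(0,2s-1)$ is not invertible, you cannot reparametrize back. This yields only a weak (Dold-type) covering homotopy property, so even if you genuinely needed the quotient topology on the join, the trick would not close the argument.

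Your fallback is also insufficient. Knowing that $(p_k)^K=(p^K)_k$ is a Hurewicz fibration for every closed subgroup $K\le G$ gives lifting against $G$-CW complexes, a $G$-Serre-type property. It does not give the $G$-homotopy lifting property against arbitrary $G$-spaces $X$, which the paper's definition of $G$-fibration requires. Drop both detours and justify continuity directly as in the first paragraph.
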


\begin{proposition}[{\cite[Proposition 3.4]{Grantsymmtc}}]
Suppose $p \colon E \to B$ is a $G$-fibration over a paracompact base.
Then $\sct_G(p) \leq k$ if and only if $p_k \colon J^k_B(E) \to B$ admits a global $G$-section.
\end{proposition}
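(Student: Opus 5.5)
We prove the two implications separately, following the classical argument of Schwarz for the fibred join and its equivariant version in \cite{EqTC}. Throughout we use that the fibred join is functorial: a $G$-map $\phi \colon U \to E$ over $B$ induces a $G$-map $J^k_B$ of the restrictions, and a family of maps can be combined coordinate by coordinate.

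\textbf{Sections give a cover.} Suppose $\sigma \colon B \to J^k_B(E)$ is a global $G$-section of $p_k$.
\begin{enumerate}
\item Each barycentric coordinate $t_i \colon J^k_B(E) \to I$ is continuous and $G$-invariant, since $G$ acts trivially on the $t_i$.
\item Set $U_i := (t_i\circ\sigma)^{-1}(0,1]$. These are $G$-invariant open sets, and they cover $B$ because $\sum t_i = 1$.
\item On $U_i$ the $i$-th coordinate $e_i$ is well defined and continuous, so $s_i := e_i \circ \sigma \colon U_i \to E$ is a $G$-map.
\item By construction $p \circ s_i = \iota_{U_i}$ holds strictly.
\end{enumerate}
Hence $\sct_G(p) \le k$.

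\textbf{A cover gives a section.} Suppose $\{U_0,\dots,U_k\}$ is a $G$-invariant open cover with $G$-maps $s_i' \colon U_i \to E$ and $G$-homotopies $p\circ s_i' \simeq \iota_{U_i}$.
\begin{enumerate}
\item Since $p$ is a $G$-fibration, apply the $G$-homotopy lifting property to $s_i'$ and this homotopy. The end of the lift is a $G$-map $s_i \colon U_i \to E$ with $p \circ s_i = \iota_{U_i}$ strictly.
\item Choose a $G$-invariant partition of unity $\{\varphi_i\}$ subordinate to $\{U_i\}$.
\item Define
$$\sigma(b) := [s_0(b),\varphi_0(b),\dots,s_k(b),\varphi_k(b)].$$
When $\varphi_i(b) = 0$ the entry $s_i(b)$ is irrelevant under $\sim$, so $\sigma$ is well defined. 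All entries lie over $b$, so $p_k \circ \sigma = \id_B$.
\item $\sigma$ is equivariant because each $s_i$ is a $G$-map and each $\varphi_i$ is $G$-invariant.
\end{enumerate}

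\textbf{Main obstacles.} Two points need care.

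The first is the existence of a $G$-invariant partition of unity. Since $G$ is a compact Lie group acting on a paracompact Hausdorff space $B$, the orbit space $B/G$ is paracompact. The open sets $U_i/G$ cover $B/G$, so we take a partition of unity subordinate to them and pull it back along the orbit map $B \to B/G$. The pullbacks are automatically $G$-invariant.

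The second is continuity of $\sigma$ near points where some $\varphi_i$ vanishes. This depends on the topology on $J^k_B(E)$. With the coarse topology generated by the coordinate functions $t_i$ and by $e_i$ on $\{t_i>0\}$, continuity is immediate. Here we should use the topology of \cite{Grantsymmtc}.

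To keep the argument clean, we replace $\varphi_i$ by functions whose supports are closed inside $U_i$. Then each $s_i$ is only ever evaluated on its domain, and $\sigma$ is continuous, coordinate by coordinate, on the open sets $U_i$.
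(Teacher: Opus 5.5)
Your proposal is correct and follows essentially the same route as the proof behind \cite[Proposition 3.4]{Grantsymmtc}, which the paper only cites: the standard Schwarz argument. You upgrade to strict $G$-sections via the $G$-homotopy lifting property, pull back a partition of unity from the paracompact Hausdorff orbit space $B/G$ (which uses compactness of $G$), and conversely take the invariant open sets $(t_i\circ\sigma)^{-1}(0,1]$ with sections $e_i\circ\sigma$. One phrasing needs tightening: if the join carries the quotient topology, continuity of $\sigma$ should be checked on a neighbourhood of each point $b$ on which every $\varphi_j$ either vanishes identically or $s_j$ is defined, not ``on the open sets $U_i$''.
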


\subsection{Relative Category}
\label{subsec: relative category}
\hfill\\ \vspace{-0.7em}

Suppose $p \colon E \to B$ is a map.
Then we have the following commutative diagram
\begin{equation}
\label{diag: canonical map into join}
\begin{tikzcd}
E \arrow[rr, "i_k"] \arrow[rd, "p"'] &        & J^k_B(E) \arrow[ld, "p_k"] \\
                                     & B, &                           
\end{tikzcd}
\end{equation}
where $i_k \colon E \to J^{k}_B(E)$ is the canonical map defined as
$$
    i_k(e)
        = \left[e,\frac{1}{k+1},\dots,e,\frac{1}{k+1}\right].
$$

\begin{definition}[{\cite[Definition 3]{secat-and-relcat-I}}]
\label{defn: relative category}
Suppose $p \colon E \to B$ is a map. 
The \emph{relative category} of $p$, denoted by $\rct(p)$, is the least nonnegative integer $k$ such that $p_k \colon J^k_B(E) \to B$ admits a homotopy section $\sigma$ satisfying $\sigma \circ p \simeq i_k$.
If such an integer does not exist, we set $\rct(p) = \infty$.
\end{definition}

\begin{proposition}[{\cite[Corollary 1.7]{garcia2019note}}]
\label{prop: equivalent definition of relative category}
    Let $i_X \colon A \to X$ be a closed cofibration, where $X$ is a normal space. 
    Then the following statements are equivalent:
    \begin{enumerate}
        \item $\mathrm{relcat}(i_X) \leq k$.
        \item $X$ admits an open cover $U_0, U_1, \dots, U_k$ such that for all $i$, $A \subset U_i$ and there exists a homotopy $H_i \colon U_i \times I \to X$ satisfying $H_i(x,0)=x$, $H_i(x,1) \in A$ and $H_i(a,t)=a$, for all $x \in X$, $a \in A$ and $t \in I$.
        \item $X$ admits an open cover $U_0, U_1, \dots, U_k$ such that for all $i$, $A \subset U_i$ and there exists a homotopy $H_i \colon U_i \times I \to X$ satisfying $H_i(x,0)=x$, $H_i(x,1) \in A$ and $H_i(a,1)=a$, for all $x \in X$ and $a \in A$.
    \end{enumerate}
\end{proposition}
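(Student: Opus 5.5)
The plan is to pass through the intermediate ``strict'' version of the section condition, and to prove the cycle of implications $(2)\Rightarrow(3)\Rightarrow(1)\Rightarrow(2)$. The first two implications are formal. For $(2)\Rightarrow(3)$, a homotopy $H_i$ that fixes $A$ pointwise at every time in particular satisfies $H_i(a,1)=a$.

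For $(3)\Rightarrow(1)$, we use that a closed cofibration $A\hookrightarrow X$ with $X$ normal can be turned into a fibration. Concretely, replace $i_X$ by the path fibration $\pi\colon P_A X=\{\gamma\in X^I:\gamma(1)\in A\}\to X$, $\gamma\mapsto\gamma(0)$. On each $U_i$ the homotopy $H_i$ gives a section $s_i(x)=H_i(x,-)$ of $\pi$, and this section restricted to $A$ is homotopic to the constant-path section. That homotopy comes from the condition $H_i(a,1)=a$, after reparametrizing the loops $H_i(a,-)$ with the help of the cofibration. We then choose a partition of unity subordinate to $\{U_i\}$, which exists because $X$ is normal and the cover is finite. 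Using the barycentric coordinates, the local sections glue to a global section $\sigma$ of the fibred join $\pi_k\colon J^k_X(P_AX)\to X$. Its restriction to $A$ is homotopic, through the local homotopies, to the join of constant paths, which is $i_k$. Translating back along the equivalence $A\simeq P_AX$ over $X$ gives $\rct(i_X)\le k$.

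For $(1)\Rightarrow(2)$, which is the main obstacle, we start from a homotopy section $\sigma$ of $p_k$ with $\sigma\circ i_X\simeq i_k$. First we use the homotopy lifting property of $p_k$ (again after replacing $i_X$ by the fibration $\pi$) to make $\sigma$ a strict section. Then we use the cofibration $A\hookrightarrow X$, via homotopy extension, to arrange that $\sigma|_A=i_k$ on the nose. Next we set $U_i=\{x: t_i(\sigma(x))>\tfrac{1}{k+1}-\epsilon\}$, or more safely the sets where $t_i>0$, refined by a standard shrinking so that they are open and contain $A$. This containment holds because on $A$ all coordinates $t_i$ equal $1/(k+1)$. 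On $U_i$ the $i$-th coordinate of $\sigma$ gives a path from $x$ into $A$, which defines $H_i$. For $a\in A$ this path is constant, since $\sigma(a)=i_k(a)$ is built from constant paths, so $H_i(a,t)=a$ for all $t$.

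The delicate points are making $\sigma|_A$ strictly equal to $i_k$, which requires a relative lifting that uses both that $i_X$ is a closed cofibration and that $\pi_k$ is a fibration, and checking that the resulting $U_i$ cover $X$ while each contains $A$. If the strictification proves awkward, I would instead cite the Garc\'{\i}a-Calcines-type Whitehead/Ganea characterization of $\rct$ for cofibrations and deduce the open-cover form from it by the same coordinate-set argument.
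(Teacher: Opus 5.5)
The paper gives no proof of this statement (it is quoted from Garc\'{\i}a-Calcines), so I assess your outline on its own merits. $(2)\Rightarrow(3)$ is fine. Your coordinate-set construction at the end of $(1)\Rightarrow(2)$ is also fine, provided you already have a strict section $\sigma$ with $\sigma|_A=s_k$, the join of constant paths (your $i_k$).

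The gap is that nothing in your cycle handles the loops $\ell_a:=H_i(a,-)$ that (3) permits, and disposing of them is exactly what separates (3) from (2).

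In $(3)\Rightarrow(1)$, the reparametrization $K_i(a,s)(t)=H_i(a,s+(1-s)t)$ is a homotopy in $P_AX$ but not over $X$. Its initial point travels along $H_i(a,s)$, and this differs for different $i$, so the $K_i$ cannot be run simultaneously inside the fibred join $J^k_X(P_AX)$. Reparametrizing cannot repair this. A homotopy over $X$ from $\ell_a$ to $\mathrm{c}_a$ inside the fibre $\{\gamma : \gamma(0)=a,\ \gamma(1)\in A\}$ exists only if $[\ell_a]$ lies in the image of $\pi_1(A,a)\to\pi_1(X,a)$, and (3) does not force this. For example, take $A=\{*\}\subset X=S^1$: an arc $U_0\ni *$ may be contracted to $*$ after first rotating it once around the circle.

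You can still get a free homotopy $\sigma|_A\simeq s_k$: slide all join weight onto one coordinate (legitimate because $A\subseteq U_i$ for every $i$), contract that single path freely in $P_AX$, then slide the weight back. But a free homotopy breaks the strictification in $(1)\Rightarrow(2)$. If $K\colon\sigma\circ i_X\simeq s_k$ is not vertical, then $\theta:=\pi_k\circ K$ is a nontrivial self-homotopy of $i_X$. Homotopy extension plus relative lifting then yields $\widetilde\sigma$ with $\widetilde\sigma|_A=s_k$, but with $\pi_k\circ\widetilde\sigma=\Theta_1$ for an extension $\Theta$ of $\theta$ with $\Theta_0=\id_X$. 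Since $\Theta_1$ need not be $\id_X$, $\widetilde\sigma$ is in general no longer a section. Relative lifting preserves the section property only over the stationary base homotopy, i.e. only when $K$ is vertical, which is precisely what you lack.

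The missing idea is to cancel the loops. In (3), $r_i:=H_i(-,1)\colon U_i\to A$ is a retraction, so put $H_i'(x,-):=H_i(x,-)\ast\overline{H_i(r_i(x),-)}$.
\begin{itemize}
\item This is continuous because $r_i(x)\in A\subseteq U_i$, and it still ends at $r_i(x)\in A$.
\item On $A$ it gives $H_i'(a,-)=\ell_a\ast\overline{\ell_a}$, which is null-homotopic rel endpoints, continuously in $a$.
\item $(U_i,A)$ is a closed cofibration: restrict a Str{\o}m structure of $(X,A)$, using normality of $X$ for the cut-off functions. Hence $(U_i\times I,\,A\times I\cup U_i\times\partial I)$ is one as well.
\item Homotopy extension then deforms $H_i'$ rel $U_i\times\partial I$ into a homotopy that is stationary on $A$.
\end{itemize}
This proves $(3)\Rightarrow(2)$.

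The cycle should therefore run $(1)\Rightarrow(3)\Rightarrow(2)\Rightarrow(1)$.

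For $(2)\Rightarrow(1)$, your gluing works verbatim. Now $s_i(a)=\mathrm{c}_a$, and the remaining homotopy, which moves the weights to $\tfrac{1}{k+1}$, is vertical.

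For $(1)\Rightarrow(3)$:
\begin{itemize}
\item First make $\sigma$ a strict section.
\item Let $L(a,s)=[\gamma_{a,s},\tfrac{1}{k+1},\dots,\gamma_{a,s},\tfrac{1}{k+1}]$ with $\gamma_{a,s}(t)=\theta(a,s+(1-s)t)$; this is the canonical lift of $\theta$ from $\tau$ to $s_k$.
\item Concatenate $K$ with the reverse of $L$. The result covers $\theta\ast\overline{\theta}$, which is homotopic rel $A\times\partial I$ to the stationary homotopy, so the concatenation can be made vertical.
\item This gives a vertical homotopy from $\sigma\circ i_X$ to $\tau(a)=[\theta_a,\tfrac{1}{k+1},\dots,\theta_a,\tfrac{1}{k+1}]$.
\item Relative lifting now produces a section with $\sigma|_A=\tau$, and your sets $\{t_i\circ\sigma>0\}$ yield (3) with $H_i(a,-)=\theta_a$.
\end{itemize}
This lands in (3), not (2), which is why the loop cancellation cannot be avoided.
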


\begin{definition}[{\cite[Definition 2.11]{garcia2019note}}]
Let $i_X \colon A \hookrightarrow X$ be a closed cofibration.
\begin{enumerate}
    \item A subset $U$ of $X$ is said to be \emph{relatively sectional} if $A \subseteq U$, and there exists a homotopy of pairs $H \colon (U,A) \times I \to (X,A)$ satisfying $H(x,0) = x$ and $H(x,1) \in A$ for all $x \in U$.

    
    \item The \emph{generalized relative category} of $i_X$, denoted by $\rct_g(i_X)$, is defined as the least nonnegative integer $k$ such that $X$ admits a (not necessarily open) cover by $k+1$ relatively sectional subsets.
    If such an integer does not exist, we set $\rct_g(i_X) = \infty$.
\end{enumerate}
\end{definition}

\begin{theorem}[{\cite[Theorem 2.16]{garcia2019note}}]
\label{thm: relcat = gen-relcat}
Let $i_X \colon A \hookrightarrow X$ be a closed cofibration between ANR spaces. 
Then $\mathrm{relcat}_g(i_X) = \mathrm{relcat}(i_X)$.
\end{theorem}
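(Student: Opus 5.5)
The inequality $\rct_g(i_X)\le \rct(i_X)$ is the easy direction, and I would do it first. Since $X$ is an ANR, it is metrizable and hence normal. So \Cref{prop: equivalent definition of relative category} applies to the closed cofibration $i_X$. If $\rct(i_X)\le k$, part (2) of that proposition gives an open cover $U_0,\dots,U_k$ with $A\subseteq U_i$ and homotopies $H_i\colon U_i\times I\to X$ from the inclusion into $A$ that are stationary on $A$. In particular each $H_i$ maps $A\times I$ into $A$, so it is a homotopy of pairs $(U_i,A)\times I\to (X,A)$. Hence each $U_i$ is relatively sectional, and $\rct_g(i_X)\le k$.

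For the reverse inequality I would reformulate relatively sectional subsets as partial sections of a fibration, following the Srinivasan/Garc\'{\i}a-Calcines argument that $\TC_g=\TC$ for ANRs. Let $P_A X=\{\gamma\in X^I\mid \gamma(1)\in A\}$, and let $e_0\colon P_AX\to X$ be $\gamma\mapsto\gamma(0)$. Since $A\hookrightarrow X$ is a closed cofibration, $e_0$ is a fibration; one can argue this as in \Cref{thm: restriction map is a G-fibration}, with trivial group and $B$ a point. The map $c\colon A\to P_AX$ sending $a$ to the constant path is a section of $e_0$ over $A$.

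A subset $U\supseteq A$ is relatively sectional exactly when there is a section $s\colon U\to P_AX$ of $e_0$ with $s(A)\subseteq P_A A$. Two standard modifications then adjust this section.
\begin{enumerate}
\item Using that $A$ is an ANR, the homotopy $H|_{A\times I}$ can be contracted in $A$ back to the constant homotopy.
\item Using the cofibration $A\hookrightarrow X$, the section on $U$ can be deformed so that $s|_A=c$. This passes from condition (3) to condition (2) of \Cref{prop: equivalent definition of relative category}, which is allowed by that proposition.
\end{enumerate}
Next, $X^I$ is an ANR because $X$ is a metrizable ANR and $I$ is compact, and $P_AX$ is a closed subset of $X^I$. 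Granting that $P_AX$ is also an ANR, the key step is the following extension statement.

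\emph{Key step.} A section of $e_0$ defined on an arbitrary subset $U$ extends to a section over some open neighbourhood $W\supseteq U$, up to a fibrewise homotopy that is fixed on $A$.

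My plan for the key step uses the ANR neighbourhood property on the closed set $A$. Choose an open neighbourhood $N$ of $\Delta X$ in $X\times X$ and a continuous choice of short paths $\lambda\colon N\to X^I$ between close points, with $\lambda$ constant on $\Delta X$; this exists because $X$ is an ANR, hence LEC. Then set
\[
W=\{x\in X\mid (x,y)\in N \text{ for some } y\in U \text{ near } x\}.
\]
On $W$, prepend to the path $s(y)$ the short path $\lambda(x,y)$. To make this continuous in $x$ I would, as Srinivasan does, replace $U$ by a suitable subcover: take $k+1$ arbitrary relatively sectional sets and produce $k+1$ open ones by using a partition-of-unity refinement and the join description of \Cref{prop: equivalent definition of relative category}. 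Because $\lambda$ is constant on the diagonal and $A\subseteq U$, the resulting sections still restrict to $c$ on $A$. This gives open relatively sectional sets $W_0,\dots,W_k$ covering $X$, and so $\rct(i_X)\le\rct_g(i_X)$.

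I expect the main obstacle to be this extension from non-open, possibly non-closed sets to open neighbourhoods. The difficulty is that maps from arbitrary subsets into ANRs need not extend, so the argument has to use the fibration structure together with local equiconnectedness of $X$ rather than plain extension. Keeping the relative condition on $A$ throughout is the part needing the most care. If the direct approach fails, I would fall back on proving the analogous statement for the associated fibration in the form $\sct_g=\sct$ for fibrations between ANRs, and then transport it to $\rct$ via the join characterization.
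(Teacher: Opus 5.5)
Your proof of $\rct_g(i_X)\le\rct(i_X)$ is fine. The reverse inequality has a genuine gap, at exactly the step you flag.

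\textbf{The extension step.} There is no continuous way to choose a nearby $y\in U$ for each $x\in W$, and a partition of unity does not repair this through the join. Weighting the concatenations of $\lambda(x,y_j)$ and $s(y_j)$ by $\phi_j(x)$ gives a section of a fibred join of $e_0$ with as many factors as the partition has members. That is not a section of $e_0$. Also, \Cref{prop: equivalent definition of relative category} only speaks about open covers, so it cannot be fed the non-open sets you start with. (The paper gives no proof of this statement; it quotes \cite{garcia2019note}.)

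The tool that does the job is the ANR property of the total space; compare the argument of \cite[Theorem 2.7]{garcia2019note}, which the paper reuses in \Cref{cor: mon-para-tc = para-tc under gen-special cover}. Embed $P_AX$ as a closed subset of a normed space and average the values $s(y_j)$ over a partition of unity on a neighbourhood of $S$. Retract the result into $P_AX$ with a neighbourhood retraction. Its composite with $e_0$ is only close to, hence homotopic to, the inclusion, so correct it by homotopy lifting. The output is a section $\sigma$ on an open $V\supseteq S$ with $\sigma|_S\simeq s$ \emph{only up to homotopy}. This needs $P_AX$ to be an ANR, which you should prove rather than grant. It is a neighbourhood retract of $X^I$, built from a retraction of a neighbourhood of $A$ onto $A$ and the equiconnecting map of $X$.

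\textbf{The relative condition.} This is the more serious problem: the condition on $A$ cannot survive such a construction. A point $a\in A$ receives contributions from nearby points $y_j\notin A$. So the new section is not $c$ on $A$, its values need not even be paths in $A$, and a homotopy fixed on $A$ is not available.

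Your preparatory steps do not help. In (1), $H|_{A\times I}$ runs from $\id_A$ to $H_1|_A$, which is in general not $\id_A$, so it cannot be contracted rel endpoints to the constant homotopy. In (2), relative lifting over an arbitrary $U$ needs $A\hookrightarrow U$ to be a cofibration, and this fails for non-open $U$ in general. Your fallback via $\sct_g=\sct$ leaves exactly this relative issue untouched, and it is precisely where $\rct$ differs from $\sct$.

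\textbf{The fix: reverse the order.}
\begin{enumerate}
\item The pair condition $H(A\times I)\subseteq A$ gives a homotopy through sections $u\mapsto\bigl(a\mapsto H(a,u\,\cdot\,)\bigr)$ from $c$ to $s|_A$ inside $P_AX$. This is the only place the pair condition is needed.
\item After extending, $\sigma|_A\simeq s|_A\simeq c$.
\item Since $V$ is open in the ANR $X$, it is an ANR. So $A\hookrightarrow V$ is a closed cofibration by the trivial-group case of \Cref{thm: cobfib}.
\item Relative homotopy lifting, as in the step $(3)\Rightarrow(4)$ of \Cref{thm: equivalent defn of monoidal para tc}, replaces $\sigma$ by a section equal to $c$ on $A$. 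Then $V$ satisfies condition (2) of \Cref{prop: equivalent definition of relative category}.
\end{enumerate}
If the homotopy $\sigma|_A\simeq c$ is not through sections, the same lifting lands you in condition (3), which also suffices.

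Compare \Cref{question: existence of monoidal section}, where the paper is stuck at precisely this correction for lack of a cofibration $\Delta E\hookrightarrow U$. Here the openness of $V$ supplies it.
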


\begin{definition}[{\cite[Definition 3.1]{aguilar-gonzalez-2023motion}}]
Let $i_X \colon A \hookrightarrow X$ be a closed cofibration.
The \emph{Fadell-Husseini relative category} of $i_X$, denoted by $\rct^{FH}_{\op}(i_X)$, is defined as the least nonnegative integer $k$ for which $X$ admits an open cover $\{U_0, \dots,U_k\}$ such that 
\begin{enumerate}
    \item $U_0$ is a relatively sectional open subset, 
    \item for $i \geq 1$, $U_i \cap A = \emptyset$ and there are homotopies $H_i \colon U_i \times I \to X \times I$ such that $H_i(x,0) = x$ and $H_i(x,1) \in A$ for all $x \in U_i$.
\end{enumerate}
If such an integer does not exist, we set $\rct^{FH}_{\op}(i_X) = \infty$.
The generalized notion of the Fadell–Husseini relative category of $i_X$, denoted by $\rct_g^{FH}(i_X)$, is defined in the same way, except that the covering sets are not required to be open.
\end{definition}

\begin{theorem}[{\cite[Proposition 3.3]{aguilar-gonzalez-2023motion}}]
\label{thm: relcat = FH-relcat}
Let $i_X \colon A \hookrightarrow X$ be a closed cofibration, where $X$ is a normal space. 
Then $\rct(i_X) = \rct_{\op}^{FH}(i_X)$.
\end{theorem}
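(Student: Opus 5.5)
We prove the two inequalities $\rct^{FH}_{\op}(i_X) \leq \rct(i_X)$ and $\rct(i_X) \leq \rct^{FH}_{\op}(i_X)$ separately. Both directions use \Cref{prop: equivalent definition of relative category}, which applies because $i_X$ is a closed cofibration and $X$ is normal. Throughout, let $H \colon X \times I \to X$ denote a homotopy witnessing that $A$ is a closed cofibration. By the standard Strøm theory this comes with a function $\varphi \colon X \to I$ with $A = \varphi^{-1}(0)$, such that $H(x,t)=x$ for $x\in A$ and $H(x,1)\in A$ whenever $\varphi(x)<1$. The neighbourhood $N=\varphi^{-1}[0,1)$ then deforms into $A$ rel $A$.

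For $\rct^{FH}_{\op}(i_X)\le \rct(i_X)=k$: take an open cover $U_0,\dots,U_k$ from item (2) of \Cref{prop: equivalent definition of relative category}. Each $U_i$ contains $A$ and carries a deformation $H_i$ into $A$ fixing $A$.
\begin{itemize}
\item Set $U_0'=U_0$; it is relatively sectional.
\item For $i\ge1$, set $U_i' = U_i\setminus A$. This is open since $A$ is closed, it is disjoint from $A$, and the restriction of $H_i$ deforms it into $A$.
\end{itemize}
These sets still cover $X$, because $A\subseteq U_0'$. Hence $\rct^{FH}_{\op}(i_X)\le k$.

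For the converse, take an FH cover $U_0,\dots,U_k$ with homotopies $H_i$. We must enlarge each $U_i$, $i\ge1$, to an open set containing $A$ that admits a deformation into $A$ with $H_i(a,1)=a$ on $A$; item (3) of \Cref{prop: equivalent definition of relative category} only requires this. Since $U_0\supseteq A$ is open and $X$ is normal, choose an open $W$ with
$$A\subseteq W\subseteq \overline W\subseteq U_0\cap N.$$
Put $V_i=(U_i\setminus \overline{W'})\cup W'$, where $W'$ is an intermediate open set with $A\subseteq W'\subseteq\overline{W'}\subseteq W$, chosen so that $\overline{W'}$ and $X\setminus W$ separate. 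Concretely, pick an Urysohn-type splitting so that $V_i$ is the disjoint union of the open sets $U_i\setminus\overline{W}$ and $W'$.
\begin{itemize}
\item On $U_i\setminus\overline W$, use $H_i$.
\item On $W'$, use $H_0$ restricted, or the Strøm deformation.
\end{itemize}
Because the two pieces are disjoint open sets, the homotopy is continuous. It fixes $A$ at time $1$.

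The remaining issue, and the main obstacle, is that points of $U_i\cap(\overline W\setminus W')$ are lost from $V_i$. These points must be covered by some other set. They lie in $W\subseteq U_0$, so $U_0$ already covers them; no loss occurs. Hence $\{U_0,V_1,\dots,V_k\}$ is an open cover satisfying item (3), and $\rct(i_X)\le k$.

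I expect the careful choice of $W'$ and $W$ to need normality exactly once, through a shrinking argument. This choice is the delicate point. The rest is bookkeeping.
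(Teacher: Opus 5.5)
\textbf{The gap is the set $U_0$.} Your inequality $\rct^{FH}_{\op}(i_X)\le\rct(i_X)$ is correct. Your treatment of the enlarged sets $V_i$, $i\ge1$, also works, with the Str\o m deformation used on $W'$.

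The problem is that you keep $U_0$ unchanged. Relative sectionality of $U_0$ only gives a homotopy of pairs $H_0\colon (U_0,A)\times I\to (X,A)$, so all you know is $H_0(A\times I)\subseteq A$. Nothing forces $H_0(a,1)=a$, let alone $H_0(a,t)=a$. Hence $U_0$ with $H_0$ satisfies neither item (2) nor item (3) of \Cref{prop: equivalent definition of relative category}. Your closing claim that $\{U_0,V_1,\dots,V_k\}$ satisfies item (3) is therefore unjustified.

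The same objection rules out your alternative ``use $H_0$ restricted'' on $W'$; only the Str\o m deformation is admissible there. The step you call the main obstacle is in fact immediate, because every dropped point lies in $\overline W\subseteq U_0$. The auxiliary $W'$ is also unnecessary, since $(U_i\setminus\overline W)\sqcup W$ is already a disjoint union of open sets. The real delicate point is $U_0$.

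\textbf{The fix.} Treat $U_0$ the same way as the other sets. This is what the paper does in the analogous \Cref{thm: mon-para-tc = para-tc under special cover}, where the non-monoidal section $s_0$ plays the role of your $H_0$ (the paper itself only cites the present statement).

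For $k\ge1$: choose an open $M$ with $A\subseteq M\subseteq\overline M\subseteq W'$. Replace $U_0$ by $V_0=(U_0\setminus\overline M)\sqcup M$, using $H_0$ on the first piece and the Str\o m deformation on $M$. Now every $V_j$ deforms into $A$ rel $A$. The points of $U_0$ lost this way lie in $\overline M\setminus M\subseteq W'\subseteq V_1$. The points of $\overline W\setminus W'$ dropped from the $V_i$ lie in $U_0\setminus\overline M\subseteq V_0$. So the sets still cover $X$, and item (2) gives $\rct(i_X)\le k$.

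This argument needs $k\ge1$, so the case $k=0$ (where $U_0=X$) must be handled separately, using that $A\hookrightarrow X$ is a cofibration:
\begin{enumerate}
\item Let $r=H_0(-,1)\colon X\to A$. The homotopy $(a,t)\mapsto H_0(a,1-t)$ takes values in $A$ and starts at $r|_A$.
\item Extend it to a homotopy $X\times I\to A$ starting at $r$. Its end map $r'$ is a genuine retraction onto $A$.
\item Concatenating $H_0$ with this homotopy deforms $\mathrm{id}_X$ to $r'$, with $r'(a)=a$. This is item (3).
\end{enumerate}
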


\begin{theorem}[{\cite[Proposition 3.7]{aguilar-gonzalez-2023motion}}]
\label{thm: gen-relcat = gen-FH-relcat}
Let $i_X \colon A \hookrightarrow X$ be a closed cofibration between ANR spaces. 
Then $\mathrm{relcat}_g(i_X) = \mathrm{relcat}_g^{FH}(i_X)$.
\end{theorem}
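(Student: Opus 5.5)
The plan is to prove the two inequalities separately. One direction is elementary. The other I would reduce to \Cref{thm: relcat = gen-relcat} and \Cref{thm: relcat = FH-relcat} by passing through the open invariants.

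\emph{Step 1: $\rct_g^{FH}(i_X) \leq \rct_g(i_X)$.} Suppose $X = U_0 \cup \dots \cup U_k$, where each $U_i$ is relatively sectional with homotopy of pairs $H_i \colon (U_i,A)\times I \to (X,A)$. Put $V_0 := U_0$ and $V_i := U_i \setminus A$ for $i \geq 1$. Since $A \subseteq U_0$, the sets $V_0,\dots,V_k$ still cover $X$. The set $V_0$ is relatively sectional by hypothesis. For $i\ge 1$, $V_i \cap A = \emptyset$ and $H_i|_{V_i\times I}$ starts at the inclusion and ends in $A$. So $\{V_i\}$ is an admissible generalized Fadell--Husseini cover, which gives the inequality.

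\emph{Step 2: $\rct_g(i_X) \leq \rct_g^{FH}(i_X)$.} Gluing a Fadell--Husseini set $W_i$ ($i\ge1$) to $A$ directly does not work. The deformation of $W_i$ and the stationary homotopy on $A$ need not combine continuously on $W_i\cup A$, since $W_i$ may accumulate on $A$. So I would route through open covers instead. Since $X$ is an ANR it is metrizable, hence normal. By \Cref{thm: relcat = FH-relcat} we have $\rct(i_X) = \rct^{FH}_{\op}(i_X)$, and by \Cref{thm: relcat = gen-relcat} we have $\rct_g(i_X)=\rct(i_X)$. It therefore suffices to show $\rct^{FH}_{\op}(i_X) \leq \rct^{FH}_g(i_X)$, that is, that a generalized Fadell--Husseini cover can be replaced by an open one of the same size.

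\emph{Step 3 (main obstacle): open-ization.} Here I would follow the argument behind \Cref{thm: relcat = gen-relcat}, i.e.\ the standard proof that generalized sectional category equals the open one for ANRs. I would first replace the given cover by a cover in which each piece is a (locally finite) union of closed sets that are separated from one another. On each such closed piece $C$ (closed in $X$, hence in the metrizable space $X\times I$), the deformation $C\times I\cup X\times\{0\}\to X$ extends to a neighbourhood, because $X$ is an ANR. Shrinking that neighbourhood keeps the endpoint inside a neighbourhood of $A$ that retracts onto $A$; this neighbourhood exists because $A$ is an ANR closed in $X$. For $i\ge1$, I would also shrink the open sets to avoid $A$, which is possible because the closed pieces are disjoint from the closed set $A$. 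For the piece $W_0$, which contains $A$, the same extension has to be done relative to $A$, using that $A\hookrightarrow X$ is a closed cofibration, so that $A$ stays fixed throughout.

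I expect the delicate part to be exactly this reorganisation into closed, separated pieces while preserving the special role of $U_0$. If it becomes unwieldy, the fallback is to cite the open/generalized comparison for $\rct^{FH}$ in \cite{aguilar-gonzalez-2023motion} directly.
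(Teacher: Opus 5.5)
\textbf{There is a genuine gap, and it sits in Step 3.} Your Steps 1 and 2 are fine. Deleting $A$ from $U_1,\dots,U_k$ gives $\rct^{FH}_g(i_X)\le\rct_g(i_X)$. Since an ANR is metrizable, \Cref{thm: relcat = gen-relcat} and \Cref{thm: relcat = FH-relcat} correctly reduce the other inequality to $\rct^{FH}_{\op}(i_X)\le\rct^{FH}_g(i_X)$. (The paper gives no proof of its own here; it imports the statement from \cite{aguilar-gonzalez-2023motion}.)

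The failure is the very first move of Step 3. Your closed pieces must lie inside the $U_i$, or the deformations do not restrict to them. An arbitrary cover admits no such closed refinement in general. Take $X=[0,1]$, $A=\{0\}$, $U_0=\mathbb{Q}\cap[0,1]$ and $U_1=[0,1]\setminus\mathbb{Q}$, both deformed by $(x,t)\mapsto(1-t)x$. This is an admissible generalized Fadell--Husseini cover. Yet a closed subset of $U_0$ is countable and a closed subset of $U_1$ is nowhere dense. By Baire, no closed sets subordinate to $\{U_0,U_1\}$ cover $[0,1]$, and a locally finite union of closed sets is again closed, so that variant fails too.

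Everything you do afterwards depends on closedness. For non-closed $S$ the map on $(S\times I)\cup(X\times\{0\})$ need not be continuous at points of $(\overline{S}\setminus S)\times\{0\}$. Also, a map from a non-closed subset into an ANR need not extend to any open neighbourhood. Cutting into closed, separated pieces is the device for passing from open covers to disjoint or closed ones, not the reverse. Your fallback of citing \cite{aguilar-gonzalez-2023motion} would be citing the statement itself.

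What you are missing is a per-set enlargement lemma proved by homotopy approximation rather than exact extension, as in \cite{garcia2019note}. The paper uses this, via the proof of \cite[Theorem 2.7]{garcia2019note}, in \Cref{cor: mon-para-tc = para-tc under gen-special cover}. The idea is that a deformation on an arbitrary subset $S$ is approximated by one on an open $V\supseteq S$, and sufficiently close maps into an ANR are homotopic.

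For the piece containing $A$ you need the relative form: every relatively sectional $U_0$ lies in an open relatively sectional $V_0$. The condition on $A$ is restored by a relative lifting argument. This uses that $A\hookrightarrow V_0$ is a closed cofibration, because $A$ and the open set $V_0$ are ANRs (\Cref{thm: cobfib}).

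This relative statement is all you need. Given such $V_0$, choose an open $M$ with $A\subseteq M\subseteq\overline{M}\subseteq V_0$, and set $V_i=(U_i\setminus\overline{M})\sqcup M$ for $i\ge 1$. Both parts are relatively open in $V_i$, so the deformations glue as in the proof of \Cref{thm: mon-para-tc = para-tc under special cover}. Every point outside $V_0$ lies in some $U_i\setminus\overline{M}$. Thus $\{V_0,\dots,V_k\}$ is a generalized relative cover. This gives $\rct_g(i_X)\le\rct^{FH}_g(i_X)$ without passing through $\rct^{FH}_{\op}$.
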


\subsection{Monoidal topological complexity}
\label{subsec: mono-tc}
\hfill\\ \vspace{-0.7em}

The monoidal topological complexity of a topological space was introduced by Iwase and Sakai in \cite{I-S}. 
The motivation behind defining this notion was that the motion of local motion planners should be static when the initial and final states of the mechanical system coincide.

Suppose $\pi \colon X^I \to X \times X$ is the free path space fibration.

\begin{definition}[{\cite[Definition 1.3]{I-S}}]
The \emph{monoidal topological complexity} of a space $X$, denoted by $\TC^{M}(X)$, is the least nonnegative integer $k$ such that $X \times X$ may be covered by open subsets $U_0,\dots,U_k$, each of which contains the diagonal $\Delta X$ and admits a local section $\sigma_i \colon U_i \to X^I$ of $\pi$ such that $\sigma_i(x,x) = \mathrm{c}_x$ for all $x \in X$.
If such an integer does not exist, we set $\TC^{M}(X) = \infty$.
\end{definition}

\begin{definition}
The \emph{D-monoidal topological complexity} of a space $X$, denoted by $\TC^{DM}(X)$, is the least nonnegative integer $k$ such that $X \times X$ may be covered by open subsets $U_0,\dots,U_k$, each of which admits a local section $\sigma_i \colon U_i \to X^I$ of $\pi$ such that $\sigma_i(x,x) = \mathrm{c}_x$ for all $x \in X$ with $(x,x) \in U_i$. 
If such an integer does not exist, we set $\TC^{DM}(X) = \infty$.
\end{definition}

It is clear that $\TC(X) \leq \TC^{DM}(X) \leq \TC^{M}(X)$. 
On the other hand, if $X$ is an ENR, then Dranishnikov \cite{dranishnikov2014topological} noticed that the condition $\Delta X \subseteq U_i$ in Iwase-Sakai's definition can be relaxed, i.e., $\TC^{M}(X) = \TC^{DM}(X)$. 
Further, Aguilar-Guzm\'an and J. Gonz\'alez \cite{aguilar-gonzalez-2023motion}  have shown that this observation holds for ANRs as well.

\begin{theorem}[{\cite[Proposition 2.3]{aguilar-gonzalez-2023motion}}]
If $X$ is an ANR, then
$$
    \TC^{M}(X) = \TC^{DM}(X).
$$   
\end{theorem}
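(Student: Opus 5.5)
The inequality $\TC^{DM}(X) \leq \TC^{M}(X)$ holds by definition, so only $\TC^{M}(X) \leq \TC^{DM}(X)$ needs proof. Suppose $U_0,\dots,U_k$ is an open cover of $X \times X$ with sections $\sigma_i \colon U_i \to X^I$ of $\pi$ satisfying $\sigma_i(x,x)=c_x$ whenever $(x,x)\in U_i$. I will enlarge each $U_i$ to an open set $W_i \supseteq U_i \cup \Delta X$ and extend $\sigma_i$ to a section on $W_i$ that is stationary on the diagonal. The tool is the ANR structure: since $X$ is an ANR, so is $X\times X$, and $\Delta X \cong X$ is a closed ANR subset. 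Hence $\Delta X$ has an open neighbourhood $N$ with a deformation $D\colon N\times I\to X\times X$ rel $\Delta X$ from the inclusion to a retraction $r\colon N\to\Delta X$. Equivalently, since $X$ is an ANR, $\TC(X)$ can be realised with a neighbourhood $N$ of $\Delta X$ and a section $\tau\colon N\to X^I$ with $\tau(x,x)=c_x$. This is the local-equiconnectedness of ANRs: a neighbourhood of the diagonal admits a continuous choice of short paths that are constant on the diagonal.

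The key step is gluing $\sigma_i$ on $U_i$ with $\tau$ near the diagonal points not covered by $U_i$. Let $A_i=\Delta X\setminus U_i$, which is closed in $X\times X$ and disjoint from the closed set $(X\times X)\setminus N$. I would shrink $U_i$ slightly away from $A_i$ and set $W_i = U_i' \cup N_i$, where $N_i\subseteq N$ is an open neighbourhood of $A_i$. On the overlap $U_i\cap N$, the two sections $\sigma_i$ and $\tau$ agree on the diagonal but not necessarily elsewhere. To interpolate, I would use an Urysohn function $\lambda\colon X\times X\to I$ that equals $1$ near $A_i$ and $0$ outside a small neighbourhood, and define the path at $(x,y)$ by concatenating pieces of $\tau(x,y)$ and $\sigma_i(x,y)$ weighted by $\lambda$. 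A cleaner option is to use the fact that $\pi$ restricted to $U_i\cap N$ has two sections that agree on $\Delta X\cap U_i$. Since $\Delta X\hookrightarrow X\times X$ is a cofibration between ANRs (\Cref{thm: cobfib}) and $\pi$ is a fibration, a fibrewise homotopy between them rel the diagonal exists on a smaller neighbourhood, because the fibres of $\Omega$-type near the diagonal are locally contractible. This homotopy is then cut off with $\lambda$ to produce the glued section.

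The main obstacle is this gluing: ensuring $\sigma_i$ and $\tau$ are fibrewise homotopic rel $\Delta X$ near $A_i\cap\overline{U_i}$, with a homotopy that still sends diagonal points to constant paths. I would attempt this first via the ANR $X^I$ and the homotopy extension property of the pair $(X\times X,\Delta X)$. Alternatively, I would reduce directly to \Cref{prop: equivalent definition of relative category}(3) with $\Delta X\hookrightarrow X\times X$ as in \cite{aguilar-gonzalez-2023motion}. Finally, I would check that the $W_i$ cover $X\times X$ and each contains $\Delta X$, which gives $\TC^{M}(X)\le k$.
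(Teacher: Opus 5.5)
There is a genuine gap. The inequality $\TC^{M}(X)\le\TC^{DM}(X)$ rests entirely on the step you call ``the main obstacle'', and you do not carry it out. Worse, as you set it up, it asks for something that is false in general.

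You want $\sigma_i$ and $\tau$ to be fibrewise homotopic rel $\Delta X$ near $A_i\cap\overline{U_i}$. But points of $A_i=\Delta X\setminus U_i$ do not lie in $U_i$, so $\sigma_i$ has no continuity there, and its nearby values need not be short paths. For $X=S^1$, $U_i$ may contain disjoint off-diagonal balls accumulating at some $(a,a)\notin U_i$. On the $j$-th ball let $\sigma_i$ be the short arc followed by $j$ full turns. Then no neighbourhood of $(a,a)$ carries a path homotopy from $\sigma_i$ to $\tau$. Local contractibility of the $\Omega$-type fibres does not help, since a fibrewise homotopy between two sections is a continuous family of path homotopies, not a property of individual fibres. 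The fallback ``reduce to \Cref{prop: equivalent definition of relative category}(3)'' only restates the problem: you still need open sets containing all of $\Delta X$ with deformations fixing $\Delta X$. Also, with $W_i=U_i'\cup N_i$ and $N_i$ only a neighbourhood of $A_i$, the diagonal points of $U_i\setminus U_i'$ need not lie in $W_i$.

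The paper's route avoids open-set gluing altogether; this statement is the case $B=\{*\}$ of \Cref{thm: mon-para-tc = D-mon-para-tc}. Shrink to a closed cover $Z_i\subseteq U_i$. Define a section on $Z_i\cup\Delta X$ by $\sigma_i$ on $Z_i$ and $\mathrm{c}_x$ on $\Delta X$. It is continuous by the pasting lemma precisely because of the D-monoidal condition on $Z_i\cap\Delta X$ (\Cref{lemma: gen-mon-para-tc leq d-mon-para-tc}), so $\TC^{M}_g(X)\le\TC^{DM}(X)$. Openness is then recovered from $\TC^{M}(X)=\rct(\Delta_X)=\rct_g(\Delta_X)\le\TC^{M}_g(X)$. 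This uses \Cref{thm: monodial tc is a relative category}, Garc\'{\i}a-Calcines' \Cref{thm: relcat = gen-relcat} for closed cofibrations between ANRs, and \Cref{lemma: recal(Delta) leq gen-mon-para-tc}.

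Your direct route can also be repaired, but only with a different setup:
\begin{enumerate}
\item Take $U_i'$ with $\overline{U_i'}\subseteq U_i$.
\item Interpolate only on a neighbourhood $O\subseteq U_i\cap N$ of $\Delta X\cap\overline{U_i'}\subseteq\Delta X\cap U_i$. There, continuity of $\sigma_i$ at the stationary points forces $\sigma_i(x,y)$ and $\tau(x,y)$ to be uniformly close. So a straight-line homotopy in a convex ambient set, followed by an ANR neighbourhood retraction, is a path homotopy that stays constant on $\Delta X$.
\item Glue in $\tau$ on a neighbourhood $N_i$ of $\Delta X\setminus U_i'$ (not merely of $A_i$), chosen so that $U_i'\cap N_i\subseteq O$.
\end{enumerate}
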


Iwase and Sakai in \cite[Theorem 1.13]{I-S} proved that $\TC^{M}(X) = \TC(X)$, if $X$ is a locally finite simplicial complex.
But it was later withdrawn after Garcia-Calcines pointed out that there is a problem in the proof.
Later, Dranishnikov \cite{dranishnikov2014topological} showed that the statement holds when $X$ is a Lie group, as well as when $\dim(X) < (k+1)(\TC(X)+1)-1$, where $k$ is the connectivity of $X$.

\begin{conjecture}[Iwase-Sakai Conjecture] 
If $X$ is a locally finite simplicial complex, then 
$$
    \TC^{M}(X) = \TC(X).
$$
\end{conjecture}

Recall that a space $X$ is said to be \emph{locally equiconnected}, or \emph{LEC}, if the diagonal map $\Delta \colon X \to X \times X$ is a closed fibration.
Examples of LEC spaces include CW complexes and ANRs.

\begin{theorem}[{\cite[Theorem 12]{carrasquel2014relative}, \cite[Proposition 2.17]{garcia2019note}}]
\label{thm: monodial tc is a relative category}
If $X$ is a LEC space, then
$$
    \TC^M(X) = \rct(\Delta_X \colon X \to X\times X)
$$
In particular, $\TC^{M}(X)$ is a homotopy invariant when we restrict to LEC spaces.
\end{theorem}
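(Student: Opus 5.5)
The plan is to reduce to the open-cover characterization of relative category in \Cref{prop: equivalent definition of relative category}, applied to the closed cofibration $\Delta_X \colon X \to X \times X$. The LEC hypothesis says exactly that $\Delta_X$ is a closed cofibration. For normality of $X\times X$ I will assume the standing hypotheses under which $\TC^M$ is usually treated, e.g.\ $X\times X$ paracompact or metrizable, as for ANRs and CW complexes (with the compactly generated product). Granting this, $\rct(\Delta_X)\le k$ holds if and only if $X\times X$ has an open cover $U_0,\dots,U_k$ with $\Delta X\subseteq U_i$ and homotopies $H_i\colon U_i\times I\to X\times X$ such that $H_i(x,0)=x$, $H_i(x,1)\in\Delta X$, and $H_i$ is stationary on $\Delta X$ (condition (2)).

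\textbf{From $\TC^M(X)\le k$ to $\rct(\Delta_X)\le k$.} Take a monoidal section $\sigma_i\colon U_i\to X^I$ with $\sigma_i(x,x)=c_x$. Define
\[
H_i((a,b),t)=\bigl(a,\ \sigma_i(a,b)(1-t)\bigr).
\]
At $t=0$ this is $(a,b)$, at $t=1$ it is $(a,a)\in\Delta X$, and on the diagonal it is constantly $(x,x)$ because $\sigma_i(x,x)=c_x$. This gives condition (2).

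\textbf{From $\rct(\Delta_X)\le k$ to $\TC^M(X)\le k$.} Given $H_i$ as in (2), write $H_i((a,b),t)=(\alpha_t(a,b),\beta_t(a,b))$. Since $\alpha_1=\beta_1$, the concatenation $\alpha_\bullet(a,b)\cdot\overline{\beta_\bullet(a,b)}$ is a path from $a$ to $b$ depending continuously on $(a,b)$. On the diagonal it is $c_x\cdot c_x=c_x$, which is exactly constant under the standard reparametrization of concatenation. So these paths form a monoidal section over $U_i$, and $U_i\supseteq\Delta X$ as required.

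\textbf{Main obstacle.} The delicate step is the backward direction: it depends on having the strong form (2), with homotopies stationary on $A$, rather than only $H(a,1)=a$. This is precisely what \Cref{prop: equivalent definition of relative category} provides under the cofibration and normality hypotheses, so the real work is checking that those hypotheses hold.

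\textbf{Homotopy invariance.} The final assertion follows because relative category is a homotopy invariant of the map $\Delta_X$ up to homotopy equivalence of pairs. A homotopy equivalence $X\simeq Y$ of LEC spaces induces a commuting square $\Delta_X\to\Delta_Y$ between cofibrations, which is therefore a homotopy equivalence of pairs. Relative category, being defined via fibred joins and homotopy sections compatible with $i_k$, is invariant under such equivalences.
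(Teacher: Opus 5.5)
Your argument is correct. The normality of $X\times X$ that you add is exactly what is needed to invoke \Cref{prop: equivalent definition of relative category}. The quoted statement suppresses it, and the paper's own parametrized version, \Cref{thm: equivalent defn of monoidal para tc}, correspondingly assumes $E\times_B E$ paracompact.

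The paper does not prove this statement itself; it is quoted from the literature. However, \Cref{thm: equivalent defn of monoidal para tc} specializes to it at $B=\{*\}$, and that proof takes a different route from yours:
\begin{itemize}
\item It works with the fibrational substitute $\Pi$ and its fibred joins.
\item It glues the monoidal sections with a partition of unity into a section $\sigma$ of $\Pi_k$ with $\sigma\circ\Delta_E\simeq s_k$, and transfers this to $(\Delta_E)_k$ through $h_k$.
\item In the converse direction it first straightens $\sigma\circ\Delta_E\simeq s_k$ to the strict equality $\sigma\circ\Delta_E=s_k$. It does this by relative homotopy lifting along the closed cofibration $\Delta E\hookrightarrow E\times_B E$, and then reads the monoidal cover off the join coordinates.
\end{itemize}

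You instead use Garc\'{\i}a-Calcines's open-cover characterization as a black box. You supply only the elementary dictionary ``monoidal section over $U_i$ $\leftrightarrow$ deformation of $U_i$ into $\Delta X$ stationary on $\Delta X$.'' Your forward map is the same device as in \Cref{lemma: recal(Delta) leq gen-mon-para-tc}, and your concatenation gives the reverse. You also correctly saw that form (2), not (3), of the characterization is what makes the reverse direction work.

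Your route is shorter and needs only normality. The cost is that all the genuinely homotopy-theoretic content is outsourced: passing between homotopy sections of joins and covers with homotopies stationary on the diagonal, which is where the cofibration is used.

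The paper's route is self-contained and yields the extra strict characterization $\sigma\circ\Delta_E=s_k$. That characterization is reused almost verbatim in the $\Z_2$-equivariant \Cref{thm: equivalent defn of sym monoidal para tc}, where an equivariant version of the cover characterization is not at hand. Your homotopy-invariance step is the same transport of $\rct$ along a commuting square that the paper uses, via Doeraene--El Haouari, in its fibre-preserving corollary.
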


We now show that, in Dranishnikov's result \cite[Theorem 2.5]{dranishnikov2014topological}, the dimension of the space can be replaced by its homotopy dimension, thereby strengthening the result.

\begin{theorem}\label{thm: dim--hdim in Dranishnikov's result}
Suppose $X$ is a $k$-connected CW complex.
If $\hdim(X) < (k+1)(\TC(X)+1)-1$, then
$$
    \TC^{M}(X) = \TC(X).
$$
\end{theorem}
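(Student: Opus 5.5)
The plan is to reduce to Dranishnikov's dimension statement \cite[Theorem 2.5]{dranishnikov2014topological} through homotopy invariance. Both sides of the claimed equality are homotopy invariants on the relevant class of spaces. $\TC$ is a homotopy invariant in general. $\TC^{M}$ is a homotopy invariant among LEC spaces by \Cref{thm: monodial tc is a relative category}, since there $\TC^M(X)=\rct(\Delta_X)$.

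Set $n=\hdim(X)$, so that $X$ is homotopy equivalent to a CW complex $Y$ with $\dim(Y)=n$. Since $X$ and $Y$ are both CW complexes, they are LEC. Connectivity is a homotopy invariant, so $Y$ is also $k$-connected, and $\TC(Y)=\TC(X)$. The hypothesis therefore reads
$$\dim(Y) < (k+1)(\TC(Y)+1)-1.$$
Dranishnikov's theorem applied to $Y$ then gives $\TC^M(Y)=\TC(Y)$. Using homotopy invariance on both sides,
$$\TC^M(X)=\TC^M(Y)=\TC(Y)=\TC(X).$$

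The main point to check is that Dranishnikov's hypotheses on the space (CW complex, $k$-connected, finite dimension) are satisfied by $Y$, in whatever form \cite{dranishnikov2014topological} states them. If that statement requires, say, a simplicial complex or finite-dimensionality in a specific sense, I would instead pick $Y$ to be a simplicial complex of dimension $\hdim(X)$. This is possible because every CW complex is homotopy equivalent to a simplicial complex of the same dimension.

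The other delicate point is that homotopy invariance of $\TC^M$ truly needs the LEC property on both spaces. This is exactly why the reduction goes through \Cref{thm: monodial tc is a relative category}: relative category of $\Delta$ is invariant under homotopy equivalences, because a homotopy equivalence $f\colon X\to Y$ induces a map of pairs $(X\times X,\Delta X)\to(Y\times Y,\Delta Y)$ that is a homotopy equivalence of pairs. If needed, I would verify this last point directly, using the cofibration property of the diagonals to upgrade the homotopy equivalence to one of pairs.
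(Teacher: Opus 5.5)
Your argument is correct and is essentially the paper's proof. The paper makes the same reduction to a CW complex $Y\simeq X$ with $\dim(Y)=\hdim(X)$, using homotopy invariance of $\TC$ and of $\TC^M=\rct(\Delta)$ on LEC spaces. Your concern about pairs is unnecessary: $f\times f$ and the homotopies $H\times H$ automatically preserve the diagonals.

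The only difference is the last step. Instead of quoting Dranishnikov's Theorem 2.5 for $Y$, the paper re-derives it. It notes that $\Delta_Y$ is a $k$-equivalence, since its homotopy fibre $\Omega Y$ is $(k-1)$-connected, and applies \cite[Theorem 14]{secat-and-relcat-II} to get $\rct(\Delta_Y)=\sct(\Delta_Y)=\TC(Y)$. This avoids any dependence on the exact hypotheses of Dranishnikov's statement, which is the issue you flagged.
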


\begin{proof}
Let $Y$ be a CW complex homotopy equivalent to $X$.
Then it follows that $\TC(Y) = \TC(X)$ and $\TC^{M}(Y) = \TC^{M}(X)$.
Hence, it is enough to show that if $\dim(Y) < (k+1)(\TC(Y)+1)-1$, then $\TC^{M}(Y) = \TC(Y)$.

Note that the homotopy fibre of $\Delta_Y$ is the based loop space $\Omega Y$, since the free path space fibration $\pi \colon PY \to Y \times Y$ is the fibrational substitute of $\Delta_Y$.
Since $\Omega Y$ is $(k-1)$-connected, it follows that the map $\Delta_Y$ is a $k$-equivalence (i.e., the induced map $(\Delta_Y)_* \colon \pi_i(Y) \to \pi_i(Y \times Y)$ is an isomorphism for $i < k$ and a surjection for $i = k$).
Then, by \cite[Theorem 14]{secat-and-relcat-II}, it follows that 
$$
    \rct(\Delta_Y) = \sct(\Delta_Y) = \TC(Y)\,.
$$
Hence, the result follows from \Cref{thm: monodial tc is a relative category}.
\end{proof}

As an application of the preceding theorem, we determine the monoidal topological complexity of configuration spaces.

\begin{example}
\label{ex: monoidal TC of conf spaces}
The \emph{ordered configuration space} of $n$ points in $\R^d$, denoted by $F(\R^d,n)$, is defined as
\[
    F(\R^d,n):=\{(x_1,\dots,x_n)\in (\R^d)^n \mid x_i\neq x_j ~\text{ for }~ i\neq j\}.
\]
For $d,n \geq 2$, Farber and Grant \cite{TC-of-configuration-spaces} showed that 
$$
    \TC(F(\R^d,n)) = 
        \begin{cases}
            2n-2, & \text{if } d \text{ is odd},\\
            2n-3, & \text{if } d \text{ is even}.
        \end{cases}
$$
Since the space $F(\R^d,n)$ is $(d-2)$-connected,
and its homotopy dimension is $(d-1)(n-1)$, the preceding theorem implies that
$$
    \TC^M(F(\R^d,n)) = 
        \begin{cases}
            2n-2, & \text{if } d \text{ is odd},\\
            2n-3, & \text{if } d \text{ is even and } (d,n) \neq (2,2).
        \end{cases}
$$
For the case $d=n=2$, the space $F(\R^2,2)$ is homeomorphic to $\R^3 \times S^1$.
Hence, by \Cref{thm: monodial tc is a relative category} and \cite[Lemma 2.7]{dranishnikov2014topological}, it follows that
$$ 
    \TC^M(F(\R^2,2)) 
        = \TC^M(\R^3 \times S^1) 
        = \TC^M(S^1)
        = \TC(S^1)
        = 1
$$
Hence, the monoidal topological complexity of configuration spacses is given by
$$
    \TC^M(F(\R^d,n)) = 
        \begin{cases}
            2n-2, & \text{if } d \text{ is odd},\\
            2n-3, & \text{if } d \text{ is even}.
        \end{cases}
$$
for all $n,d \geq 2$.
\end{example}

\begin{definition}[{\cite[Definition 1.2]{aguilar-gonzalez-2023motion}}]
The \emph{Fadell-Husseini monoidal topological complexity} of a space $X$, denoted by $\TC^{FH}(X)$, is the least nonnegative integer $k$ such that $X \times X$ may be covered by open subsets $U_0,\dots,U_k$, each of which admits a local section $\sigma_i \colon U_i \to X^I$ of $\pi$ such that 
\begin{enumerate}
    \item the diagonal $\Delta X$ is a subset of $U_0$,
    \item $\sigma_0(x,x) = \mathrm{c}_x$ for all $x \in X$, 
    \item $\Delta X \cap U_i = \emptyset$ for all $i \geq 1$.
\end{enumerate}
If such an integer does not exist, we set $\TC^{FH}(X) = \infty$.
\end{definition}

\begin{theorem}[{\cite[Theorem 1.3]{aguilar-gonzalez-2023motion}}]
If $X$ is an ANR, then
$$
    \TC^{FH}(X) = \TC^{M}(X).
$$      
\end{theorem}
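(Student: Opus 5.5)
This theorem is quoted from \cite[Theorem 1.3]{aguilar-gonzalez-2023motion}. Given what has already been recorded, I would derive it by identifying both invariants with relative categories of the diagonal $\Delta_X \colon X \to X \times X$ and then applying \Cref{thm: relcat = FH-relcat}.

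\textbf{Step 1: hypotheses.} Since $X$ is an ANR, so is $X \times X$. The diagonal $\Delta X$ is closed in $X\times X$ (ANRs are metrizable, hence Hausdorff) and is homeomorphic to $X$, so it is also an ANR. By \Cref{thm: cobfib} with trivial $G$, the inclusion $\Delta_X$ is a closed cofibration. Moreover, $X\times X$ is metrizable, hence normal. In particular $X$ is LEC, so \Cref{thm: monodial tc is a relative category} gives $\TC^M(X)=\rct(\Delta_X)$. Likewise \Cref{thm: relcat = FH-relcat} gives $\rct(\Delta_X)=\rct^{FH}_{\op}(\Delta_X)$.

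\textbf{Step 2: $\TC^{FH}(X)=\rct^{FH}_{\op}(\Delta_X)$.} This is the main obstacle. The approach is to translate sections of $\pi$ into homotopies, in the same way that \Cref{thm: monodial tc is a relative category} is proved.

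For an open set $U\subseteq X\times X$, a section $\sigma$ with $\sigma(x,y)(0)=x$ and $\sigma(x,y)(1)=y$ gives a homotopy
\[
H(x,y,t) = \bigl(\sigma(x,y)(t),\, y\bigr),
\]
and for the first sets (after reversing paths) the sets $U_i$ with $i\ge1$ deform into $\Delta X$.
\begin{itemize}
\item On $U_0$, the condition $\sigma_0(x,x)=c_x$ makes $H$ fix $\Delta X$ pointwise, so $H$ is a homotopy of pairs. Hence $U_0$ is relatively sectional.
\item For $i\ge 1$, we have $U_i\cap\Delta X=\emptyset$, and $H_i$ deforms $U_i$ into $\Delta X$.
\end{itemize}

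Conversely, suppose $H \colon U \times I \to X\times X$ deforms $U$ into $\Delta X$. Write $H=(H^1,H^2)$ and $H(x,y,1)=(z,z)$. Concatenating the path $H^1(x,y,-)$ with the reverse of $H^2(x,y,-)$ gives a path from $x$ to $y$, so we obtain a section of $\pi$ over $U$.

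On $U_0$ the homotopy is of pairs, so for $a\in\Delta X$ the point $H(a,t)$ stays in $\Delta X$. The resulting loop at $(x,x)$ is then $\gamma\cdot\bar\gamma$, which is not literally the constant path $c_x$. To fix this I would first use \Cref{prop: equivalent definition of relative category}(2): in the monoidal case it upgrades the homotopy to one that is stationary on $\Delta X$, giving $H(a,t)=a$. Then the concatenated path at $a$ is constant. For the FH sets with $i\ge1$, no condition on the diagonal is required, so nothing needs checking there.

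\textbf{Step 3: matching the first set.} The remaining point is to confirm that the version of $\rct^{FH}_{\op}$ in \Cref{thm: relcat = FH-relcat} permits $U_0$ to be taken with a stationary homotopy on $A$. If it does not, I would repeat the argument of \Cref{prop: equivalent definition of relative category} on $U_0$ alone: use the cofibration $\Delta_X$ to replace a homotopy of pairs by a rel-$A$ homotopy on a smaller open neighbourhood of $\Delta X$, shrinking $U_0$ while keeping a cover by normality.

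Combining the steps gives
\[
\TC^{FH}(X)=\rct^{FH}_{\op}(\Delta_X)=\rct(\Delta_X)=\TC^M(X).
\]
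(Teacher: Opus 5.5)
Your argument is correct, though the converse half of Step 2 and all of Step 3 are an unnecessary detour. The only nontrivial inequality is $\TC^{M}(X)\le\TC^{FH}(X)$. You get it from the easy half of Step 2: an FH cover with sections gives, via $H(x,y,t)=(\sigma(x,y)(t),y)$, an FH relative cover, with $H_0$ even stationary on $\Delta X$. Combine this with $\rct^{FH}_{\op}(\Delta_X)=\rct(\Delta_X)=\TC^M(X)$.

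The reverse inequality is immediate. Since $X$ is Hausdorff, $\Delta X$ is closed, so deleting $\Delta X$ from $U_1,\dots,U_k$ in a monoidal cover yields an FH cover; the sets stay open and still cover because $\Delta X\subseteq U_0$. So your proof is really the sandwich
\[
\TC^{M}(X)=\rct(\Delta_X)=\rct^{FH}_{\op}(\Delta_X)\le\TC^{FH}(X)\le\TC^{M}(X).
\]
Incidentally, \Cref{prop: equivalent definition of relative category}(2) does not upgrade the homotopy on a prescribed FH set $U_0$ while keeping $U_1,\dots,U_k$. It hands you a new cover whose members all contain $\Delta X$ and carry stationary homotopies. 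Concatenating those homotopies gives $\TC^M(X)\le k$ at once, so the concern in Step 3 disappears.

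The paper does not reprove this cited result. Its proof of the parametrized analogue, \Cref{thm: mon-para-tc = FH-mon-para-tc}, specializes to it at $B=\{*\}$, and runs the same sandwich with generalized (non-open) covers:
\[
\rct^{FH}_g(\Delta_X)\le\TC^{FH}_g(X)\le\TC^{FH}(X)\le\TC^{M}(X)=\rct_g(\Delta_X)=\rct^{FH}_g(\Delta_X).
\]
This uses Garc\'{\i}a-Calcines' equality $\rct_g=\rct$ (\Cref{thm: relcat = gen-relcat}) and the generalized FH comparison \Cref{thm: gen-relcat = gen-FH-relcat}, both of which need ANR hypotheses. Your route instead uses the open-cover comparison \Cref{thm: relcat = FH-relcat}, which needs only a closed cofibration into a normal space. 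So, granting \Cref{thm: monodial tc is a relative category} as stated, your argument gives $\TC^{FH}(X)=\TC^{M}(X)$ for any LEC space $X$ with $X\times X$ normal, not only for ANRs. The paper's route, in exchange, also identifies the generalized invariant $\TC^{FH}_g(X)$ with the other two, and it parallels the proof of the D-monoidal result (\Cref{thm: mon-para-tc = D-mon-para-tc}).
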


\subsection{Symmetrized topological complexity}
\label{subsec: symm-tc}
\hfill\\ \vspace{-0.7em}

In this section, we recall symmetrized topological complexity. 
Suppose $X$ is a topological space.
Then there is a $\Z_2 = \langle g \rangle$ action on both $X^I$ and $X \times X$ defined as follows: 
\[
    g\cdot \gamma(t)=\gamma(1-t) ~\text{and}~ g\cdot (x_1,x_2)=(x_2,x_1).
\]
for $\gamma \in X^I$ and $(x_1,x_2)\in X\times X$.
In fact, Grant proved that the free path space fibration $\pi\colon X^I\to X\times X$ is a $\Z_2$-fibration under this action. 

\begin{notation*}
Throughout this subsection, we denote the group $\Z_2$ by $G$.    
\end{notation*}

\begin{definition}[{\cite[Definition 4.1]{gonzalezhighertc}}]
The \emph{symmetrized topological complexity} of a space $X$ is defined as
$$
    \TC^{\Sigma}(X): = \sct_G(\pi).
$$
\end{definition}

\begin{definition}[{\cite[Definition 5.1]{Grantsymmtc}}]
The \emph{monoidal symmetrized topological complexity} of a space $X$, denoted by $\TC^{M,\,\Sigma}(X)$, is the least nonnegative integer $k$ such that $X \times X$ may be covered by $G$-invariant open subsets $U_0,\dots,U_k$, each of which contains the diagonal $\Delta X$ and admits a local $G$-section $\sigma_i \colon U_i \to X^I$ of $\pi$ such that $\sigma_i(x,x) = \mathrm{c}_x$ for all $x \in X$.
If such an integer does not exist, we set $\TC^{M,\Sigma}(X) = \infty$.
\end{definition}

\begin{definition}
The \emph{D-monoidal symmetrized topological complexity} of a space $X$, denoted by $\TC^{DM,\,\Sigma}(X)$, is the least nonnegative integer $k$ such that $X \times X$ may be covered by $G$-invariant open subsets $U_0,\dots,U_k$, each of which admits a local $G$-section $\sigma_i \colon U_i \to X^I$ of $\pi$ such that $\sigma_i(x,x) = \mathrm{c}_x$ for all $x \in X$ with $(x,x) \in U_i$.
If such an integer does not exist, we set $\TC^{DM,\Sigma}(X) = \infty$.
\end{definition}

It is clear that 
    $$
        \TC^{\Sigma}(X)
            \leq \TC^{DM,\,\Sigma}(X)
            \leq \TC^{M,\,\Sigma}(X).
    $$
    
\begin{theorem}[{\cite[Theorem 5.2]{Grantsymmtc}}]
Let $X$ be a paracompact ENR. 
Then
$$
    \TC^{\Sigma}(X)
        = \TC^{DM,\,\Sigma}(X)
        = \TC^{M,\,\Sigma}(X).
$$
\end{theorem}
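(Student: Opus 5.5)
The inequalities $\TC^{\Sigma}(X) \leq \TC^{DM,\,\Sigma}(X) \leq \TC^{M,\,\Sigma}(X)$ are already noted above. It therefore suffices to show $\TC^{M,\,\Sigma}(X) \leq \TC^{\Sigma}(X)$ when $X$ is a paracompact ENR. I would follow the fibred-join approach, working with the $G$-fibration $\pi \colon X^I \to X \times X$ and its $(k+1)$-fold fibred join $\pi_k \colon J^k_{X\times X}(X^I) \to X \times X$, where $k = \TC^{\Sigma}(X)$.

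\textbf{Step 1.} Since $X \times X$ is paracompact, \cite[Proposition 3.4]{Grantsymmtc} applies to $\sct_G(\pi) \leq k$. It gives a global $G$-section $\sigma \colon X \times X \to J^k_{X\times X}(X^I)$.

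\textbf{Step 2.} Deform $\sigma$ by a $G$-homotopy through sections so that on the diagonal it equals $s_k(x,x) = [c_x,\tfrac{1}{k+1},\dots,c_x,\tfrac{1}{k+1}]$.

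\begin{itemize}
\item First, $\Delta X \hookrightarrow X \times X$ is a closed $G$-cofibration. This follows from \Cref{thm: cobfib}: $X \times X$ is a $\Z_2$-ENR, because its fixed set $\Delta X$ and $X\times X$ itself are ANRs \cite[Theorem 2.1]{Jaworowski1976}, as in \Cref{prop: cofib E to E times B E} with $B$ a point.
\item Restricted to $\Delta X$, the fibre of $\pi_k$ over $(x,x)$ is $J^k(\Omega_x X)$ with the $\Z_2$-action induced by path reversal. This restricted fibration is fibre homotopy trivial in a suitable $G$-sense.
\item It therefore suffices to find a $G$-homotopy of sections over $\Delta X$ from $\sigma|_{\Delta X}$ to $s_k|_{\Delta X}$. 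One natural candidate is the join-coordinate straight-line homotopy $[\gamma_0,t_0,\dots]\mapsto$ the ``join'' of $\sigma$ and $s_k$. This does not literally exist, since a join admits no convex combinations of two points.
\item Instead, I would use the shrinking of each path $\gamma_i$ to the constant path via $\gamma_i|_{[0,s]}$, suitably symmetrized. That shrinking does not preserve the endpoints, so the reparametrization must be chosen more carefully.
\end{itemize}

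\textbf{Step 3.} Extend the homotopy over $X\times X$ using the $G$-homotopy lifting property of $\pi_k$ relative to the $G$-cofibration $\Delta X \hookrightarrow X\times X$ \cite[Proposition 2.9]{Grantsymmtc}. This yields a $G$-section $\sigma'$ with $\sigma' \circ \Delta = s_k$. Pulling back the standard join cover, $U_i = \{t_i \neq 0\}$ after the usual adjustment of the barycentric functions, each $U_i$ contains $\Delta X$ and carries a $G$-section restricting to constants there. This gives $\TC^{M,\,\Sigma}(X)\leq k$.

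\textbf{Main obstacle.} The hard part is Step 2: showing that $\sigma|_{\Delta X}$ is $G$-homotopic through sections to $s_k|_{\Delta X}$. Equivalently, one needs a $\Z_2$-equivariant null-homotopy problem on the fibres $J^k(\Omega_x X)$. A fallback is Dranishnikov's approach for $\TC^M = \TC^{DM}$. Using the ENR structure, take a $G$-invariant neighborhood $W$ of $\Delta X$ that $G$-deformation retracts onto $\Delta X$ and on which the short paths in a tubular neighborhood give a canonical $G$-section equal to constants on $\Delta X$. Then modify each local section on $U_i \cap W$ by cutting it off with a $G$-invariant Urysohn function, and enlarge each $U_i$ by a thinner copy of $W$.
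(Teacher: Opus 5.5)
Your overall outline is the one the paper uses for the parametrized version, \Cref{thm: equivalent defn of sym monoidal para tc}, which reduces to this statement when $B$ is a point: take a global $G$-section of $\Pi_k$, make it equal to $s_k$ on the diagonal via the $G$-cofibration $\Delta X \hookrightarrow X\times X$ and relative lifting, then pull back the join coordinates. The gap is Step 2, which you flag as the main obstacle and leave unproved. The claim that the restricted fibration is ``fibre homotopy trivial in a suitable $G$-sense'' is not justified, and it does not help anyway. The fibre $J^k(\Omega_x X)$ need not be contractible, so triviality alone would not make two sections homotopic.

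The missing idea is to pass to fixed points. Since $G$ acts trivially on $\Delta X$, the $G$-section $\sigma$ must send $\Delta X$ into the subfibration $J^k_{\Delta X}\bigl((X^I)^G\bigr)\to \Delta X$. Its fibre over $(x,x)$ is $J^k\bigl((\Omega_x X)^G\bigr)$. A loop fixed by reversal satisfies $\gamma(1-t)=\gamma(t)$, so restriction to $[0,\tfrac12]$ identifies $(\Omega_x X)^G$ with the contractible based path space $P_xX$. Hence the fixed fibre $J^k(P_xX)$ is contractible.

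It follows that $\sigma|_{\Delta X}$ and $s_k|_{\Delta X}$ are homotopic through sections. The paper gets this in \Cref{lemma: sections restricted to diagonal are homotopic} from Pave\v{s}i\'c's result: a fibration with contractible fibres over a paracompact, locally contractible base is fibre homotopy trivial. Such a homotopy is automatically a $G$-homotopy, because $G$ acts trivially on both $\Delta X$ and the fixed set.

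Your ``suitably symmetrized shrinking'' actually works once you restrict to symmetric loops. Set $R_s(\gamma)(t)=\gamma\bigl(s\min(t,1-t)\bigr)$. This fixes the basepoint at both ends, equals $\gamma$ at $s=1$ precisely because $\gamma$ is symmetric, and equals $\mathrm{c}_x$ at $s=0$. Apply $R_s$ to every join entry, then straighten the join coordinates to $\tfrac{1}{k+1}$, as in the homotopy $K$ in the proof of \Cref{thm: equivalent defn of monoidal para tc}. This gives an explicit vertical $G$-homotopy from $\sigma|_{\Delta X}$ to $s_k|_{\Delta X}$. Your attempt failed only because you were trying to contract arbitrary loops in $\Omega_x X$.

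The fallback does not repair this. It never uses the symmetry in an essential way. Run with $G$ trivial, the same recipe would prove $\TC^{M}(X)=\TC(X)$ for every ENR, settling the Iwase--Sakai conjecture in that generality. So the cut-off step must be hiding the real difficulty. Concretely, to cut a local section $s_i$ off against the canonical short-path section near $\Delta X$, you need a homotopy of sections between them on the overlap. On the diagonal, this asks whether the loops $s_i(x,x)$ contract in $\Omega_x X$ continuously in $x$. That is exactly the Step 2 problem, and it is solvable only because equivariance forces those loops to be symmetric.
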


Since symmetrized topological complexity is a homotopy invariant \cite[Proposition 4.7]{gonzalezhighertc}, it follows that Grant's the dimension-connectivity upper bound \cite[Theorem 4.2]{Grantsymmtc} can be strengthened as follows.
We leave the details to the reader.

\begin{theorem}
If $X$ is a $s$-connected CW complex, then
$$
    \TC^{\Sigma}(X) < \frac{2\,\hdim(X) + 1}{s+1}.
$$
\end{theorem}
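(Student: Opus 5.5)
The plan is to reduce the statement to Grant's dimension--connectivity bound \cite[Theorem 4.2]{Grantsymmtc} by replacing $X$ with a CW complex of minimal dimension in its homotopy type, exactly as in the proof of \Cref{thm: dim--hdim in Dranishnikov's result}. Recall that Grant's theorem asserts that for an $s$-connected CW complex $Y$ one has $\TC^{\Sigma}(Y) < \frac{2\dim(Y)+1}{s+1}$.

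First, I would choose a CW complex $Y$ homotopy equivalent to $X$ with $\dim(Y) = \hdim(X)$, which exists by the definition of homotopy dimension. If $\hdim(X) = \infty$ there is nothing to prove, so assume it is finite. Since connectivity is a homotopy invariant, $Y$ is again $s$-connected. By the homotopy invariance of symmetrized topological complexity \cite[Proposition 4.7]{gonzalezhighertc}, we get $\TC^{\Sigma}(Y) = \TC^{\Sigma}(X)$.

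Next, I would apply Grant's bound to $Y$, which gives
$$
    \TC^{\Sigma}(X) = \TC^{\Sigma}(Y) < \frac{2\dim(Y)+1}{s+1} = \frac{2\,\hdim(X)+1}{s+1}.
$$

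The only point needing care is checking that the hypotheses of the two cited results are met by $Y$ and $X$. This means confirming that Grant's Theorem 4.2 is stated for all $s$-connected CW complexes, including any small-dimension caveats such as $\dim \geq 1$ or $s \geq 0$, and that \cite[Proposition 4.7]{gonzalezhighertc} applies to CW complexes. If Grant's theorem carries an extra hypothesis like $\dim(Y)\ge 1$, I would treat the contractible or low-dimensional cases separately. In those cases $\TC^{\Sigma}$ is small and the inequality can be verified directly.
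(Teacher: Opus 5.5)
Your proposal is correct and is exactly the argument the paper intends, where it says it leaves the details to the reader: replace $X$ by a CW complex of dimension $\hdim(X)$, transfer $\TC^{\Sigma}$ using its homotopy invariance \cite[Proposition 4.7]{gonzalezhighertc}, and apply Grant's bound \cite[Theorem 4.2]{Grantsymmtc}. Your treatment of the degenerate cases is a harmless addition: when $\hdim(X)=\infty$ the bound is vacuous, and when $X$ is contractible $\TC^{\Sigma}(X)=0$.
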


As an application of the preceding theorem, we obtain the following estimate for the monoidal symmetrized topological complexity of configuration spaces.

\begin{example}
Consider the configuration space $F(\R^d,n)$ for $d,n \geq 2$.
Then 
$$
    \TC^{\Sigma}(F(\R^d,n)) 
        < \frac{2 (d-1)(n-1)+1}{(d-2)+1} 
        = 2(n-1) + \frac{1}{(d-1)}.
$$
Hence, $\TC^{\Sigma}(F(\R^d,n)) \leq 2n-2$.
Since $\TC(F(\R^d,n)) \leq \TC^{\Sigma}(F(\R^d,n))$, it follows by \cite{TC-of-configuration-spaces} that
$$
    \TC^{\Sigma}(F(\R^d,n)) = 
        \begin{cases}
            2n-2, & \text{if } d \text{ is odd},\\
            2n-2\text{ or }2n-3, & \text{if } d \text{ is even}.
        \end{cases}
$$
Moreover, $\TC^{M,\Sigma}(F(\R^d,n)) = \TC^{\Sigma}(F(\R^d,n))$.
If $n=2$, then $F(\R^d,2)$ is homotopy equivalent to $S^{d-1}$.
Hence, by \cite[Theorem 6.1]{Grantsymmtc} (see also \cite{MR3665579}), it follows that
$$
    \TC^{\Sigma}(F(\R^d,2))
        = \TC^{\Sigma}(S^{d-1})
        = 2.
$$
Hence, 
$$
    \TC^{\Sigma}(F(\R^d,n)) = 
        \begin{cases}
            2 & \text{if } n=2,\\
            2n-2, & \text{if } d \text{ is odd},\\
            2n-2\text{ or }2n-3, & \text{if } n \geq 3 \text{ and } d \text{ is even}.
        \end{cases}
$$
\end{example}

\subsection{Parametrized topological complexity}
\label{subsec: para-tc}
\hfill\\ \vspace{-0.7em}

For a fibration $p \colon E\to B$, consider the subspace of the path space $E^I$, defined as follows:
\[
    E^I_B := \{\gamma\in E^I \mid (p\circ \gamma)(t)=b ~\text{for some}~ b\in B ~\text{and for all}~ t\in I \}.
\]
The fibre product corresponding to $p \colon E\to B$ is defined by 
\[
    E\times_B E:=\{(e_1,e_2)\in E\times E \mid p(e_1)=p(e_2)\}.
\]
Then the map
\begin{equation}
    \Pi \colon E^I_B \to E\times_B E, \quad \Pi(\gamma) = (\gamma(0),\gamma(1))
\end{equation}
is a fibration (see \cite[Appendix]{PTCcolfree}).

\begin{definition}[{\cite[Definition 4.1]{farber-para-tc}}]
The \emph{parametrized topological complexity} of a fibration $p \colon E\to B$, denoted by $\TC[p \colon E\to B]$, is defined as 
\[
    \TC[p \colon E\to B] :=\sct(\Pi).
\]
\end{definition}

The authors of \cite{farber-para-tc} showed that $\TC[p \colon E\to B]$ is a fibre-preserving homotopy invariant and also proved that 
$$
    \TC(F) \leq \TC[p \colon E \to B],
$$ 
where $F$ denotes the fibre of the fibration $p \colon E \to B$.

\begin{definition}
\label{defn: gen-para-tc}
The \emph{generalized parametrized topological complexity} of a fibration $p \colon E\to B$, denoted by $\TC_g[p \colon E\to B]$, is the least nonnegative integer $k$ such that $E \times_B E$ may be covered by (not necessarily open) subsets $S_0, \dots, S_k$, each of which admits a local section $\sigma_i \colon S_i \to E^I_B$ of $\Pi$.
If such an integer does not exist, we set $\TC_{g}[p \colon E \to B] = \infty$.
\end{definition}

\begin{theorem}
\label{thm:  equivalent defn of para tc}
Suppose $p \colon E \to B$ is a fibration.
Let $\Delta_E \colon E \to E \times_B E$ denote the diagonal map.
If $E \times_B E$ is paracompact, then the following statements are equivalent:
\begin{enumerate}
    \item $\TC[p \colon E \to B] \leq k$.
    \item $\sct(\Delta_E) \leq k$.
    \item The fibration $\Pi_k \colon J^k_{E \times_B E}(E^I_B) \to E \times_B E$ admits a global section.
\end{enumerate}
\end{theorem}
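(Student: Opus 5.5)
The plan is to prove (1)$\Leftrightarrow$(2) and (1)$\Leftrightarrow$(3) separately. Both reduce to standard facts about sectional category once we check that the relevant maps are fibrations, or are homotopy equivalent to fibrations, over the paracompact base $E \times_B E$.

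For (1)$\Leftrightarrow$(3), apply the result on fibred joins recalled above (\cite[Proposition 3.4]{Grantsymmtc}) with $G$ the trivial group. By \cite[Appendix]{PTCcolfree}, or by \Cref{thm: restriction map is a G-fibration} applied to the cofibration $\{0,1\}\hookrightarrow I$ with trivial action, the map $\Pi \colon E^I_B \to E\times_B E$ is a fibration. Since $E\times_B E$ is paracompact, that proposition gives $\sct(\Pi)\le k$ if and only if $\Pi_k \colon J^k_{E\times_B E}(E^I_B)\to E\times_B E$ has a global section. By definition $\TC[p\colon E\to B]=\sct(\Pi)$, so (1) and (3) are equivalent.

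For (1)$\Leftrightarrow$(2), factor the diagonal as $\Delta_E = \Pi\circ c$, where $c\colon E\to E^I_B$ sends $e$ to the constant path $\mathrm{c}_e$. This lands in $E^I_B$ because $p\circ \mathrm{c}_e$ is constant. The key step is to show that $c$ is a homotopy equivalence. We take $\mathrm{ev}_0\colon E^I_B\to E$, $\gamma\mapsto\gamma(0)$, as its inverse. Then $\mathrm{ev}_0\circ c=\id_E$, and the contraction $H(\gamma,s)(t)=\gamma(st)$ stays inside $E^I_B$ because $p\circ\gamma$ is constant. This exhibits $\Pi$ as a fibrational substitute of $\Delta_E$. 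Sectional category in the Bernstein--Ganea sense is invariant under such replacement, which we check directly in both directions.

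\begin{itemize}
\item Suppose $s\colon U\to E^I_B$ is a section of $\Pi$. Then $\mathrm{ev}_0\circ s\colon U\to E$ satisfies $\Delta_E\circ\mathrm{ev}_0\circ s(e_1,e_2)=(e_1,e_1)$. This is homotopic to the inclusion $U\hookrightarrow E\times_B E$ via $\bigl(e_1, s(e_1,e_2)(t)\bigr)$, which lies in $E\times_B E$ because $s(e_1,e_2)$ is a fibrewise path.
\item Suppose $f\colon U\to E$ satisfies $\Delta_E\circ f\simeq\iota_U$. Then $c\circ f$ is a homotopy section of $\Pi$. Since $\Pi$ is a fibration, the homotopy lifting property turns this homotopy section into a genuine section of $\Pi$ over $U$.
\end{itemize}

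Applying these two observations to each member of an open cover shows $\sct(\Pi)=\sct(\Delta_E)$, which is (1)$\Leftrightarrow$(2).

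The main obstacle is modest. It consists of checking that every homotopy used stays inside the fibrewise spaces $E^I_B$ and $E\times_B E$, and of making sure the version of $\sct$ used for the non-fibration $\Delta_E$ is the homotopy-section version. Paracompactness of $E\times_B E$ is needed only for the join characterization in (3).
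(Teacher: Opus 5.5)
Your proposal is correct and follows essentially the same route as the paper. For (1)$\Leftrightarrow$(3) you use the join characterization of sectional category over a paracompact base; the paper cites Schwarz's theorem, which is the trivial-group case of the proposition from \cite{Grantsymmtc} that you invoke. For (1)$\Leftrightarrow$(2) you use the factorization $\Delta_E = \Pi \circ h$ through the constant-path homotopy equivalence, as the paper does, and you verify by hand the invariance of $\sct$ under this replacement, which the paper leaves implicit.
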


\begin{proof}
Since $E \times_B E$ is paracompact, it follows from \cite[Theorem 3]{Sva} that (1) and (3) are equivalent.
Let $h \colon E \to E^I_B$ be the homotopy equivalence which maps $e \in E$ to the constant path in $E$ which takes the value $e$.
Since the following diagram commutes
\begin{equation}
\label{diag: E is homotopy equivalent to E^I_B}
\begin{tikzcd}
E \arrow[rd, "\Delta_E"'] \arrow[rr, "h"] &                   & E^{I}_{B} \arrow[ld, "\Pi"] \\
                                          & {E \times_B E\,,} &                            
\end{tikzcd}
\end{equation}
it follows that $\sct(\Delta_E) = \sct(\Pi) = \TC[p \colon E \to B]$.
\end{proof}

\section{Monodial parametrized topological complexity}
\label{sec: mono-para-tc}

In this section, we introduce and study the notion of monoidal parametrized topological complexity. The motivation for defining this notion is that the motion of parametrized local motion planners should be static when the initial and final states of the mechanical system coincide.
\begin{definition}
The \emph{monoidal parametrized topological complexity} of a fibration $p \colon E \to B$, denoted by $\TC^{M}[p \colon E\to B]$, is the least nonnegative integer $k$ such that $E \times_B E$ may be covered by open subsets $U_0,\dots,U_k$, each of which contains the diagonal $\Delta E$ and admits a local section $\sigma_i \colon U_i \to E^I_B$ of $\Pi$ such that $\sigma_i(e,e) = \mathrm{c}_e$ for all $e \in E$.
If such an integer does not exist, we set $\TC^{M}[p \colon E \to B] = \infty$.
\end{definition}

\subsection{Properties}

Note that we always have
$$
     \TC(F) 
        \leq \TC[p \colon E \to B] 
        \leq \TC^{M}[p \colon E \to B]\,,
$$
where $F$ denotes the fibre of the fibration $p \colon E \to B$.
Moreover, $\TC^M(F)$ is not well defined, since monoidal topological complexity is not a homotopy invariant.
If $B=\{*\}$, then $\TC^{M}[p \colon E \to B]=\TC^{M}(E)$. 
Hence, monoidal parametrized topological complexity is not a fibre-homotopy invariant, because monoidal topological complexity itself is not homotopy invariant.
However, we have the following result:

\begin{proposition}
\label{prop: mono-para-tc under pullback}
Let $p \colon E \to B$ be a fibration and $B' \subseteq B$. 
If $E' = p^{-1}(B')$ and $p' \colon E' \to B'$ is the restricted fibration over $B'$, then 
\begin{equation}\label{eq: restricted fibration ineq for monoidal parametrized}
\TC^{M}[p' \colon E' \to B'] \leq \TC^{M}[p \colon E \to B]\,. 
\end{equation}
In particular, 
\begin{enumerate}
\item $\TC^{M}(p^{-1}(b)) \leq \TC^{M}[p \colon E \to B]$ for all $b \in B$.

\item If $p \colon E \to B$ is a locally trivial fibration with fibre $F$, then $\TC^{M}(F) \leq \TC^{M}[p \colon E \to B]$.
\end{enumerate}
\end{proposition}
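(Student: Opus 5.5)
The plan is to restrict an optimal monoidal cover of $E\times_B E$ to the subspace $E'\times_{B'}E'$. First I note that $E'\times_{B'}E'$ is exactly $(E\times_B E)\cap (E'\times E')$. Indeed, a pair $(e_1,e_2)$ with $p(e_1)=p(e_2)\in B'$ lies in $E'\times E'$. Conversely, a pair in $E'\times E'$ with equal images under $p$ has both points over $B'$. So $E'\times_{B'}E'$ is a subspace of $E\times_B E$, and $\Delta E'=\Delta E\cap (E'\times_{B'}E')$. Likewise, a path in $E^I_B$ whose endpoints lie in $E'$ stays in a single fibre $p^{-1}(b)$ with $b\in B'$. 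Hence it lies in $E'^I_{B'}=E^I_B\cap E'^I$.

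Now assume $\TC^{M}[p\colon E\to B]=k<\infty$; otherwise there is nothing to prove. Take an open cover $U_0,\dots,U_k$ of $E\times_B E$ with $\Delta E\subseteq U_i$ and sections $\sigma_i$ of $\Pi$ satisfying $\sigma_i(e,e)=c_e$. Set $U_i'=U_i\cap(E'\times_{B'}E')$. These sets are open in $E'\times_{B'}E'$, they cover it, and each contains $\Delta E'$. Let $\sigma_i'=\sigma_i|_{U_i'}$. For $(e_1,e_2)\in U_i'$, the path $\sigma_i(e_1,e_2)$ runs from $e_1$ to $e_2$ inside one fibre, so by the observation above it lies in $E'^I_{B'}$. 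The map $\sigma_i'\colon U_i'\to E'^I_{B'}$ is continuous, because the compact-open topology on $E'^I$ agrees with the subspace topology from $E^I$. It is a section of $\Pi'$ and satisfies $\sigma_i'(e,e)=c_e$. This gives the inequality \eqref{eq: restricted fibration ineq for monoidal parametrized}.

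For (1), take $B'=\{b\}$. Then $E'=p^{-1}(b)$, and $E'\times_{B'}E'=E'\times E'$ with $E'^I_{B'}=E'^I$, so $\TC^M[p'\colon E'\to B']=\TC^M(p^{-1}(b))$, as already remarked in the paper for $B=\{*\}$. For (2), local triviality identifies each fibre $p^{-1}(b)$ homeomorphically with $F$. Monoidal topological complexity is invariant under homeomorphism, since a homeomorphism carries covers, diagonals and constant paths to the same kind of data. Hence (1) gives $\TC^M(F)\le\TC^M[p\colon E\to B]$.

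I expect no serious obstacle. The only point requiring care is the topological bookkeeping: checking that the restricted path space carries the subspace topology, and that the restricted sections really land in $E'^I_{B'}$.
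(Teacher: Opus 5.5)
Your proposal is correct and takes essentially the same route as the paper. The paper notes that the square formed by $(E')^I_{B'} \hookrightarrow E^I_B$ over $E'\times_{B'}E' \hookrightarrow E\times_B E$ is a pullback and gets $\sigma_i'$ from its universal property; your direct checks (restricted paths stay in a fibre over $B'$, and the compact-open topology restricts to subspaces) verify that same property by hand, and your deductions of (1) and (2) via $B'=\{b\}$ and homeomorphism invariance match the paper's.
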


\begin{proof}
Note that the following commutative diagram   
\[
\begin{tikzcd}
(E')^{I}_{B'} \arrow[d, "\Pi'"'] \arrow[r, hook] & E^{I}_B \arrow[d, "\Pi"] \\
E' \times_{B'} E' \arrow[r, hook]                & E \times_B E            
\end{tikzcd}
\]
is a pullback.
Suppose $U$ is an open subset of $E \times_B E$ containing the diagonal $\Delta E$ and $\sigma \colon U \to E^{I}_B$ is a section of $\Pi$ satisfying $\sigma(e,e) = \mathrm{c}_e$.
Then $V := U \cap (E' \times_{B'} E')$ contains $\Delta E'$.
Moreover, by the universal property of pullbacks, there exists a section $\sigma' \colon V \to (E')^{I}_{B'}$ of $\Pi'$ such that the following diagram commutes
\[
\begin{tikzcd}
V \arrow[d, hook] \arrow[r, "\sigma'"] & (E')^{I}_{B'} \arrow[d, hook] \\
U \arrow[r, "\sigma"]                  & {E^{I}_B\,.}                 
\end{tikzcd}
\]
Hence, $\sigma'(e',e') = \mathrm{c}_{e'}$ for all $e' \in E'$.
This proves the inequality \eqref{eq: restricted fibration ineq for monoidal parametrized}.

In particular, taking $B'=\{b\}$ in \eqref{eq: restricted fibration ineq for monoidal parametrized}, we obtain $\TC^{M}(p^{-1}(b)) \leq \TC^{M}[p \colon E \to B]$. 
Furthermore, for a locally trivial fibration $p\colon E\to B$ with fibre $F$, we have $F$ is homeomorphic to $p^{-1}(b)$ for any $b\in B$. 
Consequently, we have $\TC^{M}(F) \leq \TC^{M}[p \colon E \to B]$. 
\end{proof}

Let $s_0 \colon E \to E^I_B$ be the canonical map, which sends an element $e$ in $E$ to the constant path $\mathrm{c}_e$ that takes the value $e$.
It induces a map $s_k \colon E \to J^k_{E \times_B E}(E^{I}_B)$ given by
$$
    s_k(e) = \left[\mathrm{c}_e,\frac{1}{k+1},\dots,\mathrm{c}_e,\frac{1}{k+1}\right].
$$
Let $\Delta_E \colon E \to E \times_B E$ be the diagonal map.
The diagram \eqref{diag: E is homotopy equivalent to E^I_B} then induces the following commutative diagram
\begin{equation}
\begin{tikzcd}
E \arrow[rr, "i_k"] \arrow[rrd, "s_k"] \arrow[d, "h"'] &  & J_{E \times_B E}^{k}(E) \arrow[r, "(\Delta_E)_k"] \arrow[d, "h_k"] & E \times_B E \arrow[d, Rightarrow, no head] \\
E^{I}_B \arrow[rr, "j_k"]                              &  & J_{E \times_B E}^{k}(E^{I}_B) \arrow[r, "\Pi_k"]                   & E \times_B E ,                              
\end{tikzcd}   
\end{equation}
where $i_k$ and $j_k$ are the canonical maps satisfying $\Pi_k \circ j_k = \Pi$ and $(\Delta_E)_k \circ i_k = \Delta_E$, respectively, as in diagram \eqref{diag: canonical map into join}, and $h_k$ is a homotopy equivalence given by
$$
   h_k(\left[e_0,t_0,\dots,e_k,t_k\right])
        = \left[\mathrm{c}_{e_0},t_0,\dots,\mathrm{c}_{e_k},t_k\right].
$$

The following theorem states that the monoidal parametrized topological complexity is a relative category. 
Moreover, it generalizes \cite[Theorem 12]{carrasquel2014relative} and \cite[Proposition 2.4 (2)]{dranishnikov2014topological} to the parametrized setting.

\begin{theorem}
\label{thm: equivalent defn of monoidal para tc}
Suppose $p \colon E \to B$ is an LEC fibration such that $E \times_B E$ is paracompact.
Then the following statements are equivalent:
\begin{enumerate}
    \item $\TC^{M}[p \colon E\to B] \leq k$.
    
    \item $\mathrm{relcat}(\Delta_E) \leq k$.

    \item The fibration $\Pi_k \colon J^k_{E \times_B E}(E^I_B) \to E \times_B E$ admits a global section $\sigma$ satisfying $\sigma \circ \Delta_E \simeq s_k$.
    
    \item The fibration $\Pi_k \colon J^k_{E \times_B E}(E^I_B) \to E \times_B E$ admits a global section $\sigma$ satisfying $\sigma \circ \Delta_E = s_k$.
\end{enumerate}
\end{theorem}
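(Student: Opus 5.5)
We will prove the cycle of implications $(1)\Rightarrow(4)\Rightarrow(3)\Rightarrow(2)\Rightarrow(1)$, following the pattern of \cite[Theorem 12]{carrasquel2014relative} and \cite[Proposition 2.17]{garcia2019note}. The fixed data are the diagonal $\Delta_E \colon E \to E\times_B E$, which is a closed cofibration because $p$ is LEC, and the fibrewise homotopy equivalence $h\colon E\to E^I_B$ over $E\times_B E$ from diagram \eqref{diag: E is homotopy equivalent to E^I_B}. Paracompactness of $E\times_B E$ supplies partitions of unity, and hence normality, as needed in \Cref{prop: equivalent definition of relative category}.

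\emph{$(1)\Rightarrow(4)$.} Take an open cover $U_0,\dots,U_k$ with $\Delta E\subseteq U_i$ and sections $\sigma_i$ satisfying $\sigma_i(e,e)=\mathrm{c}_e$. Choose a partition of unity $\{\phi_i\}$ subordinate to this cover and define
\[
\sigma(x)=[\sigma_0(x),\phi_0(x),\dots,\sigma_k(x),\phi_k(x)].
\]
This is a well-defined continuous section of $\Pi_k$; it is the standard Schwarz construction. On the diagonal it gives $[\mathrm{c}_e,\phi_0,\dots,\mathrm{c}_e,\phi_k]$. This is not literally $s_k(e)$ unless every $\phi_i$ equals $1/(k+1)$ on $\Delta E$. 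So we first modify the partition of unity. Since $\Delta E$ is closed and lies in every $U_i$, we may use a Urysohn function $\lambda$ that equals $1$ on $\Delta E$ and is supported in $\bigcap_i U_i$. We then replace $\phi_i$ by
\[
\lambda\cdot\tfrac1{k+1}+(1-\lambda)\phi_i,
\]
which is still subordinate to the cover. This gives $\sigma\circ\Delta_E=s_k$ exactly. The implication $(4)\Rightarrow(3)$ is trivial.

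\emph{$(3)\Rightarrow(2)$.} Compose with a fibrewise homotopy inverse of $h_k$ over $E\times_B E$. Because $h$ is a fibrewise homotopy equivalence over $E\times_B E$, with inverse evaluation at $0$, the join $h_k$ is as well. From a section $\sigma$ of $\Pi_k$ with $\sigma\circ\Delta_E\simeq s_k$ we thus obtain a homotopy section $\sigma'$ of $(\Delta_E)_k$ with $\sigma'\circ\Delta_E\simeq i_k$, using $h_k\circ i_k=s_k$. This is exactly \Cref{defn: relative category}, so $\rct(\Delta_E)\le k$.

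\emph{$(2)\Rightarrow(1)$.} Apply \Cref{prop: equivalent definition of relative category}(2) to the closed cofibration $\Delta_E$. This gives open sets $U_i\supseteq\Delta E$ covering $E\times_B E$, together with homotopies $H_i\colon U_i\times I\to E\times_B E$ such that $H_i(x,0)=x$, $H_i(x,1)\in\Delta E$, and $H_i$ is stationary on $\Delta E$. The sticking point is that $H_i$ need not live in a single fibre of $p$, whereas sections into $E^I_B$ require fibrewise paths. Write $x=(e,e')$ and $H_i(x,t)=(\alpha(t),\beta(t))$, with $\alpha(1)=\beta(1)$ and $p\alpha=p\beta$ (a path $\omega$ in $B$ from $p(e)$). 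The path $\alpha\cdot\bar\beta$ joins $e$ to $e'$ in $E$ and lies over $\omega\cdot\bar\omega$. We contract $\omega\cdot\bar\omega$ to the constant path at $p(e)$ by the canonical contraction, continuous in $\omega$ and rel endpoints. We then lift this contraction to $E$ with endpoints held fixed, using the relative homotopy lifting property for the fibration $p$ with respect to $(I,\partial I)$. For continuity in $x$ we use a lifting function for $p$, or equivalently apply the relative lifting to the family over $U_i\times I$.

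The main obstacle is keeping the condition on the diagonal. When $x\in\Delta E$, $\alpha=\beta$ is constant, so $\omega$ is constant and the contraction is stationary; we need the lift to be the constant path $\mathrm{c}_e$. We handle this by treating $\Delta E\times I\cup U_i\times\partial I$ as part of the data to be preserved. Since $\Delta E\hookrightarrow U_i$ is a closed cofibration, the relative lifting extension for the fibration $p$ applies to the pair $(U_i\times I^2,\ \Delta E\times I^2\cup U_i\times(\partial I\times I\cup I\times\{0\}))$. This yields sections $\sigma_i$ of $\Pi$ on $U_i$ with $\sigma_i(e,e)=\mathrm{c}_e$, possibly after reparametrising the constant concatenation. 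Hence $\TC^M[p\colon E\to B]\le k$.
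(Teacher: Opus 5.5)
Your cycle $(1)\Rightarrow(4)\Rightarrow(3)\Rightarrow(2)\Rightarrow(1)$ is organised differently from the paper's proof. The paper proves $(1)\Rightarrow(3)$ by a straight-line homotopy of the join coordinates along $\Delta E$, and $(2)\Leftrightarrow(3)$ by composing with $h_k$ or a homotopy inverse, using that $\Pi_k$ is a fibration to upgrade a homotopy section to a section. It then proves $(3)\Rightarrow(4)$ by one relative lifting for $\Pi_k$ along the \emph{global} cofibration $\Delta E\hookrightarrow E\times_B E$, and $(4)\Rightarrow(1)$ by taking $U_i=\sigma^{-1}(t_i^{-1}(0,1])$, which contain $\Delta E$ because $t_i(s_k(e))=1/(k+1)$.

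Your $(1)\Rightarrow(4)$, which pushes the partition of unity to $1/(k+1)$ near $\Delta E$, is correct and more elementary than the paper's detour through $(3)$. Your $(2)\Rightarrow(1)$ is genuinely different: it works set by set. It turns each relatively sectional open set from \Cref{prop: equivalent definition of relative category} directly into the domain of a monoidal fibrewise section, by lifting the canonical null-homotopy of $\omega\cdot\bar\omega$ rel endpoints. That is a finer statement than the paper obtains, since its open sets are manufactured from join coordinates. However, it costs a cofibration you have not established.

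The unjustified step is ``$\Delta E\hookrightarrow U_i$ is a closed cofibration''. This does not follow from the LEC hypothesis, which concerns $\Delta E\hookrightarrow E\times_B E$; the retracting homotopy of an NDR-pair can leave $U_i$. The paper's \Cref{question: existence of monoidal section} names exactly this as the obstacle, and its proof is arranged so that only the global cofibration is ever used.

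In your setting the gap can be closed using normality of $E\times_B E$:
\begin{enumerate}
\item Take NDR data $(u,H)$ for $(E\times_B E,\Delta E)$.
\item Let $V=\{x : H(\{x\}\times I)\subseteq U_i\}$; it is open by compactness of $I$ and contains $\Delta E$.
\item Choose a Urysohn function $\varphi$ with $\varphi|_{\Delta E}=0$ and $\varphi=1$ off $V$.
\item Set $H'(x,t)=H(x,t\min\{1,2-2\varphi(x)\})$ and $u'=\min\{1,\max\{u,2\varphi\}\}$.
\end{enumerate}
These restrict to NDR data for $(U_i,\Delta E)$. After that, your relative lifting along $\Delta E\times I\cup U_i\times\partial I\hookrightarrow U_i\times I$ goes through as written.

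Separately, in $(3)\Rightarrow(2)$ your claim that $h$ is a fibrewise homotopy equivalence over $E\times_B E$, with inverse evaluation at $0$, is false. Indeed $\Delta_E(\alpha(0))=(\alpha(0),\alpha(0))\neq\Pi(\alpha)$, and no map $E^I_B\to E$ over $E\times_B E$ exists as soon as a fibre contains a path between distinct points. The claim is also unnecessary. For an ordinary homotopy inverse $g_k$ of $h_k$ one has $(\Delta_E)_k\circ g_k\circ\sigma=\Pi_k\circ h_k\circ g_k\circ\sigma\simeq\Pi_k\circ\sigma=\id$, which is all \Cref{defn: relative category} asks, exactly as in the paper.
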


\begin{proof}
$(1) \implies (3)$
    Suppose $\TC^{M}[p \colon E \to B] \leq k$.
    Let $\mathcal{U} = \{U_0, \dots,U_k\}$ be an open cover of $E \times_B E$ such that $\Delta E \subset U_i$ for each $i$, and there exists a section $\sigma_i$ of $\Pi$ over each $U_i$ satisfying $\sigma_i(e,e) = \mathrm{c}_e$ for all $e \in E$.
    As $E \times_B E$ is paracompact, there exists a partition of unity $\{h_0,\dots,h_k\}$ subordinate to the cover $\mathcal{U}$.
    Then 
    $$
        \sigma(e_1,e_2) 
            = [\sigma_0(e_1,e_2),h_0(e_1,e_2), \dots \sigma_k(e_1,e_2),h_k(e_1,e_2)]
    $$
    is a global section of $\Pi_k$.
    Let $K \colon E \times I \to J^{k}_{E \times_B E}(E^{I}_B)$ be the homotopy defined as
    $$
        K(e,t) 
            = \left[\mathrm{c}_e,(1-t) h_{0}(e,e)+t\frac{1}{k+1}, \dots, \mathrm{c}_e,(1-t) h_{k}(e,e)+t\frac{1}{k+1}\right].
    $$
    Then $K_0 = \sigma \circ \Delta_E$ and $K_1 = s_k$ implies that $\Pi_k$ admits a global section $\sigma$ satisfying $\sigma \circ \Delta_E \simeq s_k$.

$(3) \implies (2)$ 
    Suppose $\Pi_k$ admits a global section $\sigma$ satisfying $\sigma \circ \Delta_E \simeq s_k$.
    If $g_k \colon J^{k}_{E \times_B E}(E^{I}_B) \to J^{k}_{E \times_B E}(E^{I}_B)$ is a homotopy inverse of $h_k$, then
    $$
        g_k \circ \sigma \circ \Delta_E
            \simeq g_k \circ s_k
            = g_k \circ h_k \circ i_k
            \simeq i_k
    $$
    implies $g_k \circ \sigma$ is a global homotopy section of $(\Delta_E)_k$ satisfying $(g_k \circ \sigma) \circ \Delta_E \simeq i_k$.
    Hence, $\mathrm{relcat}(\Delta_E) \leq k$.
    
$(2) \implies (3)$
    Suppose $\mathrm{relcat}(\Delta_E) \leq k$, that is, $(\Delta_E)_k$ admits a global homotopy section $\rho$ satisfying $\rho \circ \Delta_E \simeq i_k$.
    Then $\rho' = h_k \circ \rho$ is a global homotopy section of $\Pi_k$ satisfying 
    $$
        \rho' \circ \Delta_E 
            = h_k \circ \rho \circ \Delta_E
            \simeq h_k \circ i_k
            = s_k.
    $$
    Since $\Pi_k$ is a fibration, we can get a global section $\sigma$ of $\Pi_k$ such that $\sigma \simeq \rho'$.
    Hence, $\sigma \circ \Delta_E \simeq \rho' \circ \Delta_E \simeq s_k$.

$(3) \implies (4)$
Suppose $\Pi_k$ admits a global section $\sigma$ satisfying $\sigma \circ \Delta_E \simeq s_k$.
Let $K \colon \Delta E \times I \to J^{k}_{E \times_B E}(E^{I}_B)$ be a homotopy such that $K_0 = \left.\sigma\right|_{\Delta E}$ and $K_1 = s_k \circ \Delta_E^{-1}$, where $\Delta_E^{-1} \colon \Delta E \to E$ is the inverse of the restricted diagonal map.
Consider the diagram
\begin{equation}
\label{diag: relative homotopy lifting}
\begin{tikzcd}
(E \times_B E \times \{0\}) \cup (\Delta E \times I) \arrow[d, hook] \arrow[rr, "\sigma \cup K"] &  & J^{k}_{E \times_B E}(E^I_B) \arrow[d, "\Pi_k"] \\
E \times_B E \times I \arrow[rr, "H"] \arrow[rru, "F", dotted]                                   &  & E \times_B E,                                  
\end{tikzcd}
\end{equation}
where $H$ is the identity homotopy.
As the inclusion map $\Delta E \hookrightarrow E \times_B E$ is a closed cofibration, by \cite[Proposition 2.9]{Grantsymmtc}, we obtain a homotopy $F \colon (E\times_B E) \times I \to J^{k}_{E \times_B E}(E^I_B)$ such that the diagram \eqref{diag: relative homotopy lifting} commutes.
Let $\widetilde{\sigma} \colon E \times_B E \to J^{k}_{E \times_B E}(E^{I}_B)$ be defined as
$$
    \widetilde{\sigma} := \left.F\right|_{(E \times_B E) \times \{1\}}.
$$
Then $\widetilde{\sigma}$ is a section of $\Pi_k$, since 
$$
    \Pi_k \circ \widetilde{\sigma} 
        = \Pi_k \circ \left.F\right|_{(E \times_B E) \times \{1\}} 
        = \left.H\right|_{(E \times_B E) \times \{1\}} 
        = \mathrm{id}_{E \times_B E}.
$$    
Moreover, $\widetilde{\sigma}$ satisfies
$$
    \widetilde{\sigma}(e,e) 
        = \left.F\right|_{(E \times_B E) \times \{1\}}(e,e)
        = K(e,e,1)
        = (s_k \circ \Delta_E^{-1})(e,e)
        = s_k(e).
$$
    
$(4) \implies (1)$
    Suppose $\Pi_k$ admits a global section $\sigma$ satisfying $\sigma \circ \Delta_E = s_k$.
    Let $U_i := \sigma^{-1}(t_i^{-1}(0,1])$, where $t_i$'s are join coordinates in $J^{k}_{E \times_B E}(E^{I}_B)$. 
    As each $U_i$ consists of those $(e_1,e_2) \in E \times_B E$ for which the $i$-th join coordinate of $\sigma(e_1,e_2)$ is positive, it follows that $U_i$'s forms a cover of $E \times_B E$.
    Moreover, each $U_i$ contains the diagonal $\Delta E$, since
    $$
        t_i(\sigma(e,e)) 
            = t_i(s_k(e,e)) 
            = \frac{1}{k+1}.
    $$
    Define $\sigma_i \colon U_i \to E^I_B$ by the formula
    $$
        \sigma(e_1,e_2) 
            = [\sigma_0(e_1,e_2),t_0, \dots ,\sigma_k(e_1,e_2),t_k].
    $$
    As $\sigma(e,e) = s_k(e)$ for all $e \in E$, it follows that $\sigma_i(e,e) = \mathrm{c}_e$.
\end{proof}

We now present some applications of the above theorem.

\begin{proposition}
\label{prop: mon-para-tc = 0 iff para-tc = 0}
Suppose $p \colon E \to B$ is an LEC fibration such that $E \times_B E$ is a paracompact Hausdorff space. 
Then 
$$
    \TC[p \colon E \to B] = 0\text{ if and only if }\TC^{M}[p \colon E \to B] = 0.
$$
In particular, if $p \colon E \to B$ is an LEC fibration with contractible fibre $F$ and $E \times_B E$ is a CW complex, then $\TC^{M}[p \colon E \to B] = 0$.    
\end{proposition}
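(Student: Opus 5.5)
The inequality $\TC[p \colon E \to B] \leq \TC^{M}[p \colon E \to B]$ already gives one direction: if $\TC^{M}[p \colon E \to B]=0$ then $\TC[p \colon E \to B]=0$. For the converse, the plan is to use \Cref{thm: equivalent defn of monoidal para tc} with $k=0$. Note that $J^0_{E\times_B E}(E^I_B)=E^I_B$, $\Pi_0=\Pi$ and $s_0$ is the constant-path map. By the implication $(3)\Rightarrow(1)$ there, it therefore suffices to produce a global section $\sigma$ of $\Pi$ with $\sigma\circ\Delta_E\simeq s_0$.

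\textbf{Step 1 (a global section).} Suppose $\TC[p \colon E \to B]=0$. Then $\Pi$ has a global section $\sigma \colon E\times_B E\to E^I_B$. (Alternatively, \Cref{thm:  equivalent defn of para tc} with $k=0$ gives the same conclusion using paracompactness.)

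\textbf{Step 2 (the diagonal condition).} We must show $\sigma\circ\Delta_E\simeq s_0$ as maps $E\to E^I_B$. Both maps have the same composite with $\Pi$, namely $\Delta_E$. The map $\sigma\circ\Delta_E$ assigns to each $e$ a loop $\sigma(e,e)$ based at $e$ inside the fibre $p^{-1}(p(e))$, while $s_0$ assigns the constant loop. The trick is to replace $\sigma$ by a corrected section whose diagonal values are contractible loops. Define
\[
\sigma'(e_1,e_2):=\overline{\sigma(e_1,e_1)}\ast\sigma(e_1,e_2),
\]
the concatenation of the reverse of the loop $\sigma(e_1,e_1)$ with the path $\sigma(e_1,e_2)$. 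This is continuous and is again a section of $\Pi$, since both pieces lie in a single fibre. On the diagonal, $\sigma'(e,e)=\overline{\sigma(e,e)}\ast\sigma(e,e)$. The standard null-homotopy $\bar\gamma\ast\gamma\simeq \mathrm{c}$, which traverses less and less of $\gamma$, is continuous in $\gamma$, stays in the fibre, and fixes the endpoints. Applying it pointwise gives a homotopy $\sigma'\circ\Delta_E\simeq s_0$, and this homotopy even holds over $\Delta_E$. Hence condition (3) of \Cref{thm: equivalent defn of monoidal para tc} holds, and $\TC^{M}[p \colon E \to B]=0$.

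The main point needing care is this Step 2 correction. A naive argument tries to compare $\sigma(e,e)$ with $\mathrm{c}_e$ directly, which fails because the loops $\sigma(e,e)$ need not be contractible. The concatenation trick avoids this, and the only remaining check is continuity of the reparametrized homotopy in the compact-open topology, which is routine.

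\textbf{The ``in particular'' part.} Suppose $F$ is contractible and $E\times_B E$ is a CW complex, so it is paracompact Hausdorff. The fibre of $\Pi$ is the based loop space $\Omega F$, which is contractible. Equivalently, $\Delta_E \colon E\to E\times_B E$ is a fibrewise map whose fibres are the inclusion of a point into the contractible space $F\times F$, so $\Delta_E$ is a weak equivalence. By Whitehead's theorem applied to the fibration $\Pi$ (whose fibre is weakly contractible) over the CW base $E\times_B E$, obstruction theory yields a global section; all obstructions vanish since $\pi_*(\Omega F)=0$. Thus $\TC[p \colon E \to B]=0$, and the first part gives $\TC^{M}[p \colon E \to B]=0$.
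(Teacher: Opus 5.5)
Your proof is correct, but it takes a different route from the paper. The paper never modifies the section. It uses $\sigma$ to deform $E\times_B E$ into $\Delta E$ along the paths $\sigma(e_1,e_2)$, in effect $(e_1,e_2,t)\mapsto(e_1,\sigma(e_1,e_2)(1-t))$, so that the final map $(e_1,e_2)\mapsto(e_1,e_1)$ is the identity on $\Delta E$. García-Calcines's open-cover characterization of relative category (condition (3) of \Cref{prop: equivalent definition of relative category}, with the single open set $E\times_B E$) then gives $\rct(\Delta_E)=0$, and $(2)\Rightarrow(1)$ of \Cref{thm: equivalent defn of monoidal para tc} finishes.

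You instead stay inside path spaces and go through $(3)\Rightarrow(1)$. The concatenation $\overline{\sigma(e_1,e_1)}\ast\sigma(e_1,e_2)$ gives a section whose diagonal values are contracted by an explicit, endpoint-fixing homotopy. Your route avoids the external relcat characterization, which is where normality (hence the paracompact Hausdorff hypothesis) enters. For $k=0$ you use only that $\Delta E\hookrightarrow E\times_B E$ is a closed cofibration and that $\Pi$ is a fibration. Moreover, the fact that your homotopy lies over $\Delta_E$ is not a side remark. It is exactly the compatibility with the stationary homotopy that the relative lifting in $(3)\Rightarrow(4)$ needs, so it deserves to be stated as an essential part of the argument. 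The paper's route, in exchange, is a one-line deformation once the relcat machinery is available.

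One small correction to your motivation. Since $\TC(F)\le\TC[p\colon E\to B]=0$, every fibre is contractible, so each loop $\sigma(e,e)$ is in fact individually null-homotopic. The real obstacle to the naive approach is choosing these null-homotopies continuously in $e$ and within the fibres of $\Pi$, which is precisely what your trick supplies.

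For the ``in particular'' part, the paper simply cites \cite[Proposition 4.5]{farber-para-tc}. Your argument works, but present it as one argument rather than mixing Whitehead and obstruction theory. Either use obstruction theory, or argue that $\Pi$ is a weak equivalence onto a CW complex, hence has a homotopy section, which the lifting property makes strict.
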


\begin{proof}
Suppose $\TC[p \colon E \to B] = 0$, i.e., $\Pi$ admits a global section, say $\sigma \colon E \times_B E \to E^{I}_B$.
Let $g \colon E^{I}_B \to E$ be the evaluation map given by $\alpha \mapsto \alpha(0)$.
Let $h \colon E \to E^{I}_B$ be the map which sends $e \in E$ to the constant path $\mathrm{c}_e$ at $e$, see diagram \eqref{diag: E is homotopy equivalent to E^I_B}.
Let $H \colon E^{I}_B \times I \to E^{I}_B$ be the homotopy defined by
$$
    H(\alpha,t)(s) := \alpha(t-st) 
        \quad \text{for all }s,t \in I \text{ and } \alpha \in E^{I}_B\,.
$$
Then $H$ is a homotopy between the identity map of $E^{I}_B$ and $h \circ g$.
In particular, $g$ is a homotopy equivalence, since $g \circ h = \id_E$.
Note that $(h \circ g)(\alpha) = \mathrm{c}_{\alpha(0)}$ for all $\alpha \in E^{I}_B$.
Define the homotopy $K \colon (E \times_B E) \times I \to E \times_B E$ as the composition
$$
    K : = \Pi \circ H \circ (\sigma \times \id_I) \,.
$$
Then $K_0 = \id_{E \times_B E}$ and $K_1 = \Pi \circ H_1 \circ \sigma = \Pi \circ h \circ g \circ \sigma = \Delta_E \circ g \circ \sigma$, where $\Delta_E \colon E \to E \times_B E$ is the diagonal inclusion.
Moreover, 
\begin{align*}
K(e,e,1) 
        = \Pi(H(\sigma(e,e),1)) 
        & = (H(\sigma(e,e),1)(0),\, H(\sigma(e,e),1)(1))\\
        & = (\sigma(e,e)(1),\, \sigma(e,e)(0)) \\
        & = (e,e).     
\end{align*}
Hence, by \Cref{prop: equivalent definition of relative category}, it follows that $\rct(\Delta_E) = 0$.
Consequently, $\TC^{M}[p \colon E \to B] = 0$ by \Cref{thm: equivalent defn of monoidal para tc}.

Furthermore, if the fibre $F$ of $p$ is contractible and $E \times_B E$ is a CW complex, then \cite[Proposition 4.5]{farber-para-tc} implies that $\TC[p \colon E \to B] = 0$. 
Hence, $\TC^{M}[p \colon E \to B] = 0$.
\end{proof}

Let us recall the notion of fibre-preserving homotopy equivalence in the sense of Grant, see \cite[Page 292]{GrantPTC}.
Let $p \colon E \to B$ and $p' \colon E' \to B'$ be fibrations.
A \emph{fibre-preserving map} from $p$ to $p'$ is a pair of maps $f \colon E \to E'$ and $g \colon B \to B'$ satisfying $p' \circ f = g \circ p$, and is denoted by $(f,g) \colon p \to p'$.
A \emph{fibre-preserving homotopy} is a pair of maps $F \colon E \times I \to E'$ and $G \colon B \times I \to B'$
such that $p' \circ F = G \circ (p \times \id_I)$, and is denoted by $(F,G)$.
The fibrations $p$ and $p'$ are said to be \emph{fibre-preserving homotopy equivalent} if there exist fibre-preserving maps $(f,g) \colon p \to p'$ and $(f',g') \colon p' \to p$ such that $(f \circ f',\, g \circ g')$ is fibre-preserving homotopic to $(\id_{E'},\, \id_{B'})$ and $(f' \circ f,\, g' \circ g)$ is fibre-preserving homotopic to $(\id_{E},\, \id_{B})$.

\begin{corollary}
Suppose $p \colon E \to B$ and $p' \colon E' \to B'$ are LEC fibrations such that $E \times_B E$ and $E' \times_{B'} E'$ are paracompact. 
If there exist homotopy equivalences $f \colon E \to E'$ and
$g \colon B \to B'$ such that $(f,g) \colon p \to p'$ is a fibre-preserving map, then
$$
    \TC^M[p \colon E \to B] 
        = \TC^M[p' \colon E' \to B'].
$$
\end{corollary}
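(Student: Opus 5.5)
By \Cref{thm: equivalent defn of monoidal para tc}, it suffices to show that $\rct(\Delta_E) = \rct(\Delta_{E'})$. Here $\Delta_E \colon E \to E \times_B E$ and $\Delta_{E'} \colon E' \to E' \times_{B'} E'$ are the fibrewise diagonals. Both are closed cofibrations by the LEC hypothesis, and both fibre products are paracompact. The plan is to reduce the corollary to the homotopy invariance of relative category under a homotopy equivalence of maps, that is, a homotopy commutative square whose horizontal arrows are homotopy equivalences.

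First I would build the square. The fibre-preserving map $(f,g)$ induces
$$
    f \times_g f \colon E \times_B E \to E' \times_{B'} E', \qquad (e_1,e_2) \mapsto (f(e_1),f(e_2)),
$$
which is well defined because $p' \circ f = g \circ p$. It satisfies $(f \times_g f) \circ \Delta_E = \Delta_{E'} \circ f$ strictly, so the square commutes on the nose.

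The key step is to show that $f \times_g f$ is a homotopy equivalence. Since $p$ and $p'$ are fibrations, the map $E \times_B E \to E$, $(e_1,e_2) \mapsto e_1$, is a fibration with fibre $F$; it is the pullback of $p$ along $p$. The same holds for $E'$. The map $f \times_g f$ covers $f$, and on fibres it restricts to $f_b \colon p^{-1}(b) \to p'^{-1}(g(b))$. Because $f$ and $g$ are homotopy equivalences, comparing the long exact sequences of $p$ and $p'$ with the five lemma shows that each $f_b$ is a weak equivalence. A second five-lemma argument then shows that $f \times_g f$ is a weak equivalence. To upgrade this to a homotopy equivalence I would need the spaces to have the homotopy type of CW complexes. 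Alternatively, I would avoid that by using the join description: $\rct$ only depends on the homotopy type of the map $\Delta_E$ together with the fibred join $J^k$ construction over the base. This step, promoting the induced map to a genuine homotopy equivalence under only the paracompactness and LEC assumptions, is where I expect the main obstacle. If the weak-equivalence route fails, I would replace $p$ and $p'$ by the fibre-preserving homotopy equivalence framework and use that fibrewise-induced maps between pullbacks of fibrations along homotopy equivalences are homotopy equivalences (a Dold-type argument).

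Next I would prove invariance of $\rct$ under homotopy equivalences of maps. Suppose $\phi = f \times_g f$ has homotopy inverse $\psi$, and $f$ has homotopy inverse $f'$. Let $\rho'$ be a homotopy section of $(\Delta_{E'})_k$ with $\rho' \circ \Delta_{E'} \simeq i'_k$. The join functoriality gives a map $J(f') \colon J^k_{E'\times_{B'}E'}(E') \to J^k_{E \times_B E}(E)$ covering $\psi$; to be over $\psi$ strictly, I would use the fibration property of $(\Delta_E)_k$, which is the fibrational substitute via $h_k$, to adjust it. Then $J(f') \circ \rho' \circ \phi$ is a homotopy section of $(\Delta_E)_k$, and its restriction along $\Delta_E$ is homotopic to $i_k$. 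This gives $\rct(\Delta_E) \le \rct(\Delta_{E'})$. The reverse inequality follows by symmetry, and applying \Cref{thm: equivalent defn of monoidal para tc} to both fibrations concludes the proof.
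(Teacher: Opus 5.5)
Your reduction and overall architecture match the paper's. You pass to $\rct(\Delta_E)=\rct(\Delta_{E'})$ via \Cref{thm: equivalent defn of monoidal para tc}, compare the diagonals through $f\times_g f$, and use homotopy invariance of relative category. The paper takes that invariance from \cite[Corollary 13]{secat-and-relcat-I} rather than re-deriving it as you do.

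The gap is the step you flag yourself: you never establish that $f\times_g f$ is a homotopy equivalence. The five-lemma route yields only a weak equivalence, and the hypotheses (LEC, paracompact fibre products) give no CW structure with which to promote it. Your first alternative is circular. The principle that ``$\rct$ only depends on the homotopy type of $\Delta_E$'' can only be applied once you know $\Delta_E$ and $\Delta_{E'}$ are homotopy equivalent as maps, and that is exactly the statement that $(f,\,f\times_g f)$ is a homotopy equivalence.

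The missing idea is to upgrade $(f,g)$ before passing to fibre products. By Dold's theorem (the paper cites \cite[Page 53]{JPMay}), $(f,g)$ is a fibre-preserving homotopy equivalence. So there are $(f',g')\colon p'\to p$ and a fibre-preserving homotopy $(F,G)$ from $(f'\circ f,\,g'\circ g)$ to $(\id_E,\id_B)$, i.e.\ $p\circ F_t=G_t\circ p$, and likewise for the other composite.

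Then $(e_1,e_2,t)\mapsto (F(e_1,t),F(e_2,t))$ is well defined into $E\times_B E$, because $p(F(e_1,t))=G(p(e_1),t)=G(p(e_2),t)=p(F(e_2,t))$. It is a homotopy $(f'\times f')\circ(f\times f)\simeq \id_{E\times_B E}$, and it restricts along $\Delta_E$ to $\Delta_E\circ F_t$. Together with the analogous homotopy on $E'\times_{B'}E'$, this exhibits $(f,\,f\times f)$ as a homotopy equivalence from $\Delta_E$ to $\Delta_{E'}$. Its inverse $(f',\,f'\times f')$ is given by strictly commuting squares.

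Your closing remark about a ``Dold-type argument'' points in this direction, but it has to be the main argument, not a fallback. It also simplifies your last step. Taking $\psi:=f'\times_{g'}f'$, you get $\Delta_E\circ f'=\psi\circ\Delta_{E'}$ on the nose. The induced map of joins then covers $\psi$ strictly, so no adjustment through the fibrational substitute $\Pi_k$ is needed.
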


\begin{proof}
By \cite[Page 53]{JPMay}, it follows that $(f,g)$ is in fact a fibre-preserving homotopy equivalence.
Let $(f',g') \colon p' \to p$ be a fibre-preserving map such that $(f \circ f',\, g \circ g')$ and $(f' \circ f,\, g' \circ g)$ are fibre-preserving homotopic to $(\id_{E'},\, \id_{B'})$ and $(\id_{E},\, \id_{B})$, respectively.
Note that the fibre-preserving maps $p$ and $p'$ induce the following commutative diagram
\[
\begin{tikzcd}
E \arrow[d, "\Delta_E"'] \arrow[r, "f"] & E' \arrow[r, "f'"] \arrow[d, "\Delta_{E'}"] & E \arrow[d, "\Delta_E"] \\
E \times_B E \arrow[r, "f \times f"]    & E' \times_{B'} E' \arrow[r, "f'\times f'"]  & {E \times_{B} E\,.}    
\end{tikzcd}
\]
Since $(f' \circ f,\, g' \circ g)$ is fibre-preserving homotopic to $(\id_{E},\, \id_{B})$, it follows that $(f' \times f') \circ (f \times f) \colon E \times_B E \to E \times_B E$ is homotopic to $\id_{E \times_B E}$.
Hence, by \cite[Corollary 13]{secat-and-relcat-I}, we have $\rct(\Delta_{E'}) \leq \rct(\Delta_E)$.
Similarly, $\rct(\Delta_{E}) \leq \rct(\Delta_{E'})$.
Hence, the desired equality follows from \Cref{thm: equivalent defn of monoidal para tc}.
\end{proof}

For an ENR, the inequalities $\TC(X)\leq \TC^M(X)\leq \TC(X)+1$ were shown in \cite{I-S} and \cite{dranishnikov2014topological}.
The following theorem generalizes these inequalities to the parametrized setting.

\begin{theorem}
Suppose $p \colon E \to B$ is an LEC fibration such that $E \times_B E$ is paracompact.
Then
$$
    \TC[p \colon E\to B] 
        \leq \TC^{M}[p \colon E\to B] 
        \leq \TC[p \colon E\to B] + 1.
$$
\end{theorem}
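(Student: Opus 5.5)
The left inequality is immediate from the definitions: any cover witnessing $\TC^{M}[p \colon E\to B] \leq k$ is in particular an open cover of $E \times_B E$ by sets admitting local sections of $\Pi$, so $\TC[p \colon E \to B] \leq \TC^{M}[p \colon E \to B]$. All the work is in the right inequality. I will prove it directly from the cover definition, following the Iwase--Sakai/Dranishnikov argument: from a $\TC$-cover with $k+1$ sets I will build a monoidal cover with $k+2$ sets. This uses only the LEC hypothesis and paracompactness (hence normality) of $E \times_B E$.

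Let $k = \TC[p \colon E\to B]$ and let $U_0,\dots,U_k$ be an open cover of $E \times_B E$ with local sections $\sigma_i \colon U_i \to E^I_B$ of $\Pi$. Since $\Delta_E$ is a closed cofibration, I choose a neighbourhood deformation datum: an open set $N \supseteq \Delta E$ and a homotopy $D \colon N \times I \to E \times_B E$ with $D_0 = \mathrm{incl}$, $D_1(N) \subseteq \Delta E$, and $D(x,t)=x$ for $x \in \Delta E$. Strøm's characterization of closed cofibrations gives this, together with a function $\varphi \colon E \times_B E \to I$ satisfying $\varphi^{-1}(0) = \Delta E$.

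Set $\Delta E = D_1(x)=(e',e')$ with $e'=d(x)$, and write $x = (e_1,e_2)$. The path $t \mapsto D(x,t)$ is a pair of paths in $E$ that stay over a common moving base point. To obtain a path from $e_1$ to $e_2$ lying over a single point of $B$, I will use the fibration $\Pi$ itself. Since $\Pi$ is a fibration, there is a global section of $\Pi$ over $N$ equal to constants on $\Delta E$. Concretely, the constant section $s_0$ over $\Delta E$, transported along the homotopy $D$ by the homotopy lifting property of $\Pi$ (relative to $\Delta E$, using \cite[Proposition 2.9]{Grantsymmtc} as in the proof of \Cref{thm: equivalent defn of monoidal para tc}), produces a section $\tau \colon N \to E^I_B$ of $\Pi$ with $\tau(e,e) = \mathrm{c}_e$. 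This is the main technical step. Equivalently, and more cleanly, I apply the relative lifting to the diagram with top map $s_0 \circ d$ on $N \times\{1\}$ union $s_0$ on $\Delta E \times I$, and bottom map $D$ run backwards.

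Then I build the cover. Using normality, choose open sets $W, W'$ with $\Delta E \subseteq W \subseteq \overline{W} \subseteq W' \subseteq \overline{W'}\subseteq N$. Put $V_i := (U_i \setminus \overline{W}) \cup W$ for $0\le i\le k$, and $V_{k+1} := N$. On $V_i$ the section is $\sigma_i$ on $U_i\setminus\overline W$ and $\tau$ on $W$; these pieces are disjoint open sets, so the resulting map is continuous, and it restricts to constant paths on the diagonal. The set $V_{k+1}=N$ carries $\tau$. Together these cover $E \times_B E$, since points of $\overline W \setminus W$ lie in $N$. Every set contains $\Delta E$ and every section is constant on it, so $\TC^{M}[p \colon E \to B] \le k+1$.

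The cover trick is routine. The expected obstacle is the construction of $\tau$, specifically checking that the relative homotopy lifting along a deformation of $N$ into $\Delta E$ keeps the paths vertical (in $E^I_B$) and exactly constant on $\Delta E$. This relies on $\Pi$ being a fibration and on $\Delta E \hookrightarrow N$ being a cofibration, and I will verify both carefully.
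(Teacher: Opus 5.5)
Your argument is correct, and it takes a genuinely different route from the paper's.

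\textbf{The paper's route.} The paper never works with covers here. It uses its two characterizations: $\TC[p\colon E\to B]=\sct(\Delta_E)$, and $\TC^{M}[p\colon E\to B]=\rct(\Delta_E)$ from \Cref{thm: equivalent defn of monoidal para tc}. The second is proved through fibred joins, partitions of unity, and a global relative lifting over $E\times_B E$. It then quotes the Doeraene--El Haouari estimate $\sct\le\rct\le\sct+1$.

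\textbf{Your route.} You first build a monoidal section $\tau$ on a neighbourhood $N$ of $\Delta E$. You then shrink each $U_i$ away from $\overline{W}$ and add $N$ as one extra set. Your gluing and covering checks are right; the set $W'$ is never used.

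\textbf{The step you deferred.} You only promise to verify that $\Delta E\hookrightarrow N$ is a cofibration. It does hold, using only normality. Let $r\colon (E\times_B E)\times I\to (E\times_B E)\times 0\cup\Delta E\times I$ be a retraction, and set $W_N=r^{-1}(N\times 0\cup\Delta E\times I)$, which is open and contains $N\times 0\cup\Delta E\times I$. By the tube lemma, $O=\{x:\{x\}\times I\subseteq W_N\}$ is an open set containing $\Delta E$. Take an Urysohn function $\nu$ equal to $1$ on $\Delta E$ and $0$ off $O$. Then $(x,t)\mapsto r(x,t\nu(x))$ retracts $N\times I$ onto $N\times 0\cup\Delta E\times I$. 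Strøm's relative lifting against the Hurewicz fibration $\Pi$ then applies. Verticality of the lifted paths is automatic, since the lift lands in $E^I_B$.

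\textbf{What each approach buys.} Your argument is self-contained and needs only normality of $E\times_B E$, not paracompactness. More importantly, your construction of $\tau$ never uses $\TC[p\colon E\to B]<\infty$. It therefore proves \Cref{cor: existence of monoidal section} unconditionally. The paper obtains that corollary only as a consequence of this theorem, and poses removing the finiteness hypothesis as \Cref{question: existence of monoidal section}. The paper's route, in exchange, is a two-line reduction given its machinery. It also places the result inside the general $\sct$/$\rct$ comparison, which the paper reuses for its dimension--connectivity criterion.
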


\begin{proof}
By \Cref{thm:  equivalent defn of para tc} and \Cref{thm: equivalent defn of monoidal para tc}, it is enough to show that
$$
    \sct(\Delta_E) \leq \rct(\Delta_E) \leq \sct(\Delta_E) + 1,
$$
where $\Delta_E \colon E \to E \times_B E$ is the diagonal inclusion.
Hence, the result follows from \cite[Theorem 18]{secat-and-relcat-I} (see also \cite[Proposition 12]{secat-and-relcat-II}).
\end{proof}

An immediate corollary of the above result is as follows:

\begin{corollary}
\label{cor: existence of monoidal section}
Suppose $p \colon E \to B$ is an LEC fibration such that $E \times_B E$ is paracompact.
If $\TC[p \colon E \to B] < \infty$, then there exists an open subset $U$ of $E \times_B E$ containing $\Delta E$ and a section $\sigma \colon U \to E^{I}_B$ such that $\sigma(e,e) = \mathrm{c}_e$ for all $e \in E$.
\end{corollary}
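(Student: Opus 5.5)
The plan is to combine the preceding theorem with the equivalence in \Cref{thm: equivalent defn of monoidal para tc}. Since $\TC[p \colon E \to B] < \infty$, the preceding theorem gives $\TC^{M}[p \colon E \to B] \leq \TC[p \colon E \to B] + 1 < \infty$. Set $k := \TC^{M}[p \colon E \to B]$.

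By definition of $\TC^{M}$, there is then an open cover $U_0,\dots,U_k$ of $E \times_B E$ in which every $U_i$ contains $\Delta E$. Each $U_i$ also carries a local section $\sigma_i \colon U_i \to E^I_B$ of $\Pi$ with $\sigma_i(e,e) = \mathrm{c}_e$ for all $e \in E$. We take $U := U_0$ and $\sigma := \sigma_0$, which already has every property the corollary asks for.

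To make the argument independent of how the infimum is attained, one can instead start from \Cref{thm: equivalent defn of monoidal para tc}(4). With $k = \TC[p \colon E \to B] + 1$, it gives a global section $\sigma'$ of $\Pi_k$ with $\sigma' \circ \Delta_E = s_k$. Put $U := \sigma'^{-1}(t_0^{-1}(0,1])$; this is open and contains $\Delta E$, because $t_0(s_k(e)) = 1/(k+1) > 0$. On $U$ the $0$-th join coordinate is well defined and continuous, and it gives a section $\sigma$ of $\Pi$. It satisfies $\sigma(e,e) = \mathrm{c}_e$ because $\sigma'(e,e) = s_k(e)$, and the $0$-th coordinate of $s_k(e)$ is $\mathrm{c}_e$.

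The argument has no real obstacle. The only point needing care is that finiteness of $\TC$ must be transferred to finiteness of $\TC^{M}$, and that is exactly the upper bound in the preceding theorem. That bound uses the LEC and paracompactness hypotheses, which are assumed here.
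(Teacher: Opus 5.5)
Your proof is correct and is the paper's argument: the paper calls this corollary immediate from $\TC^{M}[p \colon E \to B] \leq \TC[p \colon E \to B] + 1 < \infty$, after which any single member of a monoidal cover, together with its section, does the job. Your alternative via condition (4) of the equivalence theorem is correct but not needed, since it just unpacks that theorem's $(4)\Rightarrow(1)$ step.
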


\begin{question}
\label{question: existence of monoidal section}
Is the above corollary true without the assumption $\TC[p \colon E \to B] < \infty$. 

\medskip

Suppose $p \colon E \to B$ is a locally trivial fibration such that $E$ and $B$ are metrizable separable ANRs.
Then $\Pi$ is a fibration between ANR spaces by \cite[Theorem 4.7]{farber-para-tc}.
Let $\widetilde{s}_0 \colon \Delta E \to E^I_B$ be the canonical local section, which sends an element $(e,e)$ in the diagonal $\Delta E$ to the constant path $\mathrm{c}_e$ that takes the value $e$.
Following the proof of \cite[Theorem 2.7]{garcia2019note}, it follows that there exists an open subset $U$ of $E \times_B E$ containing $\Delta E$ and a section $\sigma' \colon U \to E^{I}_B$ of $\Pi$ such that $\left.\sigma'\right|_{\Delta E} \simeq \widetilde{s}_0$. 
If one can show that there exists an open subset $U$ as the line above such that $\Delta E \hookrightarrow U$ is a cofibration, then, using the lifting diagram
\[
\begin{tikzcd}
(U \times \{0\}) \cup (\Delta E \times I) \arrow[d, hook] \arrow[rr, "\sigma' \cup K"] &  & E^I_B \arrow[d, "\Pi"] \\
U \times I \arrow[rr, "H"] \arrow[rru, "F", dotted]                                    &  & {E \times_B E\,,}     
\end{tikzcd}
\]
we can find a section $\sigma := F_1$ satisfying the desired properties. 
Here $K$ is a homotopy between $\left.\sigma'\right|_{\Delta E}$ and $\widetilde{s}_0$, and $H(e_1,e_2,t) = (\sigma'(e_1,e_2)(0),\sigma'(e_1,e_2)(1))$.
\end{question}

\subsection{Parametrized Iwase-Sakai conjecture}
\label{subsec: para-Iwase-Sakai-conj}
\hfill\\ \vspace{-0.7em}

In this section, we present several results establishing cases in which the parametrized Iwase–Sakai conjecture \eqref{conj: para IS conjecture} holds, together with examples illustrating these results. 
Satisfying the parametrized Iwase--Sakai conjecture is crucial because it guarantees that imposing the natural stasis condition (requiring zero motion when the initial and terminal configurations coincide) does not introduce any additional algorithmic complexity, even when the robot's workspace varies with external parameters. 
This provides a theoretical foundation for constructing motion planners that respect the natural stasis condition while retaining the optimal complexity predicted by the underlying topological invariants in parametrized settings.
We begin by extending \cite[Theorem 2.5]{dranishnikov2014topological} to the parametrized setting.

\begin{theorem}
\label{thm: mon-para-tc = para-tc if dim(E) < (TC[p] + 1)(k+1)-1}
Suppose $p \colon E \to B$ is an LEC fibration with the fibre $F$, where $E$ is a CW complex and $E \times_B E$ is paracompact.
If $F$ is $k$-connected such that
$$
    \dim(E) < (\TC[p \colon E \to B]+1) \cdot (k+1) - 1, 
$$
then $\TC^{M}[p \colon E \to B] = \TC[p \colon E \to B]$.
\end{theorem}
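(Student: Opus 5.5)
By \Cref{thm:  equivalent defn of para tc} and \Cref{thm: equivalent defn of monoidal para tc}, the hypotheses (LEC fibration, $E \times_B E$ paracompact) give
$$
    \TC[p \colon E \to B] = \sct(\Delta_E) \quad\text{and}\quad \TC^{M}[p \colon E \to B] = \rct(\Delta_E),
$$
where $\Delta_E \colon E \to E \times_B E$ is the diagonal. So the statement reduces to proving $\rct(\Delta_E) = \sct(\Delta_E)$. The plan mirrors the proof of \Cref{thm: dim--hdim in Dranishnikov's result}: first determine the connectivity of $\Delta_E$, then apply the comparison \cite[Theorem 14]{secat-and-relcat-II}. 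That result says $\rct = \sct$ for a map $A \to X$ that is a $k$-equivalence, provided $\dim$ of the relevant space is below $(\sct+1)(k+1)-1$.

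\textbf{Step 1: the homotopy fibre of $\Delta_E$.} By diagram \eqref{diag: E is homotopy equivalent to E^I_B}, the map $\Pi \colon E^I_B \to E \times_B E$ is a fibrational replacement of $\Delta_E$. Its fibre over $(e_1,e_2)$ is the space of paths in the fibre $F_b = p^{-1}(b)$ from $e_1$ to $e_2$, where $b = p(e_1)$. Since $F$ is $k$-connected, hence path-connected, this fibre is homotopy equivalent to $\Omega F$, which is $(k-1)$-connected. The long exact sequence of $\Pi$ then shows that $\Delta_E$ induces an isomorphism on $\pi_i$ for $i<k$ and a surjection for $i=k$, i.e.\ $\Delta_E$ is a $k$-equivalence. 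Equivalently, I could compare the fibrations $E \to B$ and $E\times_B E \to B$, whose fibres are $F$ and $F \times F$, and use that $F \to F\times F$ is a $k$-equivalence. One caveat: $E \times_B E$ might be disconnected if $B$ is, but then I would work componentwise over components of $B$.

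\textbf{Step 2: invoke the comparison theorem.} I then apply \cite[Theorem 14]{secat-and-relcat-II} to $\Delta_E$. I need to check which dimension it requires. I expect it to be the dimension of the domain of the map, here $E$, which matches the hypothesis $\dim(E) < (\TC[p]+1)(k+1)-1$. This mirrors the non-parametrized case, where the condition is on $\dim(X)$ for $\Delta_X \colon X \to X \times X$. The CW hypothesis on $E$ ensures the theorem's cellular and obstruction arguments apply. This gives $\rct(\Delta_E) = \sct(\Delta_E)$, and combining with the first paragraph finishes the proof.

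\textbf{Main obstacle.} The delicate point is Step 2: verifying that the hypotheses of \cite[Theorem 14]{secat-and-relcat-II} are met, namely whether it needs $\Delta_E$ to be a cofibration between CW complexes and which space carries the dimension bound. If the theorem is stated for cofibrations of CW complexes with a bound on the target, I would first replace $\Delta_E$ up to homotopy by a cellular inclusion; relative category is invariant under such replacements by \cite[Corollary 13]{secat-and-relcat-I}. The underlying obstruction argument only needs to deform a section of the join over the part coming from $E$, so the bound on $\dim E$ should suffice. The join $J^{s}$ of $(s+1)$ copies of a $(k-1)$-connected fibre is $((s+1)(k+1)-2)$-connected, and the relevant obstructions then vanish in dimensions up to $\dim E + 1$.
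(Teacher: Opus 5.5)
Your proposal is correct and matches the paper's proof step for step. The paper also reduces the statement to $\rct(\Delta_E)=\sct(\Delta_E)$ via the two characterization theorems. It then uses that $\Pi$ is a fibrational substitute for $\Delta_E$ with $(k-1)$-connected homotopy fibre $\Omega F$, so $\Delta_E$ is a $k$-equivalence, and applies \cite[Theorem 14]{secat-and-relcat-II} with the dimension bound placed on the domain $E$, exactly as you guessed in Step 2.
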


\begin{proof}
Let $\Delta_E \colon E \to E \times_B E$ be the diagonal inclusion.
Then the homotopy fibre of $\Delta_E$ is the based loop space $\Omega F$, since $\Pi$ is a fibrational substitute for the map $\Delta_E$, see diagram \eqref{diag: E is homotopy equivalent to E^I_B}.
Since $\Omega F$ is $(k-1)$-connected, it follows that the map $\Delta_E$ is a $k$-equivalence.
Then, by \Cref{thm:  equivalent defn of para tc}, we have 
$
    \dim(E) < (\sct(\Delta_E)+1) \cdot (k+1) - 1, 
$
and hence, by \cite[Theorem 14]{secat-and-relcat-II}, it follows that
$$
    \rct(\Delta_E) = \sct(\Delta_E) = \TC[p \colon E \to B].
$$
Hence, the result follows from \Cref{thm: equivalent defn of monoidal para tc}.
\end{proof}

\begin{example}
Let $\zeta \colon E \to B$ be a vector bundle of rank $2r$ admitting a complex structure.
Let $\dot{\zeta} \colon \dot{E} \to B$ be its sphere bundle.
Then $\TC[\dot{\zeta} \colon \dot{E} \to B] = 1$, see \cite[Corollary 17]{ptcspherebundles}.
Since the fibre $S^{2r-1}$ of $\dot{\zeta}$ is $(2r-2)$-connected and $\dim(\dot{E}) = (2r-1) + \dim(B)$, it follows that 
$$
    \dim(\dot{E})<(1+1)(2r-2+1)-1 \iff \dim(B) < 2(r-1).
$$
Hence,
$
    \TC^{M}[\dot{\zeta} \colon \dot{E} \to B] = 1,
$
if $\dim(B) < 2(r-1)$. 
\end{example}

It was proved in \cite[Proposition 4.3]{farber-para-tc} that for a principal $G$-bundle $p \colon E \to B$, where $G$ is a connected topological group, we have
$$
    \TC[p \colon E \to B] = \TC(G) = \ct(G).
$$
We extend this result, thereby generalizing both the above result and Dranishnikov's \cite[Lemma 2.7]{dranishnikov2014topological}, which states
$$
    \TC^{M}(G) = \TC(G) = \ct(G),
$$
for a connected Lie group $G$.

\begin{proposition}\label{prop: Iwase-Sakai conjecture for principal G-bundle}
Let $p \colon E \to B$ be a principal $G$-bundle, where $G$ is a connected Lie group.
Then
    $$
        \TC^{M}[p \colon E \to B] = \TC[p \colon E \to B] = \TC^{M}(G) = \TC(G) = \ct(G).
    $$
\end{proposition}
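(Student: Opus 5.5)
The chain $\TC[p \colon E \to B] = \TC(G) = \ct(G)$ is \cite[Proposition 4.3]{farber-para-tc}. The equalities $\TC^{M}(G) = \TC(G) = \ct(G)$ are Dranishnikov's \cite[Lemma 2.7]{dranishnikov2014topological}. Since always $\TC[p \colon E \to B] \leq \TC^{M}[p \colon E \to B]$, it remains to show $\TC^{M}[p \colon E \to B] \leq \TC^{M}(G)$.

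For this I would imitate the proof of \cite[Proposition 4.3]{farber-para-tc}. The key map is the division map
$$
    \phi \colon E \times_B E \to G, \qquad e_2 = e_1 \cdot \phi(e_1,e_2).
$$
It is well defined and continuous because the right $G$-action on $E$ is free and transitive on fibres, and local triviality lets us check continuity in a chart. Moreover $\phi(e,e) = 1$. Now take an open cover $\{V_0,\dots,V_k\}$ of $G \times G$ realizing $\TC^{M}(G) = k$, with sections $s_i \colon V_i \to G^I$ satisfying $s_i(g,g) = \mathrm{c}_g$ and $\Delta G \subseteq V_i$. Restrict to the slice $\{1\} \times G$: the sets $W_i = \{g \in G \mid (1,g) \in V_i\}$ are open, cover $G$, and contain $1$. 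On each $W_i$ we have paths $\gamma_g := s_i(1,g)$ from $1$ to $g$, with $\gamma_1 = \mathrm{c}_1$.

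Set $U_i := \phi^{-1}(W_i)$. These are open, cover $E \times_B E$, and contain $\Delta E$ because $\phi(e,e) = 1 \in W_i$. Define
$$
    \sigma_i(e_1,e_2)(t) := e_1 \cdot \gamma_{\phi(e_1,e_2)}(t).
$$
This path stays in the fibre $p^{-1}(p(e_1))$, so it lies in $E^I_B$. It starts at $e_1$ and ends at $e_1 \cdot \phi(e_1,e_2) = e_2$, so $\sigma_i$ is a section of $\Pi$. On the diagonal, $\sigma_i(e,e) = e \cdot \mathrm{c}_1 = \mathrm{c}_e$. Continuity follows from continuity of the action, of $\phi$, and of $s_i$, via the exponential law, since $I$ is compact. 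Hence $\TC^{M}[p \colon E \to B] \leq k = \TC^{M}(G)$, which closes the chain.

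The only genuinely delicate point is the continuity of $\phi$. I would verify it locally over a trivializing open set $V \subseteq B$, where $p^{-1}(V) \cong V \times G$ equivariantly and $\phi((b,g_1),(b,g_2)) = g_1^{-1} g_2$. Everything else is formal.
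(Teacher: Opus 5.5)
Your proof is correct, and its second half is the paper's argument. Your division map $\phi$ and the path $t\mapsto e_1\cdot\gamma(t)$ are exactly the homeomorphisms $F^{-1}$ and $F'$ of \cite[Proposition 4.3]{farber-para-tc} that the paper uses, written with a right action instead of a left one.

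The difference is where the based path data on $G$ comes from. The paper bounds $\TC^{M}[p \colon E \to B]$ directly by $\ct(G)$. Since $\{e_G\}\hookrightarrow G$ is a closed cofibration, \cite[Lemma 1.25]{CLOT} gives a categorical open cover of $G$ by neighbourhoods of $e_G$ whose contractions fix $e_G$. These are continuous families of paths from $e_G$ that are stationary at $e_G$. You get the same kind of data by slicing a monoidal motion planner of $G$ along $\{1\}\times G$. That bounds $\TC^{M}[p \colon E \to B]$ by $\TC^{M}(G)$, so you need Dranishnikov's \cite[Lemma 2.7]{dranishnikov2014topological} to close the chain.

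The paper's route is therefore independent of Dranishnikov's lemma; taking $B$ to be a point, it even reproves $\TC^{M}(G)\leq\ct(G)$. Your route gives, as a by-product, the inequality $\TC^{M}[p \colon E \to B]\leq\TC^{M}(G)$ for every locally trivial principal bundle of an arbitrary topological group, with no connectivity or well-pointedness assumption.
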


\begin{proof}
    It is enough to show that $\TC^{M}[p \colon E \to B] \leq \ct(G)$.
    Let 
    $$
    P_0 G = \{ \gamma \in G^{I} \mid \gamma(0) = e_G\},
    $$
    where $e_G \in G$ is the identity element.
    By \cite[Proposition 4.3]{farber-para-tc}, the following diagram is commutative, 
\[
\begin{tikzcd}
E \times P_0G \arrow[d, "\mathrm{id}_E \times p_0"'] \arrow[r, "F'"] & E^{I}_B \arrow[d, "\Pi"] \\
E \times G \arrow[r, "F"]                                            & E \times_B E            
\end{tikzcd}
\]
    where $F$ and $F'$ are homeomorphisms given by $F(e,g) = (e,ge)$ and $F'(e,\gamma)(t) = \gamma(t)e$, and $p_0$ is the fibration which sends $\gamma$ to $\gamma(1)$.
    
    Suppose $\ct(G)=n$.
    Note that the inclusion $\{e_G\} \hookrightarrow G$ is a closed cofibration.
    Hence, by \cite[Lemma 1.25]{CLOT}, there exists an open cover $\{U_0, \dots, U_n\}$ of $G$ such that $e_G \in U_i$ for all $i$ and each $U_i$ admits a homotopy $H_i \colon U_i \times I \to G$ such that 
    $$
    H_i(g,0)=e_G, \quad H_i(e_G, t) = e_G \quad \text{ and } \quad H_i(g,1) = g
    $$
    for all $g \in U_i$ and $t \in I$.
    Let $V_i := F(E \times U_i)$.
    Since $e_G \in U_i$ for all $i$, we have $\Delta E \subset V_i$ for all $i$.
    If $s_i \colon U_i \to P_0 G$ is the section of $p_0$ defined by 
    $
        s_i(g)(t) := H_i(g,t),
    $
    then 
    $$
    \widetilde{s}_i := F' \circ (\mathrm{id}_E \times s_i) \circ F^{-1} \colon V_i \to E^{I}_B
    $$
    is a section of $\Pi$.
    Further, $\widetilde{s}_i$ satisfies 
    $$
    \widetilde{s}_i(e,e) 
        = ( F' \circ (\mathrm{id}_E \times s_i) \circ F^{-1})(e,e)
        = ( F' \circ (\mathrm{id}_E \times s_i))(e,e_G)
        = F'(e,\mathrm{c}_{e_G})
        = \mathrm{c}_e.
    $$
    Hence, $\TC^{M}[p \colon E \to B] \leq n$.
\end{proof}

\begin{example}
Let $\eta \colon E(\eta) \to \C P^n$ be the canonical complex line bundle over the complex projective space $\C P^n$, viewed as a rank 2 real vector bundle.
The unit sphere bundle $\dot{\eta}$ associated with the vector bundle $\zeta$ is a principal $S^1$-bundle.
Hence, $\TC^M[\dot{\eta} \colon \dot{E}(\eta) \to \C P^n] = \ct(S^1) = 1$.

In \cite[Example 10]{ptcspherebundles}, the authors gave explicit motion planners over a generalized cover $\{F_0,F_1\}$ of $ \dot{E}(\eta) \times_{\C P^n}  \dot{E}(\eta)$ such that $\Delta  \dot{E}(\eta) \subset F_0$.
In the following results, we show that these kinds of covers can be used to estimate monoidal parametrized topological complexity.
\end{example}

In the following theorem, we employ the technique developed in \cite[Theorem 1.4]{aguilar-gonzalez-2023motion} to generalize \cite[Corollary 3 (1)]{I-S-erratum} to the parametrized setting.

\begin{theorem}
\label{thm: mon-para-tc = para-tc under special cover}
Let $p \colon E \to B$ be an LEC fibration such that $E \times_B E$ is paracompact Hausdorff.
If $\{U_i\}_{i=0}^{k}$ is an open cover of $E \times_B E$ with sections $s_i \colon U_i \to E^{I}_B$ of $\Pi$ such that $\Delta E \subset U_0$ and $\Delta E \cap U_i = \emptyset$ for all $i \ge 1$, then
$$
    \TC^{M}[p \colon E \to B] \leq k.
$$
Moreover, if $\TC[p \colon E \to B] = k$, then 
$$
    \TC^M[p \colon E \to B] = \TC[p \colon E \to B] = k.
$$
\end{theorem}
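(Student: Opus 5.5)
The plan is to pass through the relative category: show $\rct(\Delta_E)\le k$ and then apply \Cref{thm: equivalent defn of monoidal para tc}. The hypotheses fit. Since $p$ is LEC, $\Delta_E\colon E\to E\times_B E$ is a closed cofibration. Since $E\times_B E$ is paracompact Hausdorff, it is normal. So \Cref{prop: equivalent definition of relative category}(3) applies. It is enough to produce an open cover $W_0,\dots,W_k$ of $E\times_B E$ with $\Delta E\subset W_i$ for every $i$, together with homotopies $H_i\colon W_i\times I\to E\times_B E$ satisfying
\begin{itemize}
\item $H_i(x,0)=x$,
\item $H_i(x,1)\in\Delta E$,
\item $H_i(a,1)=a$ for every $a\in\Delta E$.
\end{itemize}
The point of using form (3) rather than a Fadell--Husseini style relatively sectional set is that form (3) does not require the homotopy to keep $\Delta E$ inside $\Delta E$ at intermediate times. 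Our $s_0$ need not send $(e,e)$ to a constant path, so that stronger condition would fail.

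The homotopies come directly from the sections. For an open $W$ and a section $s\colon W\to E^I_B$ of $\Pi$, define
$$H(e_1,e_2,t):=\bigl(s(e_1,e_2)(t),\,e_2\bigr).$$
This lands in $E\times_B E$ because $s(e_1,e_2)$ is a fibrewise path, so $p(s(e_1,e_2)(t))=p(e_2)$. At $t=0$ it is the identity. At $t=1$ it equals $(e_2,e_2)\in\Delta E$. On a diagonal point $(e,e)$ the value at $t=1$ is $(e,e)$, even though $s(e,e)$ may be a nonconstant loop. So any subset of some $U_j$ carries a homotopy of the required type, using the restriction of $s_j$.

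The main obstacle is that $U_i$ for $i\ge1$ does not contain $\Delta E$, so it must be enlarged without destroying continuity of the homotopy. I would follow the Aguilar-Guzm\'an--Gonz\'alez trick and use normality twice.
\begin{itemize}
\item Choose open sets $V''$ and $V'$ with $\Delta E\subset V''\subset\overline{V''}\subset V'\subset\overline{V'}\subset U_0$.
\item Put $W_0:=U_0$.
\item For $i\ge1$, put $W_i:=(U_i\setminus\overline{V'})\sqcup V''$.
\end{itemize}
This is a disjoint union of two open sets, since $V''\subset V'$. Define $H_i$ on $W_i$ piecewise: use the formula above with $s_i$ on $U_i\setminus\overline{V'}$, and with $s_0$ on $V''$. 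Because the two pieces are disjoint and open, $H_i$ is continuous, and it satisfies the three conditions.

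It remains to check that the $W_i$ still cover $E\times_B E$. A point of $\overline{V'}$ lies in $U_0=W_0$. A point outside $\overline{V'}$ lies in some $U_j$; if $j\ge1$ it lies in $U_j\setminus\overline{V'}\subset W_j$. Hence $\rct(\Delta_E)\le k$, and \Cref{thm: equivalent defn of monoidal para tc} gives $\TC^{M}[p\colon E\to B]\le k$.

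For the final assertion, suppose $\TC[p\colon E\to B]=k$ and the special cover exists. Combining the inequality just proved with $\TC[p\colon E\to B]\le \TC^{M}[p\colon E\to B]$ gives $k\le\TC^{M}[p\colon E\to B]\le k$, which is the claimed equality.
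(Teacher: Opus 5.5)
Your argument is correct, and it takes a different route from the paper.

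\textbf{The paper's route.} The paper works directly with sections. It treats $k=0$ separately via \Cref{prop: mon-para-tc = 0 iff para-tc = 0}. For $k\ge 1$ it invokes \Cref{cor: existence of monoidal section} to obtain an open $W\supseteq\Delta E$ carrying a genuinely monoidal section $\sigma$. That corollary rests on the up-to-one estimate $\TC^M\le\TC+1$, which applies because the given cover forces $\TC[p]\le k<\infty$. The paper then:
\begin{enumerate}
\item shrinks the $U_i$ to $W_i$ with $\overline{W_i}\subset U_i$;
\item uses the hypothesis $\Delta E\cap U_i=\emptyset$ to find $N\supseteq\Delta E$ missing every $\overline{W_i}$ for $i\ge1$;
\item splices $\sigma$ into all $k+1$ sets near the diagonal.
\end{enumerate}
The monoidal cover is thus verified directly from the definition.

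\textbf{Your route.} You never need a monoidal section. Criterion (3) of \Cref{prop: equivalent definition of relative category} only asks for $H(a,1)=a$, and this is met by $(s(e_1,e_2)(t),e_2)$ for an arbitrary section $s$ over a set containing $\Delta E$. So you can splice in $s_0$ itself near the diagonal, and \Cref{thm: equivalent defn of monoidal para tc} converts $\rct(\Delta_E)\le k$ back into $\TC^M[p]\le k$.

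\textbf{What each approach buys.} Your proof is shorter. It needs no case split on $k$, no appeal to \Cref{cor: existence of monoidal section} or the Doeraene--El Haouari estimate, and no use of the hypothesis $\Delta E\cap U_i=\emptyset$; this last point is consistent with the remark following the theorem. The cost is that the real work is hidden. Turning the possibly nonconstant loops $s_0(e,e)$ into constant paths happens inside the weak form (3) of Garc\'{\i}a-Calcines' characterization, together with the relative lifting step in \Cref{thm: equivalent defn of monoidal para tc}. So your argument does not exhibit the monoidal sections on the refined cover. The paper's construction does produce them explicitly from the given planners plus one monoidal section near the diagonal, although that section is itself obtained non-constructively.
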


\begin{proof}
If $k=0$, then \Cref{prop: mon-para-tc = 0 iff para-tc = 0} implies that $\TC^{M}[p \colon E \to B] = 0$.
We can therefore assume $k \geq 1$. 
By \Cref{cor: existence of monoidal section}, there is an open subset $W$ of $E \times_B E$ containing $\Delta E$ and a local section $\sigma \colon W \to E^I_B$ of $\Pi$ satisfying $\sigma(e,e) = \mathrm{c}_e$ for all $e \in E$. 
Since $E \times_B E$ is normal, there exists an open cover $\{W_i\}_{i=0}^k$ of $E \times_B E$ such that $W_i \subset \overline{W_i} \subset U_i$ for all $i \ge 0$. 
Consider the open neighbourhood $N$ of $\Delta E$ given by
$$
    N = W \cap U_0 
            \cap ((E \times_B E)\setminus \overline{W_1}) 
            \cap \cdots 
            \cap ((E \times_B E) \setminus \overline{W_n}).
$$
By normality of $E \times_B E$, we may again choose an open set $M \subset E \times_B E$ satisfying
$$
    \Delta E \subset M \subset \overline{M} \subset N .
$$
Let $V_0 = (U_0 \setminus \overline{M}) \sqcup M$.
Then $\Delta E \subset M \subset V_0$ and the section $\sigma_0 \colon V_0 \to E^{I}_{B}$
of $\Pi$ defined by
$$
\sigma_0(e_1,e_2) =
    \begin{cases}
    s_0(e_1,e_2), & \text{if } (e_1,e_2) \in U_0 \setminus \overline{M},\\
    \sigma(e_1,e_2), & \text{if } (e_1,e_2) \in M,
    \end{cases}
$$
satisfies $\sigma_0(e,e) = \sigma(e,e) = \mathrm{c}_e$ for all $e \in E$.

For $1 \leq i \leq k$, set $V_i = W_i \sqcup N$ and define the local sections $\sigma_i \colon V_i \to E^{I}_B$
of $\Pi$ by
$$
\sigma_i(e_1,e_2) =
    \begin{cases}
    s_i(e_1,e_2), & \text{if } (e_1,e_2) \in W_i,\\
    \sigma(e_1,e_2), & \text{if } (e_1,e_2) \in N.
    \end{cases}
$$
Then $\Delta E \subset N \subset V_i$ and $\sigma_i$ satisfies $\sigma_i(e,e) = \sigma(e,e) = \mathrm{c}_e$ for all $e \in E$.

We now show that the collection $\{V_0,\dots,V_k\}$ covers $E \times_B E$.
Indeed,
\begin{align*}
    V_0 \cup V_1 \cup \dots \cup V_k 
        & = ((U_0 \setminus \overline{M}) \cup M) \cup (W_1 \cup N) \cup \dots \cup (W_k \cup N) \\
        & = (U_0 \setminus \overline{M}) \cup M \cup (W_1 \cup \dots \cup W_k) \cup N \\
        & = U_0 \cup W_1 \cup \dots \cup W_k \cup N \\
        & \supseteq  W_0 \cup W_1 \cup \dots W_k \\
        & = E \times_B E,
\end{align*}
where the equality
$$
(U_0 \setminus \overline{M}) \cup M \cup (W_1 \cup \dots \cup W_k) \cup N
    \;=\; U_0 \cup W_1 \cup \dots \cup W_k \cup N
$$
follows from the fact that $M \subset \overline{M} \subset N$.
Hence, $\TC^{M}[p \colon E \to B] \leq k$.
\end{proof}

\begin{remark}
If $E$ is Hausdorff, then the condition $U_i \cap \Delta E = \emptyset$ for all $i \geq 1$ can be omitted in \Cref{thm: mon-para-tc = para-tc under special cover}, since the diagonal $\Delta E$ can be removed from the open sets $U_i$ for all $i \geq 1$.
\end{remark}

\begin{corollary}
\label{cor: mon-para-tc = para-tc under gen-special cover}
Let $p \colon E \to B$ be a locally trivial fibration, where $E$ and $B$ are metrizable separable ANRs.
Then $\TC[p \colon E \to B] = \TC_g[p \colon E \to B]$.
If $\{U_i\}_{i=0}^{k}$ is a (not necessarily open) cover of $E \times_B E$ with sections $s_i \colon U_i \to E^{I}_B$ of $\Pi$ such that $\Delta E \subset U_0$, then
$$
    \TC^{M}[p \colon E \to B] \leq k.
$$
Moreover, if $\TC[p \colon E \to B] = k$, then 
$$
    \TC^M[p \colon E \to B] = \TC[p \colon E \to B] = \TC_g[p \colon E \to B] = k.
$$
\end{corollary}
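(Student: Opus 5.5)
The plan is to reduce everything to the relative category of the diagonal and then invoke the ANR machinery of Garc\'{\i}a-Calcines and Aguilar-Guzm\'an--Gonz\'alez.

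First, by \Cref{prop: fibre product and path space are ANR}, the space $E \times_B E$ is a metrizable separable ANR, and $E$ is an ANR by hypothesis. By \Cref{thm: cobfib} (with trivial group), the diagonal $\Delta_E \colon E \to E \times_B E$ is a closed cofibration between ANR spaces, so $p$ is an LEC fibration. Metrizable spaces are paracompact, so \Cref{thm:  equivalent defn of para tc} and \Cref{thm: equivalent defn of monoidal para tc} both apply.

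For the equality $\TC[p] = \TC_g[p]$, the natural route is to use $\TC[p]=\sct(\Pi)$, where $\Pi \colon E^I_B \to E\times_B E$ is a fibration between ANRs (\cite[Theorem 4.7]{farber-para-tc}). For fibrations between ANRs, the sectional category agrees with its generalized (arbitrary cover) version; this is the parametrized form of the Garc\'{\i}a-Calcines argument. I would deduce it the same way the relative statement \Cref{thm: relcat = gen-relcat} is proved in \cite{garcia2019note}. Concretely, a section over an arbitrary subset $S$ extends, by the ANR property of $E^I_B$ and the fibration property of $\Pi$, to a section over an open neighbourhood of $S$, up to homotopy that is then corrected by lifting. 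I expect this to be a citation-level step.

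For the main inequality, let $\{U_i\}$ be the given cover with $\Delta E\subset U_0$. The sets $U_i$ for $i\ge1$ may also meet $\Delta E$, but we can replace them by $U_i\setminus\Delta E$, since $U_0$ already covers the diagonal. I would then show that $\rct_g^{FH}(\Delta_E)\le k$. The sets $U_i\setminus \Delta E$ for $i\geq 1$ carry sections of $\Pi$; using the homotopy equivalence $g\colon E^I_B\to E$, $\alpha\mapsto\alpha(0)$, and the contraction $H$ from the proof of \Cref{prop: mon-para-tc = 0 iff para-tc = 0}, each such section yields a homotopy deforming the inclusion into $\Delta E$. Hence these sets are of the second Fadell--Husseini type.

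The main obstacle is $U_0$: it must be \emph{relatively} sectional, meaning the deformation must keep $\Delta E$ inside $\Delta E$ throughout. The homotopy $K(e_1,e_2,t)=\bigl(\sigma_0(e_1,e_2)(t),\,\sigma_0(e_1,e_2)(1)\bigr)$ sends $(e,e)$ to $(\sigma_0(e,e)(t),e)$, which generally leaves the diagonal. To repair this, I would first modify $s_0$ near $\Delta E$ using \Cref{cor: existence of monoidal section}. That corollary applies because $\TC[p]\le k<\infty$, and it gives a section $\sigma$ on an open neighbourhood $W\supset\Delta E$ with $\sigma(e,e)=\mathrm c_e$. I would then glue $\sigma$ on a closed-nbhd shell $M$ with $s_0$ on $U_0\setminus\overline M$, exactly as in the proof of \Cref{thm: mon-para-tc = para-tc under special cover}. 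Because generalized covers carry no openness constraint, this disjoint gluing is legitimate. The resulting section is stationary on the diagonal, so $K$ is a homotopy of pairs and $U_0$ is relatively sectional.

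Then \Cref{thm: gen-relcat = gen-FH-relcat} and \Cref{thm: relcat = gen-relcat} give $\rct(\Delta_E)\le k$. By \Cref{thm: equivalent defn of monoidal para tc}, this yields $\TC^M[p]\le k$. Finally, if $\TC[p]=k$, combining this with $\TC[p]\le\TC^M[p]$ and the first part gives the chain of equalities.
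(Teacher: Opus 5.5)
There is a genuine gap in your treatment of $U_0$. The set $M\sqcup(U_0\setminus\overline{M})$ does carry a continuous section, because the two pieces are disjoint and relatively open. But it omits the points of $U_0\cap(\overline{M}\setminus M)$, so the modified family is no longer a cover.

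In the proof of \Cref{thm: mon-para-tc = para-tc under special cover}, these shell points are absorbed by the neighbourhood $N$ added to each $V_i$, $i\ge 1$. That works only because the $U_i$ are open. The shrinking lemma gives $W_i$ with $\overline{W_i}\subset U_i$, closed and disjoint from $\Delta E$, so $N$ can be chosen to miss them and $W_i\sqcup N$ is again a disjoint union of open sets. For arbitrary subsets $U_i$ nothing like this is available: the shrinking lemma needs an open cover, and $\overline{U_i\setminus\Delta E}$ may meet $\Delta E$, so no neighbourhood of the diagonal can be separated from these sets. (For $k=0$ there is no other set at all to take the shell.)

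If instead you keep the shell inside $U_0$ by using $\sigma$ on the closed set $\overline{M}$, or $s_0$ on $U_0\setminus M$, the pieces are no longer separated. The glued map is then discontinuous along $\partial M$, since $s_0$ and $\sigma$ disagree there. They need not even be homotopic: take $s_0(e,e)$ to be an essential loop in the fibre. Dropping openness does not help, because the obstruction is continuity together with covering, not openness.

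The missing step is to open up the cover before gluing; this is what the ANR hypothesis is for.
\begin{enumerate}
\item Since $\Pi$ is a fibration between ANRs, the argument of \cite[Theorem 2.7]{garcia2019note} gives an open cover $\{V_i\}$ with $U_i\subseteq V_i$ and sections of $\Pi$ over each $V_i$. This is the same tool you already invoke for $\TC[p]=\TC_g[p]$.
\item Then $\Delta E\subseteq V_0$, and the sets $V_i\setminus\Delta E$ for $i\ge 1$ are open and miss the diagonal.
\item \Cref{thm: mon-para-tc = para-tc under special cover} now gives $\TC^M[p \colon E \to B]\le k$ directly. It handles $k=0$ via \Cref{prop: mon-para-tc = 0 iff para-tc = 0}.
\end{enumerate}
This is exactly the paper's proof, and it makes the detour through $\rct_g^{FH}(\Delta_E)$ unnecessary. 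Your handling of the sets $U_i\setminus\Delta E$ for $i\ge 1$ and the final chain of equalities is fine.
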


\begin{proof}
By \cite[Theorem 4.7]{farber-para-tc}, the map $\Pi$ is a fibration between ANR spaces, and $\TC[p \colon E \to B] = \TC_g[p \colon E \to B]$.
Following the proof of \cite[Theorem 2.7]{garcia2019note}, it follows that there exists an open cover $\{V_i\}_{i=0}^{k}$ of $E \times_B E$ and sections $\sigma_i \colon V_i \to E^{I}_B$ of $\Pi$ such that $U_i \subseteq V_i$ and $\left.\sigma_i\right|_{U_i} \simeq s_i$.     
Let $W_0 = V_0$ and $W_i = V_i \setminus \Delta E$. 
Then the cover $\{W_i\}_{i=0}^{k}$ admitting the sections $\left.\sigma_i\right|_{W_i}$ of $\Pi$ satisfies the hypotheses of \Cref{thm: mon-para-tc = para-tc under special cover}.
Hence, the desired result follows.
\end{proof}

\begin{example}
Let $\eta \colon E(\eta) \to \C P^n$ be the canonical complex line bundle over the complex projective space $\C P^n$.
Let $\zeta = \eta \oplus \epsilon$, where $\epsilon$ is the trivial real line  bundle over $\C P^n$.
If $\dot{\zeta}$ is the sphere bundle associated with the bundle $\zeta$, then \cite[Example 20]{ptcspherebundles} establishes that
\begin{equation}
\label{eq: para-tc of sphere bundle of can + trivial over CP^n}
    \TC[\dot{\zeta} \colon \dot{E}(\zeta) \to \C P^n] \leq n+2,
\end{equation}
for all $n$, and the equality holds for all even $n$.
Moreover, the authors also described explicit motion planners for which equation \eqref{eq: para-tc of sphere bundle of can + trivial over CP^n} holds.
We note the cover described satisfies the assumption of \Cref{cor: mon-para-tc = para-tc under gen-special cover}. 
Hence, $\TC^M[\dot{\zeta} \colon \dot{E}(\zeta) \to \C P^n] \leq n+2$ for all $n$, and the equality holds for all even $n$.
\end{example}

In \cite[Theorem 3.4]{Sequential-PTC-and-related-invariants}, Farber and Oprea showed that if $p \colon E \to B$ is a locally trivial fibre bundle with fibre $F$ and structure group $G$ such that $p$ is the associated fibre bundle to a principal $G$-bundle $\tau \colon P \to B$, then $     \TC[p \colon E \to B] \leq \TC_G(F)$, where $\TC_G(F)$ is the equivariant topological complexity of $F$, introduced by Grant in \cite{EqTC}.
Note that we always have $\TC(F) \leq \TC[p \colon E \to B]$, see \cite{farber-para-tc}.
Hence, we have the following series of inequalities
\begin{equation}
\label{eq: para-tc of associated bundle}
    \TC(F) \leq \TC[p \colon E \to B] \leq \TC_G(F),
\end{equation}
We refer the reader to \cite[Example 4.31]{Inv-Para-TC} for a few examples satisfying $\TC(F) = \TC_{G}(F)$.
We now use the proof of Farber and Oprea's result in our setup to obtain the following corollary.

\begin{corollary}
\label{cor: mon-para-tc = para-tc of associated bundle}
Let $p \colon E \to B$ be a locally trivial fibration with path-connected fibre $F$ and structure group $G$, where $E$ and $B$ are metrizable separable ANRs. 
If
\begin{enumerate}
    \item $\tau \colon P \to B$ is a principal $G$-bundle such that the associated bundle $F \times_G P \to B$ is $p$,

    \item $F \times F$ admits a $G$-invariant (not necessarily open) cover $\{U_i\}_{i=0}^{k}$ with $G$-equivariant sections $s_i \colon U_i \to F^I$ of the free path space fibration $\pi \colon F^{I} \to F \times F$ such that $\Delta F \subset U_0$,
\end{enumerate}
then
$$
    \TC^{M}[p \colon E \to B] \leq k.
$$
Moreover, if $\TC[p \colon E \to B] = k$, then
$$
    \TC^{M}[p \colon E \to B] = \TC[p \colon E \to B] = \TC_g[p \colon E \to B] = k.
$$
\end{corollary}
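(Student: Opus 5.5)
The plan is to reduce to \Cref{cor: mon-para-tc = para-tc under gen-special cover}. That is, we build from the $G$-invariant cover of $F \times F$ a (not necessarily open) cover $\{V_i\}_{i=0}^{k}$ of $E \times_B E$ with sections of $\Pi$ such that $\Delta E \subset V_0$. We follow the Farber--Oprea construction. Since $E = F \times_G P$, where $G$ acts on $F \times P$ by $g\cdot(x,y) = (gx, yg^{-1})$, we first identify the fibre product. Two points of $E$ lying over the same $b \in B$ can be written with a common representative $y \in P$ over $b$. This gives a homeomorphism
$$
    E \times_B E \cong (F \times F) \times_G P, \qquad [x_1,y],[x_2,y] \mapsto [(x_1,x_2),y],
$$
with $G$ acting diagonally on $F \times F$. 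Under this identification the diagonal $\Delta E$ corresponds to $(\Delta F)\times_G P$. Similarly, $E^I_B \cong F^I \times_G P$ via $[\gamma,y] \mapsto (t \mapsto [\gamma(t),y])$, with $G$ acting pointwise on paths. Under these identifications $\Pi$ becomes the map induced by $\pi \times \id_P$.

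Next we set $V_i := U_i \times_G P$, the image of $U_i \times P$ in $(F\times F)\times_G P$. This is well defined because $U_i$ is $G$-invariant. The sets $V_i$ cover $E\times_B E$ because the $U_i$ cover $F \times F$, and $V_0 \supseteq (\Delta F)\times_G P = \Delta E$. We define
$$
    \sigma_i([(x_1,x_2),y]) := [s_i(x_1,x_2), y].
$$
This is well defined by the $G$-equivariance of $s_i$, since $[s_i(gx_1,gx_2), yg^{-1}] = [g\cdot s_i(x_1,x_2), yg^{-1}] = [s_i(x_1,x_2),y]$. It is a section of $\Pi$ because $s_i$ is a section of $\pi$. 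Continuity of $\sigma_i$ follows from local triviality of $\tau$: over a trivializing open set $W \subset B$, $V_i \cap (E\times_B E)|_W \cong U_i \times W$, and in these coordinates $\sigma_i$ is $s_i \times \id_W$. Continuity on a not necessarily open set is a local condition, so it can be checked on each piece $V_i \cap (E\times_B E)|_W$, which is open in $V_i$.

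With this cover, \Cref{cor: mon-para-tc = para-tc under gen-special cover} applies directly, since $E$ and $B$ are metrizable separable ANRs. It gives $\TC^M[p\colon E\to B] \leq k$. If moreover $\TC[p\colon E \to B] = k$, the same corollary yields the chain of equalities $\TC^{M} = \TC = \TC_g = k$, using $\TC \leq \TC^M$.

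The main obstacle is the bookkeeping in the identification $E\times_B E \cong (F\times F)\times_G P$ and $E^I_B \cong F^I\times_G P$, in particular checking that these are homeomorphisms in the compact-open topology. I would handle this locally over trivializing neighbourhoods of $\tau$, where everything reduces to products with $W$. Alternatively, I would avoid the global identification for paths altogether and only use that $\sigma_i$ is locally given by $s_i \times \id_W$.
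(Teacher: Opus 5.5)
Your proposal is correct and is the paper's proof: the paper likewise transports the $G$-invariant cover through the Farber--Oprea homeomorphisms $\beta\colon (F\times F)\times_G P\to E\times_B E$ and $\alpha\colon F^I\times_G P\to E^I_B$. It takes the sets $\beta(U_i\times_G P)$ with sections $\alpha\circ(s_i\times_G \id_P)\circ\beta^{-1}$ and concludes via \Cref{cor: mon-para-tc = para-tc under gen-special cover}. The only difference is that you verify the identifications and the continuity of the sections locally over trivializing sets of $\tau$, whereas the paper takes them from \cite[Theorem 3.4]{Sequential-PTC-and-related-invariants}.
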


\begin{proof}
From \cite[Theorem 3.4]{Sequential-PTC-and-related-invariants}, we have the following commutative diagram
\[
\begin{tikzcd}
F^I \times_G P \arrow[d, "\pi \times_G \mathrm{id}_P"'] \arrow[r, "\alpha"] & E^{I}_B \arrow[d, "\Pi"] \\
(F \times F) \times_G P \arrow[r, "\beta"]                                  & {E \times_B E\,,}       
\end{tikzcd}
\]
where $\alpha$ and $\beta$ are homeomorphisms.
The homeomorphism 
$$
    \beta \colon (F \times F) \times_G P 
        \to E \times_B E 
        = (F \times_G P) \times_B (F \times_G P)
$$ 
is given by $\beta \left([f_1,f_2,p]\right) = ([f_1,p],[f_2,p])$ for $f_1,f_2 \in F$ and $p \in P$.
If we have a cover $\{U_i\}_{i=0}^k$ of $F \times F$ satisfying (3), then $\{W_i := \beta \left(U_i \times_G P \right)\}_{i=0}^{k}$ is a cover of $E \times_B E$ with sections $\alpha \circ (s_i \times_G \id_{P}) \circ \beta^{-1} \colon W_i \to E^{I}_B$ of $\Pi$ such that $\Delta E \subset W_0$.
Hence, by \Cref{cor: mon-para-tc = para-tc under gen-special cover}, the desired result follows.
\end{proof}

\begin{example}
Let $p \colon E \to B$ be a locally trivial fibration with fibre an $n$-dimensional sphere $S^n$ and structure group $\Z_2 = \langle \sigma \rangle$, where $\sigma \colon S^n \to S^n$ is the antipodal involution, and $E$ and $B$ are metrizable separable ANRs. 
If $p$ is the associated fibre bundle to a principal $\Z_2$-bundle $\tau \colon P \to B$, then equation \eqref{eq: para-tc of associated bundle} applied to the bundle $S^n \hookrightarrow E \to B$, it follows that
$$
    \TC(S^n) 
        = \TC[p \colon E \to B] 
        = \TC_{\Z_2}(S^n)
        = \begin{cases}
            1, & n \text{ odd},\\
            2, & n \text{ even},
        \end{cases}
$$
by \cite[Theorem 8]{FarberTC} and \cite[Lemma 4.1]{eqtcprodineq}.
Moreover, in \cite[Lemma 4.1]{eqtcprodineq}, the authors also gave explicit cover for $S^n \times S^n$ of the form in \Cref{cor: mon-para-tc = para-tc of associated bundle} (3) of cardinality $\TC_{\Z_2}(S^n) +1$.
Hence, $\TC^M[p \colon E \to B] = \TC[p \colon E \to B]$.
\end{example}

\begin{example}
Suppose $M$ and $N$ are topological spaces with involutions $\tau \colon M \to M$ and $\sigma \colon N \to N$, where $\sigma$ is fixed point free.
Then the following quotient space 
$$
    X(M,N) := \frac{M \times N}{(m,n) \sim (\tau(m),\sigma(n))}
$$
is called the \emph{generalized projective product space}.
This class of spaces was introduced by Sarkar and Zvengrowski in \cite{sarkargpps}, and it includes projective product spaces \cite{Davis} and Dold manifolds \cite{Dold}. 

Suppose that $N$ is completely regular Hausdorff and $M$ is path-connected.
It was observed in \cite[Proposition 6.1]{A-D-S} that the natural map
$$
    p \colon X(M,N) \to N/\langle \sigma \rangle, \quad p([m,n]) = [n]
$$
is a fibre bundle with fibre $M$ and structure group $\langle \tau \rangle$.
Moreover, the orbit map $q \colon N \to N/\langle \sigma \rangle$ is a principal $\left < \sigma \right >$-bundle, by \cite[Chapter II, Theorem 5.8]{Bredon}.
Clearly, $p$ is the associated fibre bundle $M \times_{\Z_2} N \to N/\langle \sigma \rangle$, where $\Z_2 = \{\id_{M \times N}, \sigma \times \tau \}$. 
Hence, by equation \eqref{eq: para-tc of associated bundle}, it follows that
$$
    \TC(M) 
        \leq \TC[p \colon X(M,N) \to N/\langle \sigma \rangle] 
        \leq \TC_{\langle \tau\rangle}(M)\,.
$$

Under appropriate conditions on the spaces $M$ and $N$, if one of the following conditions is satisfied.
\begin{enumerate}
    \item $\TC(M) = \TC_{\langle \tau\rangle}(M)$ and $M \times M$ admits a cover of the form in \Cref{cor: mon-para-tc = para-tc of associated bundle} (3) of cardinality $\TC_{\langle \tau\rangle}(M) + 1$ (as in the example above).

    \item $\dim(N) \leq (\TC(M) + 1)(k+1) - 1 - \dim(M)$, where $M$ is $k$-connected.
\end{enumerate}
Then we can conclude that 
$$
    \TC^{M}[p \colon X(M,N) \to N/\langle \sigma \rangle] 
        = \TC[p \colon X(M,N) \to N/\langle \sigma \rangle]\,,
$$
by \Cref{cor: mon-para-tc = para-tc of associated bundle} and \Cref{thm: mon-para-tc = para-tc if dim(E) < (TC[p] + 1)(k+1)-1}, respectively.
We note that, since the statements above rely only on the properties of the spaces $M$ and $N$, they are easier to verify. 
\end{example}

\subsection{D-monoidal parametrized topological complexity}
\label{subsec: D-mono-para-tc}

\begin{definition}
\label{defn: D-mon-para-tc}
The \emph{D-monoidal parametrized topological complexity} of a fibration $p \colon E \to B$, denoted by $\TC^{DM}[p \colon E\to B]$, is the least nonnegative integer $k$ such that $E \times_B E$ may be covered by open subsets $U_0,\dots,U_k$, each of which admits a local section $\sigma_i \colon U_i \to E^I_B$ of $\Pi$ such that $\sigma_i(e,e) = \mathrm{c}_e$ for all $e \in E$ with $(e,e) \in U_i$.
If such an integer does not exist, we set $\TC^{DM}[p \colon E \to B] = \infty$.
\end{definition}

It is clear that one has the following chain of inequalities:
\begin{equation}
\label{eq: para-tc leq D-mon-para-tc leq mon-para-tc}
     \TC[p \colon E \to B] 
        \leq  \TC^{DM}[p \colon E \to B] 
        \leq \TC^{M}[p \colon E \to B].
\end{equation}
Similar to \Cref{defn: gen-para-tc}, we can define generalized notions of monoidal and D-monoidal parametrized topological complexities of $p$, denoted by 
$$
    \TC_g^{M}[p \colon E \to B]
       \quad \text{ and } \quad
    \TC_g^{DM}[p \colon E \to B],
$$
respectively, except that the covering sets are not required to be open.
It is clear that the following chain of inequalities holds:
\begin{enumerate}
\item 
$
    \TC_g[p \colon E \to B] 
        \leq \TC_g^{DM}[p \colon E \to B]
        \leq \TC_g^{M}[p \colon E \to B] 
        \leq \TC^{M}[p \colon E \to B],
$ and

\item 
$
    \TC_g^{DM}[p \colon E \to B] 
        \leq \TC^{DM}[p \colon E \to B].
$
\end{enumerate}
Using the following lemma, we show that, when $E \times_B E$ is a normal space, the chain of inequalities (1) and (2) can be combined into a single chain of inequalities, see \Cref{cor: gen-mon-para-tc leq d-mon-para-tc}. 
Moreover, we note that the generalized monoidal and (generalized) D-monoidal parametrized topological complexities behave well under pullbacks, in the sense of \Cref{prop: mono-para-tc under pullback}.

\begin{lemma}
\label{lemma: gen-mon-para-tc leq d-mon-para-tc}
If $p \colon E \to B$ is a fibration such that $E \times_B E$ is a normal space, then
    $$
        \TC^{M}_g[p \colon E \to B] \leq \TC^{DM}[p \colon E \to B].
    $$
\end{lemma}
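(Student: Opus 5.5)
Suppose $\TC^{DM}[p \colon E \to B] = k$, witnessed by an open cover $U_0,\dots,U_k$ of $E \times_B E$ with local sections $\sigma_i \colon U_i \to E^I_B$ of $\Pi$ such that $\sigma_i(e,e) = \mathrm{c}_e$ whenever $(e,e) \in U_i$. We aim to produce a (not necessarily open) cover $S_0,\dots,S_k$ of $E \times_B E$. Each $S_i$ will contain $\Delta E$ and carry a section $\tau_i$ of $\Pi$ with $\tau_i(e,e) = \mathrm{c}_e$. The idea is to shrink the open sets to closed pieces and then add the diagonal to each piece by hand. The point is that, for generalized covers, no continuity needs to be checked across a neighbourhood of the diagonal. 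Continuity is only required on the subspace $S_i$ itself.

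First, by normality of $E \times_B E$, we choose a closed shrinking $C_0,\dots,C_k$ of the cover, with $C_i \subseteq U_i$ closed and $\bigcup C_i = E \times_B E$. Next we set $S_i := C_i \cup \Delta E$. We define $\tau_i \colon S_i \to E^I_B$ by $\tau_i = \sigma_i$ on $C_i$ and $\tau_i(e,e) = \mathrm{c}_e$ on $\Delta E$. This is well defined on the overlap $C_i \cap \Delta E \subseteq U_i \cap \Delta E$, exactly because of the D-monoidal condition. It is a section of $\Pi$, and the $S_i$ clearly cover $E \times_B E$ and contain $\Delta E$.

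The main obstacle is continuity of $\tau_i$ on $S_i$. We use the pasting lemma for the two pieces $C_i$ and $\Delta E$. Both pieces must be closed in $S_i$. The set $C_i$ is closed by construction. The set $\Delta E$ is closed in $E \times_B E$ provided $E$ is Hausdorff, which holds by the standing Hausdorff convention. Each restriction is continuous: $\sigma_i|_{C_i}$ because $\sigma_i$ is continuous on $U_i$, and $(e,e) \mapsto \mathrm{c}_e$ because it is the composite of the homeomorphism $\Delta E \cong E$ with the continuous map $s_0 \colon E \to E^I_B$. Hence $\tau_i$ is continuous, and the pasting lemma gives $\TC^{M}_g[p \colon E \to B] \leq k$. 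This is the whole argument, and the closed shrinking is precisely where normality is used. If the diagonal were not closed, one would instead paste along $\overline{\Delta E}$, but the Hausdorff convention makes this unnecessary.
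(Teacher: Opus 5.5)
Your proposal is correct and follows essentially the same route as the paper's proof: use normality to shrink to a closed cover $C_i \subseteq U_i$, adjoin $\Delta E$, and paste $\sigma_i|_{C_i}$ with $(e,e)\mapsto \mathrm{c}_e$, which is well defined by the D-monoidal condition. You also spell out the pasting-lemma continuity check that the paper leaves implicit. The closedness of $\Delta E$ that this check needs already follows from normality when normality includes $T_1$, as in Munkres' convention, because then $E \times_B E$ and hence $E \cong \Delta E$ are Hausdorff. Your closing aside about pasting along $\overline{\Delta E}$ should be dropped, since $\tau_i$ has no meaningful value on $\overline{\Delta E}\setminus \Delta E$.
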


\begin{proof}
    Suppose $\TC^{DM}[p \colon E \to B] = n$.
    Let $\{U_0,\dots,U_n\}$ be an open cover of $E \times_B E$ such that each $U_i$ admits a section $\sigma_i \colon U_i \to E^{I}_B$ of $\Pi$ such that $\sigma_i(e,e) = \mathrm{c}_e$ for all $e \in E$ with $(e,e) \in U_i$. 
    As $E \times_B E$ is normal, there exists a closed cover $\{Z_0,\dots,Z_n\}$ of $E \times_B E$ such that $Z_i \subseteq U_i$ for all $i$, see \cite[Section 31, Exercise 2]{munkres2000topology}.
    Define $s_i \colon Z_i \cup \Delta E \to E^I_B$ by
    $$
        s_i(x_1,x_2) = 
            \begin{cases}
                \sigma_i(e_1,e_2) & \text{ if } (e_1,e_2) \in Z_i,\\
                \mathrm{c}_e      & \text{ if } (e_1,e_2) \in \Delta E.
            \end{cases}
    $$
    Note that $s_i$ is well-defined because $\sigma_i(e,e)=\mathrm{c}_e$ whenever $(e,e) \in Z_i$. 
    Hence, $s_i$ is a continuous extension of $\left.\sigma_i\right|_{Z_i}$ such that $s_i$ is a local section of $\Pi$ satisfying $\sigma_i(e,e) = \mathrm{c}_e$ for all $e \in E$.
\end{proof}

\begin{corollary}
\label{cor: gen-mon-para-tc leq d-mon-para-tc}
If $p \colon E \to B$ is a fibration such that $E \times_B E$ is a normal space, then
\begin{align*}
    \TC_g^{DM}[p \colon E \to B] 
            \leq \TC_g^{M}[p \colon E \to B] 
            \leq \TC^{DM}[p \colon E \to B] 
            \leq \TC^{M}[p \colon E \to B].
\end{align*}
\end{corollary}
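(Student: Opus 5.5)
The chain consists of three inequalities, and I will verify them one at a time; only the middle one is new.

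The first inequality, $\TC_g^{DM}[p \colon E \to B] \leq \TC_g^{M}[p \colon E \to B]$, follows directly from the definitions. Take a (not necessarily open) cover $\{S_0,\dots,S_k\}$ that witnesses $\TC_g^{M}$. Each $S_i$ contains $\Delta E$ and carries a local section $\sigma_i$ of $\Pi$ with $\sigma_i(e,e)=\mathrm{c}_e$ for all $e \in E$. In particular, $\sigma_i(e,e)=\mathrm{c}_e$ holds whenever $(e,e)\in S_i$. So the same cover witnesses $\TC_g^{DM}[p \colon E \to B] \leq k$.

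The last inequality, $\TC^{DM}[p \colon E \to B] \leq \TC^{M}[p \colon E \to B]$, is the right half of \eqref{eq: para-tc leq D-mon-para-tc leq mon-para-tc}. It holds for the same reason: an open cover witnessing $\TC^M$, with every set containing $\Delta E$ and every section stationary on the diagonal, already satisfies the weaker D-monoidal condition.

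The middle inequality, $\TC_g^{M}[p \colon E \to B] \leq \TC^{DM}[p \colon E \to B]$, is exactly \Cref{lemma: gen-mon-para-tc leq d-mon-para-tc}, and this is where normality of $E \times_B E$ is used. For completeness, the lemma's argument runs as follows. Start from an open cover $\{U_i\}$ witnessing $\TC^{DM}$. By normality, shrink it to a closed cover $\{Z_i\}$ with $Z_i \subseteq U_i$. Then extend $\left.\sigma_i\right|_{Z_i}$ to $Z_i \cup \Delta E$ by sending $(e,e)$ to $\mathrm{c}_e$.

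The only real point to check in that argument is continuity of the glued section. Both $Z_i$ and $\Delta E$ are closed: $Z_i$ by construction, and $\Delta E$ because $E$ is Hausdorff under the paper's standing conventions. On their overlap the two definitions agree, by the D-monoidal condition $\sigma_i(e,e)=\mathrm{c}_e$ for $(e,e)\in U_i$. The pasting lemma for a finite closed cover therefore gives continuity, and the extended map is still a section of $\Pi$. Hence $\{Z_i\cup\Delta E\}$ is a generalized monoidal cover with the same number of sets.

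Chaining the three inequalities gives the corollary.
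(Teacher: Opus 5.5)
Your proof is correct and follows the paper's route exactly. The two outer inequalities are the definitional ones the paper records just before \Cref{lemma: gen-mon-para-tc leq d-mon-para-tc}, and the middle one is that lemma: shrink the open cover to a closed cover and glue in the constant paths on $\Delta E$. Your explicit pasting-lemma check, using that $Z_i$ and $\Delta E$ are closed and that the two definitions agree on their overlap, supplies a continuity detail that the paper's proof of the lemma leaves implicit.
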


\begin{lemma}
\label{lemma: recal(Delta) leq gen-mon-para-tc}
Suppose $p \colon E \to B$ is an LEC fibration.
Then
$$
\mathrm{relcat}_g(\Delta_E)
        \leq  \TC^{M}_g[p \colon E\to B],
$$
where $\Delta_E \colon E \to E \times_B E$ is the diagonal inclusion.
\end{lemma}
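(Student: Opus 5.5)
Assume $\TC^{M}_g[p \colon E\to B] = k$, so that $E \times_B E = S_0 \cup \dots \cup S_k$ for some (not necessarily open) subsets $S_i$. Each $S_i$ contains $\Delta E$ and carries a section $\sigma_i \colon S_i \to E^I_B$ of $\Pi$ with $\sigma_i(e,e) = \mathrm{c}_e$. The plan is to show that every $S_i$ is a relatively sectional subset for the closed cofibration $\Delta_E \colon E \to E\times_B E$. By the LEC hypothesis this inclusion is a closed cofibration, so $\rct_g(\Delta_E)$ is defined, and the inequality $\rct_g(\Delta_E) \leq k$ will follow at once.

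The homotopy is the contraction of a path to its initial point, as in the proof of \Cref{prop: mon-para-tc = 0 iff para-tc = 0}, but now with both endpoints moving. For $(e_1,e_2)\in S_i$ we write $\gamma = \sigma_i(e_1,e_2)$ and set
$$
    H_i((e_1,e_2),t) := \bigl(e_1,\ \gamma(1-t)\bigr).
$$
First, since $\gamma \in E^I_B$, the path $p\circ\gamma$ is constant. Hence $p(\gamma(1-t)) = p(\gamma(0)) = p(e_1)$, so $H_i$ really takes values in $E\times_B E$. Second, continuity of $H_i$ follows from continuity of $\sigma_i$ and of the evaluation map $E^I\times I \to E$, which is continuous because $I$ is locally compact. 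Third, at $t=0$ we get $H_i((e_1,e_2),0) = (e_1,\gamma(1)) = (e_1,e_2)$, and at $t=1$ we get $H_i((e_1,e_2),1) = (e_1,\gamma(0)) = (e_1,e_1)\in\Delta E$.

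It remains to check that $H_i$ is a homotopy of pairs $(S_i,\Delta E)\times I \to (E\times_B E,\Delta E)$, i.e.\ that it preserves the diagonal at every time. For $(e,e)\in\Delta E$ we have $\sigma_i(e,e) = \mathrm{c}_e$, so $H_i((e,e),t) = (e,e)$ for all $t$. This is exactly where the monoidal condition is used, and in fact it gives more than needed, since $\Delta E$ is fixed pointwise. Thus each $S_i$ is relatively sectional, and $\{S_0,\dots,S_k\}$ is a cover of $E\times_B E$ by $k+1$ relatively sectional subsets, which gives $\rct_g(\Delta_E)\leq k$. If $\TC^M_g[p \colon E \to B]=\infty$, there is nothing to prove.

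I do not expect a real obstacle here. The only points needing care are that $E^I_B$ has the subspace topology of the compact-open topology, so that evaluation is continuous, and the convention that the definition of a relatively sectional set requires only $H(x,0)=x$ and $H(x,1)\in\Delta E$, together with preservation of $\Delta E$ throughout the homotopy.
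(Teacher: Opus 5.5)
Your proof is correct and is essentially the paper's argument. The paper uses $H(e_1,e_2,t) = (\sigma(e_1,e_2)(t), e_2)$, which slides the first coordinate forward along the path and ends at $(e_2,e_2)$, whereas you slide the second coordinate backward and end at $(e_1,e_1)$; in both versions the condition $\sigma_i(e,e)=\mathrm{c}_e$ is what makes the homotopy fix $\Delta E$, so each covering set is relatively sectional.
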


\begin{proof}
Suppose $U$ is a subset of $E \times_B E$ containing the diagonal $\Delta E$, together with a local section $\sigma \colon U \to E^I_B$ of $\Pi$ satisfying $\sigma(e,e)=\mathrm{c}_e$ for all $e \in E$. 
Let $H \colon U \times I \to E \times E$ be the homotopy defined as 
    $$
        H(e_1,e_2,t) := (\sigma(e_1,e_2)(t),e_2) \quad \text{for all } (e_1,e_2) \in U,\, t \in I.
    $$ 
As $\sigma(e_1,e_2) \in E^{I}_B$, it follows that $p(\sigma(e_1,e_2)(t)) = b$ for some $b \in B$ and for all $t \in I$.
In particular, $p(\sigma(e_1,e_2)(t)) =p(\sigma(e_1,e_2)(1)) = p(e_2)$ implies that the image of $H$ lies in $E \times_B E$.
Moreover, $H$ satisfies $H(e_1,e_2,0) = (e_1,e_2)$, $H(e_1,e_2,1) = (e_2,e_2) \in \Delta_E(E)$ and $H(e,e,t) = (e,e)$.
Extending this argument to covers, the desired inequality follows.
\end{proof}

We now adapt the ideas presented in \cite[Remark 2.19]{garcia2019note} to present the main result of this section.
This theorem generalizes \cite[Proposition 2.3]{aguilar-gonzalez-2023motion}, and \cite[Corollary 2.18]{garcia2019note} to the parametrized setting.

\begin{theorem}
\label{thm: mon-para-tc = D-mon-para-tc}
Suppose $p \colon E \to B$ is a fibration, where $E$ and $E \times_B E$ are ANRs.
Then
$$
    \TC^{M}[p \colon E\to B]
        = \TC_g^{M}[p \colon E\to B]
        = \TC^{DM}[p \colon E\to B].
$$
\end{theorem}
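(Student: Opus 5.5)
We will close the chain of inequalities
$$
\TC^{M}[p \colon E\to B] \leq \TC_g^{M}[p \colon E\to B] \leq \TC^{DM}[p \colon E\to B] \leq \TC^{M}[p \colon E\to B].
$$
The first inequality will come from relative category, the second from \Cref{lemma: gen-mon-para-tc leq d-mon-para-tc}, and the third from \eqref{eq: para-tc leq D-mon-para-tc leq mon-para-tc}.

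We first record the consequences of the hypotheses. Since $E$ and $E\times_B E$ are ANRs, they are metrizable, hence paracompact and normal. By \Cref{thm: cobfib} (with trivial group), the diagonal $\Delta_E \colon E \to E\times_B E$ is a closed cofibration. So $p$ is an LEC fibration and $\Delta_E$ is a closed cofibration between ANR spaces.

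The middle inequality $\TC_g^{M} \leq \TC^{DM}$ is exactly \Cref{lemma: gen-mon-para-tc leq d-mon-para-tc}, since $E\times_B E$ is normal. The last inequality $\TC^{DM}\leq\TC^{M}$ is trivial.

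For the first inequality we pass through relative category:
$$
\TC^{M}[p \colon E\to B] = \rct(\Delta_E) = \rct_g(\Delta_E) \leq \TC^{M}_g[p \colon E\to B].
$$
The first equality is \Cref{thm: equivalent defn of monoidal para tc}, which applies because $p$ is LEC and $E\times_B E$ is paracompact. The second equality is \Cref{thm: relcat = gen-relcat}, which applies because $\Delta_E$ is a closed cofibration between ANRs. The final inequality is \Cref{lemma: recal(Delta) leq gen-mon-para-tc}.

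One point needs checking in that lemma. The homotopy $H(e_1,e_2,t)=(\sigma(e_1,e_2)(t),e_2)$ must be a homotopy of pairs $(U,\Delta E)\times I\to(E\times_B E,\Delta E)$ landing in $\Delta E$ at time $1$, as required by the definition of relatively sectional sets. This holds because $\sigma(e,e)=\mathrm{c}_e$ forces $H(e,e,t)=(e,e)$ for all $t$, and $\sigma(e_1,e_2)(1)=e_2$ gives $H(e_1,e_2,1)=(e_2,e_2)\in\Delta E$.

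The main obstacle is verifying the hypotheses of the cited results rather than any new construction. In particular, one must confirm that the ANR hypothesis on $E\times_B E$ gives both the paracompactness needed for \Cref{thm: equivalent defn of monoidal para tc} and the closed cofibration needed for \Cref{thm: relcat = gen-relcat}. The diagonal is closed because $E\times_B E$ is Hausdorff. Combining the three steps, all the quantities in the chain coincide.
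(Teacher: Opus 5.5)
Your proposal is correct and is essentially the paper's proof: the paper closes exactly the chain $\TC^{M}[p \colon E\to B] = \rct(\Delta_E) = \rct_g(\Delta_E) \leq \TC_g^{M}[p \colon E\to B] \leq \TC^{DM}[p \colon E\to B] \leq \TC^{M}[p \colon E\to B]$, citing the same four results. Your explicit checks of the hypotheses, which the paper leaves implicit, are a welcome addition: metrizability gives paracompactness and normality of $E\times_B E$, and \Cref{thm: cobfib} with trivial group makes $\Delta_E$ a closed cofibration between ANRs.
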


\begin{proof}
The series of inequalities
\begin{align*}
    \TC^{M}[p \colon E\to B] 
            & = \mathrm{relcat}(\Delta_E) 
                & \mathrm{\Cref{thm: equivalent defn of monoidal para tc}} \\
            & = \mathrm{relcat}_g(\Delta_E)
                & \mathrm{\Cref{thm: relcat = gen-relcat}} \\
            & \leq \TC_g^{M}[p \colon E\to B] 
                & \mathrm{\Cref{lemma: recal(Delta) leq gen-mon-para-tc}} \\
            & \leq \TC^{DM}[p \colon E\to B] 
                & \mathrm{\Cref{lemma: gen-mon-para-tc leq d-mon-para-tc}}\\
            & \leq \TC^{M}[p \colon E\to B]
\end{align*}
implies the desired result.
\end{proof}

We note that results of the form \Cref{thm: mon-para-tc = para-tc under special cover} and \Cref{cor: mon-para-tc = para-tc under gen-special cover} also hold for D-monoidal parametrized topological complexity, since the inequality \eqref{eq: para-tc leq D-mon-para-tc leq mon-para-tc}, and its generalized version, holds.

\subsection{Fadell–Husseini monoidal topological complexity}
\label{subsec: FM-mono-para-tc}

\begin{definition}
\label{defn: FH-mon-para-tc}
The \emph{Fadell-Husseini monoidal parametrized topological complexity} of a fibration $p \colon E \to B$, denoted by $\TC^{FH}[p \colon E \to B]$, is the least nonnegative integer $k$ such that $E \times_B E$ may be covered by open subsets $U_0,\dots,U_k$, each of which admits a local section $\sigma_i \colon U_i \to E^I_B$ of $\Pi$ such that 
\begin{enumerate}
    \item the diagonal $\Delta E$ is a subset of $U_0$,
    \item $\sigma_0(e,e) = \mathrm{c}_e$ for all $e \in E$, 
    \item $\Delta E \cap U_i = \emptyset$ for all $i \geq 1$.
\end{enumerate}    
If such an integer does not exist, we set $\TC^{FH}[p \colon E \to B] = \infty$.
The generalized notion of the Fadell–Husseini monoidal parametrized topological complexity of $p$, denoted $\TC_g^{FH}[p \colon E \to B]$, is defined in the same way, except that the covering sets are not required to be open.
\end{definition}

It is clear that one has the following inequalities:
\begin{enumerate}
    \item $\TC^{DM}[p \colon E \to B] \leq \TC^{FH}[p \colon E \to B]$, 
    \item $\TC_{g}^{FH}[p \colon E \to B] \leq \TC^{FH}[p \colon E \to B]$,
    \item $\TC_{g}^{DM}[p \colon E \to B] \leq \TC_{g}^{FH}[p \colon E \to B] \leq \TC_{g}^{M}[p \colon E \to B]$.
\end{enumerate}
Here, the second inequality in (3) follows from the fact that the diagonal $\Delta E$ can be removed from the open sets $U_i$ for $i \geq 1$.
Hence, if $E$ is Hausdorff, we also have
$$
    \TC^{FH}[p \colon E \to B] \leq \TC^{M}[p \colon E \to B].
$$

The proof of the following lemma is similar to that of \Cref{lemma: recal(Delta) leq gen-mon-para-tc} and is therefore omitted.

\begin{lemma}
\label{lemma: FH-recal(Delta) leq gen-FH-mon-para-tc}
Suppose $p \colon E \to B$ is an LEC fibration.
Then
$$
\mathrm{relcat}^{FH}_g(\Delta_E)
        \leq  \TC^{FH}_g[p \colon E\to B].
$$
where $\Delta_E \colon E \to E \times_B E$ is the diagonal inclusion.
\end{lemma}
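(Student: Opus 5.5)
The plan is to imitate the proof of \Cref{lemma: recal(Delta) leq gen-mon-para-tc}, applying its homotopy construction to each covering set separately, once for the distinguished set $U_0$ and once for the sets $U_i$ with $i \geq 1$. Suppose $\TC^{FH}_g[p \colon E \to B] = k$. Choose a (not necessarily open) cover $\{U_0,\dots,U_k\}$ of $E \times_B E$ with sections $\sigma_i \colon U_i \to E^I_B$ of $\Pi$ such that:
\begin{itemize}
    \item $\Delta E \subseteq U_0$ and $\sigma_0(e,e) = \mathrm{c}_e$ for all $e \in E$;
    \item $U_i \cap \Delta E = \emptyset$ for all $i \geq 1$.
\end{itemize}
The goal is to show that this same cover witnesses $\rct^{FH}_g(\Delta_E) \leq k$.

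For each $i$, define $H_i \colon U_i \times I \to E \times_B E$ by
$$
    H_i(e_1,e_2,t) := (\sigma_i(e_1,e_2)(t),\, e_2).
$$
This lands in $E \times_B E$ because $\sigma_i(e_1,e_2)$ is a path inside a single fibre, whose endpoint is $e_2$. It is continuous because evaluation $E^I_B \times I \to E$ is continuous ($I$ being locally compact). Moreover,
$$
    H_i(e_1,e_2,0) = (e_1,e_2) \quad\text{and}\quad H_i(e_1,e_2,1) = (e_2,e_2) \in \Delta E .
$$

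For $i = 0$, the monoidal condition $\sigma_0(e,e) = \mathrm{c}_e$ gives $H_0(e,e,t) = (e,e)$ for all $t$. Hence $H_0$ is a homotopy of pairs $(U_0,\Delta E) \times I \to (E \times_B E, \Delta E)$ deforming $U_0$ into $\Delta E$ rel $\Delta E$, so $U_0$ is relatively sectional. For $i \geq 1$, the set $U_i$ is disjoint from $\Delta E$, and $H_i$ is a deformation of $U_i$ into $\Delta E = \Delta_E(E)$, as the definition of $\rct^{FH}_g$ requires.

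The cover therefore satisfies the generalized Fadell--Husseini conditions, and $\rct^{FH}_g(\Delta_E) \leq k$. The LEC hypothesis is needed only so that $\Delta_E$ is a closed cofibration, which makes $\rct^{FH}_g(\Delta_E)$ defined. There is no real obstacle here. The only point needing care is the target in the definition of $\rct^{FH}$: for $i \geq 1$ the homotopy must take values in $E \times_B E$ (the paper's ``$X \times I$'' is presumably a typo for $X$), and $H_i$ does.
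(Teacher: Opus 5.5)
Your proposal is correct and follows the paper's intended argument: the paper omits this proof as being the same as that of \Cref{lemma: recal(Delta) leq gen-mon-para-tc}, namely applying the homotopy $H(e_1,e_2,t) = (\sigma(e_1,e_2)(t), e_2)$ to each covering set, which is what you do for $U_0$ and for the $U_i$ with $i \geq 1$. Your reading of the target ``$X \times I$'' in the definition of $\rct^{FH}_{\op}$ as a typo for $X$ is also right.
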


\begin{theorem}
\label{thm: mon-para-tc = FH-mon-para-tc}
Suppose $p \colon E \to B$ is a fibration, where $E$ and $E \times_B E$ are ANRs.
Then
$$
    \TC_g^{FH}[p \colon E\to B]
        = \TC^{FH}[p \colon E \to B]
        = \TC^{M}[p \colon E\to B].
$$
\end{theorem}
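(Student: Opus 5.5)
The plan is to mirror the proof of \Cref{thm: mon-para-tc = D-mon-para-tc}: build a closed chain of inequalities through the relative category of the diagonal $\Delta_E \colon E \to E \times_B E$. First, since $E$ and $E \times_B E$ are ANRs, \Cref{thm: cobfib} shows that $\Delta_E$ is a closed cofibration. Hence $p$ is an LEC fibration. Moreover, ANRs are metrizable, so $E \times_B E$ is paracompact and normal. Consequently \Cref{thm: equivalent defn of monoidal para tc}, \Cref{thm: relcat = gen-relcat} and \Cref{thm: gen-relcat = gen-FH-relcat} all apply to $\Delta_E$.

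The chain I would write is
\begin{align*}
    \TC^{M}[p \colon E\to B]
        & = \rct(\Delta_E)
        = \rct_g(\Delta_E)
        = \rct_g^{FH}(\Delta_E) \\
        & \leq \TC_g^{FH}[p \colon E\to B]
        \leq \TC^{FH}[p \colon E\to B]
        \leq \TC^{M}[p \colon E\to B].
\end{align*}
The three equalities come from \Cref{thm: equivalent defn of monoidal para tc}, \Cref{thm: relcat = gen-relcat} and \Cref{thm: gen-relcat = gen-FH-relcat}. The first inequality is \Cref{lemma: FH-recal(Delta) leq gen-FH-mon-para-tc}. The second is the trivial comparison (2) listed after \Cref{defn: FH-mon-para-tc}. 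The last holds because $E$ is Hausdorff, so the diagonal can be removed from the open sets $U_i$ with $i\ge 1$. Since the chain begins and ends at $\TC^{M}[p \colon E\to B]$, all its terms are equal.

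The step needing real care is \Cref{lemma: FH-recal(Delta) leq gen-FH-mon-para-tc}, whose proof the paper omits, so I would verify it explicitly. Take a generalized FH cover $S_0,\dots,S_k$ with sections $\sigma_i$. On $S_0$ I would use the homotopy $H(e_1,e_2,t) = (\sigma_0(e_1,e_2)(t), e_2)$ exactly as in \Cref{lemma: recal(Delta) leq gen-mon-para-tc}. This is a homotopy of pairs into $(E\times_B E,\Delta E)$ ending in $\Delta E$, so $S_0$ is relatively sectional. For $i \ge 1$ the same formula gives a deformation of $S_i$ into $\Delta E$, and no condition on $\Delta E$ is needed there because $S_i\cap\Delta E=\emptyset$. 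The image stays in $E\times_B E$ because the paths are fibrewise. Should \Cref{thm: relcat = FH-relcat} be needed for the open version, it applies as well, but the chain above avoids it.
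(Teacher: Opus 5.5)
Your proposal is correct and is essentially the paper's proof. The paper uses the same closed chain $\rct_g^{FH}(\Delta_E) \leq \TC_g^{FH}[p \colon E \to B] \leq \TC^{FH}[p \colon E \to B] \leq \TC^{M}[p \colon E \to B] = \rct_g(\Delta_E) = \rct_g^{FH}(\Delta_E)$, with the same justifications (the lemma, the Hausdorff removal of $\Delta E$ from $U_i$ for $i \ge 1$, and the three cited results); you simply start reading it at $\TC^{M}$, and your explicit check of the omitted lemma via $H(e_1,e_2,t)=(\sigma_i(e_1,e_2)(t),e_2)$ is also correct.
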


\begin{proof}
The series of inequalities
\begin{align*}
    \mathrm{relcat}^{FH}_g(\Delta_E)
            & \leq  \TC^{FH}_g[p \colon E\to B]
                & \text{\Cref{lemma: FH-recal(Delta) leq gen-FH-mon-para-tc}} \\
            & \leq \TC^{FH}[p \colon E\to B] \\
            & \leq \TC^{M}[p \colon E\to B] \\
            & = \rct_g(\Delta_E)
                & \text{\Cref{thm: equivalent defn of monoidal para tc} and \Cref{thm: relcat = gen-relcat}}\\
            & = \rct_g^{FH}(\Delta_E)
                & \text{\Cref{thm: gen-relcat = gen-FH-relcat}}
\end{align*}  
implies the desired result.
\end{proof}

When the space $E$ is Hausdorff, results of the form \Cref{thm: mon-para-tc = para-tc under special cover} and \Cref{cor: mon-para-tc = para-tc under gen-special cover} also hold for Fadell-Husseini monoidal parametrized topological complexity, since the following inequality holds:
$$
    \TC[p \colon E \to B] \leq \TC^{FH}[p \colon E \to B] \leq \TC^{M}[p \colon E \to B].
$$
As a result, Theorem 1.4 in \cite{aguilar-gonzalez-2023motion} extends to the parametrized setting.

\section{Symmetrized parametrized topological complexity}
\label{sec: symm-para-tc}

In this section, we introduce the notion of the symmetrized parametrized topological complexity of a fibration.
Suppose $p \colon E \to B$ is a fibration.
Then there is an action of $\Z_2 = \langle g \rangle$ on the spaces $E^I_B$ and $E\times_B E$ defined as follows: 
\[
    g\cdot \gamma(t)=\gamma(1-t), \quad \text{and} \quad g\cdot (e_1,e_2)=(e_2,e_1)\,,
\]
for $\gamma\in E^I_B$ and $(e_1,e_2)\in E\times_B E$.
The fibration $\Pi \colon E^I_B\to E\times_B E$ satisfies
\[
    \Pi(g \cdot \gamma) 
        = (g \cdot \gamma(0),g \cdot \gamma(1)) 
        = (\gamma(1),\gamma(0))
        = g \cdot (\gamma(0),\gamma(1))\,,
\]
and hence the map $\Pi$ is $\Z_2$-equivariant.
In fact, we now show that $\Pi$ is a $\Z_2$-fibration.

\begin{proposition}
Suppose $p \colon E \to B$ is a fibration.
Then the evaluation map
\begin{equation*}
    \Pi \colon E^{I}_B \to E \times_B E, \quad 
        \alpha \mapsto (\alpha(0),\alpha(1))
\end{equation*}
is a $\Z_2$-fibration.
\end{proposition}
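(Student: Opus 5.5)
This is a direct application of \Cref{thm: restriction map is a G-fibration} with $G=\Z_2$, $K=I$ and $L=\partial I=\{0,1\}$. Let $\Z_2=\langle g\rangle$ act on $I$ by the reflection $g\cdot t=1-t$; this action preserves $\partial I$ and swaps its two points. The plan has two steps: first check that the induced actions on $E^I_B$ and $E^{\partial I}_B$ are the ones in the statement, and then check that $\partial I\hookrightarrow I$ is a $\Z_2$-cofibration.

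For the identification, note that the induced action on $E^I_B$ is $(g\cdot\gamma)(t)=\gamma(g^{-1}t)=\gamma(1-t)$, which is exactly the given action. The map $E^{\partial I}_B\to E\times_B E$, $f\mapsto (f(0),f(1))$, is a homeomorphism, since $\partial I$ is discrete. It carries $g\cdot f$, which sends $0\mapsto f(1)$ and $1\mapsto f(0)$, to the transposed pair. Hence it is a $\Z_2$-homeomorphism, and under it the restriction map $E^I_B\to E^{\partial I}_B$ becomes $\Pi$. A $\Z_2$-fibration composed with a $\Z_2$-homeomorphism is again a $\Z_2$-fibration, so it suffices to treat the restriction map.

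The main obstacle is the second step, showing that $\partial I\hookrightarrow I$ is a $\Z_2$-cofibration. I would first try \Cref{thm: cobfib}. The reflection action on $I$ is an orthogonal action on a $\Z_2$-invariant convex compact subset of the representation $\R$ given by $t\mapsto -t$ (after the affine shift $t\mapsto t-\tfrac12$). It is therefore a $\Z_2$-retract of $\R$ via equivariant clamping, so $I$ is a $\Z_2$-ENR and hence a $\Z_2$-ANR by \Cref{prop: ENR implies ANR}. The same applies to $\partial I$, which is a closed invariant finite subset, hence a $\Z_2$-ENR. \Cref{thm: cobfib} then gives the cofibration property.

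If that route is awkward, I would instead give a direct equivariant retraction $r\colon I\times I\to (I\times\{0\})\cup(\partial I\times I)$. Take the standard radial projection from the point $(\tfrac12,2)$, which is symmetric under $(t,s)\mapsto(1-t,s)$, so $r$ commutes with the reflection. Extending a $\Z_2$-map $h$ from $(I\times\{0\})\cup(\partial I\times I)$ to $I\times I$ by $h\circ r$ then gives the cofibration property directly. Either way, \Cref{thm: restriction map is a G-fibration} applies and yields that $\Pi$ is a $\Z_2$-fibration.
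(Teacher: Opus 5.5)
Your proof is correct and follows the paper's argument. The paper also applies \Cref{thm: restriction map is a G-fibration} with $K=I$, $L=\{0,1\}$ and the reflection action, leaving the identification $E^{\{0,1\}}_B\cong E\times_B E$ implicit. The only difference is that the paper cites Grant's Example 2.4 for the fact that $\{0,1\}\hookrightarrow I$ is a $\Z_2$-cofibration, whereas you verify it directly; either of your two routes is fine.
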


\begin{proof}
Let $\Z_2 = \langle g \rangle$ act on the unit interval $I$ via the involution $g \cdot t = 1-t$ for $t \in I$. 
It was shown in \cite[Example 2.4]{Grantsymmtc} that the inclusion map $i \colon \{0,1\} \hookrightarrow I$ is a $\Z_2$-cofibration.
Hence, by \Cref{thm: restriction map is a G-fibration}, it follows that $\Pi$ is a $\Z_2$-fibration.
\end{proof}

\begin{notation*}
Throughout this section, we denote the group $\Z_2$ by $G$.
\end{notation*}

\begin{remark}
\label{rem: fibre of the fixed point fibration for parameterized fibration}
    Note that the fibre of $\Pi$ over the point $(e,e) \in E \times_B E$ is the based loop space $\Omega F$ of the pointed space $(F,e)$, where $F = p^{-1}(b)$ and $b = p(e)$.
    Since $(e,e) \in (E \times_B E)^G = \Delta E$, it follows from \Cref{prop: fibre of fixed point fibration} that $\Omega F$ admits a $G$-action, given by $g \cdot \gamma = \overline{\gamma}$, and the induced fibration on the fixed points 
    $$
        \Pi^G \colon (E^{I}_B)^G \to \Delta E
    $$
    has the fibre $(\Omega F)^G$, which is homeomorphic to the based path space $P_0 F = \{\gamma \in F^I \mid \gamma(0) = e\}$.
\end{remark}

We are now in a position to define our main object.

\begin{definition}
    The \emph{symmetrized parametrized topological complexity} of a fibration $p \colon E\to B$, denoted by $\TC^{\Sigma}[p \colon E\to B]$, is defined as 
    \[
        \TC^{\Sigma}[p \colon E\to B] := \sct_G(\Pi).
    \]
\end{definition}

\subsection{Properties}

We now study the homotopy invariance of $\mathrm{TC}^{\Sigma}[p \colon E \to B]$. 
Recall that a \emph{fibrewise map} between two fibrations $p \colon E \to B$ and $p' \colon E' \to B$ is a map $h \colon E \to E'$ satisfying $p' \circ h = p$.
A map $H \colon E \times I \to E'$ is called a \emph{fibrewise homotopy} if it preserves the projection to $B$, meaning $p'(H(e,t)) = p(e)$ for all $e \in E$ and $t \in I$. 
In this situation, $H_t := H(-,t)$, $H$ is said to be a fibrewise homotopy between $H_0$ and $H_1$.
The fibrations $p \colon E \to B$ and $p' \colon E' \to B$ are 
\emph{fibrewise homotopy equivalent} if there exist fibrewise maps $f \colon E' \to E$ and $g \colon E \to E'$ such that $f \circ g$ is fibrewise homotopic to $\id_E$ and $g \circ f$ is fibrewise homotopic to $\id_{E'}$.
It is a standard fact that if a fibrewise map $f \colon E' \to E$ is an ordinary homotopy equivalence, then it is in fact a fibrewise homotopy equivalence.

\begin{proposition}
\label{prop: fibrewise homotopy invariance of sym-para-tc}
If $p \colon E \to B$ and $p' \colon E' \to B$ are fibrewise homotopy equivalent fibrations, then
$$
    \TC^{\Sigma}[p \colon E \to B] = \TC^{\Sigma}[p' \colon E' \to B].
$$
\end{proposition}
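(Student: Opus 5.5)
The plan is to show that a fibrewise homotopy equivalence induces $G$-maps between the fibre products and between the fibrewise path spaces, compatible with $\Pi$ up to $G$-homotopy. Then I will apply the standard comparison principle for equivariant sectional category: if $\Pi \colon E^I_B \to E\times_B E$ and $\Pi' \colon (E')^I_B \to E'\times_B E'$ are $G$-fibrations and there are $G$-maps
\[
\Phi \colon E\times_B E \to E'\times_B E', \qquad \Psi \colon E'\times_B E' \to E\times_B E
\]
with $\Psi\circ\Phi \simeq_G \id$, together with compatible $G$-maps on the total spaces, then $\sct_G(\Pi) \le \sct_G(\Pi')$. By symmetry, the equality follows.

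\textbf{Step 1.} Let $f \colon E' \to E$ and $g \colon E \to E'$ be fibrewise maps, with a fibrewise homotopy $H \colon E\times I \to E$ from $f\circ g$ to $\id_E$, and similarly $H'$ on $E'$. Define
\[
g\times_B g \colon E\times_B E \to E'\times_B E', \qquad (e_1,e_2)\mapsto (g(e_1),g(e_2)).
\]
This is well defined because $p'\circ g = p$, and it is $G$-equivariant for the transposition action, since it is a diagonal product. Likewise, $g_* \colon E^I_B \to (E')^I_B$, $\gamma \mapsto g\circ\gamma$, lands in fibrewise-constant paths and commutes with path reversal. So $\Pi'\circ g_* = (g\times_B g)\circ \Pi$ strictly. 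The same holds for $f$.

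\textbf{Step 2.} The homotopy $(e_1,e_2,t) \mapsto (H(e_1,t),H(e_2,t))$ stays in $E\times_B E$, because $H$ is fibrewise. It is also $G$-equivariant, since $G$ acts trivially on $I$. Hence $(f\times_B f)\circ(g\times_B g) \simeq_G \id_{E\times_B E}$.

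\textbf{Step 3.} Suppose $\sct_G(\Pi') = k$, with a $G$-invariant open cover $\{V_i\}$ of $E'\times_B E'$ and $G$-maps $s_i \colon V_i \to (E')^I_B$ such that $\Pi'\circ s_i \simeq_G \iota_{V_i}$. Set
\[
U_i := (g\times_B g)^{-1}(V_i),
\]
which is open and $G$-invariant, and take the $G$-map $\sigma_i := f_*\circ s_i\circ (g\times_B g)|_{U_i}$. Then
\[
\Pi\circ\sigma_i = (f\times_B f)\circ \Pi'\circ s_i\circ(g\times_B g) \simeq_G (f\times_B f)\circ(g\times_B g)|_{U_i} \simeq_G \iota_{U_i},
\]
using Steps 1 and 2. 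Hence $\sct_G(\Pi)\le k$. Swapping the roles of $E$ and $E'$ gives the reverse inequality.

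The \textbf{main obstacle} is ensuring that every homotopy is genuinely $G$-equivariant and stays inside the fibre products. The definition of $\sct_G$ used in the paper allows $G$-homotopy sections rather than strict sections, so no lifting argument is needed. Equivariance holds precisely because the homotopies act diagonally with the same parameter $t$ in both coordinates. If a strict section formulation were required, I would instead invoke the $G$-homotopy lifting property of $\Pi$, which holds because $\Pi$ is a $G$-fibration by the preceding proposition.
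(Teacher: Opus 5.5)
Your proof is correct, but it takes a different route from the paper's.

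\textbf{The paper's route.} The paper starts from a strict $G$-equivariant section $s$ of $\Pi$ over $U$. It writes down an explicit strict $G$-section of $\Pi'$ over $(f\times f)^{-1}(U)$ by concatenating three pieces: the fibrewise homotopy $H(a,\cdot)$ from $a$ to $gf(a)$, then $g\circ s(f(a),f(b))$, then $H(b,\cdot)$ reversed. The middle piece is written symmetrically, using $s(e_1,e_2)(1/2)=s(e_2,e_1)(1/2)$, so that equivariance under path reversal can be checked directly.

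\textbf{Your route.} You instead use the fact that the paper's definition of $\sct_G$ only asks for $G$-homotopy sections, and argue formally:
\begin{itemize}
\item $f_*$ and $f\times_B f$ form a strictly commuting square of $G$-maps from $\Pi'$ to $\Pi$;
\item $g\times_B g$ is a $G$-homotopy right inverse of $f\times_B f$, via the diagonal homotopy $(H(e_1,t),H(e_2,t))$;
\item pulling back along $g\times_B g$ and pushing forward along $f_*$ therefore turns $G$-homotopy sections of $\Pi'$ into $G$-homotopy sections of $\Pi$, giving $\sct_G(\Pi)\le\sct_G(\Pi')$.
\end{itemize}

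\textbf{What each buys.} Your version is the equivariant form of the standard domination lemma for sectional category. It needs neither path bookkeeping nor the $G$-fibration property of $\Pi$. The paper's version produces strict sections, that is, actual symmetric motion planners, directly. In effect it carries out by hand the lift of the homotopy $(H(a,\cdot),H(b,\cdot))$ that you would otherwise obtain from the $G$-homotopy lifting property of $\Pi$ if strict sections were required.

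Both arguments use only one of the two fibrewise homotopies for each inequality. So each in fact gives invariance under fibrewise homotopy domination.
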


\begin{proof}
Suppose $H \colon E' \times I \to E'$ is a homotopy such that $H_0 = \id_{E'}$, $H_1 = g \circ  f$, and $p' \circ H_t = p'$ for all $t \in I$, where $f$ and $g$ are as defined in the preceding paragraph.
Suppose $U$ is a $G$-invariant open subset of $E \times_B E$ with a continuous $G$-equivariant section $s \colon U \to E^{I}_B$ of $\Pi$.
Since $s$ is $G$-equivariant, we have $s(e_1,e_2)(1/2) = s(e_2,e_1)(1/2)$ for all $(e_1,e_2) \in U$.
Define $V := (f \times f)^{-1}(U)$.
Then $V$ is a $G$-invariant open subset of $E' \times_B E'$.
Define a continuous section $s' \colon V \to (E')^{I}_B$ of $\Pi'$ as
$$
    s'(a,b)(t) =
    \begin{cases}
        H(a,4t), & \text{for } 0 \leq t \leq 1/4,\\
        g(s(f(a),f(b))(2t-1/2)), & \text{for } 1/4 \leq t \leq 1/2,\\
        g(s(f(b),f(a))(3/2 - 2t)), & \text{for } 1/2 \leq t \leq 3/4,\\
        H(b,4(1-t)), & \text{for } 3/4 \leq t \leq 1.
    \end{cases}
$$
It is straightforward to check that $s'$ is well-defined and $G$-equivariant.
Hence, $\TC^{\Sigma}[p \colon E' \to B] \leq \TC^{\Sigma}[p \colon E \to B]$.
The reverse inequality follows by considering the fibrewise homotopy between $f \circ g$ and $\id_E$.
\end{proof}

\begin{proposition}
\label{prop: sym-para-tc of projection map}
Let $\pr_1 \colon B \times F \to B$ be the projection map.
Then 
$$
    \TC^{\Sigma}[\pr_1 \colon B \times F \to B] = \TC^{\Sigma}(F).
$$   
\end{proposition}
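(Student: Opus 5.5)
We will identify the $G$-fibration $\Pi$ for the trivial fibration with a pullback of the free path fibration of $F$, and then compare sectional categories in both directions. For $p=\pr_1$ we have $E\times_B E=\{((b,x),(b,y))\}$, and there is a $G$-homeomorphism
$$
\phi\colon B\times F\times F \to E\times_B E,\qquad (b,x,y)\mapsto ((b,x),(b,y)),
$$
where $G$ acts trivially on $B$ and by transposition on $F\times F$. Similarly, a path in $E^I_B$ is a pair consisting of a constant path at some $b\in B$ and an arbitrary path in $F$. This gives a $G$-homeomorphism
$$
\psi\colon B\times F^I\to E^I_B,\qquad (b,\gamma)\mapsto (t\mapsto (b,\gamma(t))),
$$
with $G$ acting on $F^I$ by reversal. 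These satisfy $\Pi\circ\psi=\phi\circ(\id_B\times\pi)$. Hence $\TC^{\Sigma}[\pr_1]=\sct_G(\id_B\times\pi)$, and it suffices to show $\sct_G(\id_B\times \pi)=\sct_G(\pi)=\TC^{\Sigma}(F)$.

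For the inequality $\le$, take a $G$-invariant open cover $\{U_0,\dots,U_k\}$ of $F\times F$ with $G$-maps $s_i\colon U_i\to F^I$ and $G$-homotopies $\pi\circ s_i\simeq\iota_{U_i}$. Then $\{B\times U_i\}$ is a $G$-invariant open cover of $B\times F\times F$. The maps $\id_B\times s_i$ are $G$-equivariant because $G$ acts trivially on $B$, and $\id_B\times(\text{homotopy})$ gives the required $G$-homotopies. Transporting through $\phi,\psi$ yields $\TC^{\Sigma}[\pr_1]\le\TC^{\Sigma}(F)$.

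For the inequality $\ge$, fix a point $b_0\in B$ and restrict along the $G$-inclusion $j\colon F\times F\to B\times F\times F$, $(x,y)\mapsto(b_0,x,y)$. Given a $G$-invariant open cover $\{V_i\}$ with $G$-maps $\sigma_i\colon V_i\to B\times F^I$ and $G$-homotopies $H_i$ from $(\id_B\times\pi)\circ\sigma_i$ to the inclusion, we set $U_i=j^{-1}(V_i)$. These sets are $G$-invariant and open, and they cover $F\times F$. We then take $s_i=\pr_2\circ\sigma_i\circ j$ and use $\pr_{F\times F}\circ H_i\circ(j\times\id_I)$ as the $G$-homotopy from $\pi\circ s_i$ to $\iota_{U_i}$. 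Equivariance holds because the projections $\pr_2$ and $\pr_{F\times F}$ are $G$-maps, the action on $B$ being trivial.

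The main point to check is this second direction. The homotopy ends at the inclusion $U_i\hookrightarrow F\times F$ after projection because $H_i$ ends at the inclusion of $V_i$, whose $F\times F$-component is exactly $(x,y)$. We need no compatibility in the $B$-coordinate, so no fibration-lifting argument is required. The step expected to demand most care is verifying that $\phi$ and $\psi$ are homeomorphisms in the compact-open topology; this follows from the exponential law $(B\times F)^I\cong B^I\times F^I$, with $E^I_B$ corresponding to $B\times F^I$ via constant paths in $B$. Combining both inequalities gives $\TC^{\Sigma}[\pr_1\colon B\times F\to B]=\TC^{\Sigma}(F)$.
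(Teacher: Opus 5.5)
Your proof is correct and follows the same route as the paper: you identify $\Pi$ with $\id_B\times\pi_F$ via the $G$-homeomorphisms $E\times_B E\cong B\times F\times F$ and $E^I_B\cong B\times F^I$, then compare equivariant sectional categories. The paper simply asserts $\sct_G(\id_B\times\pi_F)=\sct_G(\pi_F)$. Your two-sided argument supplies the justification it omits: taking products with $B$ gives $\le$, and restricting to a slice $\{b_0\}\times F\times F$ and projecting gives $\ge$.
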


\begin{proof}
Denote $E = B\times F$ and $p = \pr_1$.
Note that $E^I_B = \{(\gamma_1,\gamma_2)\in B^I\times F^I \mid \gamma_1 = \mathrm{c}_b \text{ for some } b\in B\}$ and $E\times_B E=\{(b_1,f_1,b_2,f_2)\in E\times E \mid b_1=b_2\}$.  
Consider the following diagram:
\[\begin{tikzcd}
B\times F^I \arrow[d, "\mathrm{id}_B \times \pi_F"'] \arrow[r, "\phi"] & E^{I}_B \arrow[d, "\Pi"] \\
B \times F\times F \arrow[r, "\psi"] & E \times_B E,           
\end{tikzcd}
\]   
where $\phi(b,\gamma) = (\mathrm{c}_b,\gamma)$ and $\psi(b,f_1,f_2)=(b,f_1,b,f_2)$. 
Note that $\phi$ and $\psi$ are $G$-equivariant homeomorphisms, where $G$ acts trivially on $B$, by path reversal on $F^I$, and by transposition on $F \times F$. 
Therefore,
$$
    \TC^{\Sigma}[p\colon E\to B]
        = \sct_G(\Pi)
        = \sct_G(\id_B\times \pi_F)
        = \sct_G(\pi_F)
        =\TC^{\Sigma}(F)
$$
implies the desired equality.
\end{proof}

\begin{proposition}
Let $p \colon E \to B$ be a fibration with fibre $F$.
Suppose that either 
\begin{enumerate}
\item[(a)] $p$ is a locally trivial fibration and $B$ is paracompact, or 

\item[(b)] $E$ and $B$ have homotopy type of CW complexes.
\end{enumerate}
If $F$ is contractible, then $\TC^{\Sigma}[p \colon E \to B] = 0$.
\end{proposition}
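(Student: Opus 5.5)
The plan is to reduce to the trivial fibration $\id_B \colon B \to B$ using \Cref{prop: fibrewise homotopy invariance of sym-para-tc}, and then compute the invariant directly in that case. Concretely, I will show that under either hypothesis (a) or (b), $p$ is fibrewise homotopy equivalent (over $B$) to $\id_B \colon B \to B$. The map $p$ itself is a fibrewise map from $p$ to $\id_B$, since $\id_B \circ p = p$.

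\emph{Case (b).} The long exact homotopy sequence of the fibration $p$ with contractible fibre $F$ shows that $p_* \colon \pi_i(E) \to \pi_i(B)$ is an isomorphism for all $i$ and all basepoints. It is also a bijection on $\pi_0$: surjectivity holds because fibres are nonempty, and injectivity because the fibres are path-connected. Since $E$ and $B$ have the homotopy type of CW complexes, Whitehead's theorem makes $p$ a homotopy equivalence. By the standard fact recalled before \Cref{prop: fibrewise homotopy invariance of sym-para-tc}, a fibrewise map that is an ordinary homotopy equivalence is a fibrewise homotopy equivalence. Hence $p$ and $\id_B$ are fibrewise homotopy equivalent.

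\emph{Case (a).} Over each trivializing open set $U$, the restriction $p^{-1}(U) \cong U \times F \to U$ is fibrewise homotopy equivalent to $\id_U$, because $F$ is contractible. Since $B$ is paracompact, the trivializing cover is numerable. Dold's theorem (a fibrewise map that is a fibrewise homotopy equivalence locally over a numerable cover is a global one) then shows that $p \colon E \to B$, viewed as a map over $B$, is a fibrewise homotopy equivalence to $\id_B$. This case is the step I expect to need the most care. Dold's theorem is the cleanest route. If one wants to avoid it, an alternative is to build a global section of $p$ by a partition-of-unity argument, using contractibility of $F$, and then construct a fibrewise contraction onto that section.

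\emph{Computing the trivial case.} By \Cref{prop: fibrewise homotopy invariance of sym-para-tc}, in both cases
$$
    \TC^{\Sigma}[p \colon E \to B] = \TC^{\Sigma}[\id_B \colon B \to B].
$$
The right-hand side can be computed by hand. Here $B \times_B B = \Delta B \cong B$, and $B^I_B$ consists of the constant paths only. So $\Pi$ is a homeomorphism, and $G$ acts trivially on both its source and target. Its inverse $(b,b) \mapsto \mathrm{c}_b$ is therefore a global $G$-equivariant section, which gives $\sct_G(\Pi) = 0$. Alternatively, one can apply \Cref{prop: sym-para-tc of projection map} with $F$ a point, which gives $\TC^{\Sigma}(\ast) = 0$. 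Either way, $\TC^{\Sigma}[p \colon E \to B] = 0$.
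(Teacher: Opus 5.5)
Your proposal is correct and follows the paper's proof. The first step, showing that $p$ is a fibrewise homotopy equivalence to $\id_B$, is identical: in (a) via the local-to-global theorem for numerable covers, and in (b) via Whitehead plus the Dold/May result. The only difference is cosmetic: where the paper writes the equivariant section explicitly as $H(e_1,\cdot)$ followed by the reverse of $H(e_2,\cdot)$, you invoke the fibrewise homotopy invariance of $\TC^{\Sigma}$ and compute $\TC^{\Sigma}[\id_B \colon B \to B]=0$, and unwinding that invariance proof with $f=p$, $g=\sigma$ gives exactly the paper's section up to reparametrization.
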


\begin{proof}
\textbf{Step 1:} (a) Suppose $H \colon F \times I \to F$ is a homotopy such that $H(f,0)=f$ and $H(f,1)=f_0$ for some $f_0 \in F$. 
Suppose $\{U_i\}_{i \in I}$ is a trivializing open cover of $B$ with trivializations $\phi_i \colon p^{-1}(U_i) \to U_i\times F$ over $U_i$. 
We claim that $p$ is a homotopy equivalence over $B$.
By \cite[Proposition 7.31 and Theorem 7.57]{james1984topology}, it enough to show that the restriction $\left.p\right|_{p^{-1}(U_i)} \colon p^{-1}(U_i) \rightarrow U_i$ is a homotopy equivalence over $U_i$ for each $i$.
This is equivalent to showing that $\pi_1 = \left.p\right|_{p^{-1}(U_i)} \circ \phi_i^{-1} \colon U_i \times F \to F$ is a homotopy equivalence over $U_i$ for each $i$. 
Define $g_i \colon U_i \to U_i \times F$ by $g_i(u) = (u,f_0)$ for $u \in U_i$.
The map $g_i$ satisfies $\pi_1 \circ g_i = \id_{U_i}$ and $(g_i \circ \pi_1)(u,f) = (u,f_0)$.
Then $\widetilde{H}_i \colon U_i \times F \times I \to U_i \times F$ defined by $\widetilde{H}(u,f,t) = (u,H(f,t))$ is a homotopy over $U_i$ between $\id_{U_i \times F}$ and $g_i$.
Hence, $p$ is a homotopy equivalence over $B$.

(b) Since $p$ is a fibration and $F$ is contractible, by the long exact sequence of homotopy groups, it follows that
$
    p_* \colon \pi_n(E) \to \pi_n(B)
$
is an isomorphism for every $n\ge 0$. 
Therefore, $p$ is a weak homotopy equivalence.
Since $E$ and $B$ are homotopy equivalent to CW complexes, Whitehead's theorem implies that $p$ is a homotopy equivalence.
Then, by \cite[Proposition, Page 52]{JPMay}, $p$ is a fibre homotopy equivalence.\\

\textbf{Step 2:}
Hence, there exists a fibre-preserving map $\sigma \colon B \to E$, i.e., $p \circ \sigma = \id_B$, satisfying $p \circ \sigma \simeq_B \id_B$ and $\sigma \circ p \simeq_B \id_E$.
Thus there exists a homotopy $H \colon E \times I \to E$ such that $H_0 = \id_E, H_1 = \sigma \circ p$ and $p \circ H_t = p$ for all $t \in I$.
Define $s \colon E \times_B E \to E^I_B$ by
$$
    s(e_1,e_2)(t) 
        = \begin{cases}
            H(e_1,2t), & 0 \leq t \leq 1/2,\\
            H(e_2,2-2t), & 1/2 \leq t \leq 1\,.
        \end{cases}
$$
Note that $s$ is well-defined, since $p \circ H_t = p$ for all $t \in I$ implies the image of $s$ lies in $E^{I}_B$, and
$
    H(e_1,1) 
        = \sigma(p(e_1)) 
        = \sigma(p(e_2))
        = H(e_2,1).
$
Then $s$ satisfies $s(e_2,e_1)(t) = s(e_1,e_2)(1-t)$, i.e., $s(g \cdot (e_1,e_2)) = g \cdot s(e_1,e_2)$, where $\Z_2 = G = \langle g \rangle$.
Hence, $s$ is $G$-equivariant.
\end{proof}

\subsection{Bounds}

We first show that the symmetrized parametrized topological complexity always dominates the usual parametrized topological complexity.

\begin{proposition}
\label{prop: para-tc leq sym-para-tc}
Suppose $p \colon E \to B$ is a fibration. 
Then
    $$
        \TC[p \colon E\to B] \leq \TC^{\Sigma}[p \colon E\to B].
    $$
\end{proposition}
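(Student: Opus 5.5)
The inequality follows by forgetting the group action. By definition, $\TC^{\Sigma}[p \colon E\to B] = \sct_G(\Pi)$ and $\TC[p \colon E\to B] = \sct(\Pi)$, where $\Pi \colon E^I_B \to E\times_B E$ is the evaluation fibration. So it suffices to show that $\sct(\Pi) \leq \sct_G(\Pi)$. I would argue directly from the open-cover definitions; no join constructions are needed.

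Suppose $\sct_G(\Pi) = k < \infty$; if it is infinite there is nothing to prove. Then there is a $G$-invariant open cover $\{U_0,\dots,U_k\}$ of $E\times_B E$ together with $G$-maps $s_i \colon U_i \to E^I_B$ such that $\Pi \circ s_i$ is $G$-homotopic to the inclusion $\iota_{U_i}$. Forgetting equivariance, each $U_i$ is in particular an open subset and each $s_i$ is a continuous map with $\Pi\circ s_i \simeq \iota_{U_i}$.

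The non-equivariant sectional category of $\Pi$ is normally phrased with strict local sections. Since $\Pi$ is a fibration (\cite[Appendix]{PTCcolfree}), a homotopy section over $U_i$ can be upgraded to a strict section by the homotopy lifting property. Concretely, lift the homotopy from $\Pi\circ s_i$ to $\iota_{U_i}$, starting at $s_i$. The endpoint of the lift is a map $\sigma_i \colon U_i \to E^I_B$ with $\Pi\circ\sigma_i = \iota_{U_i}$.

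Hence $\{U_0,\dots,U_k\}$ is an open cover of $E\times_B E$ by sets admitting local sections of $\Pi$, and so $\TC[p \colon E\to B] = \sct(\Pi) \leq k$.

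The only point needing care is this harmless mismatch between the homotopy-section and strict-section conventions, which the fibration property resolves. Nothing else is an obstacle, since an equivariant cover and equivariant sections are a fortiori ordinary ones.
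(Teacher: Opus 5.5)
Your proof is correct and takes the same route as the paper: both reduce the claim to $\sct(\Pi)\leq \sct_G(\Pi)$, which holds by forgetting the $G$-action. The paper simply cites \cite[Corollary 2.8 (2)]{Inv-Para-TC} for this inequality, whereas you verify it directly, including a correct upgrade of homotopy sections to strict sections via the homotopy lifting property of $\Pi$.
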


\begin{proof}
Note that we have
    \begin{align*}
        \TC[p \colon E\to B]
            = \sct(\Pi) 
            \leq \sct_G(\Pi) 
            = \TC^{\Sigma}[p \colon E\to B],
    \end{align*}
where the inequality follows from \cite[Corollary 2.8 (2)]{Inv-Para-TC}.
\end{proof}

\begin{lemma}
\label{lemma: eq def symmetrized}
Let $p\colon E\to B$ be a fibration. 
For a $G$-invariant (not necessarily open) subset $U$ of $E \times_B E$ the following are equivalent:
\begin{enumerate}
\item there exists a $G$-equivariant section of $\Pi \colon E^I_B\to E\times_B E$ over $U$.

\item there exists a $G$-homotopy between the inclusion map $\iota_U\colon U \hookrightarrow E \times_B E$ and a $G$-map $f \colon U \to E \times_B E$ which takes values in the diagonal $\Delta E$.
\end{enumerate}
\end{lemma}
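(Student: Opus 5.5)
We prove the two implications directly, by writing down explicit $G$-equivariant formulas modelled on the non-parametrized argument of Grant for $\TC^{\Sigma}$. Throughout, $G=\Z_2=\langle g\rangle$ acts on $E^I_B$ by path reversal and on $E\times_B E$ by transposition. Since $G$ acts trivially on $I$, a $G$-homotopy $U\times I\to E\times_B E$ is the same as a homotopy $H$ with $H(g\cdot u,t)=g\cdot H(u,t)$ for all $t$.

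\textbf{(1) $\Rightarrow$ (2).} Let $s\colon U\to E^I_B$ be a $G$-equivariant section of $\Pi$. Equivariance gives $s(e_2,e_1)(t)=s(e_1,e_2)(1-t)$. We contract each path symmetrically towards its midpoint. For $(e_1,e_2)\in U$ and $t\in I$, set
$$
    H(e_1,e_2,t) := \bigl(s(e_1,e_2)(t/2),\; s(e_1,e_2)(1-t/2)\bigr).
$$
This lies in $E\times_B E$, because $p\circ s(e_1,e_2)$ is constant. Continuity follows from continuity of the evaluation map $E^I_B\times I\to E$; this is fine since $I$ is locally compact. At $t=0$ we get $H_0=\iota_U$, using $\Pi\circ s=\id$. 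At $t=1$ we get $H_1(e_1,e_2)=(m,m)$, where $m=s(e_1,e_2)(1/2)$, so $H_1$ takes values in $\Delta E$.

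It remains to check equivariance:
$$
    H(e_2,e_1,t)=\bigl(s(e_1,e_2)(1-t/2),\,s(e_1,e_2)(t/2)\bigr)=g\cdot H(e_1,e_2,t).
$$
So $f:=H_1$ is the required $G$-map.

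\textbf{(2) $\Rightarrow$ (1).} Let $H\colon U\times I\to E\times_B E$ be a $G$-homotopy with $H_0=\iota_U$ and $H_1=f$, where $f(U)\subseteq\Delta E$. Write $H=(H^1,H^2)$. Equivariance of $H$ says
$$
    H^2(e_1,e_2,t)=H^1(e_2,e_1,t).
$$
The naive candidate is the concatenation of $t\mapsto H^1(e_1,e_2,t)$ with the reverse of $t\mapsto H^2(e_1,e_2,t)$. These two paths meet at time $1$, because $H_1(e_1,e_2)\in\Delta E$. The concatenation is symmetric, since swapping $(e_1,e_2)$ exchanges the two halves and reverses the path, and it starts at $e_1$ and ends at $e_2$.

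\textbf{Main obstacle.} The concatenated path must lie in a single fibre of $p$, i.e.\ in $E^I_B$. However, $H$ is only a homotopy in $E\times_B E$, so the basepoint $b_t=p(H^1(e_1,e_2,t))$ may move with $t$.

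\textbf{Repairing the fibre condition.} The path $b_t$ starts at $p(e_1)$, and it depends only on the orbit, since $p\circ H^1=p\circ H^2$. Define the $G$-homotopy $K\colon U\times I\to B$ by
$$
    K(u,t):=p\bigl(H^1(u,1-t)\bigr),
$$
i.e.\ the path $b_t$ run backwards.

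Next take the $G$-map $F\colon U\times I\to E\times_B E$ given by $F(u,t)=H(u,t)$. We use the $G$-fibration property of $p\times_B p\colon E\times_B E\to B$, with trivial action on $B$. This property follows from the Lemma on trivial actions together with the fact that a fibration over $B$ pulls back along the diagonal.

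Apply the $G$-homotopy lifting property for the family $F$ over $U\times I$ along the second homotopy $K$. This yields a $G$-map
$$
    \widetilde F\colon U\times I\times I\to E\times_B E,
$$
starting at $F$ and covering $K$.

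\textbf{Using $\widetilde F$.} Take the ``diagonal'' path $t\mapsto\widetilde F(u,t,t)$. Its projection to $B$ is $K(u,t)=b_{1-t}$, which is still not constant. It is cleaner to lift the straight-line homotopy in the square $I\times I$ that collapses $b$ to the constant path at $p(e_1)$. Concretely:
\begin{itemize}
    \item Use the fibrewise transport (a lifting function for $p\times_B p$) along the path $b_{[0,t]}$ reversed.
    \item This transports $H(u,t)$ back into the fibre over $p(e_1)$, giving a $G$-map $H'\colon U\times I\to E\times_B E$ that lies entirely over $p(e_1)$.
    \item The transport is trivial at $t=0$, so $H'_0=\iota_U$.
\end{itemize}

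We also need $H'_1\in\Delta E$. Transport along paths in $B$ does not preserve the diagonal in general. To fix this, choose the lifting function for $p\times_B p$ to be the product $(\lambda,\lambda)$ of a lifting function $\lambda$ for $p$. Then the diagonal is preserved, and $G$-equivariance is automatic.

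With this choice, $H'$ is a $G$-homotopy inside a single fibre from $\iota_U$ to a map into $\Delta E$. The concatenation formula above, applied to $H'$, then gives the required $G$-equivariant section of $\Pi$ over $U$.
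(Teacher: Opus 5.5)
Your (1)$\Rightarrow$(2) is the same as the paper's. For (2)$\Rightarrow$(1) you take a genuinely different route. The paper stays inside the path fibration. Because $f(u)\in\Delta E$ is $G$-fixed, $\sigma(u):=\mathrm{c}_{\pi_1(f(u))}$ is a $G$-map with $\Pi\circ\sigma=f$. The paper then lifts the $G$-homotopy from $f$ to $\iota_U$ through $\Pi$, starting at $\sigma$. This uses that $\Pi$ is a $\Z_2$-fibration, which follows from the restriction-map theorem and Grant's $\Z_2$-cofibration $\{0,1\}\hookrightarrow I$. The end of the lift is the required section.

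You use only a non-equivariant lifting function $\lambda$ of $p$. You transport both coordinates of $H(u,t)$ into the fibre over $p(e_1)$ along $s\mapsto p(H^1(u,t(1-s)))$, then concatenate the first coordinate with the reverse of the second. Your checks are the right ones, and they go through:
\begin{itemize}
\item the base path is the same for $u$ and $g\cdot u$;
\item using one $\lambda$ on both coordinates makes the transported homotopy $H'$ equivariant and keeps $H'_1$ in $\Delta E$;
\item the swap then reverses the concatenated path.
\end{itemize}
Your route is explicit and bypasses the equivariant fibration theory. The paper's is shorter and fits directly into the $\sct_G$ framework.

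Two things need repair. First, the step ``the transport is trivial at $t=0$, so $H'_0=\iota_U$'' holds only if $\lambda$ is regular, i.e.\ $\lambda(e,\mathrm{c}_{p(e)})=\mathrm{c}_e$. This is not automatic for a Hurewicz fibration, you never arrange it, and the lemma assumes nothing about $B$. In general $H'_0(e_1,e_2)=\bigl(\lambda(e_1,\mathrm{c}_{b_0})(1),\lambda(e_2,\mathrm{c}_{b_0})(1)\bigr)$ with $b_0=p(e_1)$, so your path need not start at $e_1$ or end at $e_2$. The fix is cheap. Each $\lambda(e_i,\mathrm{c}_{b_0})$ is a path inside $p^{-1}(b_0)$ from $e_i$ to the $i$-th coordinate of $H'_0(e_1,e_2)$. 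So take the four-fold concatenation, in equal quarters, of:
\begin{itemize}
\item $\lambda(e_1,\mathrm{c}_{b_0})$;
\item the first coordinate of $H'$;
\item the reverse of the second coordinate of $H'$;
\item the reverse of $\lambda(e_2,\mathrm{c}_{b_0})$.
\end{itemize}
The swap exchanges the two outer pieces as well as the two inner ones, so the result is still a $G$-equivariant section over $U$.

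Second, drop the detour through $K$ and $\widetilde F$. It is never used, and its justification is wrong: the paper's trivial-action lemma needs the action on the total space to be trivial, whereas $E\times_B E$ carries the swap. The equivariance you actually need comes directly from using the product lifting function $(\lambda,\lambda)$.
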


\begin{proof}
$(1) \implies (2)$
Suppose $\sigma \colon U \to E^I_B$ is a $G$-equivariant section of $\Pi$. 
Then define $H \colon U\times I\to E\times_B E$ by 
$$
    H(e_1,e_2,t) := \left(\sigma(e_1,e_2)(t/2), \sigma(e_1,e_2)(1-t/2)\right).
$$
Since $(p\circ \sigma(e_1,e_2))(t) = b$ for some $b \in B$ and for all $t \in I$, the map $H$ is well defined.
Since $\sigma$ is $G$-equivariant, it follows that
\begin{align*}
    H(e_2,e_1,t) 
        & = (\sigma(e_2,e_1)(t/2), \sigma(e_2,e_1)(1-t/2)) \\
        & = ((g \cdot \sigma(e_1,e_2))(t/2), (g \cdot \sigma(e_1,e_2))(1-t/2)) \\
        & = (\sigma(e_1,e_2)(1-t/2), \sigma(e_1,e_2)(t/2)) \\
        & = g\cdot H(e_1,e_2,t)\,,
\end{align*}
which shows that the homotopy $H$ is $G$-equivariant.
Clearly, $H(e_1,e_2,0) = (e_1,e_2)$ and $H(e_1,e_2,1) = (\sigma(e_2,e_1)(1/2), \sigma(e_2,e_1)(1/2))\in \Delta E$.  

$(2) \implies (1)$
Suppose $H \colon U \times I \to E \times_{B} E$ is a $G$-homotopy between $f$ and the inclusion map $\iota_U$. 
Define a map $\sigma \colon U \to E^{I}_B$ by 
$$
    \sigma(e_1,e_2)
        = \mathrm{c}_{\pi_1(f(e_1,e_2))}
        = \mathrm{c}_{\pi_2(f(e_1,e_2))}\,,
$$ 
where $\pi_i \colon E \times_B E \to E$ denotes the projection onto the $i$-th factor. 
Since $f$ is $G$-equivariant and takes values in the diagonal $\Delta E$, we have 
$$
    f(e_2,e_1) 
        = f( g \cdot (e_1,e_2))
        = g \cdot f(e_1,e_2)
        = f(e_1,e_2)\,.
$$
We now show that $\sigma$ is $G$-equivariant. 
It suffices to verify that
$$
\sigma(e_2,e_1)(t)
    = g\cdot \sigma(e_1,e_2)(t)
    = \sigma(e_1,e_2)(1-t)
    \quad \text{for all } t\in I .
$$
Since $\sigma(e_1,e_2)$ is a constant path, it is enough to prove that $\sigma(e_2,e_1) = \sigma(e_1,e_2)$.
Indeed,
$$
    \sigma(e_1,e_2) 
        = \mathrm{c}_{\pi_1(f(e_1,e_2))}
        = \mathrm{c}_{\pi_1(f(e_2,e_1))}
        = \sigma(e_2,e_1)
$$
which proves that $\sigma$ is $G$-equivariant.
By the $G$-homotopy lifting property of $\Pi$, there exists a $G$-homotopy
$
    \widetilde{H} \colon U \times I \to E^{I}_B
$
such that the following diagram
\[
\begin{tikzcd}
    U \times \{0\} \arrow[d, hook] \arrow[r, "\sigma"] 
    & E^{I}_B \arrow[d, "\Pi"] \\
    U \times I \arrow[r, "H"] \arrow[ru, dashed, "\widetilde{H}"]
    & E \times_{B} E
\end{tikzcd}
\]
commutes. 
Hence, $\widetilde{H}_1$ is a $G$-equivariant section of $\Pi$ over $U$, since
$\Pi \circ \widetilde{H}_1 = H_1 = \iota_U$.
\end{proof}

For a fibration $p \colon E\to B$, define the \emph{fibrewise symmetric square} of $E$ as the quotient
$$
    SP^2_B(E) := \frac{E\times_B E}{G}\,.
$$
Let $\rho \colon E\times_B E \to SP^2_B(E)$ be the orbit map. 
Denote by $dE_B := \rho(\Delta E)$ the image of the diagonal under $\rho$. With these notations, we now establish a cohomological lower bound for the symmetrized parametrized topological complexity. We note that this bound is a parametrized version of \cite[Theorem 4.6]{Grantsymmtc} which provides a cohomological lower bound for symmetrized topological complexity.

\begin{theorem}
\label{thm: cohomological lower bound on sym-para-tc}
Suppose $p \colon E \to B$ is a fibration. 
If there exists cohomology classes $u_1,\dots,u_k \in H^*(SP^2_B(E);R)$ (for any commutative ring $R$) such that
    \begin{enumerate}
        \item $u_i$ restricts to zero in $H^*(dE_B;R)$ for $i=1,\dots,k$;
        \item $u_1 \smile \dots \smile u_k \neq 0$ in $H^*(SP^2_B(E);R)$, 
    \end{enumerate}
    then $\TC^{\Sigma}[p \colon E \to B]\geq k.$   
\end{theorem}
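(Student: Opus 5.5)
We follow the standard Lusternik--Schnirelmann cup-length argument, adapted to the equivariant setting as in Grant's proof of \cite[Theorem 4.6]{Grantsymmtc}. Suppose, for contradiction, that $\TC^{\Sigma}[p \colon E \to B] \leq k-1$. Then there is a $G$-invariant open cover $\{U_1,\dots,U_k\}$ of $E \times_B E$, together with $G$-equivariant sections of $\Pi$ over each $U_i$. Since each $U_i$ is $G$-invariant, $V_i := \rho(U_i)$ is open in $SP^2_B(E)$ (the orbit map of a finite group action is open), and $\rho^{-1}(V_i) = U_i$. Hence $\{V_1,\dots,V_k\}$ is an open cover of $SP^2_B(E)$.

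The key step is to show that each $u_i$ restricts to zero on $V_i$. By \Cref{lemma: eq def symmetrized}, there is a $G$-homotopy $H \colon U_i \times I \to E \times_B E$ from the inclusion $\iota_{U_i}$ to a $G$-map $f \colon U_i \to E\times_B E$ with values in $\Delta E$. Since $G$ acts trivially on $I$, the map $H$ descends to a homotopy $\bar H \colon V_i \times I \to SP^2_B(E)$. Here one uses that $(U_i \times I)/G \cong V_i \times I$, which holds because $I$ is compact Hausdorff, so that $\rho|_{U_i} \times \id_I$ is a quotient map. The homotopy $\bar H$ runs from the inclusion $V_i \hookrightarrow SP^2_B(E)$ to a map $\bar f$ that factors through $dE_B$. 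Consequently the inclusion $V_i \hookrightarrow SP^2_B(E)$ is homotopic to a map factoring through $dE_B \hookrightarrow SP^2_B(E)$. By hypothesis (1), $u_i|_{V_i} = 0$.

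From $u_i|_{V_i}=0$ and the long exact sequence of the pair, $u_i$ lifts to a relative class $\tilde u_i \in H^*(SP^2_B(E), V_i; R)$. The relative cup product then gives
$$
    \tilde u_1 \smile \dots \smile \tilde u_k \in H^*\bigl(SP^2_B(E), V_1\cup\dots\cup V_k; R\bigr) = H^*\bigl(SP^2_B(E), SP^2_B(E); R\bigr) = 0.
$$
This class maps to $u_1\smile\dots\smile u_k$, which is therefore zero, contradicting hypothesis (2). Hence $\TC^{\Sigma}[p \colon E \to B]\geq k$.

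The main obstacle is the descent step: we must check that the $G$-homotopy genuinely induces a continuous homotopy on the quotient, and that the relative cup product is available for the open sets $V_i$. Both are handled by working with singular cohomology, where the relative cup product over open sets is standard (excisive couples), and by the quotient-map argument above. No paracompactness is needed, since we use only the open cover and never a partition of unity.
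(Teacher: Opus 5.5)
Your proposal is correct and follows essentially the same argument as the paper. The steps match: descend the $G$-homotopy from \Cref{lemma: eq def symmetrized} to $\rho(U_i)$, conclude that $u_i$ vanishes there, lift to relative classes, and use the relative cup product over the cover. Your checks that $\rho(U_i)$ is open and that $\bar H$ is continuous via the quotient-map argument fill in details the paper leaves implicit.
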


\begin{proof}
Suppose there are cohomology classes $u_1,\dots, u_k \in H^*(SP^2_B(E);R)$ such that $u_1 \smile \dots \smile u_k \neq 0$. 
On the contrary, assume that $\TC^{\Sigma}[p \colon E \to B]<k$. 
Then $E\times_B E$ can be covered by $G$-invariant open sets $U_1,\dots, U_k$ such that there exist sections $\sigma_i \colon U_i\to E^I_B$ 
of $\Pi$ for $1\leq i\leq k$.
It follows from \Cref{lemma: eq def symmetrized} that, for each $1\leq i\leq k$, there exists a $G$-homotopy $H_i\colon U_i\to E\times_B E$ from the inclusion map $\iota_{U_i} \colon U_i \hookrightarrow E \times_B E$ to a $G$-map $f_i \colon U_i\to E\times_B E$ which takes values in $\Delta E$. 
Let $\overline{U}_i:=\rho(U_i)$, and consider the induced homotopy $\overline{H}_i \colon \overline{U}_i\times I\to SP^2_B(E)$. 
Note that $\overline{H}_i$ is a homotopy between $\iota_{\overline{U}_i}$ and $\overline{f}_i$. Moreover, $\overline{f}_i$ takes values in $dE_B$. 

Now using the long exact sequence for cohomology of the pair $(SP^2_B(E),\overline{U}_i)$ and the fact that $u_i$ restricts to zero in $dE_B$, we have cohomology classes $\bar{u}_i\in H^*(SP^2_B(E),\overline{U}_i;R)$ whose image is $u_i$ under the co-boundary map. 
Note that 
$$
    \bar{u}_1\smile \dots \smile \bar{u}_i
        \in H^*(SP^2_B(E),\cup_{i=1}^k\overline{U}_i;R)
        = H^*(SP^2_B(E),SP^2_B(E);R) 
        = 0.
$$ 
Then, by naturality of cup products, we have $u_1\smile \dots \smile u_k = 0$. 
This is a contradiction to our assumption that $u_1\smile \dots \smile u_k\neq 0$.    
\end{proof}

The dimension–connectivity upper bound for the symmetrized topological complexity was given by Grant in \cite[Theorem 4.2]{Grantsymmtc}. In the following, we prove its parametrized analogue, strengthened by replacing the dimension with the homotopy dimension.

\begin{theorem}
\label{thm: dimension-connectivity upper bound on symmetrized TC}
Suppose $p \colon E \to B$ is a fibration with fibre $F$ such that $E \times_B E$ is a $G$-CW complex of dimension at least 2.
If $F$ is $s$-connected, then
$$
    \TC^{\Sigma}[p \colon E \to B] 
        < \frac{\hdim_G(E \times_B E)+1}{s+1}.
$$
\end{theorem}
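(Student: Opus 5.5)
The plan is to follow Grant's proof of \cite[Theorem 4.2]{Grantsymmtc}, applied to the $G$-fibration $\Pi \colon E^I_B \to E\times_B E$ in place of the free path fibration. First I reduce to the case where $\dim_G$ is replaced by homotopy dimension. If $Y$ is a $G$-CW complex $G$-homotopy equivalent to $E\times_B E$ with $\dim Y = \hdim_G(E\times_B E)$, then $\sct_G$ of $\Pi$ equals $\sct_G$ of the pullback of $\Pi$ to $Y$, since equivariant sectional category is a $G$-homotopy invariant of the base for pullbacks of $G$-fibrations. So it suffices to prove $\sct_G(\Pi) < (\dim(E\times_B E)+1)/(s+1)$ for a $G$-CW base of the given dimension; the hypothesis $\dim \geq 2$ is used only to match Grant's range.

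Let $k$ be the least integer with $(k+1)(s+1) > \dim(E\times_B E)$, i.e.\ $k$ is the smallest integer not less than $(\dim+1)/(s+1) - 1$. Then $k < (\dim+1)/(s+1)$, and it suffices to show $\sct_G(\Pi)\le k$. By \cite[Proposition 3.4]{Grantsymmtc}, with a CW base being paracompact, it is enough to produce a global $G$-section of the fibred join $\Pi_k \colon J^k_{E\times_B E}(E^I_B) \to E\times_B E$. This is a $G$-fibration by \cite[Lemma 3.3]{Grantsymmtc}.

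I will use equivariant obstruction theory, cell by cell over the $G$-CW structure. A $G$-cell $G/H\times D^n$ requires extending a map into the fixed-point fibration $\Pi_k^H$ over the $H$-fixed cells. There are only two isotropy types, $H=T$ and $H=\Z_2$.
\begin{itemize}
\item \emph{Free cells.} The fibre of $\Pi_k$ is $J^k(\Omega F)$. Since $\Omega F$ is $(s-1)$-connected, its $(k+1)$-fold join is $((k+1)(s+1)-2)$-connected. Extension over free cells of dimension $n \le \dim$ is therefore unobstructed as long as $n-1 \le (k+1)(s+1)-2$, which holds by the choice of $k$.
\item \emph{Fixed cells.} These lie in $(E\times_B E)^G=\Delta E$. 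By \Cref{rem: fibre of the fixed point fibration for parameterized fibration}, the fibre of $\Pi^G$ is $(\Omega F)^G \cong P_0F$, which is contractible. The fixed fibre of the join is $J^k((\Omega F)^G)$, because $G$ acts trivially on join coordinates, and this is again contractible. So there is no obstruction on fixed cells. Alternatively, one can simply start from the canonical section $s_k$ over $\Delta E$.
\end{itemize}

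The main obstacle is justifying the equivariant obstruction-theoretic extension. Concretely, I need the statement that a $G$-fibration whose $H$-fixed fibres are $(n-1)$-connected for each isotropy $H$ of $n$-cells admits a $G$-section over the $n$-skeleton, extending a given one on the $(n-1)$-skeleton. I would cite the same equivariant extension lemma Grant uses, or prove it by reducing each $G$-cell $G/H\times D^n$ to an ordinary extension problem for $\Pi_k^H$ over $D^n$, and then check the connectivity of the fixed-point fibres as above. A secondary point is verifying that the fixed fibre of the fibred join is the join of the fixed fibres.
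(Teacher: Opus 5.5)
Your proposal is correct and is essentially the paper's argument. The paper checks exactly the two fibre conditions you isolate: the fibre $\Omega F$ of $\Pi$ is $(s-1)$-connected, and the fixed fibre $(\Omega F)^G\cong P_0F$ is contractible. It then invokes the general equivariant dimension--connectivity bound \cite[Theorem 2.16]{Inv-Para-TC} for $\sct_G$. Your reduction to a $G$-CW model of dimension $\hdim_G(E\times_B E)$, together with the cell-by-cell equivariant obstruction argument for $\Pi_k$, amounts to unpacking that cited result along the lines of Grant's proof.
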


\begin{proof}
Note that the fibre $\Omega F$ of $\Pi$ is $(s-1)$-connected.
By \Cref{rem: fibre of the fixed point fibration for parameterized fibration}, the fixed point subspace $(\Omega F)^G$ is homeomorphic to the based path space $P_0 F = \{\gamma \in F^I \mid \gamma(0) = e\}$, which is contractible.
Hence, it follows that 
$$
    \TC^{\Sigma}[p \colon E \to B] = \sct_G(\Pi) < \frac{\hdim_G(E \times_B E)+1}{s+1}.
$$
by \cite[Theorem 2.16]{Inv-Para-TC}.
\end{proof}

\section{Monoidal symmetrized parametrized topological complexity}
\label{sec: mon-symm-para-tc}

Monoidal symmetrized topological complexity was introduced and studied by Grant in \cite{Grantsymmtc}. In this section, we introduce its parametrized analogue, along with another version in the sense of Dranishnikov and the Fadell–Husseini approach, and study their interactions with each other.
\begin{definition}
The \emph{monoidal symmetrized parametrized topological complexity} of a fibration $p \colon E \to B$, denoted by $\TC^{M,\,\Sigma}[p \colon E\to B]$, is the least nonnegative integer $k$ such that $E \times_B E$ may be covered by $G$-invariant open subsets $U_0,\dots,U_k$, each of which contains the diagonal $\Delta E$ and admits a local $G$-section $\sigma_i \colon U_i \to E^I_B$ of $\Pi$ such that $\sigma_i(e,e) = \mathrm{c}_e$ for all $e \in E$.
If such an integer does not exist, we set $\TC^{M,\, \Sigma}[p \colon E \to B] = \infty$.
\end{definition}

\begin{definition}
The \emph{D-monoidal symmetrized parametrized topological complexity} of a fibration $p \colon E \to B$, denoted by $\TC^{DM,\,\Sigma}[p \colon E\to B]$, is the least nonnegative integer $k$ such that $E \times_B E$ may be covered by $G$-invariant open subsets $U_0,\dots,U_k$, each of which admits a local $G$-section $\sigma_i \colon U_i \to E^I_B$ of $\Pi$ such that $\sigma_i(e,e) = \mathrm{c}_e$ for all $e \in E$ with $(e,e) \in U_i$.
If such an integer does not exist, we set $\TC^{DM,\,\Sigma}[p \colon E \to B] = \infty$.
\end{definition}

\begin{definition}
The \emph{Fadell-Husseni monoidal symmetrized parametrized topological complexity} of a fibration $p \colon E \to B$, denoted by $\TC^{FH,\,\Sigma}[p \colon E\to B]$, is the least nonnegative integer $k$ such that $E \times_B E$ may be covered by $G$-invariant open subsets $U_0,\dots,U_k$, each of which admits a local $G$-section $\sigma_i \colon U_i \to E^I_B$ of $\Pi$ such that 
\begin{enumerate}
    \item the diagonal $\Delta E$ is a subset of $U_0$,
    \item $\sigma_0(e,e) = \mathrm{c}_e$ for all $e \in E$, 
    \item $\Delta E \cap U_i = \emptyset$ for all $i \geq 1$.
\end{enumerate}  
If such an integer does not exist, we set $\TC^{FH,\,\Sigma}[p \colon E \to B] = \infty$.    
\end{definition}

\subsection{Properties}

It is clear that one has the following chain of inequalities:
\begin{itemize}

\item $\TC^{\Sigma}[p \colon E\to B] \leq \TC^{DM,\,\Sigma}[p \colon E\to B] \leq \TC^{FH,\,\Sigma}[p \colon E\to B]$. 

\

\item $\TC^{DM,\,\Sigma}[p \colon E\to B] \leq \TC^{M,\,\Sigma}[p \colon E\to B]$.

\

\item $\TC^{M}[p \colon E\to B] \leq \TC^{M,\,\Sigma}[p \colon E\to B]$.
\end{itemize}
If $E$ is Hausdorff, then we also have
$$
    \TC^{FH,\,\Sigma}[p \colon E\to B] \leq \TC^{M,\,\Sigma}[p \colon E\to B].
$$

We now present a lemma that will help us show that all these versions of monoidal symmetrized parametrized topological complexity agree with symmetrized parametrized topological complexity for a locally trivial fibration whose total space is an ENR and whose base space is an ANR.
If $E \times_B E$ is paracompact, then by \cite[Proposition 3.4]{Grantsymmtc}, we have $\TC^{\Sigma}[p \colon E \to B] \leq k$ if and only if the $(k+1)$-fold join 
$$
    \Pi_k \colon J^k_{E \times_B E}(E^{I}_B) \to E\times_B E
$$
admits a global $G$-section. 
Note that the canonical $G$-map $s_0 \colon E \to E^I_B$, which maps $e \in E$ to the constant path $\mathrm{c}_e$ at $e$, induces a $G$-map $s_k \colon E \to J^k_{E \times_B E}(E^{I}_B)$ given by
$$
    s_k(e) 
        = \left[c_e,\frac{1}{k+1},\dots,c_e,\frac{1}{k+1}\right],
$$
where $G$ acts trivially on $E$.

\begin{lemma}
\label{lemma: sections restricted to diagonal are homotopic}
Suppose $p \colon E \to B$ is a fibration with fibre $F$, where $E$ is an ANR. 
If $\sigma \colon E \times_B E \to J^{k}_{E \times_B E}(E^I_B)$ is a $G$-section of $\Pi_k$, then $\sigma \circ \Delta_E \colon E \to J^{k}_{E \times_B E}(E^{I}_B)$ and $s_k$ are $G$-homotopic, where $G$ acts trivially on $E$.
\end{lemma}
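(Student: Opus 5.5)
Since $G$ acts trivially on $E$, a $G$-homotopy $E \times I \to J^{k}_{E \times_B E}(E^I_B)$ is exactly an ordinary homotopy taking values in the fixed point set $J^{k}_{E \times_B E}(E^I_B)^G$. My plan is therefore to show that both $\sigma \circ \Delta_E$ and $s_k$ are sections of a fibration over $\Delta E \cong E$ with contractible fibres, and that any two such sections are homotopic through sections.

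\textbf{Step 1: identify the fixed set.} Since $(e,e) \in (E\times_B E)^G$ and $\sigma$ is a $G$-map, $\sigma(e,e)$ is $G$-fixed. The map $s_k$ lands in fixed points, because constant paths are invariant under reversal. The action on the join is coordinatewise, with $g\cdot[\gamma_0,t_0,\dots,\gamma_k,t_k] = [\overline{\gamma}_0,t_0,\dots,\overline{\gamma}_k,t_k]$. Hence a point lying over $\Delta E$ is fixed if and only if $\overline{\gamma}_i = \gamma_i$ whenever $t_i>0$. This identifies $\Pi_k^{-1}(\Delta E)^G$ with the fibred join $J^k_{\Delta E}\big((E^I_B)^G\big)$ of the fixed point fibration
\[
\Pi^G \colon (E^I_B)^G \to \Delta E .
\]

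\textbf{Step 2: contractible fibres.} By \Cref{rem: fibre of the fixed point fibration for parameterized fibration}, the fibre of $\Pi^G$ is $(\Omega F)^G \cong P_0F$, which is contractible. By \cite[Lemma 3.3]{Grantsymmtc}, applied with the trivial group, the fibred join $(\Pi^G)_k$ is then a fibration over $\Delta E \cong E$ with fibre $J^k(P_0F)$, which is again contractible. Both $\sigma\circ\Delta_E$ and $s_k$ (transported via $\Delta_E$) are sections of this fibration.

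\textbf{Step 3: sections are homotopic.} It remains to show that a fibration $q\colon Z \to E$ with contractible fibres over an ANR admits a fibrewise contraction, or at least that any two of its sections are homotopic. I would argue as in part (b) of the proposition on contractible fibres in the previous section. Since $E$ is an ANR, it has the homotopy type of a CW complex. The long exact sequence shows that $q$ is a weak equivalence. Once $Z$ is also known to have CW homotopy type, $q$ is a homotopy equivalence, hence a fibre homotopy equivalence by \cite[Proposition, Page 52]{JPMay}. Two sections $s,s'$ of a fibre homotopy equivalence over the identity are then fibrewise homotopic, since
\[
s \simeq_E \tau\circ q \circ s = \tau = \tau \circ q\circ s' \simeq_E s',
\]
where $\tau$ is a fibre homotopy inverse of $q$. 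This gives the required homotopy into the fixed set, and hence the $G$-homotopy.

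\textbf{Main obstacle.} The delicate point is the homotopy type of the total space $Z$ of the fixed join. If I cannot establish directly that it is of CW type, I would instead use Dold's theorem. Since $E$ is a metrizable ANR, it is paracompact. A fibration with contractible fibres is locally shrinkable over a numerable cover: over a contractible neighbourhood the fibration is fibre homotopy trivial, and $E$ is locally contractible. Dold's theorem then promotes this to a global fibrewise shrinking, and the argument above concludes.
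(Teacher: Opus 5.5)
Your proposal is correct and follows the paper's proof. The paper also views $\sigma\circ\Delta_E$ and $s_k$ as sections of $\Pi_k^G\colon J^k_{\Delta E}((E^I_B)^G)\to\Delta E$ with contractible fibre $J^k(P_0F)$. It then cites Pave\v{s}i\'c's result that a fibration with contractible fibres over a paracompact, locally contractible base is fibre-homotopically trivial. That is exactly what your Dold-theorem fallback proves, so the CW-type detour (which would need a Stasheff/Sch\"on-type theorem for the total space) can be dropped.

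In that fallback, take neighbourhoods $V$ that are contractible in $E$, which is what local contractibility of an ANR provides, rather than contractible in themselves. This suffices, because pulling back along the null-homotopic inclusion $V\hookrightarrow E$ already makes the restriction fibre homotopy equivalent to $V\times J^k(P_0F)$.
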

    
\begin{proof}
As $\sigma$ is $G$-equivariant, we have
$$
    \sigma (\Delta E)
        = \sigma\left((E \times_B E)^G\right) 
        \subseteq \left( J^{k}_{E \times_B E}(E^{I}_B) \right)^G
        = J^k_{\Delta E}\left((E^{I}_B)^G \right).
$$
Thus, both $\left.\sigma\right|_{\Delta E}$ and $s_k \circ \Delta_E^{-1}$ are global sections of the fibration
$
    \Pi_k^G \colon J^k_{\Delta E}((E^{I}_B)^G) \to \Delta E
$
obtained by restricting $\Pi_k$ to the fixed-point sets, where $\Delta_E^{-1} \colon \Delta E \to E$ is the inverse of the restricted diagonal map. 
Observe that the fibre of $\Pi_k^G$ is 
$$
    (J^k(\Omega F))^G = J^k((\Omega F)^G) = J^k(P_0 F),
$$
see \Cref{rem: fibre of the fixed point fibration for parameterized fibration}.
Hence, it follows that the fibre of $\Pi_k^G$ is contractible.
Since $E$ is paracompact and locally contractible \cite[Chapter IV, Corollary 3.3]{borsuk-retracts}, it follows by \cite[Proposition 2.2]{petar-trivial-fibrations} that $\Pi_k^G$ is fibre-homotopically trivial. 
Hence, in particular, any two of its sections are homotopic, and
$$
    \left.\sigma\right|_{\Delta E} \simeq s_k \circ     \Delta_E^{-1} \colon \Delta E \to J^k_{\Delta E}((E^{I}_B)^G).
$$
Hence, $\sigma \circ \Delta_E \simeq s_k \colon E \to J^k_{\Delta E}((E^{I}_B)^G)$.
The homotopy may be regarded as a $G$-homotopy with $G$ acting trivially.
\end{proof}

The following theorem generalizes \cite[Theorem 5.2]{Grantsymmtc} to the parametrized setting.

\begin{theorem}
\label{thm: equivalent defn of sym monoidal para tc}
Suppose $p \colon E \to B$ is a locally trivial fibration, where $E$ is an ENR and $B$ is a separable ANR.
Then the following statements are equivalent:
\begin{enumerate}
    \item $\TC^{M,\,\Sigma}[p \colon E\to B] \leq k$.

    \item $\TC^{FH,\,\Sigma}[p \colon E\to B] \leq k$.
        
    \item $\TC^{DM,\,\Sigma}[p \colon E\to B] \leq k$.

    \item $\TC^{\Sigma}[p \colon E\to B] \leq k$.
        
    \item The $G$-fibration $\Pi_k \colon J^k_{E \times_B E}(E^I_B) \to E \times_B E$ admits a global $G$-section.
        
    \item The $G$-fibration $\Pi_k \colon J^k_{E \times_B E}(E^I_B) \to E \times_B E$ admits a global $G$-section $\sigma$ satisfying $\sigma \circ \Delta_E = s_k$.
\end{enumerate}
\end{theorem}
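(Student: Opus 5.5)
I will establish the cycle of implications $(1)\Rightarrow(2)\Rightarrow(3)\Rightarrow(4)\Leftrightarrow(5)\Rightarrow(6)\Rightarrow(1)$, following the pattern of \Cref{thm: equivalent defn of monoidal para tc} but keeping track of the $G$-action throughout.

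First, the standing facts. By \Cref{prop: cofib E to E times B E}, $E\times_B E$ is a $\Z_2$-ENR, hence metrizable and paracompact. The diagonal $\Delta_E\colon E\to E\times_B E$ is a closed $G$-cofibration, so $p$ is symmetrically LEC. Also, $E$ is an ANR by \Cref{prop: ENR implies ANR}.

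The implication $(1)\Rightarrow(2)$ holds because $E$ is Hausdorff: we may delete $\Delta E$ from $U_1,\dots,U_k$, and each $U_i\setminus\Delta E$ is still $G$-invariant and open. The implications $(2)\Rightarrow(3)\Rightarrow(4)$ are the trivial inequalities listed before the theorem. The equivalence $(4)\Leftrightarrow(5)$ is \cite[Proposition 3.4]{Grantsymmtc}, which applies since $\Pi$ is a $G$-fibration and the base is paracompact.

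For $(5)\Rightarrow(6)$, let $\sigma$ be a global $G$-section of $\Pi_k$.
\begin{itemize}
\item By \Cref{lemma: sections restricted to diagonal are homotopic}, which uses that $E$ is an ANR, there is a $G$-homotopy $K\colon \Delta E\times I\to J^k_{E\times_B E}(E^I_B)$ from $\sigma|_{\Delta E}$ to $s_k\circ\Delta_E^{-1}$. Here $G$ acts trivially on $\Delta E$.
\item I then form the equivariant analogue of diagram \eqref{diag: relative homotopy lifting}. The top map is $\sigma\cup K$ on $(E\times_B E\times\{0\})\cup(\Delta E\times I)$, and the bottom map is the stationary homotopy of $\mathrm{id}_{E\times_B E}$.
\item Since $\Delta E\hookrightarrow E\times_B E$ is a closed $G$-cofibration and $\Pi_k$ is a $G$-fibration (\cite[Lemma 3.3]{Grantsymmtc}), the equivariant relative lifting property \cite[Proposition 2.9]{Grantsymmtc} gives a $G$-lift $F$.
\item Then $\widetilde\sigma:=F_1$ is a $G$-section of $\Pi_k$ with $\widetilde\sigma\circ\Delta_E=s_k$.
\end{itemize}

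For $(6)\Rightarrow(1)$, set $U_i:=\sigma^{-1}(t_i^{-1}(0,1])$, exactly as in \Cref{thm: equivalent defn of monoidal para tc}.
\begin{itemize}
\item The join coordinates $t_i$ are $G$-invariant, because $G$ acts only on the path entries. Since $\sigma$ is equivariant, each $U_i$ is a $G$-invariant open set.
\item Each $U_i$ contains $\Delta E$, since $t_i(s_k(e))=1/(k+1)>0$.
\item The components $\sigma_i\colon U_i\to E^I_B$ are well defined because $t_i>0$ on $U_i$. They are continuous by the standard argument for joins.
\item The $\sigma_i$ are $G$-equivariant: $\sigma(g\cdot x)=g\cdot\sigma(x)$ and the action is componentwise, so $\sigma_i(gx)=g\cdot\sigma_i(x)$.
\item They satisfy $\sigma_i(e,e)=\mathrm{c}_e$, because $\sigma(e,e)=s_k(e)$.
\end{itemize}
This closes the cycle.

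The main obstacle is $(5)\Rightarrow(6)$. It needs the $G$-cofibration property of the diagonal, which is exactly where the ENR hypothesis enters through \Cref{prop: cofib E to E times B E}. It also needs the fixed-point fibration argument of \Cref{lemma: sections restricted to diagonal are homotopic}, which makes $\sigma|_{\Delta E}$ and $s_k$ homotopic even though $\sigma$ carries no monoidal information a priori. I would check carefully that \cite[Proposition 2.9]{Grantsymmtc} applies to the pair $(E\times_B E,\Delta E)$ with the $G$-fibration $\Pi_k$.
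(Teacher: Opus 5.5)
Your proposal is correct and follows the paper's proof essentially step for step: the chain of easy implications $(1)\Rightarrow(2)\Rightarrow(3)\Rightarrow(4)$, Grant's Proposition 3.4 for $(4)\Leftrightarrow(5)$, \Cref{lemma: sections restricted to diagonal are homotopic} combined with the $G$-cofibration from \Cref{prop: cofib E to E times B E} and the equivariant relative lifting property for $(5)\Rightarrow(6)$, and the $G$-invariant join-coordinate cover for $(6)\Rightarrow(1)$. One point should be made explicit: for your square with the stationary bottom homotopy to commute on $\Delta E\times I$, the homotopy $K$ must be a homotopy through sections of $\Pi_k^G$, and although that lemma is stated only as giving a $G$-homotopy, this is exactly what the fibre-homotopy-triviality argument in its proof provides, since any two sections of a fibre-homotopically trivial fibration with contractible fibre are vertically homotopic.
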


\begin{proof}
    Clearly, $(1) \implies (2)$, $(2) \implies (3)$ and $(3) \implies (4)$. 

    $(4) \iff (5)$
    Note that $E \times E$ is metrizable, and hence paracompact. 
    Hence, $E \times_B E$ is also paracompact, being a closed subspace of a paracompact space.
    Thus, by \cite[Proposition 3.4]{Grantsymmtc}, we have $\TC^{\Sigma}[p \colon E\to B] \leq k$ if and only if there exists a global $G$-section $\sigma$ of the fibration $\Pi_k$.

$(5) \implies (6)$ Suppose there exists a global $G$-section $\sigma$ of the fibration $\Pi_k$.
By \Cref{lemma: sections restricted to diagonal are homotopic}, $\sigma \circ \Delta_E$ is $G$-homotopic to $s_k$, where $G$ acts trivially on $\Delta E$.
Moreover, note that the inclusion $\Delta E \hookrightarrow E \times_B E$ is a $G$-cofibration,  see \Cref{prop: cofib E to E times B E}.
The rest of the argument is identical to that in the case $(3) \implies (4)$ of \Cref{thm: equivalent defn of monoidal para tc}, generalized to the equivariant setting.

$(6) \implies (1)$ The argument is the same as the case $(4) \implies (1)$ of \Cref{thm: equivalent defn of monoidal para tc}; we only have to note that the join coordinate maps $t_i$'s are $G$-invariant, so the argument generalizes to the equivariant setting.
\end{proof}

\subsection{Bounds}

The proof of the following lemma follows from \Cref{lemma: eq def symmetrized}.

\begin{lemma}
\label{lemma: eq def mono-symmetrized}
Let $p \colon E\to B$ be a fibration, and let $U$ be a $G$-invariant (not necessarily open) subset of $E \times_B E$ containing $\Delta E$. 
If there exists a $G$-equivariant section $\sigma$ of $\Pi \colon E^I_B\to E\times_B E$ over $U$ with $\sigma(e,e) = \mathrm{c}_e$ for all $e \in E$, then $\Delta E$ is a strong $G$-deformation retract of $U$ in $E \times_B E$, that is, there exists a $G$-homotopy $H \colon U \times I \to E \times_B E$ such that 
$$
    H(e_1,e_2,0) = (e_1,e_2), \quad
    H(e_1,e_2,1) \in \Delta E, \quad \text{and} \quad
    H(e,e,t) = (e,e)
$$
for all $(e_1,e_2) \in U, e \in E$ and $t \in I$.
\end{lemma}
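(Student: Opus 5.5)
The plan is to reuse the homotopy from the proof of $(1)\implies(2)$ in \Cref{lemma: eq def symmetrized}, namely
$$
    H(e_1,e_2,t) := \left(\sigma(e_1,e_2)(t/2),\, \sigma(e_1,e_2)(1-t/2)\right), \qquad (e_1,e_2) \in U,\ t \in I,
$$
and to check that the extra hypothesis $\sigma(e,e)=\mathrm{c}_e$ upgrades it to a strong $G$-deformation.

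The facts already established there carry over verbatim. First, $H$ lands in $E\times_B E$, because $p\circ\sigma(e_1,e_2)$ is constant. Second, $H$ is $G$-equivariant, because $\sigma(g\cdot(e_1,e_2)) = g\cdot\sigma(e_1,e_2)$ means reversing the path swaps the two coordinates. Third, $H_0=\iota_U$ since $\sigma$ is a section of $\Pi$. Fourth, $H_1$ takes values in $\Delta E$, since both coordinates equal $\sigma(e_1,e_2)(1/2)$. Continuity is immediate from continuity of $\sigma$ and of evaluation $E^I_B\times I\to E$; the evaluation map is continuous because $I$ is locally compact.

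The only new point is the stationarity on the diagonal. Since $\Delta E\subseteq U$ and $\sigma(e,e)=\mathrm{c}_e$, we get
$$
    H(e,e,t)=(\mathrm{c}_e(t/2),\mathrm{c}_e(1-t/2))=(e,e)
$$
for all $t$. This gives exactly the three displayed conditions.

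I do not expect any real obstacle. The only care needed is to note that $U$ need not be open, but the construction is pointwise, so openness is never used.
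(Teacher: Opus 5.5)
Your proposal is correct and matches the paper's proof. The paper simply reuses the homotopy $H(e_1,e_2,t)=\left(\sigma(e_1,e_2)(t/2),\,\sigma(e_1,e_2)(1-t/2)\right)$ from the $(1)\implies(2)$ direction of \Cref{lemma: eq def symmetrized}. Your one additional check, that $\sigma(e,e)=\mathrm{c}_e$ makes $H$ stationary on $\Delta E$, is the only new ingredient needed.
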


\begin{remark}
Note that if $B = \{*\}$, then the converse of \Cref{lemma: eq def mono-symmetrized} also holds, see Lemma 4.5 and the proof of Theorem 5.3 in \cite{Grantsymmtc}.
If $U$ is assumed to be open and $E \times_B E$ is a perfectly normal Hausdorff space, then the existence of such a homotopy implies that the inclusion $\Delta E \hookrightarrow E \times_B E$ is a closed $\Z_2$-cofibration, see \cite{3550166}.
Hence, the converse of \Cref{lemma: eq def mono-symmetrized} is similar in form to \Cref{question: existence of monoidal section}, with the latter considered in the $\Z_2$-equivariant setting.
\end{remark}

\begin{theorem}
\label{thm: cohomological lower bound on mon-sym-para-tc}
Let $p\colon E\to B$ be a fibration. 
If there exists relative cohomology classes $v_1,\dots, v_k \in H^*(SP^2_B(E), dE_B; R)$ (for any commutative ring $R$) such that 
$$
    0 \neq v_1\smile \dots \smile v_k \in H^*(SP^2_B(E), dE_B; R)
$$
then $\TC^{M,\, \Sigma}[p\colon E\to B]\geq k$.    
\end{theorem}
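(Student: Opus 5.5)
We argue by contradiction, mirroring the proof of \Cref{thm: cohomological lower bound on sym-para-tc} but working with cohomology relative to the diagonal. Suppose $\TC^{M,\,\Sigma}[p \colon E \to B] < k$. Then $E \times_B E$ is covered by $k$ open $G$-invariant sets $U_1,\dots,U_k$. Each $U_i$ contains $\Delta E$ and carries a $G$-section $\sigma_i$ of $\Pi$ with $\sigma_i(e,e)=\mathrm{c}_e$.

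By \Cref{lemma: eq def mono-symmetrized}, each $U_i$ then admits a $G$-homotopy $H_i \colon U_i \times I \to E\times_B E$ with the following properties: it starts at the inclusion, ends in $\Delta E$, and fixes $\Delta E$ pointwise at all times. Set $\overline{U}_i := \rho(U_i)$. Since $\rho$ is the orbit map of a finite group action, it is open, so $\overline{U}_i$ is open in $SP^2_B(E)$ and contains $dE_B$. Because $H_i$ is $G$-equivariant, it descends to a homotopy $\overline{H}_i \colon \overline{U}_i \times I \to SP^2_B(E)$. This is a homotopy of pairs $(\overline{U}_i, dE_B) \to (SP^2_B(E), dE_B)$, from the inclusion to a map landing in $dE_B$. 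Consequently, the inclusion of pairs $(\overline{U}_i,dE_B) \hookrightarrow (SP^2_B(E),dE_B)$ is homotopic, as a map of pairs, to a map factoring through $(dE_B,dE_B)$. Since $H^*(dE_B,dE_B;R)=0$, each $v_i$ restricts to zero in $H^*(\overline{U}_i, dE_B;R)$.

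Now use the long exact sequence of the triple $(SP^2_B(E), \overline{U}_i, dE_B)$. Because $v_i$ restricts to zero in $H^*(\overline{U}_i,dE_B;R)$, it lifts to a class $\bar v_i \in H^*(SP^2_B(E),\overline{U}_i;R)$. The relative cup product then gives
\[
\bar v_1 \smile \dots \smile \bar v_k \in H^*\Big(SP^2_B(E), \textstyle\bigcup_i \overline{U}_i;R\Big) = H^*(SP^2_B(E),SP^2_B(E);R)=0 .
\]
Here $\bigcup_i \overline{U}_i = SP^2_B(E)$ because the $U_i$ cover $E\times_B E$ and $\rho$ is surjective. By naturality of cup products, $\bar v_1\smile\dots\smile\bar v_k$ maps to $v_1\smile\dots\smile v_k$ in $H^*(SP^2_B(E),dE_B;R)$, so the latter vanishes. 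This contradicts the hypothesis.

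The main obstacle is justifying the relative cup product $H^*(X,A)\otimes H^*(X,A')\to H^*(X,A\cup A')$. This requires $\{A,A'\}$ to be an excisive couple, which holds here because all the $\overline{U}_i$ are open. One must also check that the lifts $\bar v_i$ map correctly under the natural maps $H^*(X,\overline U_i)\to H^*(X,dE_B)$ and are compatible with the cup product. The cleanest way is to follow the standard argument for the category/sectional category cup-length bound in relative form, as in Grant's proof of the analogous statement for $\TC^{M,\Sigma}(X)$. A secondary point is that the descended homotopy $\overline{H}_i$ is continuous on $\overline{U}_i\times I$. This holds because $\rho\times\id_I$ is a quotient map, since $\rho$ is open.
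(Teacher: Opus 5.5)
Your proposal is correct and follows essentially the same route as the paper. The paper also argues by contradiction and invokes \Cref{lemma: eq def mono-symmetrized} to obtain $G$-homotopies fixing $\Delta E$, then descends them to $\overline{U}_i=\rho(U_i)$. It shows the restriction $H^*(SP^2_B(E),dE_B;R)\to H^*(\overline{U}_i,dE_B;R)$ vanishes, lifts the $v_i$ through the long exact sequence of the triple, and concludes because the relative cup product lands in $H^*(SP^2_B(E),SP^2_B(E);R)=0$. Your remarks on the openness of $\rho$, the continuity of $\overline{H}_i$, and the excisiveness of the open cover make explicit points the paper leaves implicit.
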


\begin{proof}
Suppose that $\TC^{M,\, \Sigma}[p\colon E\to B]\geq k$.
Then, by \Cref{lemma: eq def mono-symmetrized}, there exists a $G$-invariant open cover $\{U_1,\dots,U_k\}$ of $E \times_B E$ such that, for each $i$ there exists a $G$-homotopy $H_i \colon U_i \times I \to E \times_B E$ satisfying  
$$
    H_i(e_1,e_2,0) = (e_1,e_2), \quad
    H_i(e_1,e_2,1) \in \Delta E, \quad \text{and} \quad
    H_i(e,e,t) = (e,e)
$$
for all $(e_1,e_2) \in U_i, e \in E$ and $t \in I$.

Put $\overline{U}_i=\rho(U_i)$.
Since $H_i$ is equivariant, it induces a homotopy $\overline H_i \colon \overline{U}_i \times I \to SP^2_B(E)$ satisfying
\[
    \overline H_i(z,0)=z, \quad
    \overline H_i(z,1)\in dE_B, \quad
    \overline H_i(a,t)=a \qquad
    (z \in \overline{U}_i,\, a\in dE_B,\, t\in I).
\]
Consider the long exact sequence of the triple $dE_B \subseteq \overline{U}_i\subseteq SP^2_B(E)$
\[
\begin{tikzcd}[column sep=small]
\cdots
\arrow[r]
&
H^*(SP^2_B(E),\overline{U}_i;R)
\arrow[r, "\lambda_i"]
&
H^*(SP^2_B(E),dE_B;R)
\arrow[r, "\mu_i"]
&
H^*(\overline{U}_i,dE_B;R)
\arrow[r]
&
\cdots
\end{tikzcd}
\]
We claim that $\mu_i=0$.
Indeed, the homotopy $\overline H_i \colon \overline{U}_i \times I \to SP^2_B(E)$ is fixed on $dE_B$ and deforms the inclusion of $\overline U_i$ to a map with image in $dE_B$.
Hence the inclusion of pairs
\[
    (\overline{U}_i,dE_B) \hookrightarrow (SP^2_B(E),dE_B)
\]
is homotopic, as a map of pairs, to a map of pairs which factors through $(dE_B,dE_B)$.
Since $H^*(dE_B,dE_B)=0$, the induced map
\[
    \mu_i \colon H^*(SP^2_B(E),dE_B;R) \longrightarrow H^*(\overline U_i,dE_B;R)
\]
is zero. 
In particular, $\mu_i(v_i)=0$.
Hence, there exist classes $\widetilde{v}_i \in H^*(SP^2_B(E),\overline{U}_i)$ such that $\lambda_i(\widetilde{v}_i) = v_i$.
Since 
$$
    \widetilde{v}_1 \smile \dots \smile \widetilde{v}_k
        \in H^*(SP^2_B(E),\cup_{i=1}^k\overline{U}_i;R)
        = H^*(SP^2_B(E),SP^2_B(E);R) 
        = 0,
$$
by the naturality of the cup products, it follows that $v_1 \smile \dots \smile v_k = 0$. 
This is a contradiction.
\end{proof}

\section{Computations for Fadell--Neuwirth fibrations}
\label{sec: Fadell-Neuwirth fibrations}

In this section, we establish estimates for the symmetrized parametrized topological complexity of Fadell--Neuwirth fibrations \cite{FadellNeuwirth}. 
These fibrations provide a natural topological framework for motion planning in which $n$ ordered robots move in $\R^d$ while avoiding $m$ ordered obstacles, whose positions serve as the parameter of the problem.
A parametrized motion planning algorithm therefore takes as input an obstacle configuration, along with the initial and final configurations of the robots lying in that same fibre, and outputs a continuous, collision-free motion that respects the obstacles at every time instant. 
Concretely, the Fadell--Neuwirth fibration
\[
    p \colon F(\R^d,n+m)\to F(\R^d,m) 
        \quad \text{defined by} \quad 
        p(x_1,\dots,x_n,o_1,\dots,o_m)=(o_1,\dots,o_m)
\]
models this setup, where the first $n$ coordinates correspond to the robots and the last $m$ to the obstacles.
For a fixed obstacle configuration $(o_1,\dots,o_m) \in F(\mathbb{R}^d,m)$, the relevant state space for the robots is the fibre $F(\mathbb{R}^d \setminus \mathcal{O}_m,n)$, where $\mathcal{O}_m = \{o_1,\dots,o_m\}$. 
The ordinary parametrized topological complexity measures the minimal discontinuity in any algorithm that continuously adapts to changing obstacle environments; the symmetrized variant, which is our main object of study, additionally requires that the motion from an initial configuration to a final configuration is compatible with the reverse motion, i.e. the algorithm is invariant under path reversal.

The parametrized topological complexity of these fibrations was computed in \cite{farber-para-tc} and \cite{PTCcolfree}. 
In particular, these works establish the following result.

\begin{theorem}[{\cite[Theorem 9.1]{farber-para-tc}} and {\cite[Theorem 4.1]{PTCcolfree}}]
\label{thm: ptc-Fadell_Neuwirth}
Suppose $n\geq 1$, $m\geq 2$ and $d \geq 2$. 
Then
\[
    \TC[p \colon F(\R^d,n+m)\to F(\R^d, m)] =
\begin{cases}
    2n+m-1, & \text{if $d$ is odd},\\
    2n+m-2 & \text{if $d$ is even}.
\end{cases}
\]
\end{theorem}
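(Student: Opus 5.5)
The plan is to prove the two inequalities separately: an upper bound from a dimension–connectivity estimate (supplemented by an explicit motion planner in the even case), and a matching lower bound from a zero-divisor cup-length computation in $H^*(E\times_B E)$, where $E=F(\R^d,n+m)$ and $B=F(\R^d,m)$.

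\textbf{Upper bound.} I would use the parametrized dimension–connectivity estimate of \cite{farber-para-tc}:
$$
\TC[p\colon E\to B] < \frac{\hdim(E\times_B E)+1}{s+1},
$$
where $s$ is the connectivity of the fibre. This is the nonequivariant analogue of \Cref{thm: dimension-connectivity upper bound on symmetrized TC}, and it follows from $\sct(\Pi)$ together with the fact that the fibre $\Omega F$ of $\Pi$ is $(s-1)$-connected.

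1. The fibre $F(\R^d\setminus\mathcal O_m,n)$ is $(d-2)$-connected. This follows by iterating the Fadell--Neuwirth fibrations, whose fibres are wedges of $(d-1)$-spheres.
2. $E\times_B E$ is the total space of a fibration over $F(\R^d,m)$ with fibre $F(\R^d\setminus\mathcal O_m,n)^2$. It is therefore homotopy equivalent to a complex of dimension
$$
(d-1)(m-1)+2n(d-1) = (d-1)(2n+m-1).
$$
3. Substituting into the estimate gives $\TC[p] < 2n+m-1+\tfrac{1}{d-1}$, hence $\TC[p]\le 2n+m-1$. This settles the upper bound for odd $d$.
4. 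For even $d$ one more is needed. Here I would construct an explicit parametrized motion planner with $2n+m-1$ local rules, following the construction in \cite{PTCcolfree}. The idea is to move robots one at a time along generic directions and to use the planar structure ($d=2$, extended to all even $d$) to merge two of the domains. I expect this construction to be the main obstacle, since it is genuinely geometric rather than formal.

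\textbf{Lower bound.}

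1. I would use the zero-divisor cup-length bound: if $u_1\cdots u_k\neq 0$ for classes $u_i\in\ker\bigl(\Delta_E^*\colon H^*(E\times_B E)\to H^*(E)\bigr)$, then $\TC[p]\ge k$. This holds because, by \Cref{thm:  equivalent defn of para tc}, $\TC[p]=\sct(\Delta_E)$.
2. The Fadell--Neuwirth fibrations satisfy Leray--Hirsch, since the classes $\omega_{ij}$ restrict to generators of the fibre cohomology. Consequently $H^*(E\times_B E)$ is the quotient of $H^*(E)\otimes H^*(E)$ by identifying the pulled-back obstacle classes $\omega_{ij}$, $i,j>n$. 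Its relations are the Arnold (three-term) relations on each side.
3. As zero-divisors I would take the classes $\omega_{ij}\otimes 1-1\otimes\omega_{ij}$ for pairs involving at least one robot index.
4. For odd $d$ the classes have even degree, so squares of zero-divisors do not vanish. The product
$$
\prod_{i=1}^{n}\bigl(\omega_{i,n+1}\otimes1-1\otimes\omega_{i,n+1}\bigr)^2\cdot\prod_{j=2}^{m}\bigl(\omega_{1,n+j}\otimes 1-1\otimes\omega_{1,n+j}\bigr),
$$
or a suitable variant, should give $2n+m-1$ factors.
5. For even $d$, where squares of zero-divisors vanish, I would use distinct indices to obtain $2n+m-2$ factors.
6. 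Nonvanishing would be checked by expanding each product and isolating a monomial in the standard (Arnold) basis that appears with coefficient $\pm 1$, after using the basis ordering to reduce all other terms.

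This basis bookkeeping is routine but delicate. The even-dimensional upper bound remains the hardest step.
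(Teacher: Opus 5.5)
\textbf{Gap: the upper bound $\TC[p]\le 2n+m-2$ for even $d$ is not proved.} The dimension--connectivity estimate does not see the parity of $d$ and only gives $2n+m-1$. So, as written, your argument shows only that $\TC[p]\in\{2n+m-2,\,2n+m-1\}$ for even $d$.

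Step 4 does not close this. ``Follow the construction in \cite{PTCcolfree}'' is a citation, not an argument. ``Use the planar structure, extended to all even $d$, to merge two domains'' names no mechanism. The planar argument relies on $\R^2=\C$ being a field, and there is no analogue of that for general even $d$.

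What must actually be shown is the following. The fibre $J^{2n+m-2}(\Omega F)$ of $\Pi_{2n+m-2}\colon J^{2n+m-2}_{E\times_B E}(E^I_B)\to E\times_B E$ is $((d-1)(2n+m-1)-2)$-connected, and $\hdim(E\times_B E)=(d-1)(2n+m-1)$. Hence there is exactly one obstruction to a section, and it sits in the top degree. You need to show that this obstruction vanishes when $d$ is even. Alternatively, you need an explicit planner that genuinely uses evenness, for example through a nowhere-zero vector field on $S^{d-1}$.

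For $d=2$ there is a cheap route:
\begin{itemize}
\item Normalize $(o_1,o_2)$ to $(0,1)$ via $z\mapsto o_1+(o_2-o_1)z$. This identifies $p$ with $\id_{F(\R^2,2)}\times p'$, where $p'$ is the restriction of $p$ over $F(\R^2\setminus\{0,1\},m-2)$.
\item Then $\Pi=\id\times\Pi'$, so $\TC[p]=\TC[p']$.
\item The fibre product of $p'$ has homotopy dimension $(d-1)(2n+m-2)$, so the same dimension--connectivity bound now gives $2n+m-2$.
\end{itemize}
This trick uses a division-algebra structure on $\R^d$. It therefore also works for $d=4$ and $d=8$ (quaternions, octonions). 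Such structures exist only for $d=1,2,4,8$, so the remaining even $d$ need a different, parity-sensitive argument.

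\textbf{Odd $d$ and the lower bounds.} For odd $d$ your outline is the standard argument behind the cited result, and your product is in fact nonzero. Two bookkeeping points need correcting.
\begin{itemize}
\item Each squared factor equals $-2\,\omega_{i,n+1}\otimes\omega_{i,n+1}$, so every coefficient of your product is divisible by $2^n$. You will not find a coefficient $\pm1$, and you must work over $\Z$ or $\mathbb{Q}$; over $\Z_2$ the product vanishes.
\item The obvious basis element cancels. This is the one in which robot $1$ is paired with obstacle $n+1$ on both sides; its coefficient in $H^*(B)$ is $(1-1)^{m-1}\prod_{j\ge2}\omega_{n+1,n+j}=0$.
\item Instead, read off the element pairing robot $1$ with $n+1$ on one side and with some $n+k$, $k\ge2$, on the other. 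Its coefficient is, up to sign, $2^n\,\omega_{n+1,n+k}\prod_{u\ne1,k}(\omega_{n+1,n+u}-\omega_{n+k,n+u})$. This is nonzero: after ordering $n+1$ and $n+k$ first among the obstacles, it is a sum of distinct basis monomials.
\end{itemize}
For even $d$ you still have to specify the $2n+m-2$ zero-divisors and carry out the analogous nonvanishing check.
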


We now compute the symmetrized parametrized topological complexity of the Fadell--Neuwirth fibrations.

\begin{theorem}
\label{thm: sym-para-tc of fadell-neuwirth}
Suppose $n \geq 1$, $m \geq 2$ and $d \geq 2$. 
Then
$$
    \TC^{\Sigma}[\mkern 1mu p \colon F(\R^d,n+m) \to F(\R^d,m)\mkern 1mu] =
        \begin{cases}
            2n+m-1, & \text{if $d$ is odd},\\
            \text{either }2n+m-2 \text{ or } 2n+m-1, & \text{if $d$ is even}. 
        \end{cases}
$$    
\end{theorem}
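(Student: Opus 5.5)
The plan is to sandwich $\TC^{\Sigma}$ between a lower bound coming from the ordinary parametrized invariant and an upper bound from the equivariant dimension--connectivity estimate. For the lower bound, \Cref{prop: para-tc leq sym-para-tc} gives $\TC[p]\leq \TC^{\Sigma}[p]$. Then \Cref{thm: ptc-Fadell_Neuwirth} yields $\TC^{\Sigma}[p]\geq 2n+m-1$ for $d$ odd and $\TC^{\Sigma}[p]\geq 2n+m-2$ for $d$ even. It therefore suffices to prove $\TC^{\Sigma}[p]\leq 2n+m-1$ for all $d\geq 2$.

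For the upper bound I will apply \Cref{thm: dimension-connectivity upper bound on symmetrized TC}. The fibre is $F=F(\R^d\setminus\mathcal{O}_m,n)$. By iterating the Fadell--Neuwirth fibrations, $F$ has the homotopy type of a CW complex built from iterated bundles with fibres wedges of $(d-1)$-spheres, so it is $(d-2)$-connected and $s=d-2$. Next I need to estimate $\hdim_G(E\times_B E)$, where $E=F(\R^d,n+m)$, $B=F(\R^d,m)$ and $G=\Z_2$ acts by swapping. The space $E\times_B E$ is the configuration-type space of pairs of robot configurations $(x,y)$ over a common obstacle configuration $o$, and it is a smooth manifold with a smooth $\Z_2$-action. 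Since it fibres over $B$ with fibre $F\times F$, its homotopy dimension should be
$$\hdim(B)+2\hdim(F)=(d-1)(m-1)+2(d-1)n=(d-1)(2n+m-1).$$
I expect the equivariant version to be the same: take a $\Z_2$-equivariant deformation retraction of $E\times_B E$ onto a $G$-CW complex of that dimension. One way is an equivariant Salvetti/Fox--Neuwirth type cell structure on the fibrewise product. Another is to use that $E\times_B E$ is an open $\Z_2$-manifold of dimension $d(2n+m)$ whose equivariant handle decomposition has no handles above index $(d-1)(2n+m-1)$.

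Granting $\hdim_G(E\times_B E)\leq (d-1)(2n+m-1)$, the bound from \Cref{thm: dimension-connectivity upper bound on symmetrized TC} is
$$\TC^{\Sigma}[p] < \frac{(d-1)(2n+m-1)+1}{d-1} = 2n+m-1+\frac{1}{d-1}.$$
Hence $\TC^{\Sigma}[p]\leq 2n+m-1$. Combined with the lower bound, this gives exactly the stated cases. The hypothesis that $E\times_B E$ is a $G$-CW complex of dimension at least $2$ must also be checked. I would replace $p$ by a fibrewise homotopy equivalent model, using \Cref{prop: fibrewise homotopy invariance of sym-para-tc}, for which $E\times_B E$ is a $G$-CW complex; alternatively, invoke \cite[Theorem 2.16]{Inv-Para-TC} directly with a $G$-CW approximation.

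The main obstacle is the equivariant homotopy dimension estimate $\hdim_G(E\times_B E)\leq (d-1)(2n+m-1)$. The non-equivariant count is routine, but the retraction must be compatible with the swap involution and with the fixed set $\Delta E\cong E$, which itself has $\hdim=(d-1)(n+m-1)$, safely below the bound. My first attempt would be to build the fibrewise product cell by cell using the equivariant Fadell--Neuwirth/Salvetti complex over a $\Z_2$-invariant cellulation of the base. Failing that, I would use the equivariant Morse theory of a $\Z_2$-invariant proper Morse function on the manifold $E\times_B E$, checking indices on and off the fixed set.
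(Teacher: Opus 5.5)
\textbf{The gap.} Your outline is the paper's proof. The lower bound comes from \Cref{prop: para-tc leq sym-para-tc} and \Cref{thm: ptc-Fadell_Neuwirth}, the upper bound from \Cref{thm: dimension-connectivity upper bound on symmetrized TC} with $s=d-2$, and the arithmetic is the same. The gap is the step you flag yourself: the estimate $\hdim_G(E\times_B E)\le (d-1)(2n+m-1)$ is only granted, so the upper bound is not yet proved.

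The paper fills this step by asserting that $E\times_B E$ is $G$-homeomorphic to $F(\R^d,2n+m)$ with the block swap, then restricting the $\Sigma_{2n+m}$-equivariant spine of \cite[Theorem 3.13]{B-Z} to $G$. Do not import that shortcut: the identification is false, as your own description of $E\times_B E$ shows.
\begin{itemize}
\item Over $o$ the fibre is $F(\R^d\setminus\mathcal{O}_m,n)^2$, so $x_i=y_j$ is allowed. Hence $F(\R^d,2n+m)$ is only a proper open subset of $E\times_B E$.
\item Equivariantly, $(E\times_B E)^G=\Delta E\neq\emptyset$, while the swap acts freely on $F(\R^d,2n+m)$.
\item Even non-equivariantly, for $d=2$, $n=1$, $m=2$ the two spaces are complements of $5$ and $6$ complex hyperplanes, so their first Betti numbers differ.
\end{itemize}
The equivariant spine therefore has to be built for the actual space.

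\textbf{How to close it.} Your Salvetti/Fox--Neuwirth idea works once you apply it to an arrangement rather than building cells fibrewise over $B$. Write
$E\times_B E=(\R^{2n+m}\otimes\R^d)\setminus\bigcup(H_{ab}\otimes\R^d)$,
where $H_{ab}=\{z_a=z_b\}$ runs over the edges of the graph $\Gamma$ on the vertices $x_1,\dots,x_n,y_1,\dots,y_n,o_1,\dots,o_m$ containing every edge except the $x_iy_j$.
\begin{itemize}
\item Since $\Gamma$ is connected, its graphic arrangement $\mathcal{A}_\Gamma$ has rank $r=2n+m-1$. The swap is a coordinate permutation preserving $\mathcal{A}_\Gamma$.
\item Run the lexicographic (Fox--Neuwirth, Bj\"orner--Ziegler) stratification behind \cite[Theorem 3.13]{B-Z} for $\mathcal{A}_\Gamma$ in place of the braid arrangement. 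For each $H_{ab}$, record the first coordinate of $\R^d$ in which $z_a\neq z_b$, together with its sign.
\item Modulo translations the ambient space has dimension $dr$. Every stratum in the complement has dimension at least $r$, because its $d$-th coordinate vector ranges over a chamber of a localization of $\mathcal{A}_\Gamma$, which is open.
\item Hence the order complex of the poset of strata lying in the complement has dimension at most $dr-r=(d-1)(2n+m-1)$. Using the order complex also ensures that cells fixed setwise are fixed pointwise.
\item The construction uses only $\mathcal{A}_\Gamma$ and the ordered coordinates of $\R^d$, so it is natural under the swap. The $G$-equivariant deformation retraction then goes through as in the braid case, choosing $g$-fixed points in $g$-invariant (convex) strata.
\end{itemize}
Finally, the $G$-CW hypothesis of \Cref{thm: dimension-connectivity upper bound on symmetrized TC} holds directly. $E\times_B E$ is an invariant open subset of a linear $G$-representation, hence a smooth $G$-manifold, hence a $G$-CW complex by Illman's theorem. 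Your fibrewise-replacement detour is not needed, and it is unclear it would produce a $G$-CW fibre product anyway. Your equivariant-Morse alternative needs exactly this index control, so the arrangement route is the one to pursue.
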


\begin{proof}
Note that we have
\begin{align*}
    \TC[\mkern 1mu p \colon F(\R^d,n+m) \to F(\R^d,m) \mkern 1mu]
        \leq \TC^{\Sigma}[\mkern 1mu p \colon F(\R^d,n+m) \to F(\R^d,m)\mkern 1mu]
\end{align*}
from \Cref{prop: para-tc leq sym-para-tc}. 
Hence, the lower bound follows from \Cref{thm: ptc-Fadell_Neuwirth}.

Note that the fibered product $E \times_B E$ is $G$-homeomorphic to $F(\R^d,2n+m)$, where $G$-action on $F(\R^d,2n+m)$ is given by
$$
    g \cdot (x_1,\dots,x_n,y_1,\dots,y_n,o_1,\dots,o_m)
        = (y_1,\dots,y_n,x_1,\dots,x_n,o_1,\dots,o_m)\,.
$$
By \cite[Theorem 3.13]{B-Z}, there exists a finite CW subcomplex $C$ of $F(\R^d,2n+m)$ such that $C$ is a $\Sigma_{2n+m}$-equivariant strong deformation retract of $F(\R^d,2n+m)$, and $\dim(C) = (d-1)(2n+m-1)$.
The group $G$ embeds into $\Sigma_{2n+m}$ via the homomorphism
$$
    G \hookrightarrow \Sigma_{2n+m}, 
        \qquad
    g \longmapsto (1\,\,n+1)(2\,\,n+2) \cdots (n\,\,2n)\,.
$$
Restricting the $\Sigma_{2n+m}$-action to $G$, it follows that $C$ is also a $G$-equivariant strong deformation retract of $F(\R^d,2n+m)$.
Consequently, $\hdim_G(E\times_B E) \leq (d-1)(2n+m-1)$.
Therefore, by \Cref{thm: dimension-connectivity upper bound on symmetrized TC}, it follows that 
$$
    \TC^{\Sigma}[\mkern 1mu p \colon F(\R^d,n+m) \to F(\R^d,m)\mkern 1mu]
        < \frac{(d-1)(2n+m-1)+1}{(d-2)+1}
        = 2n+m-1 + \frac{1}{d-1}.
$$
Since $d \geq 2$, the desired result follows.
\end{proof}

We note that the estimate in the theorem above also holds for the monoidal parametrized topological complexity and the monoidal symmetrized parametrized topological complexity of Fadell--Neuwirth fibrations. 
Indeed, by \Cref{thm: equivalent defn of sym monoidal para tc}, we have
$$
    \TC^{\Sigma}[\mkern 1mu p \colon F(\R^d,n+m) \to F(\R^d,m)\mkern 1mu]
        = \TC^{M,\,\Sigma}[\mkern 1mu p \colon F(\R^d,n+m) \to F(\R^d,m)\mkern 1mu].
$$
Consequently,
\begin{align*}
     \TC[\mkern 1mu p \colon F(\R^d,n+m) \to F(\R^d,m)\mkern 1mu]
        & \leq \TC^{M}[\mkern 1mu p \colon F(\R^d,n+m) \to F(\R^d,m)\mkern 1mu] \\
        & \leq \TC^{M,\,\Sigma}[\mkern 1mu p \colon F(\R^d,n+m) \to F(\R^d,m)\mkern 1mu] \\
        & = \TC^{\Sigma}[\mkern 1mu p \colon F(\R^d,n+m) \to F(\R^d,m)\mkern 1mu].    
\end{align*}
Therefore, the upper bound estimate of the theorem above also applies to both the monoidal parametrized topological complexity and the monoidal symmetrized parametrized topological complexity of Fadell--Neuwirth fibrations.

Moreover, if $d$ is odd and $n(d-2)-m-1>0$, or if $d$ is even and $(n-1)(d-2)-m-2>0$, then the following equality follows from by \Cref{thm: mon-para-tc = para-tc if dim(E) < (TC[p] + 1)(k+1)-1}
$$
    \TC[\mkern 1mu p \colon F(\R^d,n+m) \to F(\R^d,m)\mkern 1mu]
        = \TC^{M}[\mkern 1mu p \colon F(\R^d,n+m) \to F(\R^d,m)\mkern 1mu].
$$

\section{Acknowledgment}
R.S. Arora thanks the Siemens-IISER Ph.D. fellowship for financial support and Industrial Consultancy and Sponsored Research (IC\&SR), Indian Institute of Technology Madras (SP25261613MADSTX009294), for financial assistance and hospitality during the author's one-month visit, during which part of this work was carried out.
N. Daundkar gratefully acknowledges the support of DST–INSPIRE Faculty Fellowship (Faculty Registration No. IFA24-MA218), as well as IC\&SR, Indian Institute of Technology Madras for the New Faculty Initiation Grant (RF25261395MANFIG009294).

\bibliographystyle{plain} 
\bibliography{References}

@article {I-S,
    AUTHOR = {Iwase, Norio and Sakai, Michihiro},
     TITLE = {Topological complexity is a fibrewise {L}-{S} category},
   JOURNAL = {Topology Appl.},
  FJOURNAL = {Topology and its Applications},
    VOLUME = {157},
      YEAR = {2010},
    NUMBER = {1},
     PAGES = {10--21},
      ISSN = {0166-8641,1879-3207},
   MRCLASS = {55M30 (55Q25 68T40)},
  MRNUMBER = {2556074},
MRREVIEWER = {J.\ M.\ Boardman},
       DOI = {10.1016/j.topol.2009.04.056},
       URL = {https://doi.org/10.1016/j.topol.2009.04.056},
}

@article{D-EqPTC, 
    title={Equivariant parametrized topological complexity}, 
    journal={Proceedings of the Royal Society of Edinburgh: Section A Mathematics}, 
    author={Daundkar, Navnath}, 
    year={2024}, 
    pages={1–24},
    DOI={10.1017/prm.2024.117}
}

@article {EqTC,
    AUTHOR = {Colman, Hellen and Grant, Mark},
     TITLE = {Equivariant topological complexity},
   JOURNAL = {Algebr. Geom. Topol.},
  FJOURNAL = {Algebraic \& Geometric Topology},
    VOLUME = {12},
      YEAR = {2012},
    NUMBER = {4},
     PAGES = {2299--2316},
      ISSN = {1472-2747,1472-2739},
   MRCLASS = {55M30 (57S10)},
  MRNUMBER = {3020208},
MRREVIEWER = {Ian\ Hambleton},
       DOI = {10.2140/agt.2012.12.2299},
       URL = {https://doi.org/10.2140/agt.2012.12.2299},
}

@article {FarberTC,
    AUTHOR = {Farber, Michael},
     TITLE = {Topological complexity of motion planning},
   JOURNAL = {Discrete Comput. Geom.},
  FJOURNAL = {Discrete \& Computational Geometry. An International Journal
              of Mathematics and Computer Science},
    VOLUME = {29},
      YEAR = {2003},
    NUMBER = {2},
     PAGES = {211--221},
      ISSN = {0179-5376},
   MRCLASS = {68T40 (55M30 68U05)},
  MRNUMBER = {1957228},
      }

@article {Sva,
    AUTHOR = {\v{S}varc, Albert S.},
     TITLE = {The genus of a fiber space},
   JOURNAL = {Dokl. Akad. Nauk SSSR (N.S.)},
  FJOURNAL = {Doklady Akademii Nauk SSSR},
    VOLUME = {119},
      YEAR = {1958},
     PAGES = {219--222},
      ISSN = {0002-3264},
   MRCLASS = {55.00},
  MRNUMBER = {0102812},
MRREVIEWER = {J. Stasheff},
}

@article {sarkargpps,
    AUTHOR = {Sarkar, Soumen and Zvengrowski, Peter},
     TITLE = {On generalized projective product spaces and {D}old manifolds},
   JOURNAL = {Homology Homotopy Appl.},
  FJOURNAL = {Homology, Homotopy and Applications},
    VOLUME = {24},
      YEAR = {2022},
    NUMBER = {2},
     PAGES = {265--289},
      ISSN = {1532-0073},
   MRCLASS = {57R25 (55N10 55R25 57R20 57R42)},
  MRNUMBER = {4478131},
}

@article {gonzalezhighertc,
    AUTHOR = {Basabe, Ibai and Gonz\'{a}lez, Jes\'{u}s and Rudyak, Yuli B. and
              Tamaki, Dai},
     TITLE = {Higher topological complexity and its symmetrization},
   JOURNAL = {Algebr. Geom. Topol.},
  FJOURNAL = {Algebraic \& Geometric Topology},
    VOLUME = {14},
      YEAR = {2014},
    NUMBER = {4},
     PAGES = {2103--2124},
      ISSN = {1472-2747},
   MRCLASS = {55M30 (55R80)},
  MRNUMBER = {3331610},
MRREVIEWER = {Dirk Sch\"{u}tz},}

@Article{secat,
 Author = {Berstein, I. and Ganea, T.},
 Title = {The category of a map and of a cohomology class},
 FJournal = {Fundamenta Mathematicae},
 Journal = {Fundam. Math.},
 ISSN = {0016-2736},
 Volume = {50},
 Pages = {265--279},
 Year = {1962},
 Language = {English},
 DOI = {10.4064/fm-50-3-265-279},
 zbMATH = {3305695},
 Zbl = {0192.29302}
}

@article {Grantsymmtc,
    AUTHOR = {Grant, Mark},
     TITLE = {Symmetrized topological complexity},
   JOURNAL = {J. Topol. Anal.},
  FJOURNAL = {Journal of Topology and Analysis},
    VOLUME = {11},
      YEAR = {2019},
    NUMBER = {2},
     PAGES = {387--403},
      ISSN = {1793-5253,1793-7167},
   MRCLASS = {55M30 (55P91 55R91 55S15 55S40 68T40)},
  MRNUMBER = {3958926},
MRREVIEWER = {Marzieh\ Bayeh},
       DOI = {10.1142/S1793525319500183},
       URL = {https://doi.org/10.1142/S1793525319500183},
}

@book {CLOT,
    AUTHOR = {Cornea, Octav and Lupton, Gregory and Oprea, John and Tanr\'{e},
              Daniel},
     TITLE = {Lusternik-{S}chnirelmann category},
    SERIES = {Mathematical Surveys and Monographs},
    VOLUME = {103},
 PUBLISHER = {American Mathematical Society, Providence, RI},
      YEAR = {2003},
     PAGES = {xviii+330},
      ISBN = {0-8218-3403-5},
   MRCLASS = {55M30 (53D35 55P62 55Q25 57R17)},
  MRNUMBER = {1990857},
MRREVIEWER = {Samuel B. Smith},
}

@article {Sequential-PTC-and-related-invariants,
    AUTHOR = {Farber, Michael and Oprea, John},
     TITLE = {Sequential parametrized topological complexity and related
              invariants},
   JOURNAL = {Algebr. Geom. Topol.},
  FJOURNAL = {Algebraic \& Geometric Topology},
    VOLUME = {24},
      YEAR = {2024},
    NUMBER = {3},
     PAGES = {1755--1780},
      ISSN = {1472-2747,1472-2739},
   MRCLASS = {55M30},
  MRNUMBER = {4767888},
MRREVIEWER = {Stephan\ Mescher},
       DOI = {10.2140/agt.2024.24.1755},
       URL = {https://doi.org/10.2140/agt.2024.24.1755},
}

@article {fibrewise,
    AUTHOR = {Garc\'{\i}a-Calcines, J. M.},
     TITLE = {Formal aspects of parametrized topological complexity and its
              pointed version},
   JOURNAL = {J. Topol. Anal.},
  FJOURNAL = {Journal of Topology and Analysis},
    VOLUME = {15},
      YEAR = {2023},
    NUMBER = {4},
     PAGES = {1129--1148},
      ISSN = {1793-5253,1793-7167},
   MRCLASS = {55M30 (55R70 55U35)},
  MRNUMBER = {4689411},
       DOI = {10.1142/S1793525321500631},
       URL = {https://doi.org/10.1142/S1793525321500631},
}

@article{FG07,
  title={Symmetric motion planning},
  author={Farber, Michael and Grant, Mark},
  journal={Contemporary Mathematics},
  volume={438},
  pages={85--104},
  year={2007},
  publisher={Providence, RI: American Mathematical Society}
}

@article {Davis,
    AUTHOR = {Davis, Donald M.},
     TITLE = {Projective product spaces},
   JOURNAL = {J. Topol.},
  FJOURNAL = {Journal of Topology},
    VOLUME = {3},
      YEAR = {2010},
    NUMBER = {2},
     PAGES = {265--279},
      ISSN = {1753-8416},
   MRCLASS = {55R25 (55M30 55P15 57R42)},
  MRNUMBER = {2651360},
MRREVIEWER = {Jes\'{u}s Gonz\'{a}lez},
       DOI = {10.1112/jtopol/jtq006},
       URL = {https://doi.org/10.1112/jtopol/jtq006},
}

@article {ptcspherebundles,
    AUTHOR = {Farber, Michael and Weinberger, Shmuel},
     TITLE = {Parametrized topological complexity of sphere bundles},
   JOURNAL = {Topol. Methods Nonlinear Anal.},
  FJOURNAL = {Topological Methods in Nonlinear Analysis},
    VOLUME = {61},
      YEAR = {2023},
    NUMBER = {1},
     PAGES = {161--177},
      ISSN = {1230-3429},
   MRCLASS = {55M30},
  MRNUMBER = {4583972},
MRREVIEWER = {Marzieh\ Bayeh},
}

@article {Dold,
    AUTHOR = {Dold, Albrecht},
     TITLE = {Erzeugende der {T}homschen {A}lgebra {${\mathcal N}$}},
   JOURNAL = {Math. Z.},
  FJOURNAL = {Mathematische Zeitschrift},
    VOLUME = {65},
      YEAR = {1956},
     PAGES = {25--35},
      ISSN = {0025-5874},
   MRCLASS = {55.0X},
  MRNUMBER = {79269},
MRREVIEWER = {W. S. Massey},
       DOI = {10.1007/BF01473868},
       URL = {https://doi.org/10.1007/BF01473868},
}

@Misc{Bredon,
 Author = {Bredon, Glen E.},
 Title = {Introduction to compact transformation groups},
 Year = {1972},
 Language = {English},
 HowPublished = {Pure and {Applied} {Mathematics}, 46. {New} {York}-{London}: {Academic} {Press}. {XIII},459 p. \$ 21.00 (1972).},
 zbMATH = {3390006},
 Zbl = {0246.57017}
}

@article {PTCcolfree,
    AUTHOR = {Cohen, Daniel C. and Farber, Michael and Weinberger, Shmuel},
     TITLE = {Parametrized topological complexity of collision-free motion
              planning in the plane},
   JOURNAL = {Ann. Math. Artif. Intell.},
  FJOURNAL = {Annals of Mathematics and Artificial Intelligence},
    VOLUME = {90},
      YEAR = {2022},
    NUMBER = {10},
     PAGES = {999--1015},
      ISSN = {1012-2443,1573-7470},
   MRCLASS = {55M30 (68U05 70Q05)},
  MRNUMBER = {4510496},
MRREVIEWER = {Jose\ M.\ Garc\'{\i}a Calcines},
       DOI = {10.1007/s10472-022-09801-6},
       URL = {https://doi.org/10.1007/s10472-022-09801-6},
}

@article{crabb2023fibrewise,
  title={Fibrewise topological complexity of sphere and projective bundles},
  author={Crabb, MC},
  journal={arXiv preprint arXiv:2305.12836},
  year={2023}
}

@article {farber-para-tc,
    AUTHOR = {Cohen, Daniel C. and Farber, Michael and Weinberger, Shmuel},
     TITLE = {Topology of parametrized motion planning algorithms},
   JOURNAL = {SIAM J. Appl. Algebra Geom.},
  FJOURNAL = {SIAM Journal on Applied Algebra and Geometry},
    VOLUME = {5},
      YEAR = {2021},
    NUMBER = {2},
     PAGES = {229--249},
      ISSN = {2470-6566},
   MRCLASS = {55M30 (55R80 55S40 70Q05)},
  MRNUMBER = {4272901},
MRREVIEWER = {Tan\ Zhang},
       DOI = {10.1137/20M1358505},
       URL = {https://doi.org/10.1137/20M1358505},
}

@article {FadellNeuwirth,
    AUTHOR = {Fadell, Edward and Neuwirth, Lee},
     TITLE = {Configuration spaces},
   JOURNAL = {Math. Scand.},
  FJOURNAL = {Mathematica Scandinavica},
    VOLUME = {10},
      YEAR = {1962},
     PAGES = {111--118},
      ISSN = {0025-5521,1903-1807},
   MRCLASS = {55.60},
  MRNUMBER = {141126},
MRREVIEWER = {R.\ H.\ Fox},
       DOI = {10.7146/math.scand.a-10517},
       URL = {https://doi.org/10.7146/math.scand.a-10517},
}

@book {JPMay,
    AUTHOR = {May, J. P.},
     TITLE = {A concise course in algebraic topology},
    SERIES = {Chicago Lectures in Mathematics},
 PUBLISHER = {University of Chicago Press, Chicago, IL},
      YEAR = {1999},
     PAGES = {x+243},
      ISBN = {0-226-51182-0; 0-226-51183-9},
   MRCLASS = {55-02 (18-02 57-02)},
  MRNUMBER = {1702278},
MRREVIEWER = {R.\ M.\ Vogt},
}

@article {minowa2024parametrized,
    AUTHOR = {Minowa, Yuki},
     TITLE = {Parametrized topological complexity of spherical fibrations
              over spheres},
   JOURNAL = {Math. Z.},
  FJOURNAL = {Mathematische Zeitschrift},
    VOLUME = {311},
      YEAR = {2025},
    NUMBER = {1},
     PAGES = {Paper No. 1, 35},
      ISSN = {0025-5874,1432-1823},
   MRCLASS = {55M30 (55S40)},
  MRNUMBER = {4920024},
MRREVIEWER = {Stephan\ Mescher},
       DOI = {10.1007/s00209-025-03794-8},
       URL = {https://doi.org/10.1007/s00209-025-03794-8},
}

@article {GrantPTC,
    AUTHOR = {Grant, Mark},
     TITLE = {Parametrised topological complexity of group epimorphisms},
   JOURNAL = {Topol. Methods Nonlinear Anal.},
  FJOURNAL = {Topological Methods in Nonlinear Analysis},
    VOLUME = {60},
      YEAR = {2022},
    NUMBER = {1},
     PAGES = {287--303},
      ISSN = {1230-3429},
   MRCLASS = {55M30 (20J06 55P20)},
  MRNUMBER = {4524869},
MRREVIEWER = {Sam\ Hughes},
}

@article {petar-trivial-fibrations,
    AUTHOR = {Pave\v si\'c, Petar},
     TITLE = {A note on trivial fibrations},
   JOURNAL = {Glas. Mat. Ser. III},
  FJOURNAL = {Glasnik Matemati\v cki. Serija III},
    VOLUME = {46(66)},
      YEAR = {2011},
    NUMBER = {2},
     PAGES = {513--519},
      ISSN = {0017-095X,1846-7989},
   MRCLASS = {55R35},
  MRNUMBER = {2855030},
MRREVIEWER = {Samuel\ B.\ Smith},
       DOI = {10.3336/gm.46.2.19},
       URL = {https://doi.org/10.3336/gm.46.2.19},
}

@book{munkres2000topology,
    title={Topology},
    edition={2nd},
    author={Munkres, James R},
    year={2000},
    publisher={Prentice Hall}
}

@book {borsuk-retracts,
    AUTHOR = {Borsuk, Karol},
     TITLE = {Theory of retracts},
    SERIES = {Monografie Matematyczne [Mathematical Monographs]},
    VOLUME = {Tom 44},
 PUBLISHER = {Pa\'nstwowe Wydawnictwo Naukowe, Warsaw},
      YEAR = {1967},
     PAGES = {251},
   MRCLASS = {54.60 (55.00)},
  MRNUMBER = {216473},
MRREVIEWER = {S.\ -T\ Hu},
}

@book {daverman-manifolds,
    AUTHOR = {Daverman, Robert J.},
     TITLE = {Decompositions of manifolds},
    SERIES = {Pure and Applied Mathematics},
    VOLUME = {124},
 PUBLISHER = {Academic Press, Inc., Orlando, FL},
      YEAR = {1986},
     PAGES = {xii+317},
      ISBN = {0-12-204220-4},
   MRCLASS = {57-01 (54B15)},
  MRNUMBER = {872468},
MRREVIEWER = {Du\v san\ Repov\v s},
}

@article {murayama-G-ANR,
    AUTHOR = {Murayama, Mitutaka},
     TITLE = {On {$G$}-{ANR}s and their {$G$}-homotopy types},
   JOURNAL = {Osaka J. Math.},
  FJOURNAL = {Osaka Journal of Mathematics},
    VOLUME = {20},
      YEAR = {1983},
    NUMBER = {3},
     PAGES = {479--512},
      ISSN = {0030-6126},
   MRCLASS = {57S10 (54C55)},
  MRNUMBER = {718960},
MRREVIEWER = {J.\ W.\ Jaworowski},
       URL = {http://projecteuclid.org/euclid.ojm/1200776318},
}

@book {dold-alg-top,
    AUTHOR = {Dold, Albrecht},
     TITLE = {Lectures on algebraic topology},
    SERIES = {Classics in Mathematics},
      NOTE = {Reprint of the 1972 edition},
 PUBLISHER = {Springer-Verlag, Berlin},
      YEAR = {1995},
     PAGES = {xii+377},
      ISBN = {3-540-58660-1},
   MRCLASS = {55-02 (01A75)},
  MRNUMBER = {1335915},
       DOI = {10.1007/978-3-642-67821-9},
       URL = {https://doi.org/10.1007/978-3-642-67821-9},
}

@article{Jaworowski1976,
author = {Jaworowski, Jan W.},
journal = {Mathematische Zeitschrift},
pages = {143-148},
title = {Extensions of {$G$}-maps and {E}uclidean {$G$}-retracts.},
url = {http://eudml.org/doc/172303},
volume = {146},
year = {1976},
}

@article {aguilar-gonzalez-2023motion,
    AUTHOR = {Aguilar-Guzm\'an, Jorge and Gonz\'alez, Jes\'us},
     TITLE = {Motion planning in polyhedral products of groups and a
              {F}adell-{H}usseini approach to topological complexity},
   JOURNAL = {Topol. Methods Nonlinear Anal.},
  FJOURNAL = {Topological Methods in Nonlinear Analysis},
    VOLUME = {61},
      YEAR = {2023},
    NUMBER = {1},
     PAGES = {37--57},
      ISSN = {1230-3429},
   MRCLASS = {55M30 (55M15)},
  MRNUMBER = {4583966},
}

@article{Inv-Para-TC,
  title={Equivariant and invariant parametrized topological complexity},
  author={Arora, Ramandeep Singh and Daundkar, Navnath},
  journal={Proceedings of the Royal Society of Edinburgh: Section A Mathematics},
  year={2026},
  pages={1–44},
  DOI={10.1017/prm.2026.10147}
}

@article{dranishnikov2014topological,
  title={Topological complexity of wedges and covering maps},
  author={Dranishnikov, Alexander},
  journal={Proceedings of the American Mathematical Society},
  volume={142},
  number={12},
  pages={4365--4376},
  year={2014}
}

@article{garcia2019note,
  title={A note on covers defining relative and sectional categories},
  author={Garc{\'\i}a-Calcines, Jose Manuel},
  journal={Topology and its Applications},
  volume={265},
  pages={106810},
  year={2019},
  publisher={Elsevier}
}

@article{carrasquel2014relative,
  title={Relative category and monoidal topological complexity},
  author={Carrasquel-Vera, Jos{\'e} Gabriel and Garcia-Calcines, JM and Vandembroucq, Lucile},
  journal={Topology and its Applications},
  volume={171},
  pages={41--53},
  year={2014},
  publisher={Elsevier}
}

@article {secat-and-relcat-II,
    AUTHOR = {Doeraene, Jean-Paul and El Haouari, Mohammed},
     TITLE = {When does secat equal relcat ?},
   JOURNAL = {Bull. Belg. Math. Soc. Simon Stevin},
  FJOURNAL = {Bulletin of the Belgian Mathematical Society. Simon Stevin},
    VOLUME = {20},
      YEAR = {2013},
    NUMBER = {5},
     PAGES = {769--776},
      ISSN = {1370-1444,2034-1970},
   MRCLASS = {55M30},
  MRNUMBER = {3160587},
MRREVIEWER = {Dae-Woong\ Lee},
       URL = {http://projecteuclid.org/euclid.bbms/1385390762},
}

@article {secat-and-relcat-I,
    AUTHOR = {Doeraene, Jean-Paul and El Haouari, Mohammed},
     TITLE = {Up-to-one approximations of sectional category and topological
              complexity},
   JOURNAL = {Topology Appl.},
  FJOURNAL = {Topology and its Applications},
    VOLUME = {160},
      YEAR = {2013},
    NUMBER = {5},
     PAGES = {766--783},
      ISSN = {0166-8641,1879-3207},
   MRCLASS = {55M30 (18B30)},
  MRNUMBER = {3022743},
MRREVIEWER = {Jean-Baptiste\ Gatsinzi},
       DOI = {10.1016/j.topol.2013.02.001},
       URL = {https://doi.org/10.1016/j.topol.2013.02.001},
}

@article {I-S-erratum,
    AUTHOR = {Iwase, Norio and Sakai, Michihiro},
     TITLE = {Erratum to ``{T}opological complexity is a fibrewise {L}-{S}
              category'' [{T}opology {A}ppl. 157 (1) (2010) 10--21]
              [MR2556074]},
   JOURNAL = {Topology Appl.},
  FJOURNAL = {Topology and its Applications},
    VOLUME = {159},
      YEAR = {2012},
    NUMBER = {10-11},
     PAGES = {2810--2813},
      ISSN = {0166-8641,1879-3207},
   MRCLASS = {55M30 (55Q25 68T40)},
  MRNUMBER = {2923451},
       DOI = {10.1016/j.topol.2012.03.009},
       URL = {https://doi.org/10.1016/j.topol.2012.03.009},
}

@MISC {3550166,
    TITLE = {Neighborhood deformation retracts vs cofibrations},
    AUTHOR = {Tyrone (https://math.stackexchange.com/users/258571/tyrone)},
    HOWPUBLISHED = {Mathematics Stack Exchange},
    NOTE = {URL:https://math.stackexchange.com/q/3550166 (version: 2023-01-08)},
    EPRINT = {https://math.stackexchange.com/q/3550166},
    URL = {https://math.stackexchange.com/q/3550166}
}

@article{A-D-S,
    AUTHOR = {Arora, Ramandeep Singh and Daundkar, Navnath and Sarkar, Soumen},
    TITLE = {Sectional category with respect to group actions and sequential topological complexity of fibre bundles},
    JOURNAL = {Homology Homotopy Appl.},
    FJOURNAL = {Homology, Homotopy and Applications},
    VOLUME = {28},
    NUMBER = {3},
    YEAR = {2026},
    PAGES = {101--131},
    DOI = {10.4310/HHA.2026.v28.n3.a5},
    URL = {https://dx.doi.org/10.4310/HHA.2026.v28.n3.a5},
}

@article {eqtcprodineq,
    AUTHOR = {Gonz\'alez, Jes\'us and Grant, Mark and Torres-Giese, Enrique
              and Xicot\'encatl, Miguel},
     TITLE = {Topological complexity of motion planning in projective product spaces},
   JOURNAL = {Algebr. Geom. Topol.},
  FJOURNAL = {Algebraic \& Geometric Topology},
    VOLUME = {13},
      YEAR = {2013},
    NUMBER = {2},
     PAGES = {1027--1047},
      ISSN = {1472-2747,1472-2739},
   MRCLASS = {57R42 (55M30 68T40)},
  MRNUMBER = {3044600},
MRREVIEWER = {Yuli\ B.\ Rudyak},
       DOI = {10.2140/agt.2013.13.1027},
       URL = {https://doi.org/10.2140/agt.2013.13.1027},
}

@book {james1984topology,
    AUTHOR = {James, I. M.},
     TITLE = {General topology and homotopy theory},
 PUBLISHER = {Springer-Verlag, New York},
      YEAR = {1984},
     PAGES = {iv+248},
      ISBN = {0-387-90970-2},
   MRCLASS = {55-01 (18-01 22-01 54-01)},
  MRNUMBER = {762634},
MRREVIEWER = {Ross\ Geoghegan},
       DOI = {10.1007/978-1-4613-8283-6},
       URL = {https://doi.org/10.1007/978-1-4613-8283-6},
}

@article {B-Z,
    AUTHOR = {Blagojevi\'c, Pavle V. M. and Ziegler, G\"unter M.},
     TITLE = {Convex equipartitions via equivariant obstruction theory},
   JOURNAL = {Israel J. Math.},
  FJOURNAL = {Israel Journal of Mathematics},
    VOLUME = {200},
      YEAR = {2014},
    NUMBER = {1},
     PAGES = {49--77},
      ISSN = {0021-2172,1565-8511},
   MRCLASS = {52C17 (52B45 55P91 55R80)},
  MRNUMBER = {3219570},
MRREVIEWER = {Zhi\ L\"u},
       DOI = {10.1007/s11856-014-1006-6},
       URL = {https://doi.org/10.1007/s11856-014-1006-6},
}

@article {TC-of-configuration-spaces,
    AUTHOR = {Farber, Michael and Grant, Mark},
     TITLE = {Topological complexity of configuration spaces},
   JOURNAL = {Proc. Amer. Math. Soc.},
  FJOURNAL = {Proceedings of the American Mathematical Society},
    VOLUME = {137},
      YEAR = {2009},
    NUMBER = {5},
     PAGES = {1841--1847},
      ISSN = {0002-9939,1088-6826},
   MRCLASS = {55R80 (55M99 68T40)},
  MRNUMBER = {2470845},
MRREVIEWER = {Daniel\ C.\ Cohen},
       DOI = {10.1090/S0002-9939-08-09808-0},
       URL = {https://doi.org/10.1090/S0002-9939-08-09808-0},
}

@article {MR3665579,
    AUTHOR = {Davis, Donald M.},
     TITLE = {The symmetrized topological complexity of the circle},
   JOURNAL = {New York J. Math.},
  FJOURNAL = {New York Journal of Mathematics},
    VOLUME = {23},
      YEAR = {2017},
     PAGES = {593--602},
      ISSN = {1076-9803},
   MRCLASS = {55M30 (55M25)},
  MRNUMBER = {3665579},
MRREVIEWER = {Jose\ M.\ Garc\'ia Calcines},
       URL = {http://nyjm.albany.edu:8000/j/2017/23_593.html},
}
\end{document}